\documentclass[11pt]{amsart}
\usepackage{lmodern}
\usepackage[T1]{fontenc}
\usepackage{microtype}
\usepackage{amssymb}
\usepackage{amsthm}
\usepackage{amscd}
\usepackage{hyperref}
\usepackage{cite}
\usepackage{verbatim}


\newtheorem{theorem}{Theorem}[section]
\newtheorem{lemma}[theorem]{Lemma}
\newtheorem{prop}[theorem]{Proposition}
\newtheorem{cor}[theorem]{Corollary}

\newtheorem*{theorem*}{Theorem}
\newtheorem*{cor*}{Theorem}

\theoremstyle{definition}
\newtheorem{definition}[theorem]{Definition}
\newtheorem{notation}[theorem]{Notation}
\newtheorem{remark}[theorem]{Remark}

\newtheorem{example}[theorem]{Example}

 \newenvironment{claimproof}{\begin{proof}}{\end{proof}}
\newcounter{claimcounter}
\numberwithin{claimcounter}{theorem}
\newenvironment{claim}{\stepcounter{claimcounter}{\textbf{Claim \theclaimcounter:}}}{}


\makeatletter

\def\dotminussym#1#2{%
  \setbox0=\hbox{$\m@th#1-$}%
  \kern.5\wd0%
  \hbox to 0pt{\hss\hbox{$\m@th#1-$}\hss}%
  \raise.6\ht0\hbox to 0pt{\hss$\m@th#1.$\hss}%
  \kern.5\wd0}

\mathchardef\mhyphen="2D


\allowdisplaybreaks[2]

\newcommand{\fn}[1]{\mathbf{#1}}
\newcommand{\sat}[1]{\ensuremath{(#1)\mathop{:}H_{#1}^\infty}}
\newcommand{\dsat}[1]{\ensuremath{[#1]\mathop{:}H_{#1}^\infty}}

\synctex=1

\begin{document}

\title[Effective bounds in differential polynomial rings]{Proof mining and effective bounds in differential polynomial rings}
\author{William Simmons and Henry Towsner}
\date{\today}
\thanks{Partially supported by NSF grant DMS-1600263.}
\address{Department of Mathematics and Computer Science, Hobart and William Smith Colleges, 300 Pulteney Street, Geneva, NY 14456, USA}
\email{wsimmons@hws.edu}
\urladdr{\url{https://math.hws.edu/simmons/}}
\address {Department of Mathematics, University of Pennsylvania, 209 South 33rd Street, Philadelphia, PA 19104-6395, USA}
\email{htowsner@math.upenn.edu}
\urladdr{\url{http://www.math.upenn.edu/~htowsner}}

\begin{abstract}
Using the functional interpretation from proof theory, we analyze nonconstructive proofs of several central theorems about polynomial and differential polynomial rings. We extract effective bounds, some of which are new to the literature, from the resulting proofs. In the process we discuss the constructive content of Noetherian rings and the Nullstellensatz in both the classical and differential settings. Sufficient background is given to understand the proof-theoretic and differential-algebraic framework of the main results. 
\end{abstract}

\maketitle

\section{Introduction}

This paper is concerned with proofs of finitary statements which pass through an ultraproduct construction as an intermediate step.  The basic idea is illustrated by the following theorem:
\begin{theorem*}[\!\!\cite{vdDS}, Theorem 2.5]
For every $n$ and $d$, there is a bound $b$ so that whenever $K$ is a field and $\Lambda$ is a finite set of generators in $K[X_1,\ldots,X_n]$ with total degree bounded by $d$, the following implication holds: if either $f\in(\Lambda)$ or $g\in(\Lambda)$ for all $fg\in(\Lambda)$ such that $fg$ has total degree $\leq b$,  then $(\Lambda)$ is prime.
\end{theorem*}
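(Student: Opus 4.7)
The plan is to argue by contradiction using a classical ultraproduct argument of exactly the nonconstructive shape whose computational content is what the paper's functional interpretation will later extract. Suppose the theorem fails for some fixed $n,d$: then for each $b\in\mathbb{N}$ we may select a field $K_b$ and generators $\Lambda_b\subseteq K_b[X_1,\ldots,X_n]$ of degree $\le d$ satisfying the bounded primality hypothesis at degree $b$ while $(\Lambda_b)$ is not prime. Replacing $\Lambda_b$ by a $K_b$-basis of its span inside the finite-dimensional space of degree-$\le d$ polynomials, we may assume $|\Lambda_b|\le N:=\binom{n+d}{n}$.

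Fix a non-principal ultrafilter $\mathcal{U}$ on $\mathbb{N}$, set $K^*:=\prod_{\mathcal{U}}K_b$, and extract coefficient-wise a tuple $\Lambda^*=(\lambda_1^*,\ldots,\lambda_N^*)\subseteq K^*[X_1,\ldots,X_n]$ of degree $\le d$. The first step is to verify that $(\Lambda^*)$ is prime in $K^*[X_1,\ldots,X_n]$. Given $f,g$ of standard degree with $fg\in(\Lambda^*)$, effective Hermann-type cofactor bounds (the required cofactor degrees depend only on $\deg(fg),\,d,\,n,\,N$) make the relation ``$fg\in(\Lambda)$'' first-order in bounded-degree coefficients, so by \L o\'s's theorem $f_bg_b\in(\Lambda_b)$ for $\mathcal{U}$-almost all $b$. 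For every such $b\ge\deg(fg)$ the bounded primality hypothesis yields $f_b\in(\Lambda_b)$ or $g_b\in(\Lambda_b)$, and the ultrafilter selects one case which transfers back (again using a Hermann cofactor bound on the witness) to $f\in(\Lambda^*)$ or $g\in(\Lambda^*)$.

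The main obstacle is then to extract a contradiction from the fact that each $(\Lambda_b)$ is itself not prime. Direct application of \L o\'s's theorem only produces non-primality of $\prod_{\mathcal{U}}(\Lambda_b)$ inside the ultraproduct ring $\prod_{\mathcal{U}}K_b[X_1,\ldots,X_n]$, witnessed by polynomials of nonstandard degree that need not live in $K^*[X_1,\ldots,X_n]$; this is a priori consistent with primality of $(\Lambda^*)$ in the standard polynomial subring and does not immediately contradict the previous step. Closing the gap requires Hilbert's basis theorem applied to $K^*[X_1,\ldots,X_n]$: Noetherianity forces the internal primary decomposition of the ultrapower extension of $(\Lambda^*)$ to descend to a standard-degree primary decomposition, which, combined with the primality of $(\Lambda^*)$, produces a standard-degree certificate of non-primality in almost every $K_b[X_1,\ldots,X_n]$. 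For any $b$ exceeding the resulting standard bound this certificate lies within the bounded primality hypothesis at level $b$, yielding the desired contradiction. This Noetherian descent is the nonconstructive ingredient whose proof-mining analysis the paper converts into an explicit function $b(n,d)$.
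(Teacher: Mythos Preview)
Your setup is the van den Dries--Schmidt ultraproduct argument, and through the end of your second paragraph (showing that $(\Lambda^*)$ is prime in $K^*[X_1,\ldots,X_n]$ via Hermann bounds and \L o\'s) everything is correct. The gap is in your final paragraph: the sentence ``Noetherianity forces the internal primary decomposition of the ultrapower extension of $(\Lambda^*)$ to descend to a standard-degree primary decomposition, which, combined with the primality of $(\Lambda^*)$, produces a standard-degree certificate of non-primality'' is not a proof, and I do not see how to make it into one. Noetherianity of $K^*[X_1,\ldots,X_n]$ gives you nothing about the internal ring $\prod_{\mathcal{U}}K_b[X_1,\ldots,X_n]$, and there is no general mechanism by which an internal primary decomposition ``descends'' to a standard one. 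What actually closes the gap in the vdDS argument is their Theorem~2.5 itself: the nontrivial fact that $\prod_{\mathcal{U}}K_b[X_1,\ldots,X_n]$ is a \emph{faithfully flat} extension of $K^*[X_1,\ldots,X_n]$, and that primality of an ideal is preserved under such an extension. The proof of preservation of primality is not a consequence of Noetherianity alone; it proceeds by induction on $n$, reducing (in the generic case) to the field $K^*(X_1)$ and using the structure of polynomial rings in an essential way. You have correctly located the difficulty but substituted a hand-wave for the real work.

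The paper's own proof (Theorem~\ref{thm:vdds_2.5}) is the effective unwinding of exactly this inductive argument: it works directly with explicit degree bounds, splits into the case where $(\Lambda)$ meets each $K[X_i]$ in low degree versus the case where some $X_i$ is ``generically transcendental'' modulo $(\Lambda)$, and in the latter case passes to $K(X_1)[X_2,\ldots,X_n]$ via the local-basis Lemma~\ref{thm:prime_ext_alg_reg} to apply the inductive hypothesis. So even if you repaired your ultraproduct argument by invoking the flatness and prime-extension results, you would be taking a genuinely different route from the paper---reproducing the original nonconstructive proof rather than its constructive unwinding---and the explicit bound $\mathfrak{p}_n(d)$ would remain invisible.
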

Their proof proceeds as follows: suppose this were false for some $n$ and $d$.  That is, for each $b$, there exists some field $k_b$ and some $\Lambda_b$ in $k_b[X_1,\ldots,X_n]$ with total degree bounded by $d$ satisfying the assumption but with $(\Lambda_b)$ not prime.  They take an ultraproduct $K=\prod_{\mathcal{U}}k_b$ and $\Lambda=\prod_{\mathcal{U}}\Lambda_b$ and then work in the ring $K[X_1,\ldots,X_n]$ to obtain a contradiction.  Details on the ultraproduct construction can be found, for instance, in \cite{keisler2010ultraproduct}, but this will not concern us here, since our interest is on how to eliminate---``unwind''---the use of ultraproducts.

The disadvantage to such a proof is that it appears to be non-constructive---one assumes, towards a contradiction, that the $\Lambda_b$ exist for all $b$, but this does not directly tell us how large the bound $b$ actually is.  By eliminating the ultraproduct construction from these proofs, we will obtain explicit calculations of these bounds.

\subsection{Unwinding Ultraproduct Proofs}

The essential technique comes from proof theory: one views a proof of a property $\sigma$ in an ultraproduct as a sequence of statements
\[\sigma_1,\ldots,\sigma_n,\sigma\]
where each step follows from the earlier ones.  In order to obtain a direct proof, we replace each step ``$\sigma_i$ is true in the ultraproduct'' with some related fact ``$\sigma'_i$ is true in every field''.  For most of the results we are interested in, the conclusion $\sigma$ is the same as $\sigma'$, which is, of course, the point.  The difficulty is that, for intermediate statements, this may not be true: sometimes $\sigma_i$ is true in an ultraproduct, but \emph{not} true in arbitrary fields.  In this case we need to replace $\sigma_i$ with some different formula $\sigma'_i$.

It turns out that the right translation is a tool called the \emph{monotone functional interpretation} \cite{MR1428007}.  The functional (or ``Dialectica'') interpretation was introduced by G\"odel \cite{MR0102482}; the monotone variant was developed by Kohlenbach to make it easier to apply to ordinary mathematical proofs.  (See also \cite{avigad:MR1640329,gerhardy2007proof, kohlenbach:MR2445721,towsner_worked, troelstra1973metamathematical} for more background on the functional interpretation.)

The functional interpretation tells us to look at the syntactic form of the statement $\sigma$ in (a suitable language of) first-order logic.  Our main conclusions, like Theorem 2.5 of \cite{vdDS}, turn out to be equivalent to statements where the relevant quantifiers have a $\forall\exists$ pattern---what are called $\Pi_2$ statements.  (As we will discuss below, it requires some care to see this since not all the quantifiers ``count'' towards this patten.  In many cases, including Theorem 2.5 of \cite{vdDS}, this equivalence is not obvious.)  Relatedly, the functional interpretation is essentially the identity on $\Pi_2$ statements, as we would expect.

Intermediate steps, however, may be more complicated.  For example, the proofs below will use Hilbert's Basis Theorem, which may be stated as:
\begin{quote}
  For every $n$ and every increasing sequence of ideals $I_1\subseteq I_2\subseteq\cdots\subseteq I_i\subseteq\cdots$ in $K[X_1,\ldots,X_n]$, there is an $m$ so that for all $m'>m$, $I_m=I_{m'}$.
\end{quote}
Specifically, the proofs we are interested in use the fact that Hilbert's Basis Theorem holds in an ultraproduct.  In order to obtain a quantitative proof of our main theorem, we need to translate Hilbert's Basis Theorem into a quantitative fact that holds in every field.

For our purposes, we will take the $I_i$ to be finitely generated.  Carefully formalizing this in the right language leads to a $\Pi_3$ statement:
\[\forall \{D_i\}\forall\{\Lambda_i\}\ \exists m\ \forall m'\ \cdots\]
where each $D_i$ is a natural number and each $\Lambda_i$ is a finite set of polynomials of degree $\leq D_i$\footnote{Technically, first-order logic cannot include the outer quantifier over the $\{D_i\},\{\Lambda_i\}$, but a standard trick is to add symbols to the language which will represent these sequences, and this is equivalent to allowing a single universal quantifier on the outside of the formula, which is precisely what we need here.}.
The functional interpretation tells us to replace this with a function bound \cite{Hertz}:
\begin{quote}
  For every $n$, every function $\fn{D}$, and every function $\fn{F}$, there is an $M$ so that whenever ($\Lambda_1)\subseteq(\Lambda_2)\subseteq\cdots$ is an increasing sequence of finitely-generated ideals in $K[X_1,\ldots,X_n]$ where each polynomial in $\Lambda_i$ has total degree $\leq \fn{D}(i)$, there is an $m\leq M$ so that $\Lambda_{\fn{F}(m)}\subseteq(\Lambda_m)$.
\end{quote}
The conclusion is weaker: we no longer ask for an $I_m=(\Lambda_m)$ which is the ultimate union of the sequence of ideals, but instead ask for a long interval on which the sequence seems to have stabilized.  In return, we obtain that the bound is uniform---it does not depend on the field $K$ or the particular polynomials $\Lambda_i$, only on bounds on the total degree of the $\Lambda_i$---and can be computed from $n$, $\fn{D}$, and $\fn{F}$.

(The functional interpretation of $\Pi_3$ statements always have roughly this form, characterized by the appearance of a function like $\fn{F}$.  Tao introduced the term \emph{metastability} to describe this property in the context of convergence of sequences \cite{taopigeonhole}.  Such statements also naturally occur from the use of Kreisel's no-counterexample interpretation \cite{MR0049135,MR0051193}.)

When Hilbert's Basis Theorem appears as an intermediate statement in a proof of a $\Pi_2$ statement, we never need the full quantitative version.  Instead, there is always a single choice of the function $\fn{F}$ which suffices to complete that proof; what the specific function $\fn{F}$ needed to complete the proof is must be extracted from the remainder of the proof.  (For the applications in this paper, we only need the case where $\fn{F}(i)=i+1$, and we will compute explicit bounds in this case below.)

What the functional interpretation guarantees is that any application of Hilbert's Basis Theorem as an intermediate step in the proof of a $\Pi_2$ statement can actually be replaced by the version with a function bound for the right choice of $\fn{F}$: in other words, a proof of a $\Pi_2$ statement never needs to use an $I_m$ which is truly maximal; it always suffices to choose $I_m$ so that $I_{\fn{F}(m)}\subseteq I_m$ for some big enough function $\fn{F}(m)$.

\subsection{Insights from unwound proofs}
The functional interpretation is a formal technique whose properties are established by rigorous theorems (\!\!\cite{kohlenbach:MR2445721} presents many of the main technical results). Such theorems establish critical properties of the functional interpretation such as modularity: rather than transforming an entire proof at once, one can refine large steps into sublemmas whose interpretations are more tractable \cite{towsner_worked}.

Importantly, though, the reader need not be familiar with the details in order to understand the output of the method, which can be expressed as constructive arguments in standard mathematical language. Even for the practitioner, the functional interpretation can be used as a heuristic for converting unformalized nonconstructive proofs into algorithmic ones without having to go through a formal language \cite{gerhardy2007proof}, \cite{towsner_worked}. Accordingly, throughout the paper we keep the machinery of the functional interpretation in the background while retaining the product: explicit procedures for computing desired objects and quantitative bounds on the complexity of those procedures.

A great virtue of the functional interpretation is that it is systematic and applies in many situations. Proofs often have  a ``hidden combinatorial core'' \cite{kohlenbach:MR2445721} that the functional interpretation can identify. (For instance, the intricate combinatorics in Szemer\'{e}di's proof of his regularity lemma \cite{MR540024} automatically emerge from the functional interpretation \cite{tao2005szemer, goldbring2014approximate}.) 

In this paper, we aim to use the perspective provided by the functional interpretation to analyze ultraproduct proofs of the Nullstellensatz and related results found in \cite{vdDS} and \cite{HTKM}. Our systematic use of tactics suggested by the functional interpretation gives an alternate route to effective versions of these important theorems. 







\subsection{Plan of the paper}

We briefly review the Nullstellensatz and differential Nullstellensatz prior to outlining our path in the rest of the paper. A standard form of Hilbert's Nullstellensatz states that for an algebraically closed field $K$, given an ideal $I\subseteq K[X_1,\dots,X_n]$, the radical ideal $\sqrt{I}$ consists of all polynomials over $K$ that vanish on the common zero locus $\mathbf{V}(I)$ of $I$. This is a nonconstructive statement, owing to the existential nature of the definition of radical ideals, and the usual proofs are also ineffective. 

The ``effective Nullstellensatz'' is the problem of finding uniform bounds on radical ideal membership valid for all fields and only depending on the number of algebraic unknowns and degrees of the generators. Brownawell, Koll\'{a}r, Dub\'{e}, and subsequent authors have employed analytic, algebraic, and combinatorial techniques to show that single exponential bounds suffice \cite{brownawell1987bounds, kollar1988sharp, dube1993combinatorial}. In contrast, van den Dries and Schmidt justify their focus on nonstandard methods by observing that ``by concentrating on \emph{existence proofs} for bounds, rather than on their construction, it is possible to gain a lot of efficiency of exposition'' \cite{vdDS}. We do not explicitly state their results and arguments, but we cite the corresponding nonconstructive analogues. Our unwindings of their proofs show that van den Dries and Schmidt's ultraproduct strategy is not only elegant, but also (implicitly) preserves much more  effective content than one might suppose. 



Our other basic source for nonstandard proofs is \cite{HTKM} by Harrison-Trainor, Klys, and Moosa, who adapt the techniques of van den Dries and Schmidt to the more complicated differential case. Differential fields enrich the field structure by adding commuting derivations (additive endomorphisms obeying the usual product rule for derivatives). Ritt \cite{ritt1932differential} and Raudenbush \cite{raudenbush1934ideal, raudenbush1936analog} enunciated  differential-algebraic versions of the basis theorem and Nullstellensatz, the latter of which Cohn \cite{cohn1941analogue} and Seidenberg \cite{seidenberg1956elimination} approached from an algorithmic angle (without giving explicit bounds). The effective differential Nullstellensatz consists of giving bounds on radical differential ideal membership or, equivalently, consistency of systems of polynomial differential equations.

Recently there has been considerable interest in analyzing the effective content of the differential Nullstellensatz \cite{MR2556127, d2014effective,gustavson2016new, MR3448164, gustavson2016effective}, with the methods employed coming from algebra and model theory \cite{socias1992length, pierce2014fields,freitag2016effective}. Other constructive problems in differential algebra and differential algebraic geometry have also gained attention\cite{grigor1987complexity, boulier1995representation, hrushovski2000effective, freitag2014strong, freitag2015differential, li2016computation, gustavson2016bounds}.







In the rest of the section, we preview the technical part of the paper. We list our main results and indicate how they should be understood.  Because our results require stating a series of explicit functions bounding various properties, we include an index to where the definitions of these functions can be found and, where appropriate, where bounds on their rate of growth are proven:

\begin{table}[h]\caption{Table of Notations and Bounds}
\begin{tabular}{lcc}
\hline
Function & Definition & Calculated Bounds\\
\hline
 $\mathfrak{d}_n$ &Notation \ref{note:dn} (p. \pageref{note:dn})&\\ 
$\mathfrak{e}$& Notation \ref{note:enb} (p. \pageref{note:enb})&\\
$\zeta_n$& Notation \ref{note:zeta} (p. \pageref{note:zeta})&\\
$\mathfrak{p}_n$& Notation \ref{note:pn} (p. \pageref{note:pn})&Lemma \ref{thm:pn_bounds} (p. \pageref{thm:pn_bounds})\\
$\mathfrak{m}$&Notation \ref{not:local_noetherian} (p. \pageref{not:local_noetherian})&Lemma \ref{thm:m_bound} (p. \pageref{thm:m_bound})\\
$\mathfrak{m}^*$&Notation \ref{not:local_noetherian} (p. \pageref{not:local_noetherian})&Lemma \ref{thm:m_bound} (p. \pageref{thm:m_bound_star})\\
$\mathfrak{g}$& Notation \ref{note:g} (p. \pageref{note:g})&\\
$\mathfrak{u}_{\fn{F}}$& Notation \ref{note:uf} (p. \pageref{note:uf})&Lemma \ref{thm:uf_bounds} (p. \pageref{thm:uf_bounds})\\
$\mathfrak{u}_{\fn{F}}^+$& Notation \ref{note:uf_plus} (p. \pageref{note:uf_plus})&Lemma \ref{thm:uf_bounds} (p. \pageref{thm:uf_bounds})\\
$\mathfrak{f}$& Notation \ref{note:fF} (p. \pageref{note:fF})&Lemma \ref{thm:fF_bounds} (p. \pageref{thm:fF_bounds})\\
$\mathfrak{h}_{n,m}$& Notation \ref{note:hgamma} (p. \pageref{note:hgamma})&Lemma \ref{thm:hgamma_bounds} (p. \pageref{thm:hgamma_bounds})\\
$\mathfrak{i}^{\mathrm{sat}}_{n,m}$&Notation \ref{note:isat} (p. \pageref{note:isat}) & Lemma \ref{thm:isat_bounds} (p. \pageref{thm:isat_bounds})\\
$\mathfrak{i}^{\mathrm{cohere}}_{n,m}$&Notation \ref{note:icohere} (p. \pageref{note:icohere}) & Lemma \ref{thm:icohere_bounds} (p. \pageref{thm:icohere_bounds})\\
$\mathfrak{z}^k$&Notation \ref{note:zk} (p. \pageref{note:zk}) & Lemma \ref{thm:zk_bounds} (p. \pageref{thm:zk_bounds})\\
$\mathfrak{i}^{\mathrm{char}}_{n,m}$&Notation \ref{note:ichar} (p. \pageref{note:ichar}) & Lemma \ref{thm:ichar_bounds} (p. \pageref{thm:ichar_bounds})\\
$\mathfrak{k}$&Theorem \ref{thm:eff diff null} (p. \pageref{thm:eff diff null}) & Lemma \ref{thm:diff null bounds} (p. \pageref{thm:diff null bounds}) \\
$\mathfrak{j}_{n,m}$&Notation \ref{not:j_function} (p. \pageref{not:j_function}) & Lemma \ref{thm:ritt bounds} (p. \pageref{thm:ritt bounds})\\
\hline
\end{tabular}
\end{table}


\newpage

Our goal in Section \ref{sec:explicit} is to unwind the existence proof of a bound on prime ideals given by Theorem 2.5 of \cite{vdDS}. The main ingredients are finitary counterparts of prime ideals and vector space bases, as well as bounds on flat extensions of polynomial rings (Theorem \ref{thm:flat} and Lemma \ref{thm:faithful}). The result \ref{thm:vdds_2.5}, which has a form typical of others in the paper, is

\begin{theorem*}Let $n, d$ be given.  If $\Lambda\subseteq K[X_{[n]}]_{\leq d}$ is such that $(\Lambda)$ is prime up to $\mathfrak{p}_n(d)$ then $(\Lambda)$ is prime.
\end{theorem*}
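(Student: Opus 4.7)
The plan is to unwind the ultraproduct proof of \cite[Theorem 2.5]{vdDS} by applying the monotone functional interpretation as outlined in the introduction. The statement ``$(\Lambda)$ is prime'' is essentially $\Pi_2$ once one fixes the data $(n,d,\Lambda)$—it asserts that for every $F,G\in K[X_{[n]}]$ with $FG\in(\Lambda)$, one of them belongs to $(\Lambda)$. The original ultraproduct argument proceeds by a non-effective use of Hilbert's Basis Theorem inside the ultraproduct, a $\Pi_3$ statement; its functional-interpretation form, packaged in the paper as $\mathfrak{u}_\fn{F}$, is what will drive the bound $\mathfrak{p}_n(d)$.

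Concretely, suppose $\Lambda\subseteq K[X_{[n]}]_{\leq d}$ is prime up to $\mathfrak{p}_n(d)$, and take any $FG\in(\Lambda)$. I would produce, at controlled degree cost, reductions $F',G'$ of $F$ and $G$ modulo $(\Lambda)$ such that $F'G'\in(\Lambda)$, $\deg(F'G')\le \mathfrak{p}_n(d)$, and $F'\in(\Lambda)$ if and only if $F\in(\Lambda)$ (and similarly for $G$). The finitary primality hypothesis then forces $F'\in(\Lambda)$ or $G'\in(\Lambda)$, and we conclude.

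To build $F'$ and $G'$ one works inside $K[X_{[n]}]_{\leq r}$ for a sufficiently large $r$, fixing a finitary monomial basis of the quotient $K[X_{[n]}]_{\leq r}/(\Lambda)_{\leq r}$ and reducing $F$ and $G$ against it. The flat extension result (Theorem \ref{thm:flat}) and the faithful extension lemma (Lemma \ref{thm:faithful}) make this reduction behave correctly, provided $r$ has been chosen at a stable stage of the ascending chain $(\Lambda)_{\leq r}$ extending sufficiently past the degree of the intended product $F'G'$. The functional-interpretation form of Hilbert's Basis Theorem supplies such an $r$ explicitly: given a test function $\fn{F}$ built from the flat and faithful degree growth together with the extra degree contributed by the multiplication, $\mathfrak{u}_\fn{F}$ returns an $r$ at which the chain appears stable up to $\fn{F}(r)$. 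Composing these bounds yields $\mathfrak{p}_n(d)$.

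The main obstacle is the degree bookkeeping: identifying the correct test function $\fn{F}$ requires simultaneously tracking the degree growth produced by the flat and faithful reductions and by the product $F'G'$, and only after this is pinned down can $\mathfrak{p}_n(d)$ be named explicitly. Once that is done, the rest of the argument is a direct finitary translation of the ultraproduct proof, with every invocation of a maximal ideal or an ``ultimate'' basis replaced by its bounded-stage counterpart at the $r$ produced by $\mathfrak{u}_\fn{F}$.
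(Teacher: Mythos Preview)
Your plan has a genuine gap: the reduction step ``produce $F',G'$ of bounded degree with $F-F',G-G'\in(\Lambda)$'' simply does not exist in general. It works only when $K[X_{[n]}]/(\Lambda)$ is finite-dimensional over $K$, i.e.\ when $(\Lambda)$ is zero-dimensional. For a positive-dimensional ideal such as $(X_1)\subseteq K[X_1,X_2]$, the element $X_2^N$ is not congruent modulo $(\Lambda)$ to anything of degree below $N$, so no amount of stabilization of the chain $(\Lambda)_{\le r}$ will let you reduce an arbitrary $F$ to bounded degree. The functional interpretation of Hilbert's Basis Theorem gives you a stage $r$ at which $(\Lambda)_{\le r}=(\Lambda)_{\le \fn{F}(r)}$ \emph{as sets of generators of the same ideal inside $K[X_{[n]}]_{\le \fn{F}(r)}$}; it does not give you that every polynomial is equivalent mod $(\Lambda)$ to one of degree $\le r$.

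Relatedly, you have misidentified the source of the bound. The paper's $\mathfrak{p}_n(d)$ (Notation~\ref{note:pn}) is defined recursively in $n$ via $\mathfrak{e}$, $\zeta_1$, $\zeta_2$ and does \emph{not} invoke $\mathfrak{u}_{\fn{F}}$ or any form of Hilbert's Basis Theorem; indeed $\mathfrak{p}_n$ is primitive recursive (Lemma~\ref{thm:pn_bounds}), whereas $\mathfrak{u}_{\fn{F}}$ is built from $\mathfrak{m}^*$ and is not. The actual proof is by induction on $n$: your reduction idea is exactly Case~1 (each $K[X_i]_{\le\rho(n,d)}$ meets $(\Lambda)$, forcing the quotient to be algebraic and hence finite-dimensional). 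The substantive Case~2, where some variable $X_1$ is transcendental over the quotient, is handled by passing to $K(X_1)[X_{[2,n]}]$, using Lemma~\ref{thm:irreducibles} to control division by elements of $K[X_1]$, using the local-basis argument (Lemma~\ref{thm:prime_ext_alg_reg}) to convert $\le_{X,Y}$-primality into $\le_Y$-primality, and then applying the inductive hypothesis in $n-1$ variables. Your proposal is missing this entire mechanism.
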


The subscript denotes a bound on the degree of the polynomials in question (see Definition \ref{def:bound poly}). ``Primality up to some value'' (Definition \ref{21}) is a ``local'' - in particular, easily seen to be computable - notion of primality suggested by the functional interpretation.  The symbol $\mathfrak{p}_n$ represents a certain recursively-defined bounding function (Notation \ref{note:pn}) on the degree of possible counterexamples to primality of an ideal $I\subseteq K[X_1,\dots,X_n]$, where $K$ is an arbitrary field. Such a bound is implicit in the sense that we must analyze the recursive definition in order to establish the growth rate of $\mathfrak{p}_n$ in comparison to some well-known benchmark. See Section \ref{sec:bounds explicit} for such an analysis.

Our actual bounds tend to be rough, and it is not surprising that in many cases carefully optimized arguments (e.g.,Theorem 3.4 of \cite{gustavson2016effective}) give tighter bounds. The functional interpretation's output is dependent on its input and so cannot improve on the implicit constructive content of a given ineffective proof. It is nonetheless meaningful to expose that content, especially since general classes of bounds are often of most intrinsic interest. Like the bounds obtained in \cite{socias1992length, MR2556127,gustavson2016effective,MR3448164}, our main bounds are non-primitive recursive. This indicates either an actual complexity barrier or the need for fundamentally new ideas that can qualitatively lower the bounds beyond what any existing proofs provide.

In Section \ref{sec:interpretation}, we will give a short introduction to the version of the functional interpretation being used to produce these quantitative versions, using the results of Section \ref{sec:explicit} as examples.




Section \ref{sec:local noetherian} deals with the underlying complexity of Noetherianity and its consequences. Using a bound on Dickson's Lemma from the literature (Theorem \ref{Dickson bound}) as a shortcut, we unwind proofs of Hilbert's Basis Theorem and the Nullstellensatz (Theorems \ref{local noetherianity} and \ref{thm:vdDS_2.7}):

\begin{theorem*}
Suppose $(\Lambda_1)\subseteq (\Lambda_2)\subseteq \cdots \subseteq K[X_{[n]}]$ with $\Lambda_i\subseteq K[X_{[n]}]_{\leq{\bf{D}}(i)}$. Then there is a $j\leq  \mathfrak{m}^*({\bf D},n)$ such that $(\Lambda_{j+1})\subseteq (\Lambda_j)$.
\end{theorem*}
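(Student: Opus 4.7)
The plan is to reduce this functional-interpretation form of Hilbert's Basis Theorem to an explicit bound on Dickson's Lemma, via leading monomials. Fix any graded monomial order on $K[X_{[n]}]$ (say graded lexicographic) and, for each $i$, let $\mathrm{in}((\Lambda_i))$ denote the monomial ideal generated by the leading monomials of all elements of $(\Lambda_i)$. Since $(\Lambda_1)\subseteq(\Lambda_2)\subseteq\cdots$, the associated initial ideals form an ascending chain of monomial ideals as well.

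Suppose, toward contradiction, that $(\Lambda_{j+1})\not\subseteq(\Lambda_j)$ for every $j\leq\mathfrak{m}^*(\fn{D},n)$. For each such $j$, pick $f_j\in\Lambda_{j+1}\setminus(\Lambda_j)$; since $\Lambda_{j+1}\subseteq K[X_{[n]}]_{\leq\fn{D}(j+1)}$, we have $\deg(f_j)\leq\fn{D}(j+1)$. Reducing $f_j$ in the graded order against the generators of $(\Lambda_j)$ --- a Gr\"obner-basis reduction invoked only as a bookkeeping device --- produces a nonzero $r_j\in(\Lambda_{j+1})$ with $\deg(r_j)\leq\fn{D}(j+1)$ whose leading monomial $m_j$ lies in $\mathrm{in}((\Lambda_{j+1}))\setminus\mathrm{in}((\Lambda_j))$. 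For any $k<j$, $m_k\in\mathrm{in}((\Lambda_{k+1}))\subseteq\mathrm{in}((\Lambda_j))$, so divisibility $m_k\mid m_j$ would force $m_j\in\mathrm{in}((\Lambda_j))$, contradicting the choice of $m_j$.

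Viewing each $m_j$ as a point of $\mathbb{N}^n$ of coordinate-sum at most $\fn{D}(j+1)$, the sequence $m_1,m_2,\ldots$ is a bad sequence for Dickson's Lemma with controlled growth. Applying the explicit Dickson bound (Theorem \ref{Dickson bound}) to the degree sequence $j\mapsto\fn{D}(j+1)$ in $n$ variables caps the length of any such sequence, and $\mathfrak{m}^*(\fn{D},n)$ is, up to a harmless index shift, exactly this cap; a sequence of length exceeding $\mathfrak{m}^*(\fn{D},n)$ is therefore impossible, producing the required $j$.

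I expect the main obstacle to be the extraction step: one must check that reduction in a graded order is degree-nonincreasing, that it terminates on a nonzero remainder because $f_j\notin(\Lambda_j)$, and that the resulting leading monomial is genuinely outside $\mathrm{in}((\Lambda_j))$ rather than merely some other representative of the coset of $f_j$. Once this extraction is packaged cleanly, everything remaining is a purely combinatorial invocation of the Dickson bound, with the decoration ``$\mathfrak{m}^*$'' (as opposed to $\mathfrak{m}$) built precisely to absorb the shift from $\fn{D}$ to the reindexed bound $j\mapsto\fn{D}(j+1)$ that appears on the monomials.
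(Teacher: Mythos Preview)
Your approach is essentially the paper's: fix a graded monomial order, extract at each step a reduced element whose leading monomial is new, and apply the Dickson bound (Theorem~\ref{Dickson bound}) to the resulting bad sequence of exponent vectors. The paper phrases this constructively---building $f_1,f_2,\ldots$ where $f_j$ is a nonzero reduction of some element of $\Lambda_j$ by $f_1,\ldots,f_{j-1}$---while you use initial ideals and normal forms, but the content is the same.

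One slip: your claim that $\mathfrak{m}^*$ is ``built precisely to absorb the shift from $\fn{D}$ to the reindexed bound $j\mapsto\fn{D}(j+1)$'' is not correct. By Notation~\ref{not:local_noetherian}, $\mathfrak{m}^*(\fn{D},n)=\mathfrak{m}_{\{n\},\fn{D}+1}(0)+1$ uses $\fn{D}+1$ (adding $1$ to \emph{outputs}), not an input shift; and for nondecreasing $\fn{D}$ the input-shifted function dominates $\fn{D}$, so the resulting Dickson bound moves the wrong way. The easy fix is to reindex to match the paper: pick your witness from $\Lambda_j$ rather than $\Lambda_{j+1}$ and reduce it against $(\Lambda_{j-1})$ (equivalently, against the $f_1,\ldots,f_{j-1}$ already built). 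Then the leading monomial has entries bounded by $\fn{D}(j)$ and Theorem~\ref{Dickson bound} applies with no shift at all.
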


\noindent (Here $\fn{D}$ denotes a given nondecreasing function from $\mathbb{N}$ to $\mathbb{N}$ and $\mathfrak{m}^*$ is the aforementioned bound on Dickson's Lemma.)

\begin{theorem*}[Based on \cite{vdDS}, Cor 2.7(ii)]
For any $n,d$ there is $m={\mathfrak{m}^*(i\mapsto \mathfrak{p}_n^i(d),n)}$ so that if $\Lambda\subseteq K[X_{[n]}]_{\leq d}$ and $f^k\in(\Lambda)$ (for any $k$) then $f=\sum_{i}c_ir_i$ where each $r_i^{2^m}\in(\Lambda)$.
\end{theorem*}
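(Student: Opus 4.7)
The plan is to build a binary tree of ideal extensions, apply Theorem \ref{thm:vdds_2.5} at the leaves, and propagate back up to the root by a squaring identity. Set $\Lambda_\emptyset := \Lambda$ and $d_\emptyset := d$. At each internal node $v$ labelled by $\Lambda_v \subseteq K[X_{[n]}]_{\leq d_v}$, ask whether $(\Lambda_v)$ is prime up to $\mathfrak{p}_n(d_v)$. If so, declare $v$ a leaf. Otherwise Definition \ref{21} furnishes witnesses $g_v, h_v \in K[X_{[n]}]_{\leq \mathfrak{p}_n(d_v)}$ with $g_vh_v \in (\Lambda_v)$ but $g_v,h_v \notin (\Lambda_v)$; set children $\Lambda_{v0} := \Lambda_v \cup \{g_v\}$ and $\Lambda_{v1} := \Lambda_v \cup \{h_v\}$, each with new degree bound $\mathfrak{p}_n(d_v)$.

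Along any branch the degree bound at depth $i$ is $\mathfrak{p}_n^i(d)$ and the ideal inclusions are strict by choice of witnesses, so the finitary Basis Theorem (Theorem \ref{local noetherianity}) applied with $\fn{D}(i) := \mathfrak{p}_n^i(d)$ forces every branch to have length at most $m := \mathfrak{m}^*(i \mapsto \mathfrak{p}_n^i(d), n)$. In particular the tree has depth at most $m$. At each leaf $\ell$, Theorem \ref{thm:vdds_2.5} gives that $(\Lambda_\ell)$ is prime, hence radical, so the assumption $f^k \in (\Lambda) \subseteq (\Lambda_\ell)$ forces $f \in (\Lambda_\ell)$.

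I then prove by backward induction on the height $h(v)$ of the subtree rooted at $v$ that $f^{2^{h(v)}} \in (\Lambda_v)$. The base case $h(v)=0$ is handled above. For an internal node $v$ with children $v0, v1$ and $H := \max(h(v0),h(v1))$, the inductive hypothesis yields $a, c \in (\Lambda_v)$ and polynomials $b, e$ with $f^{2^H} = a + b g_v = c + e h_v$. Multiplying these two expressions,
\begin{equation*}
f^{2^{H+1}} = (a + b g_v)(c + e h_v) = ac + a e h_v + b g_v c + b e\, g_v h_v,
\end{equation*}
and every term lies in $(\Lambda_v)$ (the first three because $a$ or $c$ does, the last because $g_v h_v$ does by construction). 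Since $h(v) = H+1$, the induction closes. Applied at the root, $f^{2^m} \in (\Lambda)$, which yields the claimed decomposition with the single summand $r_1 := f$, $c_1 := 1$.

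The main obstacle is the degree bookkeeping: one must verify that the function $i \mapsto \mathfrak{p}_n^i(d)$ truly dominates the degree bounds produced by the recursive application of Definition \ref{21} along a branch, so that $\mathfrak{m}^*$ can be legitimately invoked. Once this is pinned down, the tree-squaring step is routine.
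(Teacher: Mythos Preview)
Your proof is correct and follows the same overall strategy as the paper: build a binary tree of ideal extensions using bounded-degree witnesses to non-primality, bound the depth by Theorem~\ref{local noetherianity} with $\fn{D}(i)=\mathfrak{p}_n^i(d)$, and propagate from the leaves to the root by the squaring identity $(a+bg_v)(c+eh_v)\in(\Lambda_v)$. The degree bookkeeping you flag is exactly what the paper tracks as well, and it works: at depth $i$ the generators lie in $K[X_{[n]}]_{\leq \mathfrak{p}_n^i(d)}$, and the inclusions are strict, so the branches terminate by $m$.

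Where you diverge from the paper is in the endgame, and your route is actually simpler. The paper's leaf condition is ``$f\in(\Gamma_\sigma)$'' (using non-primality of $(\Gamma_\sigma)$ when $f\notin(\Gamma_\sigma)$ but $f^k\in(\Gamma_\sigma)$ to branch); it then invokes internal flatness (Theorem~\ref{thm:flat}) to decompose $f=\sum_j d_j f_j$ into pieces $f_j$ of bounded degree lying in every leaf ideal, and applies the squaring argument to each $f_j$ separately, obtaining $r_i=f_j$. You instead use ``prime up to $\mathfrak{p}_n(d_v)$'' as the leaf condition, invoke Theorem~\ref{thm:vdds_2.5} at the leaves to get genuine primality (hence $f\in(\Lambda_\ell)$), and run the squaring directly on $f$ to conclude $f^{2^m}\in(\Lambda)$ --- so the decomposition is trivial. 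Your version proves the slightly stronger statement that a single $r_1=f$ works; the paper's extra flatness step buys degree bounds on the $r_i$, which are not required by the theorem as stated but are implicit in the unwinding of the ultraproduct argument.
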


The use of Noetherianity is the key factor driving our bounds, as well as those found in other papers that examine the complexity of the differential Nullstellensatz. Morally, many of the bounds we extract are non-primitive recursive because the original nonconstructive proof invokes Noetherianity. (Moreno Soc\'{i}as proved non-primitive recursiveness of bounds on Hilbert's Basis Theorem in \cite{socias1992length}. Also, Simpson has shown in the sense of reverse mathematics that proving Hilbert's Basis Theorem is equivalent to proving that Ackermann's function--well known to not be primitive recursive--is a total function \cite{simpson:MR961012}.) The resulting bounds, as shown in Section \ref{sec:bounds explicit}, are far larger than the doubly-exponential bounds from the slick proof of the effective Nullstellensatz in \ref{thm:rad_limit} which does not appeal to Noetherianity.



Section \ref{sec:bounds differential} is the longest of the paper and establishes many of our basic effective results on differential polynomial rings. Our treatment is largely self-contained, but additional details on differential algebra are found in, e.g., \cite{kaplansky1957introduction, MR0568864, buium1999differential, MR1921694, Marker:model_thy_of_fields}. 
After describing the framework we establish bounds relating differential ideals and algebraic ideals (for example, bounds on the complexity of coherent sets and Rosenfeld's lemma, Proposition \ref{coherent bound} and Lemma \ref{thm:Kolchin_3.8.5}). The basic ingredient for these bounds is a quantitative version of the theorem that there are no infinite descending chains of autoreduced sets (Theorem \ref{thm:dickson_iteration_bound}). 

Our efforts in this section culminate in several new bounds. In \cite{freitag2015differential}, Freitag, Li, and Scanlon remark that ``producing explicit equations for differential Chow varieties in specific cases would require effectivizing Theorem 6.1'' of \cite{HTKM}, which is precisely the content of Theorem \ref{thm:char set} and its corollary \ref{thm:min prime ideal}: 

\begin{cor*}
Suppose $\Lambda\subseteq K\{X_{[n]}\}_{\leq b}$ and let $P$ be a minimal prime $\Delta$-ideal containing $\Lambda$.  Then $P$ has a characteristic set $\Sigma\subseteq K\{X_{[n]}\}_{\leq \mathfrak{i}^{\mathrm{char}}(b)}$.
\end{cor*}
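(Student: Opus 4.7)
The plan is to derive this corollary as a direct consequence of Theorem \ref{thm:char set}. The theorem should provide, effectively from the complexity bound $b$, a finite list of coherent autoreduced sets $\Sigma_1,\dots,\Sigma_r \subseteq K\{X_{[n]}\}_{\leq \mathfrak{i}^{\mathrm{char}}(b)}$ with the property that every minimal prime $\Delta$-ideal containing $\Lambda$ arises as $[\Sigma_i]:H_{\Sigma_i}^\infty$ for some $i$, in which case $\Sigma_i$ is in turn a characteristic set of that prime. Once this is granted, the corollary is little more than bookkeeping.

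Concretely, I would first invoke the classical correspondence between characteristic sets and minimal primes via Rosenfeld's lemma: a minimal prime $\Delta$-ideal $P \supseteq \Lambda$ has a characteristic set $\Sigma$ that is coherent and autoreduced, and satisfies $P = [\Sigma]:H_\Sigma^\infty$ (with $H_\Sigma$ the product of initials and separants). Conversely, if $\Sigma$ is coherent, autoreduced, and $\Lambda \subseteq [\Sigma]:H_\Sigma^\infty$, then $[\Sigma]:H_\Sigma^\infty$ is a prime $\Delta$-ideal and sits above some minimal prime over $\Lambda$. So locating a characteristic set of $P$ reduces to identifying the right coherent autoreduced set, which is precisely what Theorem \ref{thm:char set} is designed to do.

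The main obstacle---already handled inside Theorem \ref{thm:char set} itself---is controlling the descent through autoreduced sets of decreasing rank while simultaneously bounding the complexity of the partial remainders and coherence certificates introduced along the way. These are supplied by Theorem \ref{thm:dickson_iteration_bound} (the quantitative version of ``no infinite descending chain of autoreduced sets''), Proposition \ref{coherent bound} (complexity of coherence data), and Lemma \ref{thm:Kolchin_3.8.5} (the effective Rosenfeld lemma). Once the theorem is in hand, the remaining work for the corollary is only to observe that the output $\Sigma_1,\dots,\Sigma_r$ exhausts \emph{every} minimal prime above $\Lambda$, not merely some distinguished one chosen by the construction; pulling out the relevant candidate $\Sigma_i$ then exhibits a characteristic set of $P$ inside $K\{X_{[n]}\}_{\leq \mathfrak{i}^{\mathrm{char}}(b)}$, as required.
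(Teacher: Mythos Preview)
Your proposal misreads the statement of Theorem~\ref{thm:char set}. That theorem does \emph{not} produce, from $\Lambda$ alone, a finite list $\Sigma_1,\ldots,\Sigma_r$ of coherent autoreduced sets covering all minimal primes above $\Lambda$. Rather, it takes as input a specific proper $\Delta$-ideal $P\supseteq\Lambda$ satisfying a partial primality hypothesis and returns a single $\Sigma\subseteq P\cap K\{X_{[n]}\}_{\leq\mathfrak{i}^{\mathrm{char}}(b)}$ which is the characteristic set of some prime $\Delta$-ideal $\dsat{\Sigma}\supseteq\Lambda$, together with the crucial information that $H_\Sigma\notin P$. There is no list to ``exhaust'' and no need to argue that every minimal prime is captured by the construction.

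The paper's argument is accordingly much shorter and more direct than what you outline. Since the given $P$ is prime, it certainly satisfies the hypothesis of Theorem~\ref{thm:char set} (primality is stronger than the partial primality required there). Applying the theorem yields $\Sigma\subseteq P$ with $H_\Sigma\notin P$ and $\Lambda\subseteq\dsat{\Sigma}$. Now for any $f\in\dsat{\Sigma}$ one has $H_\Sigma^k f\in[\Sigma]\subseteq P$ for some $k$; since $P$ is prime and $H_\Sigma\notin P$, this forces $f\in P$. Hence $\dsat{\Sigma}\subseteq P$, and minimality of $P$ among prime $\Delta$-ideals containing $\Lambda$ gives $\dsat{\Sigma}=P$. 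Thus $\Sigma$ is a characteristic set of $P$ itself, lying in $K\{X_{[n]}\}_{\leq\mathfrak{i}^{\mathrm{char}}(b)}$. The key point you are missing is that the output $H_\Sigma\notin P$ is exactly what lets you push $\dsat{\Sigma}$ inside $P$ and then invoke minimality.
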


\noindent (In the differential setting, subscripts now indicate bounds on order as well as degree; see Definition \ref{def:diff degree}.) Moreover, Corollary \ref{thm:partial prime bound} and Lemma \ref{thm:ichar_bounds} give an explicit bound, not on primality of differential ideals (which is open and equivalent to the well-known Ritt problem \cite{golubitsky2009generalized}), but on the weaker Theorem 5.4 of \cite{HTKM} that bounds only one factor:

\begin{cor*}
Let  $\Lambda\subseteq K\{X_{[n]}\}_{\leq b}$ be given with $1\not\in[\Lambda]$. If either $f\in [\Lambda]$ or $g\in[\Lambda]$ for all $f,g\in K\{X_{[n]}\}$ with $fg\in[\Lambda]$ and $f\in K\{X_{[n]}\}_{\leq \mathfrak{i}^{\mathrm{char}}(b)}$, then $[\Lambda]$ is prime.
\end{cor*}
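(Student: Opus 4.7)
The plan is to argue the contrapositive: assuming $[\Lambda]$ is not prime, I would exhibit $f,g \in K\{X_{[n]}\}$ with $fg \in [\Lambda]$, neither in $[\Lambda]$, and $f$ of order and degree at most $\mathfrak{i}^{\mathrm{char}}(b)$, directly contradicting the hypothesis.

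First, I would invoke the preceding Corollary \ref{thm:min prime ideal}. Since $1 \notin [\Lambda]$, there exists a minimal prime $\Delta$-ideal $P$ containing $[\Lambda]$, and any such $P$ admits a characteristic set $\Sigma \subseteq K\{X_{[n]}\}_{\leq \mathfrak{i}^{\mathrm{char}}(b)}$. The product $H_\Sigma$ of the initials and separants of $\Sigma$ is then also of bounded complexity (by the design of $\mathfrak{i}^{\mathrm{char}}$) and, by the defining property of a characteristic set, $H_\Sigma \notin P$, so in particular $H_\Sigma \notin [\Lambda]$. Because $[\Lambda]$ is not prime while $P$ is, we have $[\Lambda] \subsetneq P$.

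The core of the argument then exploits the characteristic-set identity $P = \dsat{\Sigma}$. I would split into two cases. If $\Sigma \subseteq [\Lambda]$, then $[\Sigma] \subseteq [\Lambda]$ and hence $P \subseteq [\Lambda]\mathop{:}H_\Sigma^\infty$; choosing $p \in P \setminus [\Lambda]$ and letting $k \geq 1$ be minimal with $H_\Sigma^k p \in [\Lambda]$ produces the witness $f := H_\Sigma$, $g := H_\Sigma^{k-1} p$, both outside $[\Lambda]$ by construction, with $f$ bounded. If instead some $\sigma \in \Sigma$ lies outside $[\Lambda]$, then $\sigma \in P$, and when $P$ is the unique minimal prime over $[\Lambda]$ we have $\sigma \in \sqrt{[\Lambda]}$; choosing $N$ minimal with $\sigma^N \in [\Lambda]$ gives $f := \sigma$ (bounded) and $g := \sigma^{N-1}$.

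The main obstacle is the subcase where $[\Lambda]$ has several minimal primes and the chosen $\sigma \in \Sigma \setminus [\Lambda]$ does not lie in every one of them, hence is not in $\sqrt{[\Lambda]}$. I would resolve this by applying the corollary to each of the finitely many minimal primes to obtain characteristic sets $\Sigma_1,\ldots,\Sigma_s$ of bounded complexity, and then using prime avoidance to combine their elements into a product that lies in $\sqrt{[\Lambda]}$ while retaining one factor controlled by $\mathfrak{i}^{\mathrm{char}}(b)$; the least-power trick then delivers the required bounded zero-divisor. This is the effective analogue of the compactness step in the nonstandard proof of Theorem~5.4 of \cite{HTKM}, and tracking complexity through the prime-avoidance combinatorics is the delicate point where the precise form of $\mathfrak{i}^{\mathrm{char}}$ enters.
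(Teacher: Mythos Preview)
Your detour through Corollary~\ref{thm:min prime ideal} creates difficulties that the paper avoids entirely. The hypothesis of the corollary you are proving is \emph{exactly} the hypothesis of Theorem~\ref{thm:char set} with $P=[\Lambda]$, so the paper applies that theorem directly. This yields $\Sigma\subseteq P=[\Lambda]$ automatically, together with $H_\Sigma\notin[\Lambda]$ and $\Lambda\subseteq\dsat{\Sigma}$. The proof then shows $[\Lambda]=\dsat{\Sigma}$ (which is prime): for $g\in\dsat{\Sigma}$ one has $H_\Sigma^N g\in[\Sigma]\subseteq[\Lambda]$, and peeling off the individual factors $I_\lambda,S_\lambda\in K\{X_{[n]}\}_{\leq\mathfrak{i}^{\mathrm{char}}(b)}$ of $H_\Sigma$ using the hypothesis forces $g\in[\Lambda]$, since no factor lies in $[\Lambda]$. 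No case split, no minimal primes, no prime avoidance.

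Your route has two concrete gaps. First, $f:=H_\Sigma$ is a product of up to $2|\Sigma|$ elements of $K\{X_{[n]}\}_{\leq\mathfrak{i}^{\mathrm{char}}(b)}$, so its degree need not be bounded by $\mathfrak{i}^{\mathrm{char}}(b)$; nothing in the definition of $\mathfrak{i}^{\mathrm{char}}$ absorbs this product. You would have to take $f$ to be a single initial or separant and locate the first place the descending product enters $[\Lambda]$. Second, and more seriously, when $\Sigma\not\subseteq[\Lambda]$ and there are several minimal primes, your prime-avoidance sketch does not produce a bounded witness: even if each $\Sigma_i$ is bounded, the product $\sigma_1\cdots\sigma_s$ lies in $\sqrt{[\Lambda]}$, not $[\Lambda]$, and extracting a factorization $fg\in[\Lambda]$ with $f$ bounded and $f,g\notin[\Lambda]$ from a power of that product is not straightforward. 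All of this disappears once you invoke Theorem~\ref{thm:char set} with $P=[\Lambda]$ rather than with a minimal prime over it.
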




Section \ref{sec:ritt noetherian} is concerned with bounds on what we call ``Ritt-Noetherianity'', the Noetherianity of radical differential ideals. The finitary version \ref{thm:local ritt} of the Ritt-Raudenbush basis theorem is new:  

\begin{theorem*}
  Let $i_0, \Lambda, \Lambda_0\subseteq\Lambda_1\subseteq\cdots, \fn{D}, \fn{F}, d$ be given such that:
  \begin{itemize}
  \item $\Lambda\subseteq K\{X_{[n]}\}_{\leq d}$ is autoreduced, and
  \item $\Lambda_i\subseteq K\{X_{[n]}\}_{\leq\fn{D}(i)}$ for all $i$.
  \end{itemize}

Then there is an $i\in[i_0, \mathfrak{j}(i_0,\fn{D},\fn{F},d,\Lambda)]$ so that $\Lambda_{\fn{F}(i)}\subseteq\{\Lambda\cup\Lambda_i\}$.
\end{theorem*}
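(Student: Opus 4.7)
The plan is to prove this as a quantitative refinement of the Ritt-Raudenbush basis theorem, following the classical Rosenfeld/characteristic-set proof but replacing each well-foundedness step with the explicit bounds established earlier in the paper. The combinatorial heart is the effective bound on descending chains of autoreduced sets (Theorem \ref{thm:dickson_iteration_bound}); the function $\mathfrak{j}$ will essentially be a $\mathfrak{u}_{\fn{F}}$-style iteration of the coherence/Rosenfeld bounds up to the length given by that theorem.

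Fix $i_0$, $\Lambda$ autoreduced of degree/order $\leq d$, and the data $\fn{D},\fn{F}$. I would argue by induction on the rank of $\Lambda$ (with the empty/constant case handled directly, in which the conclusion reduces to finding a stabilization in an ascending chain of ordinary algebraic ideals, which was already done by the unwound Basis Theorem in Section \ref{sec:local noetherian}). For the inductive step, test candidate values $i \geq i_0$ one by one: at each step, if $\Lambda_{\fn{F}(i)} \subseteq \{\Lambda \cup \Lambda_i\}$ we stop; otherwise there is some witness $f \in \Lambda_{\fn{F}(i)}$ that is not in $\{\Lambda \cup \Lambda_i\}$. Using the effective Rosenfeld lemma (Lemma \ref{thm:Kolchin_3.8.5}) and the bound on coherent sets (Proposition \ref{coherent bound}), replace $f$ by its partial remainder $f^*$ modulo $\Lambda$: the polynomial $f^*$ still lies outside $\{\Lambda \cup \Lambda_i\}$, but $\Lambda \cup \{f^*\}$ (after further reduction of $\Lambda$ against $f^*$ if its leader undercuts an element of $\Lambda$) yields an autoreduced set $\Lambda'$ of strictly lower rank than $\Lambda$, with a controlled degree/order bound determined by $d$, $\fn{D}(\fn{F}(i))$, and the previously calculated Rosenfeld/characteristic-set bounds.

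Applying the inductive hypothesis to $\Lambda'$, with a suitably shifted starting index and a new degree bound function obtained by adding the extra generator, produces either the desired $i$ or a further drop in rank. Iterating this, each failure to terminate at some $i$ strictly decreases the rank of the current autoreduced set; by Theorem \ref{thm:dickson_iteration_bound} the number of such drops is bounded in terms of $n$ and the degree/order bound reached. Composing the (finitely many) levels of the Rosenfeld bound with this iteration count — itself subject to the $\fn{F}$-based lookahead at each stage, as in the calibration of $\mathfrak{u}_{\fn{F}}$ (Lemma \ref{thm:uf_bounds}) — yields an explicit expression matching the definition of $\mathfrak{j}_{n,m}$ in Notation \ref{not:j_function}.

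The main obstacle is bookkeeping: each iteration step inflates the degree/order bound (via Rosenfeld reduction and generator accumulation), and the resulting bound must stay inside the hypothesis of the descending-chain theorem for the \emph{next} iteration. This requires defining $\mathfrak{j}$ by primitive recursion along the rank of $\Lambda$, with the step function a composition of $\mathfrak{h}_{n,m}$, $\mathfrak{i}^{\mathrm{cohere}}_{n,m}$, and $\mathfrak{u}_{\fn{F}}$ evaluated on the updated degree bound; verifying that this choice is simultaneously large enough to make the inductive step go through and small enough to recognize as a well-defined, if non-primitive-recursive, function is where the real effort lies.
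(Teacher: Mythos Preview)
Your outline has a genuine gap at the reduction step. You assert that if $f\in\Lambda_{\fn{F}(i)}\setminus\{\Lambda\cup\Lambda_i\}$ and $f^*$ is its (partial) remainder with respect to $\Lambda$, then ``$f^*$ still lies outside $\{\Lambda\cup\Lambda_i\}$''. This does not follow. Pseudodivision gives only $H_\Lambda^k f - f^*\in[\Lambda]$, so if $f^*\in\{\Lambda\cup\Lambda_i\}$ (in particular if $f^*=0$) you can conclude $H_\Lambda f\in\{\Lambda\cup\Lambda_i\}$, but not $f\in\{\Lambda\cup\Lambda_i\}$: nothing prevents a factor $I_\lambda$ or $S_\lambda$ of $H_\Lambda$ from lying in the radical ideal. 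In that situation your construction of a lower-rank $\Lambda'$ collapses, and the descending-chain argument never gets started.

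This is precisely the obstruction the paper's proof is organized around. Rather than waiting for a failure and reducing the witness, the paper \emph{first} applies the inductive hypothesis (via the Knitting Lemma) to each of the finitely many sets $\Lambda\cup\{u\}$, $u\in\{I_\lambda,S_\lambda:\lambda\in\Lambda\}$ --- each of which has an autoreduced subset of strictly lower rank since $u$ is reduced with respect to $\Lambda$ --- to find a single $i$ with $\Lambda_{\fn{F}(i)}\subseteq\{\Lambda\cup\{u\}\cup\Lambda_i\}$ for \emph{all} such $u$. Then, in the bad case where every $\theta h$ reduces to $0$ modulo $\Lambda$, a separate radical-ideal calculation (the second Claim in the proof, showing $(\prod_u\theta_u u)(\theta'h)\in\{\Lambda\cup\Lambda_i\}$ by induction on $\sum\operatorname{ord}(\theta_u)$) lets one multiply the representations $h^m\in\{\Lambda\cup\{u\}\cup\Lambda_i\}$ together and extract $h\in\{\Lambda\cup\Lambda_i\}$. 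None of the Rosenfeld/coherence machinery (Lemma~\ref{thm:Kolchin_3.8.5}, Proposition~\ref{coherent bound}) or the $\mathfrak{u}_{\fn{F}}$-style iteration you invoke is actually used here; the essential ingredients are Proposition~\ref{autoreduced set procedure}, the Knitting Lemma, and the effective differential Nullstellensatz bound $\mathfrak{k}$ from Theorem~\ref{thm:eff diff null}.
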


We analyze the corresponding explicit bounds in \ref{thm:ritt bounds}. Using our finitary basis theorem, it is possible to unwind the proof of the differential Nullstellensatz found in \cite{HTKM} (Corollary 4.5/Theorem 6.3), but we do not include the details here; see the discussion at the beginning of Section \ref{sec:ritt noetherian}.

With the unwinding work behind us, in Section \ref{sec:bounds explicit} we show how to interpret the bounds produced by the functional interpretation in preceding sections. For most of our results we analyze the functions' growth rates and find their place in the Grzegorczyk hierarchy of fast-growing functions \cite{odifreddi1999classical}. 
For instance, the bound $\mathfrak{p}_n$ on primality in Section \ref{sec:explicit} lies in the second stage of the fast-growing hierarchy. To minimize disruption, we place in Appendix \ref{sec:ordinals} the results on ordinal arithmetic needed to justify the calculations in this section. 







\section{Explicit Bounds for Testing Primality}\label{sec:explicit}

In \cite{vdDS}, van den Dries and Schmidt prove a number of results about ultraproducts of polynomial rings $K[X]$, and derive the existence of uniform bounds independent of $K$.  Since these results are used extensively in \cite{HTKM}, in this section we obtain effective versions using the methods described in the previous section.

We have two purposes: to demonstrate, in the simpler algebraic setting, the methods we will later use in the differential setting, and to produce actual explicit bounds we will need later.  In some cases, effective proofs have been given by other means (often before \cite{vdDS}), and these bounds are often substantially more efficient than those given by unwinding the ultraproduct arguments.  When this happens, we will sometimes simply cite the known bounds; at other times, unwinding the ultraproduct proof illustrates a useful technique, so we will also describe the less efficient proof.

Throughout this section we are concerned with an arbitrary field $K$ and its polynomial extension $K[X_1,\ldots,X_n]$.

\begin{notation}
  We abbreviate $K[X_1,\ldots,X_n]$ by $K[X_{[n]}]$.  More generally, we abbreviate $K[X_{i},X_{i+1},\ldots,X_{j}]$ by $K[X_{[i,j]}]$.
\end{notation}

We prefer this to the more common abbreviations $K[\vec X]$ or even $K[X]$ because we wish to be explicit about the number of variables.

\subsection{Internal Flatness and Faithful Flatness}

\begin{definition}\label{def:bound poly}
  We write $K[X_{[n]}]_{\leq d}$ for the set of polynomials in $K[X_{[n]}]$ of total degree at most $d$.

  We say $K[X_{[n]}]$ is \emph{internally flat bounded by ${\bf D}$} if for every $b$, whenever $f_1,\ldots,f_k\in K[X_{[n]}]_{\leq b}$ are coefficients of a homogeneous linear equation $\sum_i f_i y_i=0$ and $g_1,\ldots,g_k\in K[X_{[n]}]$ is a solution, there exist $h_{ij}\in K[X_{[n]}]_{\leq {\bf D}(b)}$ and $c_j\in K[X_{[n]}]$ so that $\sum_i f_ih_{ij}=0$ for each $j$ and $\sum_j c_jh_{ij}=g_i$ for each $i$.
\end{definition}
Internal flatness states that every solution to $\sum_i f_iy_i=0$ is a linear combination of solutions of bounded degree.  The name ``internal flatness'' refers to the fact when $K=\prod K_i$ is an ultraproduct, $K[X_{[n]}]_{int}=\prod K_i[X_{[n]}]$ is a flat extension of $K[X_{[n]}]$ if and only if there is some $\fn{D}$ so that most $K_i[X_{[n]}]$ are internally flat bounded by $\fn{D}$.

\begin{remark}
Although we will not need this notion, we can define internal flatness for any graded ring $R=\oplus_i R_i$ with $R_{\leq i}=\oplus_{|j|\leq i}R_j$: $R$ is internally flat bounded by ${\bf D}, {\bf S}$ if for every $k, b$, whenever $f_1,\ldots,f_k\in R_{\leq b}$ are coefficients of a homogeneous linear equation $\sum_i f_i y_i=0$ and $g_1,\ldots,g_k\in R$ are a solution, there exist $h_{ij}\in R_{\leq {\bf D}(k,b)}$ with $1\leq j\leq {\bf S}(k,b)$ and $c_j\in R$ so that $\sum_if_ih_{ij}=0$ for each $j$ and $\sum_j c_jh_{ij}=g_i$ for each $i$.

The bounds $k, {\bf S}$ are unnecessary for polynomial rings because $K[X_{[n]}]_{\leq d}$ is finite dimensional with dimension bounded in $n, d$.
\end{remark}

\begin{notation}\label{note:dn}
  We write $\mathfrak{d}_n(b)=(2b)^{2^n}$.
\end{notation}

\begin{theorem}[\!\!\cite{MR0349648,MR2051617}]\label{thm:flat}
  $K[X_{[n]}]$ is internally flat bounded by $\mathfrak{d}_n$.

  More generally, given a system of $m$ homogeneous equations with coefficients in $K[X_{[n]}]_{\leq b}$, the space of solutions is generated by solutions in $K[X_{[n]}]_{\leq \mathfrak{d}_n(mb)}$.
\end{theorem}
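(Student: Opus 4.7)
The plan is to proceed by induction on the number of variables $n$, following Hermann's classical approach: reduce to a system in one fewer variable by placing one coefficient into Weierstrass form and then performing polynomial division in the top variable.

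The base case $n=0$ is ordinary linear algebra over $K$: any system of homogeneous linear equations with $m$ equations and $k$ unknowns has a solution space that is a finite-dimensional $K$-vector space, generated by vectors whose entries have degree $0 \leq \mathfrak{d}_0(mb) = 2mb$. For the inductive step, assume the bound $\mathfrak{d}_{n-1}$ works for $n-1$ variables. Starting from a system $\sum_i f_{ji} y_i = 0$ with $f_{ji} \in K[X_{[n]}]_{\leq b}$ (for $1 \leq j \leq m$), first perform a generic linear change of coordinates on $X_1,\ldots,X_n$ so that one nonzero coefficient, say $f_{11}$, becomes monic in $X_n$ when viewed in $K[X_{[n-1]}][X_n]$; such changes preserve total degree. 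If all $f_{ji}$ vanish the statement is trivial.

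Next, write each candidate solution coordinate $y_i$ (for $i \geq 2$) by Euclidean division in $X_n$: $y_i = q_i f_{11} + r_i$ with $\deg_{X_n}(r_i) < \deg_{X_n}(f_{11}) \leq b$. Substituting into the first equation and setting $y_1' = y_1 + \sum_{i \geq 2} q_i f_{1i}$, we reduce to searching for solutions $(y_1', r_2, \ldots, r_k)$ of the first equation in which $r_2,\ldots,r_k$ have $X_n$-degree below $b$; a symmetric manipulation for the remaining equations then bounds the $X_n$-degree of $y_1'$ by roughly $b$ as well. Expanding each unknown as a polynomial in $X_n$ of bounded degree with coefficients in $K[X_{[n-1]}]$ converts the original system into a new homogeneous system over $K[X_{[n-1]}]$: the number of equations is multiplied by roughly $b$, the number of unknowns by roughly $b$, and the coefficient degrees are still bounded by $b$. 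The inductive hypothesis produces generators of the solution module with entries of degree at most $\mathfrak{d}_{n-1}(\text{new }mb) \leq \mathfrak{d}_{n-1}(m b^2)$, and these lift back to solutions of the original system by remultiplying by powers of $X_n$ and inverting the substitution $y_1 = y_1' - \sum q_i f_{1i}$.

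Finally, collecting the contributions: every original solution decomposes as a combination of the lifted solutions together with the evident ``Koszul-type'' syzygies $(f_{j'i}\cdot e_{j'} - f_{ji}\cdot e_j$)-style cancellations used implicitly in the reduction, all of degree bounded by $\mathfrak{d}_{n-1}(mb^2) + O(b) \leq (2mb)^{2^n}$, which is $\mathfrak{d}_n(mb)$. The main obstacle is the bookkeeping in the reduction step: one must verify that the division, the substitution $y_1 \mapsto y_1'$, and the lift back add only a constant factor to the relevant degrees, so that iterating $n$ times produces exactly the doubly exponential bound rather than something worse. A secondary subtlety is ensuring the generic linear change of coordinates can be chosen over $K$ (which holds when $K$ is infinite; for finite $K$ one passes to a finite extension or uses a nongeneric substitution such as $X_i \mapsto X_i + X_n^{N_i}$ for large $N_i$ that also preserves the degree estimates after rescaling).
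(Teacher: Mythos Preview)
The paper does not give its own proof of this theorem: it is stated with citations to the literature (Hermann/Seidenberg and a modern treatment) and no proof environment follows. So there is no ``paper's proof'' to compare against here; the authors are simply importing a known bound.

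Your sketch is the classical Hermann argument that underlies those citations, and the overall architecture is right: induct on $n$, make one coefficient monic in $X_n$ by a linear change of variables, use division in $X_n$ to bound the $X_n$-degree of the unknowns, then expand in powers of $X_n$ to get a homogeneous system over $K[X_{[n-1]}]$ with about $mb$ equations and coefficients still of degree $\leq b$. The arithmetic at the end checks: $\mathfrak{d}_{n-1}(mb^2)=(2mb^2)^{2^{n-1}}\leq (4m^2b^2)^{2^{n-1}}=(2mb)^{2^n}=\mathfrak{d}_n(mb)$.

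Two places are genuinely loose, though not fatal. First, the phrase ``a symmetric manipulation for the remaining equations then bounds the $X_n$-degree of $y_1'$'' is not how the bound on $y_1'$ arises: once the $r_i$ have $X_n$-degree $<b$, the first equation $f_{11}y_1'=-\sum_{i\geq 2} f_{1i}r_i$ forces $\deg_{X_n}(y_1')<2b-\deg_{X_n}(f_{11})$ directly, because $f_{11}$ is monic in $X_n$. Second, your ``Koszul-type syzygies'' remark is misplaced: the point is that the substitution $y_i=q_if_{11}+r_i$ decomposes any solution into a piece coming from the reduced system plus a piece in the span of the obvious solutions $(f_{1i},0,\ldots,-f_{11},\ldots,0)$ to the first equation alone, and one must check those obvious solutions are also solutions of (or can be corrected to solutions of) the remaining equations. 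In Hermann's setup one handles the equations one at a time rather than simultaneously, which avoids this issue; if you insist on treating all $m$ equations at once you need to say a word more here.
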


We also expect an analog of faithful flatness.  It is standard that a flat extension is faithfully flat exactly when the extension does not create solutions to any unsolvable inhomogeneous linear equations with coefficients from the base ring.  Then ``internal faithful flatness'' just says that if an inhomogeneous equation is solvable, the size of the solution should be bounded in the degrees of the cofficients.  This is the same as giving bounds on ideal membership.
\begin{lemma}[\!\!\cite{MR1512302}]\label{thm:faithful}
For any $n$ and any $f_i\in K[X_{[n]}]_{\leq b}$, if $\sum_{i\leq k} f_ig_i=h$ where $h$ has degree $b$ then there are $g'_1,\ldots,g'_k\in K[X_{[n]}]_{\leq \mathfrak{d}_n(b)}$ such that $\sum_{i\leq k}f_ig'_i=h$.
\end{lemma}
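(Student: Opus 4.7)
The lemma is Hermann's classical effective bound on ideal membership (\cite{MR1512302}): if $h \in (f_1,\ldots,f_k)$, a witness exists of degree controlled by $n$ and $b$. My plan is to derive it from the syzygy-style Theorem \ref{thm:flat} via the standard homogenization trick, and then comment on where the real difficulty actually lies.

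First I would introduce an auxiliary indeterminate $y_0$ and rewrite the hypothesis as the statement that $(g_1,\ldots,g_k,1)$ is a solution of the single homogeneous equation
\[\sum_{i \leq k} f_i y_i - h y_0 = 0,\]
whose $k+1$ coefficients $f_1,\ldots,f_k,-h$ all still lie in $K[X_{[n]}]_{\leq b}$. Theorem \ref{thm:flat} (with $m=1$) then supplies generators $(h_{1j},\ldots,h_{kj},h_{0j})_{j}$ of the module of solutions, each coordinate of degree at most $\mathfrak{d}_n(b)$, together with coefficients $c_j \in K[X_{[n]}]$ such that
\[(g_1,\ldots,g_k,1) = \sum_j c_j (h_{1j},\ldots,h_{kj},h_{0j}).\]
Reading off the $y_0$-coordinate gives the Bezout identity $\sum_j c_j h_{0j} = 1$. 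If we can produce \emph{bounded-degree} coefficients $c'_j$ with the same property, then $g'_i := \sum_j c'_j h_{ij}$ yields $\sum_i f_i g'_i = \sum_j c'_j (h \cdot h_{0j}) = h$, with $\deg g'_i$ controlled by $\mathfrak{d}_n(b)$ plus the degrees of the $c'_j$.

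The main obstacle is precisely that last step: realizing the Bezout identity $1 = \sum_j c'_j h_{0j}$ with bounded-degree $c'_j$ is itself an instance of the ideal-membership bound we are trying to establish, so the naive reduction is circular. The way out, which is the substance of Hermann's original argument, is an induction on the number of variables $n$: one eliminates a single variable at a time by reducing to an equivalent problem in $K[X_{[n-1]}]$ with a controlled polynomial blow-up in degree, then reapplies Theorem \ref{thm:flat} in the smaller ring. Iterating $n$ times is what produces the doubly-exponential bound $\mathfrak{d}_n(b) = (2b)^{2^n}$ in the conclusion, with the trivial scalar base case $n=0$ handled by linear algebra over $K$. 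Since the paper invokes this lemma as a black box cited to Hermann, I would not reproduce the full elimination in place, and would simply flag this as the step that carries the genuine combinatorial weight.
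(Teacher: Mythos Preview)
The paper gives no proof of this lemma at all: it is stated with a citation to Hermann and immediately followed by the next subsection. Your final assessment---that the paper treats it as a black box---is exactly right, and your identification of Hermann's induction on $n$ as the genuine argument is correct.

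Your exploratory reduction via Theorem~\ref{thm:flat} is honestly flagged as circular, which is good. One small sharpening: even if you could somehow bound the $c'_j$, the resulting $g'_i = \sum_j c'_j h_{ij}$ would have degree $\deg c'_j + \mathfrak{d}_n(b)$, not $\mathfrak{d}_n(b)$, so the approach would overshoot the stated bound regardless. This reinforces your conclusion that the homogenization trick does not buy you the lemma from Theorem~\ref{thm:flat} alone, and that the doubly-exponential bound really comes from the variable-by-variable elimination in Hermann's original proof.
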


\subsection{Bounds on Primality}

Working in the ultraproduct setting with $K=\prod_{\mathcal{U}}K_i$ and $K[X_{[n]}]_{int}=\prod_{\mathcal{U}}(K_i[X_{[n]}])$, van den Dries and Schmidt \cite{vdDS} show
\begin{theorem}
  If $I$ is an ideal in $K[X_{[n]}]$ then $I$ is prime iff $IK[X_{[n]}]_{int}$ is prime in $K[X_{[n]}]_{int}$.
\end{theorem}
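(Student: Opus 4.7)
The plan is to reduce the theorem to the faithful flatness of the extension $K[X_{[n]}] \hookrightarrow K[X_{[n]}]_{int}$. Theorem~\ref{thm:flat} gives flatness directly, since internal flatness bounded by $\mathfrak{d}_n$ says exactly that every syzygy with $K[X_{[n]}]$-coefficients is a $K[X_{[n]}]_{int}$-linear combination of bounded-degree syzygies coming from $K[X_{[n]}]$. Faithfulness then follows from Lemma~\ref{thm:faithful}: if $1 \in IK[X_{[n]}]_{int}$ for some proper $I \subsetneq K[X_{[n]}]$, the uniform bound on ideal membership would force a representation of $1$ with coefficients in $K[X_{[n]}]$, contradicting properness of $I$.

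The easier direction, $IK[X_{[n]}]_{int}$ prime $\Rightarrow$ $I$ prime, is then automatic: under faithful flatness one has $IK[X_{[n]}]_{int}\cap K[X_{[n]}] = I$, and the contraction of a prime ideal under any ring map is prime.

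For the converse I would pass to the internal representation. Choose bounded-degree generators $f_1, \ldots, f_m$ of $I$, write each $f_j = [f_{j,i}]_\mathcal{U}$ with $f_{j,i} \in K_i[X_{[n]}]$ of uniformly bounded degree, and set $I_i = (f_{1,i},\ldots,f_{m,i}) \subseteq K_i[X_{[n]}]$. Lemma~\ref{thm:faithful} identifies membership $[g_i] \in IK[X_{[n]}]_{int}$ with the Los event $\{i : g_i \in I_i\} \in \mathcal{U}$, so $K[X_{[n]}]_{int}/IK[X_{[n]}]_{int} \cong \prod_{\mathcal{U}} K_i[X_{[n]}]/I_i$; by Los' theorem this quotient is a domain iff $\{i : I_i \text{ is prime}\} \in \mathcal{U}$.

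The main obstacle is to verify that primality of $I$ in $K[X_{[n]}]$ forces this $\mathcal{U}$-event. Non-primality of $I_i$ on a large set of indices produces witnesses $g_i, h_i$ with $g_i h_i \in I_i$ and $g_i, h_i \notin I_i$; taking ultraproducts yields $g, h \in K[X_{[n]}]_{int}$ with $gh \in IK[X_{[n]}]_{int}$ but $g, h \notin IK[X_{[n]}]_{int}$. This does not immediately contradict primality of $I$ in $K[X_{[n]}]$ because $g$ and $h$ may a priori be non-standard elements of $K[X_{[n]}]_{int}$ rather than standard polynomials. To descend the contradiction to $K[X_{[n]}]$, I would Noether-normalize $K[X_{[n]}]/I$ over $K$ to reduce to a finite integral extension in which the factorization witnesses have degrees bounded uniformly in $n$ and the degree of the generators of $I$, so that the witnesses can be chosen to lie in $K[X_{[n]}]$ and directly contradict primality of $I$.
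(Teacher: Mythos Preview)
The paper does not itself prove this theorem; it cites it from \cite{vdDS} and instead proves the equivalent effective statement (Theorem~\ref{thm:vdds_2.5}): there is a bound $\mathfrak{p}_n(b)$ so that any non-prime ideal generated in degree $\leq b$ has witnesses $f,g$ of degree $\leq \mathfrak{p}_n(b)$. So the relevant comparison is between your sketch and the argument of Lemmas~\ref{thm:vdds_2.3}--\ref{thm:prime_ext_alg_reg} and Theorem~\ref{thm:vdds_2.5}, which is the unwound form of the \cite{vdDS} proof.

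Your reduction is sound up to the last step. Faithful flatness handles the easy direction, and your identification $K[X_{[n]}]_{int}/IK[X_{[n]}]_{int}\cong\prod_{\mathcal U}K_i[X_{[n]}]/I_i$ together with \L o\'s correctly isolates the hard direction as: primality of $I=(\Lambda)$ over $K$ must force primality of $(\Lambda_i)$ over $K_i$ for $\mathcal U$-many $i$. You also correctly note that this is exactly the assertion that witnesses to non-primality have degrees bounded uniformly in $n$ and $\deg\Lambda$, independent of the ground field.

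The gap is your final sentence. ``Noether-normalize $K[X_{[n]}]/I$'' does not by itself yield such a bound: writing $K[X_{[n]}]/I$ as a finite module over a polynomial subring $K[y_1,\ldots,y_d]$ gives no control over the degrees of zero-divisors in the extension $K[X_{[n]}]_{int}\otimes_{K[X_{[n]}]}K[X_{[n]}]/I$, and in general tensoring a domain with a faithfully flat extension need not produce a domain (think $\mathbb C\otimes_{\mathbb R}\mathbb C$). All of the content of the theorem lives in this step, and it requires a genuine induction on $n$. The \cite{vdDS} argument (and its unwinding here) splits into two cases: either every variable $X_i$ satisfies a low-degree univariate relation modulo $I$, in which case the quotient is an explicit finite-dimensional $K$-algebra and everything is bounded; or some $X_i$ does not, in which case one shows (Lemmas~\ref{thm:vdds_2.3}--\ref{thm:irreducibles}) that all of $K[X_i]\setminus\{0\}$ is regular modulo $I$, localizes to $K(X_i)[X_{[n]\setminus\{i\}}]$, and invokes the inductive hypothesis in one fewer variable, with Lemma~\ref{thm:prime_ext_alg_reg} controlling the passage between the two gradings. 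Your Noether-normalization gesture is in the spirit of this case split, but the actual work---especially the regularity of $K[X_i]$ and the change-of-grading lemma---is not something Noether normalization hands you for free.
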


The standard analog of this is
\begin{theorem}
There is a function $\mathfrak{p}_n(b)$ so that for any $\Lambda\subseteq K[X_{[n]}]_{\leq b}$ with $(\Lambda)$ not prime, there are $f, g\in K[X_{[n]}]_{\leq \mathfrak{p}_n(b)}$ so that $fg\in(\Lambda)$ but $f,g\not\in(\Lambda)$.
\end{theorem}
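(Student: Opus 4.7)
The plan is to proceed by induction on $n$, defining the function $\mathfrak{p}_n$ during the very induction that establishes the theorem. For the base case $n=0$, $K[X_{[0]}]=K$ is a field: the only proper ideal is $(0)$, which is prime, while the improper ideal contains $1$, and witnesses of non-primality can be taken to be constants. So it suffices to set $\mathfrak{p}_0(b)=0$.

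For the inductive step, assume $\mathfrak{p}_{n-1}$ has been defined and let $\Lambda\subseteq K[X_{[n]}]_{\leq b}$ with $(\Lambda)$ not prime. I would first apply a generic linear change of coordinates, which preserves the total-degree bound $b$, so that $(\Lambda)$ contains a polynomial $h$ that is monic in $X_n$ of degree at most $b$; this is possible unless $(\Lambda)=(0)$, the trivial case. Given any hypothetical witnesses $f,g$ with $fg\in(\Lambda)$ but $f,g\notin(\Lambda)$, polynomial division in $X_n$ by $h$ yields reductions $\tilde f,\tilde g$ of $X_n$-degree strictly less than $b$, still satisfying $\tilde f\tilde g\in(\Lambda)$ and $\tilde f,\tilde g\notin(\Lambda)$. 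Thus witnesses can always be chosen with $X_n$-degree at most $b$; it remains to bound their degree in the variables $X_{[n-1]}$.

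To do this, I would view $\tilde f,\tilde g$ as polynomials in $X_n$ with coefficients in $K[X_{[n-1]}]$ and transfer non-primality of $(\Lambda)$ in $K[X_{[n]}]$ to non-primality of a derived ideal in $K[X_{[n-1]}]$. The natural device is to work over $K(X_{[n-1]})$, where $K(X_{[n-1]})[X_n]$ is a PID, so non-primality factors through an explicit decomposition in one variable; one then clears denominators using Lemma \ref{thm:faithful} to control the degree of the cofactors coming from the ideal membership $\tilde f\tilde g\in(\Lambda)$. The syzygy bound $\mathfrak{d}_n$ of Theorem \ref{thm:flat} controls the degree blow-up in this clearing step. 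Applying the inductive hypothesis with $\mathfrak{p}_{n-1}$ to the resulting $(n-1)$-variable problem furnishes $X_{[n-1]}$-degree-bounded witnesses, yielding a recursion of the shape
\[
\mathfrak{p}_n(b) \;=\; \Phi\bigl(\mathfrak{p}_{n-1}(\mathfrak{d}_n(b)),\,b\bigr),
\]
where $\Phi$ is a low-complexity bookkeeping function built from polynomials in $b$ and the degrees arising from faithful flatness. Iterating the recursion $n$ times naturally produces the non-primitive-recursive growth referenced in the paper's discussion of $\mathfrak{p}_n$.

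The main obstacle will be the transfer step, since the elimination ideal $(\Lambda)\cap K[X_{[n-1]}]$ is not in general prime even when $(\Lambda)$ is, nor is its non-primality an immediate consequence of non-primality of $(\Lambda)$. One must work with the content/primitive-part decomposition in $K[X_{[n-1]}][X_n]$, or equivalently with resultants, and then ensure that all passages through $K(X_{[n-1]})$ can be effectively pulled back to $K[X_{[n-1]}]$ with controlled denominators; this is where Theorem \ref{thm:flat} and Lemma \ref{thm:faithful} do the real work. A secondary subtlety is making the generic linear change of coordinates effective uniformly in $K$: only finitely many coordinate changes of bounded degree need be considered (by dimension counting on the space of linear forms modulo the leading form of some element of $\Lambda$), so this does not affect the recursion, but it must be stated carefully so that the bound $\mathfrak{p}_n$ is genuinely independent of the field.
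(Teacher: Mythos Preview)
Your proposal diverges substantially from the paper's argument, and the central step contains a real gap.

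The core difficulty is your ``transfer'' step. You pass to $K(X_{[n-1]})[X_n]$, a PID in one variable, and then speak of applying the inductive hypothesis $\mathfrak{p}_{n-1}$ to ``the resulting $(n-1)$-variable problem.'' But no such problem has been identified. Non-primality of $(\Lambda)$ in $K[X_{[n]}]$ does not in any direct way produce a non-prime ideal in $K[X_{[n-1]}]$ to which the inductive hypothesis applies; the elimination ideal, the content ideal, and the ideal of leading coefficients can all be prime even when $(\Lambda)$ is not. Moreover, the extension of $(\Lambda)$ to $K(X_{[n-1]})[X_n]$ may well be prime (your monic change of coordinates only prevents it from becoming the unit ideal), so the PID argument can fail outright---consider $\Lambda=\{(X_1-X_2)X_3 + X_1X_2,\ X_3^2\}$, or more simply any $\Lambda$ whose non-primality lives entirely in the $X_{[n-1]}$-direction after your coordinate change. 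You flag this as ``the main obstacle,'' but the tools you list (content, resultants, clearing denominators via Lemma \ref{thm:faithful}) do not by themselves bridge it: none of them converts non-primality in $n$ variables into non-primality of a bounded-degree ideal in $n-1$ variables.

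The paper's route is in the opposite direction: it localizes at a \emph{single} variable, passing to $K(X_1)[X_{[2,n]}]$, which is genuinely a polynomial ring in $n-1$ variables over a field, and applies the inductive hypothesis there. The work is in Lemma \ref{thm:prime_ext_alg_reg}, which shows that bounded primality in the full $X_{[1,n]}$-degree implies bounded primality in the $X_{[2,n]}$-degree; this is proved via a finitary ``local basis'' argument replacing the ultraproduct's transcendence basis. The case split in Theorem \ref{thm:vdds_2.5}---either every $K[X_i]$ meets $(\Lambda)$ in low degree (so the quotient is small), or some $X_i$ is a nonzerodivisor mod $(\Lambda)$ up to the needed bound (so one can localize)---replaces your generic-coordinate-change step and avoids the uniformity-in-$K$ issue you raise.

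One minor correction: $\mathfrak{p}_n$ is \emph{not} non-primitive-recursive. Lemma \ref{thm:pn_bounds} places it at level $\omega^2$ of the slow hierarchy (roughly iterated exponentials), well within primitive recursion. The non-primitive-recursive bounds in the paper arise from Noetherianity (Section \ref{sec:local noetherian}), not from the primality test.
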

In \cite{schmidt1987bounds} Schmidt-G\"ottsch shows that $\mathfrak{p}_n(b)$ has the form $b^{\beta(n)}$ for some $\beta$.  Here we extract bounds with a worse dependence on $b$ directly from the simpler proof given in \cite{vdDS}.

\begin{notation}\label{note:enb}
  $\mathfrak{e}(n,b)=2^{(b+\mathfrak{d}_{n-1}(b))^{n-1}+1}b+b+\mathfrak{d}_{n-1}(b)$.
\end{notation}

\begin{lemma}
  Suppose $\phi:K(X_1)\rightarrow L$ is a field extension and $\lambda_1,\ldots,\lambda_k\in K[X_{[n]}]_{\leq b}$.  Writing $\phi$ for the map $\phi:K(X_1)[X_{[2,n]}]\rightarrow L[X_{[2,n]}]$ as well, any solution in $L$ to $\sum_i \phi(\lambda_i)y_i=0$ is a linear combination of images under $\phi$ of solutions from $K[X_{[n]}]_{\leq \mathfrak{e}(n,b)}$.
\end{lemma}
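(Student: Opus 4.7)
The plan is to carry out the argument in three stages, reducing to a linear-algebra problem over the PID $K[X_1]$. First, since $\phi\colon K(X_1)\to L$ is a field extension, $L$ is flat over $K(X_1)$, and so $L[X_{[2,n]}] \cong L\otimes_{K(X_1)}K(X_1)[X_{[2,n]}]$ is flat over $K(X_1)[X_{[2,n]}]$.  This flatness lets me rewrite any solution $(y_1,\dots,y_k)\in L[X_{[2,n]}]^k$ of $\sum_i \phi(\lambda_i)y_i=0$ as an $L[X_{[2,n]}]$-linear combination of $\phi$-images of syzygies $(h_1,\dots,h_k)\in K(X_1)[X_{[2,n]}]^k$ satisfying $\sum_i \lambda_i h_i=0$.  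So the task reduces to producing a generating set of such $K(X_1)[X_{[2,n]}]$-syzygies whose representatives lie in $K[X_{[n]}]^k$ with bounded total degree.

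For the second stage I view $K(X_1)[X_{[2,n]}]$ as the polynomial ring in the $n-1$ variables $X_2,\dots,X_n$ over the field $K(X_1)$, in which each $\lambda_i$ has $X_{[2,n]}$-degree at most $b$.  Applying Theorem \ref{thm:flat} then provides a $K(X_1)[X_{[2,n]}]$-generating set of the syzygy module of $(\lambda_1,\dots,\lambda_k)$ consisting of syzygies whose $X_{[2,n]}$-degree is at most $\mathfrak{d}_{n-1}(b)$.  This is the point of splitting off $X_1$: the internal-flatness bound now involves only $n-1$ variables, so the leading $\mathfrak{d}$-factor in $\mathfrak{e}(n,b)$ is $\mathfrak{d}_{n-1}(b)$ rather than $\mathfrak{d}_n(b)$.

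The main obstacle is the final stage: lifting these $K(X_1)$-coefficient syzygies back to $K[X_{[n]}]^k$ while bounding the $X_1$-degree.  The set of such syzygies with $K[X_1]$-coefficients is the kernel of a $K[X_1]$-linear map $T$ whose codomain is the free $K[X_1]$-module $K[X_1][X_{[2,n]}]_{\leq b+\mathfrak{d}_{n-1}(b)}$ of rank at most $M=(b+\mathfrak{d}_{n-1}(b))^{n-1}$, represented by a matrix whose entries lie in $K[X_1]_{\leq b}$.  Because $K[X_1]$ is a PID, $\ker T$ is free, and a routine elimination argument over $K[X_1]$---in which each of up to $M+1$ reduction rounds at worst doubles the $X_1$-degrees of the working rows---produces a $K[X_1]$-generating set of $\ker T$ with $X_1$-degree at most $2^{M+1}b$.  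Pulling this generating set back through $\phi$ (and combining the $X_1$-degree bound $2^{M+1}b$ with the $X_{[2,n]}$-degree bound $\mathfrak{d}_{n-1}(b)$, absorbing an additional $b$ for bookkeeping) places every generator in $K[X_{[n]}]_{\leq\mathfrak{e}(n,b)}^k$.  The exponential factor $2^{(b+\mathfrak{d}_{n-1}(b))^{n-1}+1}$ appearing in $\mathfrak{e}(n,b)$ is the source of the real work: it reflects the genuine degree blowup of iterated univariate elimination, and matching the constants to the precise definition of $\mathfrak{e}(n,b)$ requires careful arithmetic bookkeeping but no conceptually new ideas.
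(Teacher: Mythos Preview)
Your approach is essentially the same as the paper's, organized slightly differently. The paper applies internal flatness directly in $L[X_{[2,n]}]$ (obtaining solutions of bounded $X_{[2,n]}$-degree over $L$), then expands in $X_{[2,n]}$-monomials to get a system of at most $(b+\mathfrak{d}_{n-1}(b))^{n-1}$ linear equations over $L$ with coefficients in $\phi(K[X_1])$, and eliminates one equation at a time using that $L$ is a field: each substitution at worst doubles the $X_1$-degree, yielding the factor $2^{(b+\mathfrak{d}_{n-1}(b))^{n-1}+1}b$. You instead first invoke abstract flatness of $L$ over the field $K(X_1)$ to pass to $K(X_1)[X_{[2,n]}]$, then internal flatness there, then run the elimination with $K[X_1]$-entries; the resulting degree count is identical.

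One small technical point: the elimination you sketch (degrees doubling each round) is really Gaussian elimination over the \emph{field} $K(X_1)$ carried out with $K[X_1]$-entries, and what it produces is a $K(X_1)$-spanning set of syzygies with bounded $X_1$-degree, not in general a $K[X_1]$-generating set of $\ker T$. For instance, for the single equation $X_1x_1+X_1x_2=0$ the obvious syzygy $(X_1,-X_1)$ does not generate $(1,-1)$ over $K[X_1]$. This is harmless here, since a $K(X_1)$-spanning set is exactly what your Stages~1--2 require, but the claim should be stated with that weaker conclusion.
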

\begin{proof}
  By internal flatness, solutions to $\sum_i\phi(\lambda_i)y_i$ are linear combinations of solutions from $L[X_{[2,n]}]_{\leq \mathfrak{d}_{n-1}(b)}$.  Let $M_1,\ldots$ list the $\leq (b+\mathfrak{d}_{n-1}(b))^{n-1}$ monomials of degree $\leq b+\mathfrak{d}_{n-1}(b)$ in $X_{[2,n]}$; then we may rewrite $\sum_i\phi(\lambda_i)y_i=0$ as 
\[\sum_i\phi(\sum_j f_{i,j}M_j)(\sum_{j'}y_{i,j'}M_{j'})=0\]
where $f_{i,j}\in K[X_1]_{\leq b}$.  So we may expand this into a system of $\leq (b+\mathfrak{d}_{n-1}(b))^{n-1}$ equations of the form
\[\sum_i\sum_j \phi(f_{i,j})y_{i,j_0-j}=\sum_k \phi(g_{k,j})x_k=0.\]

The solutions to a single equation $\sum_i \phi(g_{k,j_0})x_k=0$ are generated by the solutions of the form $(\phi(g_{k,j_0}),\ldots,-\phi(g_{1,j_0}),\ldots)$ (because $L$ is a field); by substituting $x_1=\sum_k z_k\phi(g_{k,j_0})$ and $x_k=-z_k\phi(g_{1,j_0})$, we obtain a system of equations with one fewer equation and coefficients in $\phi(K[X_1]_{\leq 2b})$.

Repeating this, we eventually reduce to a single equation whose solutions are generated by the image of solutions from $K[X_{[1]}]_{\leq 2^{(b+\mathfrak{d}_{n-1}(b))^{n-1}}b}$.  Undoing the sequence of substitutions, we see that the original $x_i$ are generated by the images of solutions from $K[X_{[1]}]_{\leq 2^{(b+\mathfrak{d}_{n-1}(b))^{n-1}+1}b}$, and so the $y_i$ are generated by the images of solutions from $K[X_{[2,n]}]_{\leq 2^{(b+\mathfrak{d}_{n-1}(b))^{n-1}+1}b+b+\mathfrak{d}_{n-1}(b)}$.
\end{proof}

\begin{lemma}[Based on {\cite{vdDS}}, Lemma 2.3]\label{thm:vdds_2.3}
For any $n,b$ and $\Lambda\subseteq K[X_{[n]}]_{\leq b}$, if $f\in K[X_1]$ has degree $>\mathfrak{e}(n,b)$ and is irreducible then for any $g\in K[X_{[n]}]$ such that $fg\in(\Lambda)$, also $g\in(\Lambda)$.
\end{lemma}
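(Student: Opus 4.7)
The plan is to reduce modulo $f$ and then lift.  Since $f\in K[X_1]$ is irreducible, $L:=K[X_1]/(f)$ is a field, and the canonical ring map $\pi:K[X_{[n]}]\to L[X_{[2,n]}]$ (acting as $X_1\mapsto X_1+(f)$ and fixing $X_2,\ldots,X_n$) has kernel $(f)$.  Writing $fg=\sum_i\lambda_ih_i$ in $K[X_{[n]}]$ and applying $\pi$ annihilates the left-hand side, so $(\pi(h_1),\ldots,\pi(h_k))$ is a solution in $L[X_{[2,n]}]$ of the homogeneous equation $\sum_i\pi(\lambda_i)y_i=0$ whose coefficients $\pi(\lambda_i)$ have total $X_{[2,n]}$-degree at most $b$.

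The critical step is to produce a lift $(\tilde h_1,\ldots,\tilde h_k)\in K[X_{[n]}]^k$ with $\pi(\tilde h_i)=\pi(h_i)$ and $\sum_i\lambda_i\tilde h_i=0$ in $K[X_{[n]}]$.  Once such a lift exists, $h_i-\tilde h_i\in\ker\pi=(f)$ gives $h_i=\tilde h_i+fv_i$ for some $v_i\in K[X_{[n]}]$, so $fg=\sum_i\lambda_i\tilde h_i+f\sum_i\lambda_iv_i=f\sum_i\lambda_iv_i$, and cancelling $f$ in the integral domain $K[X_{[n]}]$ yields $g=\sum_i\lambda_iv_i\in(\Lambda)$.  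For the lift I plan to re-run the argument of the preceding lemma with the ring map $\pi:K[X_1]\to L$ in place of the field extension $\phi:K(X_1)\to L$ appearing there.  The iterative substitution used in that proof depends only on $L$ being a field and on $\pi$ being a ring map on $K[X_1]$; it produces solutions $u^{(j)}=(u_1^{(j)},\ldots,u_k^{(j)})\in K[X_{[n]}]^k_{\leq\mathfrak{e}(n,b)}$ with $\sum_i\lambda_iu_i^{(j)}=0$ in $K[X_{[n]}]$, together with coefficients $\bar c_j\in L[X_{[2,n]}]$ such that $\pi(h_i)=\sum_j\bar c_j\pi(u_i^{(j)})$.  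Choosing any $c_j\in K[X_{[n]}]$ with $\pi(c_j)=\bar c_j$ and setting $\tilde h_i:=\sum_jc_ju_i^{(j)}$, the identity $\sum_i\lambda_i\tilde h_i=0$ is automatic and $\pi(\tilde h_i)=\pi(h_i)$ holds by construction.

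The main obstacle is verifying that the iteration from the preceding lemma really does transfer from a genuine field embedding of $K(X_1)$ to the non-injective map $\pi$, and this is exactly where the hypothesis $\deg f>\mathfrak{e}(n,b)$ is used.  After at most $(b+\mathfrak{d}_{n-1}(b))^{n-1}$ substitution steps the intermediate coefficients lie in $K[X_1]_{\leq 2^{(b+\mathfrak{d}_{n-1}(b))^{n-1}}b}$, and this bound is strictly smaller than $\deg f$, so such a coefficient vanishes under $\pi$ only if it vanishes as a polynomial; consequently, at every stage of the iteration one can still locate a nonzero $\pi(g_{k,j_0})$ whenever the equation being reduced is nontrivial, and the resulting low-degree solutions lift unambiguously from $L[X_{[2,n]}]$ back to $K[X_{[n]}]$.
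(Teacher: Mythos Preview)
Your proposal is correct and follows essentially the same approach as the paper's proof: reduce modulo $f$, apply the preceding lemma to express $\pi(h_i)$ as an $L[X_{[2,n]}]$-combination of $\pi$-images of genuine $K[X_{[n]}]$-syzygies of bounded degree, lift, and cancel $f$. You are in fact more explicit than the paper about the one delicate point---the preceding lemma is stated for a field embedding $K(X_1)\hookrightarrow L$, whereas here $\pi:K[X_1]\to L$ is not injective---and you correctly identify that the hypothesis $\deg f>\mathfrak{e}(n,b)$ is exactly what makes the iterative substitution argument transfer (every intermediate $K[X_1]$-coefficient has degree below $\deg f$, so it vanishes under $\pi$ only if it is already zero).
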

\begin{proof}
Let $\lambda_1,\ldots$ enumerate $\Lambda$.  Suppose $fg=\sum_i a_i\lambda_i$.  Since $f$ is irreducible, $L=K[X_1]/(f)$ is a field; let $\phi:K\rightarrow L$ be the natural embedding.  We have a solution $\sum_i \phi(a_i)\phi(\lambda_i)=0$ in $L[X_2,\ldots,X_n]$.  By the previous lemma, the solutions are generated by the images of solutions from $K[X_{[n]}]_{\leq \mathfrak{e}(n,b)}$.

Since $f$ has degree $>\mathfrak{e}(n,b)$, we also have $a_i=\sum_j c_ja_{ij}+fq_i$.  Therefore
\begin{align*}
  fg
&=\sum_i a_i\lambda_i\\
&=\sum_i(\sum_j c_ja_{ij}+fq_i)\lambda_i\\
&=\sum_j c_j\sum_i a_{ij}\lambda_i +f\sum_i \lambda_i q_i\\
&=f\sum_i\lambda_iq_i
\end{align*}
and therefore $g=\sum_i\lambda_iq_i\in(\Lambda)$.
\end{proof}

\begin{lemma}[Based on \cite{vdDS}, Corollary 2.4]\label{thm:irreducibles}
For any $n$, $b$ and any $\Lambda\subseteq K[X_{[n]}]_{\leq b}$, one of the following holds:
\begin{itemize}
\item there is an $f\in K[X_1]_{\leq \mathfrak{e}(n-1,k)}$ and a $g$ so $\deg(g)\leq \mathfrak{e}(n,b)+\mathfrak{d}_{n-1}(b)+b$, $fg\in(\Lambda)$, but $g\not\in(\Lambda)$, or
\item whenever $f\in K[X_1]$ and $fg\in(\Lambda)$, $g\in(\Lambda)$.
\end{itemize}
\end{lemma}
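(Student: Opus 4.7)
The strategy is to negate the second alternative, use unique factorization in $K[X_1]$ to reduce to an irreducible factor of small degree, and then adapt the syzygy-mod-$f$ analysis of Lemma \ref{thm:vdds_2.3} to extract a bounded-degree witness. Suppose the second alternative fails, and pick $f \in K[X_1]$ and $g \in K[X_{[n]}]$ with $fg \in (\Lambda)$ and $g \not\in (\Lambda)$. Factor $f = f_1 \cdots f_s$ into irreducibles in $K[X_1]$ and inspect the telescoping chain $g,\, f_1 g,\, f_1 f_2 g,\, \ldots,\, fg$. If $j$ is the least index for which $f_1 \cdots f_j g \in (\Lambda)$, then setting $f' := f_j$ and $g' := f_1 \cdots f_{j-1} g$ we obtain $f' g' \in (\Lambda)$ with $g' \not\in (\Lambda)$ and $f'$ irreducible. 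Lemma \ref{thm:vdds_2.3} then forces $\deg f' \leq \mathfrak{e}(n,b)$, since otherwise $g'$ would already lie in $(\Lambda)$.

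Now set $L := K[X_1]/(f')$, a field, and let $\phi \colon K[X_{[n]}] \to L[X_{[2,n]}]$ be the quotient. Writing $f' g' = \sum_i a_i \lambda_i$ and applying $\phi$ yields $\sum_i \phi(a_i)\phi(\lambda_i) = 0$, so $(\phi(a_i))_i$ solves $\sum_i \phi(\lambda_i) y_i = 0$ in $L[X_{[2,n]}]$. The lemma preceding Lemma \ref{thm:vdds_2.3} writes $\phi(a_i) = \sum_j c_j \phi(a_{ij})$ with $c_j \in L[X_{[2,n]}]$ and generating tuples $(a_{1j},\ldots,a_{kj}) \in K[X_{[n]}]_{\leq \mathfrak{e}(n,b)}^k$ whose $\phi$-images are syzygies of the $\phi(\lambda_i)$; equivalently $\sum_i \lambda_i a_{ij} \in (f')$, so write $\sum_i \lambda_i a_{ij} = f' r_j$. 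Choosing any lifts $\tilde c_j \in K[X_{[n]}]$ of the $c_j$, we have $a_i - \sum_j \tilde c_j a_{ij} \in (f')$, hence $a_i = \sum_j \tilde c_j a_{ij} + f' q_i$ for some $q_i$. Substituting,
\[
f' g' = \sum_i a_i \lambda_i = f' \Bigl( \sum_j \tilde c_j r_j + \sum_i q_i \lambda_i \Bigr),
\]
so $g' = \sum_j \tilde c_j r_j + \sum_i q_i \lambda_i$.

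Because $\sum_i q_i \lambda_i \in (\Lambda)$ and $g' \not\in (\Lambda)$, at least one $r_j$ must lie outside $(\Lambda)$. For this $j$ the identity $f' r_j = \sum_i a_{ij} \lambda_i$ witnesses $f' r_j \in (\Lambda)$ together with the degree estimate $\deg r_j \leq \deg(f' r_j) \leq \mathfrak{e}(n,b) + b$, comfortably within the bound $\mathfrak{e}(n,b) + \mathfrak{d}_{n-1}(b) + b$ claimed in the statement. The main obstacle is interpreting the preceding lemma correctly: its generating tuples $(a_{1j},\ldots,a_{kj})$ are only syzygies \emph{modulo} $f'$, so the required small witness $r_j$ must be recovered as the quotient $\sum_i a_{ij}\lambda_i / f'$ rather than as a coefficient of an overt syzygy, and a short argument via the explicit decomposition of $g'$ is needed to ensure that at least one such $r_j$ genuinely escapes $(\Lambda)$. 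Beyond this bookkeeping and the reduction to irreducibles via unique factorization in $K[X_1]$, the argument is a direct variant of the proof of Lemma \ref{thm:vdds_2.3}.
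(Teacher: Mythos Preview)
Your argument follows the same strategy as the paper's: reduce to an irreducible $f'$ of degree at most $\mathfrak{e}(n,b)$ (you factor and telescope; the paper instead takes $f$ of minimal degree---both work), pass to $L=K[X_1]/(f')$, decompose the image of $(a_i)$ in terms of bounded-degree syzygies, lift back, and extract a small-degree witness $r_j\notin(\Lambda)$ from $\sum_i\lambda_i a_{ij}=f'r_j$.

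The one genuine gap is the tool you invoke for the syzygy decomposition. The lemma preceding Lemma~\ref{thm:vdds_2.3} is stated for a field \emph{extension} $\phi:K(X_1)\to L$, and (as its use in the proof of Lemma~\ref{thm:vdds_2.3} shows) its output tuples are genuine $K$-solutions of $\sum_i\lambda_iy_i=0$; if you could use it here you would get $\sum_i\lambda_i a_{ij}=0$ and hence $r_j=0$ for every $j$, collapsing your argument. In the present situation $\phi$ is the \emph{quotient} $K[X_1]\twoheadrightarrow K[X_1]/(f')$, so that lemma is not available. The paper instead applies internal flatness (Theorem~\ref{thm:flat}) directly in $L[X_{[2,n]}]$ to obtain generating syzygies $a_{ij}\in L[X_{[2,n]}]_{\leq\mathfrak{d}_{n-1}(b)}$, and then lifts them to $a'_{ij}\in K[X_{[n]}]$ with $X_1$-degree at most $\deg f'-1\leq\mathfrak{e}(n,b)$ and $X_{[2,n]}$-degree at most $\mathfrak{d}_{n-1}(b)$. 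These lifts are only syzygies modulo $f'$, exactly as you want, and this is where the $\mathfrak{d}_{n-1}(b)$ term in the bound comes from. With this correction your degree estimate becomes $\deg r_j\leq\mathfrak{e}(n,b)+\mathfrak{d}_{n-1}(b)+b$, matching the statement, and the remainder of your argument (the decomposition of $g'$ and the observation that some $r_j\notin(\Lambda)$) goes through unchanged and is identical to the paper's.
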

\begin{proof}
Suppose there is some $f\in K[X_1]$ and some $g$ so that $fg\in(\Lambda)$ but $g\not\in(\Lambda)$.  We may choose $f$ with minimal degree such that this happens.  Then $f$ is irreducible---if $f=f_0f_1$ then $f_0(f_1g)\in(\Lambda)$, so either $f_1g\not\in(\Lambda)$ (so $f_0$ is a witness of smaller degree) or $f_1g\in(\Lambda)$ (so $f_1$ is a witness of smaller degree).  By the previous lemma, $f\in K[X_1]_{\leq \mathfrak{e}(n,b)}$.  

We have $fg=\sum_i a_i\lambda_i$.  Let $L=K[X_1]/(f)$ and let $\phi:K[X_1]\rightarrow L$ be the natural embedding, so $\sum_i \phi(a_i)\phi(\lambda_i)=0$, and so by internal flatness, $\phi(a_i)=\sum_j c_j a_{ij}$ where $a_{ij}\in L[X_{[2,n]}]_{\leq \mathfrak{d}_{n-1}(b)}$.  We may assume $a_{ij}=\phi(a'_{ij})$ with $a'_{ij}\in K[X_{[n]}]_{\leq \mathfrak{e}(n,b)+\mathfrak{d}_{n-1}(b)}$.  We have $a_i=\sum_j c_ja_{ij}+fq_i$ and $\sum_i \lambda_i a_{ij}=fq'_j$.  Since $\deg(\lambda_i a_{ij})\leq \mathfrak{e}(n,b)+\mathfrak{d}_{n-1}(b)+b$, also $\deg(q'_j)\leq \mathfrak{e}(n,b)+\mathfrak{d}_{n-1}(b)+b$.

There must be some $j$ so $q'_j\not\in(\Lambda)$, and therefore $f,q'_j$ is the witness to the first case.  Otherwise, towards a contradiction, each $q'_j=\sum_{i}\lambda_{i} q'_{ji}$, and therefore
\begin{align*}
fg
&=\sum_ia_i\lambda_{i}\\
&=\sum_i (\sum_j c_ja_{ij}+fq_i)\lambda_{i}\\
&=\sum_j c_j\sum_i a_{ij}\lambda_{i}+\sum_i fq_i\lambda_{k,i}\\
&=\sum_j c_j fq'_j+\sum_i fq_i\lambda_{i}\\
&=f\sum_jc_j\sum_i\lambda_iq'_{ji}+f\sum_i q_i\lambda_{i}\\
&=f\sum_i\lambda_i(\sum_j c_jq'_{ji}+q_i),
\end{align*}
and therefore $g=\sum_i \lambda_{i}(\sum_j c_jq'_{ji}+q_i)$ so $g\in(\Lambda)$, giving the needed contradiction.
\end{proof}

\begin{definition}\label{21}
  We say an ideal $I\subseteq K[X_{[n]}]$ is \emph{prime up to $b$} if whenever $fg\in(\Lambda)$ with $f,g\in K[X_{[n]}]_{\leq b}$, either $f\in(\Lambda)$ or $g\in(\Lambda)$.
\end{definition}

\cite{vdDS} shows that if $I\subseteq K(X)[Y_{[n]}]$ is a prime ideal then the ideal it generates is prime in $K(X)_{int}[Y_{[n]}]$.  In our setting, this amounts to comparing two different gradings on $K(X)[Y_{[n]}]$: we could assign $X^i\prod_j Y_j^{k_j}$ either the grade $|i|+\sum_j k_j$ or the grade $\sum_j k_j$.  We wish to show that sufficient primality in the first grading implies primality in the second grading.

\begin{notation}
  We write $K(X)[Y_{[n]}]_{\leq_Y r}$ for those elements of $K(X)[Y_{[n]}]$ whose total degree in the $Y$ variables is at most $r$.  We write $K(X)[Y_{[n]}]_{\leq_{X,Y}r}$ for those elements of $K(X)[Y_{[n]}]$ whose total degree in $X, 1/X, Y_{[n]}$ is at most $r$.
\end{notation}

The proof in the ultraproduct involves using the fact that $K_{int}(X)$ is freely generated over $K(X)$.  Therefore, given $fg=\sum_i a_i\lambda_i$ where the $a_i$ come from $(K_{int}(X))[Y_{[n]}]$, we can view the $a_i$ as coming from $K(X,Z_{[m]})[Y_{[n]}]$ where the $Z_{[m]}$ are a basis for some subspace large enough to contain the $a_i$.

In the finitary world, the analog of the basis $Z_{[m]}$ is a ``local basis'': a collection of elements $Z_1,\ldots,Z_m$ such that, on the one hand, each $a_i$ is algebraic in $Z_{[m]}$ using ``small'' coefficients (in the sense of the grading), but there are no algebraic dependencies among the $Z_{[m]}$ even using much larger coefficients.

\begin{definition}
We write $K(X)_{\leq d}$ for $K[X,1/X]_{\leq d}$.  Let $S$ be a set of elements in $K(X)$ and let ${\bf F}:\mathbb{N}\rightarrow\mathbb{N}$.  An \emph{${\bf F}$-local basis} for $S$ is a set $Z$ and a bound $w$ such that:
\begin{itemize}
\item $S\subseteq K(X,Z)_{\leq w}$,
\item if $z\in Z$ then $z\not\in K(X,Z\setminus\{z\})_{\leq {\bf F}(w)}$.
\end{itemize}
\end{definition}

\begin{lemma}\label{thm:local_basis}
  For any $S\subseteq K(X,S)$ and any ${\bf F}$, letting ${\bf F}'(x)=x{\bf F}(x)$, there is an ${\bf F}$-local basis $Z,w$ such that $Z\subseteq S$ and $w\leq ({\bf F}')^{|S|}(1)$.
\end{lemma}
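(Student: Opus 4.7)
The plan is to build $Z$ by greedy pruning, starting with $Z := S$ and $w := 1$, and iteratively deleting any element of $Z$ that witnesses a failure of the second local-basis condition, updating $w$ to preserve the first condition. Initialization trivially satisfies $S \subseteq K(X,Z)_{\leq w}$, since every $s \in S$ is itself among the indeterminates of $K(X,Z)$ so appears as a Laurent-monomial of degree $1$. At each iteration, if some $z \in Z$ satisfies $z \in K(X, Z\setminus\{z\})_{\leq \mathbf{F}(w)}$, replace $Z$ by $Z\setminus\{z\}$ and $w$ by $\mathbf{F}'(w) = w\cdot\mathbf{F}(w)$. Terminate when no such $z$ exists; output the final $(Z,w)$.

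The heart of the argument is verifying that $S \subseteq K(X,Z)_{\leq w}$ is preserved by a single deletion step. Given $s \in S$ represented as a Laurent polynomial $P$ of degree $\leq w$ in $X$ and the current $Z$, and an expression $z = Q$ with $Q$ a Laurent polynomial of degree $\leq \mathbf{F}(w)$ in $X$ and $Z\setminus\{z\}$, substitute $Q$ for $z$ in $P$. A standard degree-composition count then shows that the result is a Laurent polynomial of total degree at most $w \cdot \mathbf{F}(w) = \mathbf{F}'(w)$ in $X$ and the updated $Z$, as required.

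Termination and the bound on $w$ follow by counting. Since $|Z|$ strictly decreases at each step from its initial value $|S|$, there are at most $|S|$ iterations. Iterating the map $w \mapsto \mathbf{F}'(w)$ from $w_0 = 1$ at most $|S|$ times yields a final bound $w \leq (\mathbf{F}')^{|S|}(1)$, using that $\mathbf{F}'$ may be taken nondecreasing (replace $\mathbf{F}'$ by $x \mapsto \max_{y\leq x}\mathbf{F}'(y)$ if needed). By construction $Z \subseteq S$, and the stopping criterion is precisely the second clause of being an $\mathbf{F}$-local basis.

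The only delicate point is the substitution degree bound, and this is really a matter of fixing the convention under which $K(X,Z)_{\leq w}$ is measured — total degree given by the sum of absolute values of exponents in a Laurent-monomial presentation in $X$ and the elements of $Z$. Under that convention, substituting an expression of degree $\leq v$ into an expression of degree $\leq w$ yields an expression of degree at most $wv$, which is exactly what powers the inductive step. Everything else in the argument reduces to bookkeeping on the pair $(Z_i, w_i)$.
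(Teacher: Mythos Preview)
Your proof is correct and follows essentially the same approach as the paper: start with $Z=S$, $w=1$, and greedily delete any $z\in Z$ with $z\in K(X,Z\setminus\{z\})_{\leq\mathbf{F}(w)}$, updating $w\mapsto w\mathbf{F}(w)=\mathbf{F}'(w)$ to preserve the containment $S\subseteq K(X,Z)_{\leq w}$ via substitution, with termination in at most $|S|$ steps. You are somewhat more explicit than the paper about the substitution degree bound and the monotonicity needed to pass from ``at most $|S|$ iterations'' to the final inequality $w\leq(\mathbf{F}')^{|S|}(1)$, but the argument is otherwise identical.
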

\begin{proof}
  Set $S_0=S$ and $w_0=1$.  Given $S_i, w_i$, if this is an ${\bf F}$-local basis for $S$, we are done.  Otherwise, define $S_{i+1},w_{i+1}$ as follows: chose $z\in S_i$ so that $z\in K(X,S_i\setminus\{s\})_{\leq {\bf F}(w_i)}$ and set $S_{i+1}=S_i\setminus\{z\}$ and $w_{i+1}=w_i{\bf F}(w_i)$.  Then for each $s\in S$, since $s\in K(X,S_i)_{\leq w_i}$, also $s\in K(X,S_{i+1})_{\leq w_i {\bf F}(w_i)}$.

Since $S_{i+1}\subsetneq S_i$, this process stops in at most $|S|$ steps.
\end{proof}

\begin{notation}\label{note:zeta}\ \\
\begin{itemize}
  \item $\zeta_0(n,d)={{n+\mathfrak{d}_n(d)}\choose n}$, the number of monomials of degree $\leq \mathfrak{d}_n(d)$ in $n$ variables,
  \item $\zeta_1(n,d,b)=({b+n\choose n}+2)\zeta_0(n,d)$,
  \item $\zeta_2(n,d,b)=(\zeta_1(n,d,b)+1)^{2^{\zeta_1(n,d,b)}-1}$.
  \end{itemize}
\end{notation}

This leads to the following crucial result.  We show that sufficient primality in the sense of $\leq_{X,Y}$ implies primality in the sense of $\leq_Y$.  This is a weak form of the result we are attempting to prove: we begin with an ideal which is prime only for $f,g$ with total degree in both $X$ and $Y$ bounded, and we obtain primality for $f,g$ with $Y$-degree bounded, but arbitrary $X$-degree.

\begin{lemma}\label{thm:prime_ext_alg_reg}
Let $n, b\leq d$ be given.  Then whenever $\Lambda\subseteq K(X)[Y_{[n]}]_{\leq_{X,Y}b}$ so that $(\Lambda)$ is prime up to $\zeta_1(n,d,b)\zeta_2(n,d,b)$ in the sense of $\leq_{X,Y}$, also $(\Lambda)$ is prime in $K(X)[Y_{[n]}]$ up to $d$ in the sense of $\leq_Y$.
\end{lemma}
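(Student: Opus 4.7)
The plan is to emulate the ultraproduct argument of \cite{vdDS}, where the key ingredient is that $K(X)\subseteq K_{int}(X)$ is a faithfully flat, regular field extension, so that primality of $(\Lambda)$ transfers. In our finitary world I would construct, inside $K(X)$ itself, an $\fn{F}$-local basis in the sense of Lemma \ref{thm:local_basis} that plays the role of a transcendence basis, lift the relevant data to a polynomial ring in fresh formal indeterminates corresponding to that basis, and then reduce back to the hypothesis via a specialization.

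Concretely, suppose $fg\in(\Lambda)$ with $f,g\in K(X)[Y_{[n]}]_{\leq_Y d}$.  First I would invoke Lemma \ref{thm:faithful} in the polynomial ring $K(X)[Y_{[n]}]$ to write $fg=\sum_i a_i\lambda_i$ with each $a_i$ of $Y$-degree at most $\mathfrak{d}_n(b+d)$.  The $Y$-coefficients of $f$, $g$, and the $a_i$ form a finite set $S\subseteq K(X)$ whose cardinality is controlled by the monomial counts appearing in the definition of $\zeta_1(n,d,b)$.  Applying Lemma \ref{thm:local_basis} to $S$ with $\fn{F}$ chosen so that $\fn{F}(w)$ dominates all the $Z'$-degrees that appear below yields a local basis $Z\subseteq S$ with bound $w\leq\zeta_2(n,d,b)$.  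Since each $\lambda_i$ already has coefficients in $K(X)_{\leq b}$, the generators need no lifting; every other $s\in S$ gets written as $s=P_s(X,X^{-1},Z)/Q_s(X,X^{-1},Z)$ of total degree at most $w$.

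Introducing fresh indeterminates $Z'$ and substituting $Z\mapsto Z'$ produces lifts $\tilde f,\tilde g,\tilde a_i\in K(X,Z')[Y_{[n]}]$ of $X,Z',Y$-total degree bounded by $\zeta_1(n,d,b)\zeta_2(n,d,b)$.  The central step is to argue $\tilde f\tilde g=\sum_i\tilde a_i\lambda_i$ identically in $K(X,Z')[Y_{[n]}]$: after clearing a common denominator the difference is a polynomial in $K[X,X^{-1},Z'][Y_{[n]}]$ of $Z'$-degree at most $\fn{F}(w)$ that vanishes under $Z'\mapsto Z$, and the local independence clause of Lemma \ref{thm:local_basis} forces such a polynomial to vanish identically.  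Thus $\tilde f\tilde g\in(\Lambda)K(X,Z')[Y_{[n]}]$.  Specializing $Z'$ to generic values $c$ in $K$ (passing if necessary to an infinite algebraic extension and descending along the faithfully flat inclusion $K[Y]\hookrightarrow K'[Y]$) yields $\tilde f(c),\tilde g(c)\in K(X)[Y_{[n]}]$ of $X,Y$-total degree at most $\zeta_1\zeta_2$ with $\tilde f(c)\tilde g(c)\in(\Lambda)$.  The primality hypothesis then forces $\tilde f(c)\in(\Lambda)$ or $\tilde g(c)\in(\Lambda)$ for each generic $c$, and a standard polynomial-dependence argument in $Z'$ upgrades this to $\tilde f\in(\Lambda)K(X,Z')[Y_{[n]}]$ or the analogous statement for $\tilde g$; substituting $Z'\mapsto Z$ recovers $f\in(\Lambda)$ or $g\in(\Lambda)$.

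The main obstacle I anticipate is twofold.  First, $\fn{F}$ must be calibrated so that the combined $X,Z',Y$-degree of the lifts fits inside the primality bound $\zeta_1\zeta_2$ supplied by the hypothesis; this is precisely the tension that motivates the iterated exponential form of $\zeta_2$.  Second, the final transfer from generic specializations back to the distinguished value $Z'=Z$ must be carried out without assuming an infinite base field, which I would handle by combining an auxiliary extension with the faithfully flat descent implicit in Lemma \ref{thm:faithful}.
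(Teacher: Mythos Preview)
Your setup through the construction of the local basis $Z$ matches the paper's, but the approaches diverge sharply after that.  The paper never specializes $Z'$ to constants.  Instead, having expressed $f$, $g$, and the $a_i$ as polynomials in the local-basis elements with coefficients $u''_j,v''_j,a''_{ij}\in K[X,Y_{[n]}]$ of bounded $X$- and $Y$-degree, it separates the identity $fg=\sum_i a_i\lambda_i$ by $S$-monomial and runs the classical Gauss-lemma creeping argument directly: by induction on $J$ one finds $k_0+k_1=J$ with $u''_j\in(\Lambda)$ for all $j<k_0$ and $v''_j\in(\Lambda)$ for all $j<k_1$, the inductive step applying the $\leq_{X,Y}$-primality hypothesis to the single product $u''_{k_0}v''_{k_1}$, which lies in $K(X)[Y_{[n]}]_{\leq_{X,Y}w\zeta_1(n,d,b)}\subseteq K(X)[Y_{[n]}]_{\leq_{X,Y}\zeta_1\zeta_2}$.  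This places all $S$-coefficients of one of $f,g$ in $(\Lambda)$, hence $f\in(\Lambda)$ or $g\in(\Lambda)$, with no specialization, no irreducibility argument, and no enlargement of $K$.

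Your specialize-and-upgrade route has a genuine gap at the step you label ``a standard polynomial-dependence argument.''  For each $c$ you only learn that \emph{one} of $\tilde f(c),\tilde g(c)$ lies in $(\Lambda)$, and which one may vary with $c$.  Passing to a single choice valid for all $c$ requires showing that $\{c:\tilde f(c)\in(\Lambda)\}$ and $\{c:\tilde g(c)\in(\Lambda)\}$ are Zariski-closed (via Hermann-type bounds) and then invoking irreducibility of affine space; lifting from ``all $K$-points'' to an identity in $Z'$ needs $K$ infinite (or your proposed extension-and-descent); and the final substitution $Z'\mapsto Z$ must be checked not to hit a vanishing denominator.  None of these steps is individually hard, but together they constitute real content that is absent from your sketch and entirely avoided by the paper's monomial-by-monomial argument.
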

\begin{proof}
Let $d$ and $\Lambda\subseteq K(X)[Y_{[n]}]_{\leq_{X,Y}b}$ be given so that $(\Lambda)$ is prime up to $\zeta_1(n,d,b)\zeta_2(n,d,b)$ in the sense of $\leq_{X,Y}$.

Let $f,g\in K(X)[Y_{[n]}]_{\leq_Y d}$ be given with $fg\in(\Lambda)$.  This implies that $fg=\sum_ia_i\lambda_{i}$ and, by Lemma \ref{thm:faithful}, we may assume the $a_i\in K(X)[Y_{[n]}]_{\leq_Y \mathfrak{d}_n(d)}$.  Note that we may assume $|\Lambda|\leq {{b+n}\choose n}$, the dimension of $K(X)[Y_{[n]}]_{\leq_Y b}$ as a vector space over $K(X)$.

We enumerate the monomials in $Y_{[n]}$ appearing in the $a_i$ as $M_0,\ldots,M_j,\ldots$.  There are at most $\zeta_0(n,d)$ such monomials.  We may write $a_i=\sum_j a_{ij}M_j$, $f=\sum_j u_jM_j$, and $g_j=\sum_jv_jM_j$ where $a_{ij},u_j,v_j$ are elements of $K(X)$.

Let $S_0=\{a_{ij}\}_{i,j}\cup\{u_j,v_j\}_j$.  Note that $|S_0|\leq\zeta_1(n,d,b)$.  Let ${\bf F}$ be the function given by ${\bf F}(x)=x\zeta_1(n,d,b)+1$.  By Lemma \ref{thm:local_basis} there is an $S\subseteq S_0$ and a $w\leq\zeta_2(n,d,b)$ so that $S,w$ is an ${\bf F}$-local basis for $S_0$.

We have
\[(\sum_j u_jM_j)(\sum_j v_jM_j)=fg=\sum_i\sum_j a_{ij}M_j\lambda_{i}.\]
Writing each $u_i,v_i,a_{ij}$ as an element of $K(X,S)_{\leq w}$---that is, as a rational polynomial involving $X,S$ where the degrees of the top and bottom add to at most $w$---we may multiply through to clear denominators.  So we have
\[(\sum_j u'_jM_j)(\sum_j v'_jM_j)=\sum_i\sum_j a'_{ij}M_j\lambda_{i}\]
where the $u',v',a'_{ij}$ are polynomials in $X,S$ with degrees bounded by $w\zeta_1(n,d,b)$. 

We will now rearrange our sums to focus on monomials from $S$.  Write $M^*_0,\ldots,M^*_j,\ldots$ for the monomials in $S$ arranged so that $M^*_0=1$ and $M^*_iM^*_j=M^*_{ij}$.  We then write
\[(\sum_j u''_jM^*_j)(\sum_j v''_jM^*_j)=\sum_j (\sum_i a''_{ij}\lambda_{i})M^*_j\]
where the $u''_j,v''_j,a''_{ij}$ are elements of $K[X,Y_{[n]}]$ with $X$ degree bounded by $w\zeta_1(n,d,b)$ and $Y_{[n]}$ degree bounded by $\mathfrak{d}_n(d)$.  By our choice of pseudobasis, we can separate this out by monomial: for each $j$,
\[\sum_ia''_{ij}\lambda_{i}=\sum_{k_0+k_1=j}u''_{k_0}v''_{k_1}.\]

We follow the standard argument to solve this monomial by monomial, keeping track of bounds along the way.  We show by induction on $J$ that there are $k_0,k_1$ with $k_0+k_1=J$ so that for each $j<k_0$, $u''_j=\sum_i b_{ij}\lambda_{i}$ and for each $j<k_1$, $v''_j=\sum_i c_{ij}\lambda_{i}$.

Suppose we have chosen such $k_0,k_1$.  Then 
\[\sum_i a''_{iJ}\lambda_{i}=\sum_{j\leq J}u''_{j}v''_{J-j}=u''_{k_0}v''_{k_1}+\sum_{j<k_0}u''_jv''_{J-j}+\sum_{j<k_1}u''_{J-j}v''_j,\]
so
\[u''_{k_0}v''_{k_1}=\sum_i (a''_{iJ}+\sum_{j<k_0}v''_{J-j}b_{ij}+\sum_{j<k_1}u''_{J-j}c_{ij})\lambda_{i}.\]

Since $\Lambda$ is prime up to $w\zeta_1(n,d,b)$ in the sense of $\leq_{X,Y}$, we have either $u''_{k_0}=\sum_i b_{ik_0}\lambda_{i}$ (and we replace $k_0$ with $k_0+1$) or $v''_{k_1}=\sum_i c_{ik_1}\lambda_{i}$ (and we replace $k_1$ with $k_1+1$).

We may continue until either $k_0={{w\zeta_1(n,d)+w}\choose w}$ or $k_1={{w\zeta_1(n,d)+w}\choose w}$.  Suppose the first case happens (the second is symmetric); then we have
\[f=\sum_j u''_jM^*_j=\sum_j \sum_i b_{ij}\lambda_{i}M^*_j=\sum_i (\sum_j b_{ij}M^*_j)\lambda_{i},\]
and therefore $f=\sum_i b_i\lambda_{i}$.
\end{proof}

We now arrive at the main result of this section: showing that we can ``upgrade'' from internal primality up to a certain point to actual primality.

\begin{notation}\label{note:pn}\ \\
\begin{itemize}
\item $\mathfrak{p}_1(d)=d$,
\item $\upsilon(n,d)=\zeta_1(n-1,\mathfrak{p}_{n-1}(d),d)\zeta_2(n-1,\mathfrak{p}_{n-1}(d),d)$,
\item $\rho(n,d)=\max\{2{{\upsilon(n,d)+n}\choose n}\upsilon(n,d),\mathfrak{e}(n-1,d)\}$,
\item $\mathfrak{p}_n(d)=\rho(n,d)$.
\end{itemize}
\end{notation}

\begin{theorem}[Based on \cite{vdDS}, Theorem 2.5]\label{thm:vdds_2.5}
Let $n, d$ be given.  If $\Lambda\subseteq K[X_{[n]}]_{\leq d}$ is such that $(\Lambda)$ is prime up to $\mathfrak{p}_n(d)$ then $(\Lambda)$ is prime.
\end{theorem}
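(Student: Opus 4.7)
The plan is to proceed by induction on $n$. The base case $n=1$ is direct: $K[X_1]$ is a PID, so $(\Lambda)=(p)$ for some $p$ of degree at most $d$; any nontrivial factorization $p=qr$ with neither factor associate to $p$ would exhibit polynomials of degree $<d=\mathfrak{p}_1(d)$ witnessing failure of primality at level $d$, so $p$ must be irreducible and $(\Lambda)$ is prime.

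For the inductive step, I would invoke Lemma \ref{thm:irreducibles} with $b=d$, obtaining a dichotomy. In its first branch there is an irreducible $f\in K[X_1]$ of degree at most $\mathfrak{e}(n-1,d)$ and a $g$ of bounded degree with $fg\in(\Lambda)$ but $g\notin(\Lambda)$; since $\mathfrak{p}_n(d)\geq \mathfrak{e}(n-1,d)$ by construction of $\rho$, and the secondary degree bound on $g$ is absorbed by the larger term in $\mathfrak{p}_n(d)$, primality up to $\mathfrak{p}_n(d)$ forces $f\in(\Lambda)$. Extracting an irreducible factor $p\in K[X_1]\cap(\Lambda)$ (again by primality up to the relevant degree), we pass to $L=K[X_1]/(p)$ and study the image of $\Lambda$ in $L[X_{[2,n]}]$, to which the inductive hypothesis applies in $n-1$ variables once we check that the image inherits primality up to $\mathfrak{p}_{n-1}$ of an appropriate value.

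In the second branch of Lemma \ref{thm:irreducibles}, $(\Lambda)$ is saturated with respect to $K[X_1]\setminus\{0\}$, so the extension $(\Lambda)^e\subseteq K(X_1)[X_{[2,n]}]$ contracts back to $(\Lambda)$; it therefore suffices to show $(\Lambda)^e$ is prime. Viewing $K(X_1)[X_{[2,n]}]$ as a polynomial ring in $n-1$ variables over the field $K(X_1)$, the inductive hypothesis requires primality of $(\Lambda)^e$ up to $\mathfrak{p}_{n-1}(d)$ in the $\leq_Y$ sense. Lemma \ref{thm:prime_ext_alg_reg}, applied with $b=d$ and target bound $\mathfrak{p}_{n-1}(d)$, reduces this to primality up to $\upsilon(n,d)=\zeta_1(n-1,\mathfrak{p}_{n-1}(d),d)\zeta_2(n-1,\mathfrak{p}_{n-1}(d),d)$ in the $\leq_{X,Y}$ sense over $K(X_1)$. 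Finally, clearing denominators to transfer a $\leq_{X,Y}$-primality statement from $K(X_1)[X_{[2,n]}]$ back down to $K[X_{[n]}]$ costs at most the factor $2\binom{\upsilon(n,d)+n}{n}\upsilon(n,d)$ coming from the monomial count in the $Y$ variables, and this is precisely the other term packaged into $\rho(n,d)=\mathfrak{p}_n(d)$.

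The main obstacle is the numerical bookkeeping across both reductions: verifying (i) that in case A the quotient by an irreducible $p\in K[X_1]\cap(\Lambda)$ transmits primality up to $\mathfrak{p}_n(d)$ in $K[X_{[n]}]$ to primality up to a sufficient $\mathfrak{p}_{n-1}(\cdot)$ in $L[X_{[2,n]}]$, and in particular that the degree of $g$ produced by Lemma \ref{thm:irreducibles} is within $\mathfrak{p}_n(d)$, and (ii) that in case B the denominator-clearing overhead built into the first argument of the maximum defining $\rho(n,d)$ really absorbs the local-basis factor from Lemma \ref{thm:prime_ext_alg_reg}. The definitions of $\upsilon$, $\rho$, and $\mathfrak{p}_n$ have been engineered with exactly these two inequalities in mind, so the proof itself is a matter of threading them through the appropriate reductions.
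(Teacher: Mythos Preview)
Your outline is sound and your Case B (the saturated branch) matches the paper's argument essentially verbatim: saturation with respect to $K[X_1]\setminus\{0\}$, passage to $K(X_1)[X_{[2,n]}]$, Lemma~\ref{thm:prime_ext_alg_reg} to convert $\leq_{X,Y}$-primality at level $\upsilon(n,d)$ into $\leq_Y$-primality at level $\mathfrak{p}_{n-1}(d)$, and then the inductive hypothesis.

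Your Case A, however, is \emph{not} what the paper does. The paper's dichotomy is different: it asks whether $(\Lambda)\cap K[X_i]_{\leq\rho(n,d)}$ is nonempty for \emph{every} $i$. When all of these intersections are nonempty, the paper does not quotient and recurse; instead it observes that modulo $(\Lambda)$ every $X_i$ satisfies a relation of degree $\leq\rho(n,d)$, so an arbitrary $f$ can be reduced modulo $(\Lambda)$ to total degree $\leq n\rho(n,d)$, after which primality up to the stated bound applies directly to the reduced representatives. When some $K[X_i]_{\leq\rho(n,d)}\cap(\Lambda)$ is empty, the paper uses that emptiness (together with Lemma~\ref{thm:irreducibles}) to force the saturated branch. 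By contrast, you use Lemma~\ref{thm:irreducibles}'s first branch to produce an irreducible $p\in K[X_1]\cap(\Lambda)$, pass to $L=K[X_1]/(p)$, and invoke the inductive hypothesis in $L[X_{[2,n]}]$. This is a legitimate alternative: since $p\in(\Lambda)$ one has $K[X_{[n]}]/(\Lambda)\cong L[X_{[2,n]}]/(\bar\Lambda)$, and lifting a test pair $\bar f,\bar g\in L[X_{[2,n]}]_{\leq\mathfrak{p}_{n-1}(d)}$ to $K[X_{[n]}]$ costs at most $\deg p$ in the $X_1$-direction, so the required inequality is $\mathfrak{p}_{n-1}(d)+\deg p\leq\mathfrak{p}_n(d)$, which is easily dominated by the $2\binom{\upsilon(n,d)+n}{n}\upsilon(n,d)$ term. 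Your route avoids the paper's ``all variables algebraic'' case entirely, at the price of one more appeal to the inductive hypothesis; the paper's route avoids a second recursion but needs the reduction-to-bounded-degree observation. Both fit inside the same bound $\mathfrak{p}_n(d)$.
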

\begin{proof}
  By induction on $n$.  When $n=1$ this is straightforward: $(\Lambda)$ is principal iff there is a single element of $\Lambda$ generating the ideal.

So suppose $n>1$.  First, suppose that for each $i$ there is an $h_i\in K[X_i]_{\leq \rho(n,d)}$ with $h_i\in(\Lambda)$.  Then $K[X_{[n]}]/(\Lambda)$ is a field extension of $K$ where each $X_i$ is algebraic of degree $\leq \rho(n,d)$.  In particular, any element $f$ of $K[X_{[n]}]$ may be written $f=f_0+f'$ where $f'\in(\Lambda)$ and $f_0$ has total degree $\leq \rho(n,d)n$.  So if $fg\in(\Lambda)$ then we have $fg=(f_0+f')(g_0+g')=f_0g_0+c$ with $c\in(\Lambda)$.  Therefore $f_0g_0\in(\Lambda)$ and since $f_0g_0$ has degree $\leq 2\rho(n,d)n$, the assumption applies, and either $f_0\in(\Lambda)$ or $g_0\in(\Lambda)$.  Therefore either $f=f_0+f'\in(\Lambda)$ or $g=g_0+g'\in(\Lambda)$.

So suppose this does not hold: for some $i\leq n$, $K[X_i]_{\leq \rho(n,d)}\cap(\Lambda)=\emptyset$.  We will apply the inductive hypothesis to the ring $K(X_i)[X_{[1,i-1]},X_{[i+1,n]}]$.  By rearranging the variables, it suffices to assume $i=1$.

\begin{claim}
  For every $u\in K[X_1]$, if $fu\in(\Lambda)$ then $f\in(\Lambda)$.
\end{claim}
\begin{claimproof}
We apply Lemma \ref{thm:irreducibles}.  It suffices to rule out the first case: suppose there were an $f\in K[X_1]_{\mathfrak{e}(n-1,d)}$ and a $g\in K[X_{[n]}]_{\leq \mathfrak{e}(n-1,d)+\mathfrak{d}_{n-1}(d)+d}$ so that $fg\in(\Lambda)$ but $g\not\in(\Lambda)$.  Since, by assumption, $f\not\in(\Lambda)$, this violates the primality of $(\Lambda)$ up to $\mathfrak{e}(n-1,d)+\mathfrak{d}_{n-1}(d)+d$.
\end{claimproof}

\begin{claim}
  $(\Lambda)$ is prime in $K(X_1)[X_{[2,n]}]$ up to $\upsilon(n,d)$ in $X_{[1,n]}$-degree.
\end{claim}
\begin{claimproof}
Suppose $fg\in(\Lambda)$ with $f,g\in K(X_1)[X_{[2,n]}]_{\leq \upsilon(n,d)}$, so $fg=\sum_i a_i\lambda_{i}$.  Clearing denominators, $f'g'h=\sum_i a'_i\lambda_{i}$ where $h\in K[X_1]_{\leq 2{{\upsilon(n,d)+n}\choose n}\upsilon(n,d)}$ and $f',g'\in K[X_{[n]}]_{\leq\upsilon(n,d)}$.

Since $K[X_1]_{\leq 2{{\upsilon(n,d)+n}\choose n}}\cap(\Lambda)=\emptyset$, we have $h\not\in(\Lambda)$.  By primality of $(\Lambda)$ up to $2\upsilon(n,d)$, $f'g'\in(\Lambda)$, and so, without loss of generality, $f'\in(\Lambda)$.  Then $f=f'/h'$ for some $h'\in K[X_1]$, and since $f'=\sum_i b_i\lambda_i$, also $f=\sum_i (b_i/h')\lambda_i$, and therefore $f\in(\Lambda)$.
\end{claimproof}

\begin{claim}
$(\Lambda)$ is prime in $K(X_1)[X_{[2,n]}]$.
\end{claim}
\begin{claimproof}
Since $(\Lambda)$ is prime up to $\upsilon(n,d)$ in $K(X_1)[X_2,\ldots,X_n]$ in $X_{[1,n]}$-degree, by Theorem \ref{thm:prime_ext_alg_reg}, also $(\Lambda)$ is prime up to $\mathfrak{p}_{n-1}(d)$ in $K(X_1)[X_{[2,n]}]$ in $X_{[2,n]}$-degree.  By the inductive hypothesis applied to $K(X_1)[X_{[2,n]}]$, we have that $(\Lambda)$ is prime in $K(X_1)[X_{[2,n]}]$.
\end{claimproof}

We can now complete the proof: suppose $fg\in(\Lambda)$ in $K[X_{[n]}]$ (with $\deg(fg)$ arbitrary).  Then certainly $fg\in(\Lambda)$ in $K(X_1)[X_{[2,n]}]$, so without loss of generality, $f=\sum_i b_i\lambda_{i}$ with the $b_i\in K(X_1)[X_{[2,n]}]$.  Clearing denominators again, $fu=\sum_i b'_i\lambda_{i}$ with the $b'_i\in K[X_{[n]}]$ and $u\in K[X_1]$.  Applying the first claim above, we must have $f\in(\Lambda)$, completing the proof.
\end{proof}

\section{A Version of the Functional Interpretation}\label{sec:interpretation}

The results in the previous section and the remainder of the paper are produced by applying a syntactic translation---a version of the functional interpretation---to the original proofs.  Several versions of the functional interpretation have been developed for nonstandard analysis \cite{MR938372, MR2964881,MR3325571}.  The specific version we use is detailed in \cite{1804.10809}.  The results in this paper are not obtained by an entirely mechanical application of that translation.  As is typical in proof mining, a certain amount of ``hand optimization'' was necessary, as was some care in choosing the right formulations and right proofs to make the application of the functional interpretation more manageable.

While the full generality of the interpretation would extend this paper unreasonably, we can now outline some of the main ideas.  This discussion is purely motivation, and is not needed to follow the proofs in the remainder of the paper.

Fields and differential fields are given by first-order theories in the language of rings or differential rings (the language with symbols for $0$, $1$, $+$, $\cdot$ and, in the differential case, finitely many derivatives $\delta_1,\ldots,\delta_m$).  However the statements we are interested in---for instance, the theorems in the previous section---can not be expressed in this language.

One natural way to express these statements is by allowing quantifiers over natural numbers as well.  For example, consider ``internal flatness''---the fact that the ring $K[X_{[n]}]_{int}$ is flat over $K[X_{[n]}]$.  If we have already fixed $K[X_{[n]}]_{int}$ and polynomials $f_1,\ldots,f_k\in K[X_{[n]}]$, and a degree $d$, the statement
\begin{quote}
  every solution to $\sum_if_iy_i=0$ is a linear combination of solutions of degree $\leq d$
\end{quote}
is expressed by a first-order formula (with parameters for the elements of $K$).  To get the general statement of flatness, we need to quantify over the degree $d$ and over the degree of the polynomials $f_1,\ldots,f_k$ and the number of variables---the flatness of $K[X_{[n]}]_{int}$ is expressed by
\begin{align*}
\forall^{\mathbb{N}}n\ \forall^{\mathbb{N}}b\ \forall f_1,\ldots,f_k\in K[X_{[n]}]_{\leq b}\ \exists^{\mathbb{N}}d &\text{ every solution to $\sum_if_iy_i=0$ is a linear }\\
&\text{ combination of solutions of degree }\leq d.
\end{align*}
Taken literally, we might expect that we need to quantify over the number of polynomials as well, but the dimension of the space of polynomials is bounded already by $b$ and $n$.  (This is a small example of the sort of hand optimization one can do to simplify the work needed.)  Strictly speaking, these quantifiers should be understood as countable conjunctions, not simply quantifiers, because the specific first-order formula depends on the values $b$ and $d$.

This is an example of what we mean by a $\Pi_2$ sentence for the purposes of this paper: the natural number quantifers follow the pattern $\forall\exists$.  The ``matrix''---the purely first-order part on the inside of the sentence---has no computational content (this is similar to the role of purely internal formulas in \cite{MR2964881}), so we only count the numeric quantifiers when considering the sentence's complexity.

Given that a sentence of this kind---a sentence built from first-order formulas using conjunctions and disjunctions---holds in every ultraproduct of rings, our interpretation translates it to some fact which holds in every ring.  (There is an equivalent, more purely syntactic version of this claim, which does not refer to ultraproducts.  We could work in a suitable theory of nonstandard arithmetic, with quantifiers over the standard natural numbers as in \cite{MR2964881} taking the place of our countable conjunctions and disjunctions.  The original ultraproduct proofs could be formalized in such a theory, and a suitable metatheory would show that it is possible to obtain a purely standard proof of the same conclusion.)

With $\Pi_2$ statements, the interpretation is quite direct: it tells us that in this case we can shift all the numeric quantifiers to the outside, as in
\begin{align*}
\forall^{\mathbb{N}}n\ \forall^{\mathbb{N}}b\ \exists^{\mathbb{N}}d\ \forall f_1,\ldots,f_k\in K[X_{[n]}]_{\leq b}&\text{ every solution to $\sum_if_iy_i=0$ is a linear }\\
&\text{ combination of solutions of degree }\leq d
\end{align*}
and then obtain a bound on $d$ as a function of $b$ and $n$.  That is, there is a function $\fn{D}$ so that for \emph{every} ring $K$,
\begin{align*}
\forall^{\mathbb{N}}n\ \forall^{\mathbb{N}}b\ \forall f_1,\ldots,f_k\in K[X_{[n]}]_{\leq b}&\text{ every solution to $\sum_if_iy_i=0$ is a linear }\\
&\text{ combination of solutions of degree }\leq \fn{D}(b).
\end{align*}
This is what we called internal flatness bounded by $\fn{D}$ in the previous section.

Slightly more precisely, if we write $\varphi(n,b,d)$ for the formula expressing
\begin{quote}
for any polynomials $f_1,\ldots,f_k$ of degree $\leq b$, every solution to $\sum_if_iy_i=0$ is a linear combination of solutions of degree $\leq d$,
\end{quote}
there is a theorem (for instance, a suitable formulation of the transfer theorem of nonstandard analysis) which tells us that an ultraproduct $\prod_{\mathcal{U}}K_i$ satisfies $\forall^{\mathbb{N}}n\forall^{\mathbb{N}}b\exists^{\mathbb{N}}d\ \phi(n,b,d)$ if and only if, for each $n$ and $b$, there is a $d$ so that
\[\{i\mid K_i\vDash\phi(n,b,d)\}\in\mathcal{U}.\]

The power of the functional interpretation is giving a version of this equivalence for more complicated statements.  For instance, suppose we know that
\[\prod_{\mathcal{U}}K_i\vDash \forall^{\mathbb{N}}x\exists^{\mathbb{N}}y\forall^{\mathbb{N}}z\ \psi(x,y,z)\]
where, for all natural numbers $x,y,z$, $\psi(x,y,z)$ is a first-order formula.  Then a theorem tells us that, for every $x$ and every function $\fn{Z}:\mathbb{N}\rightarrow\mathbb{N}$, there is a $y$ so that
\[\{i\mid K_i\vDash\phi(x,y,Z(y))\}\in\mathcal{U}.\]

The form of these bounds becomes progressively more complicated as the alternations of countable conjunctions and disjunctions becomes more complicated.  In particular, as the statements become more complicated, one can no longer find exact values with sufficient uniformity to get the equivalence we need; instead, one finds only \emph{bounds} on the values.  For instance, the actual formulation, as given in \cite{1804.10809}, says that 
\[\prod_{\mathcal{U}}K_i\vDash \forall^{\mathbb{N}}x\exists^{\mathbb{N}}y\forall^{\mathbb{N}}z\ \psi(x,y,z)\]
is equivalent to saying that, for every $x$ and every $\fn{Z}:\mathbb{N}\rightarrow\mathbb{N}$, there is a $Y$ so that
\[\{i\mid \exists y\leq Y\ K_i\vDash\phi(x,y,Z(y))\}\in\mathcal{U}.\]
Since $Y$ is a finite natural number, this is equivalent to the version in the previous paragraph.  However the correct inductive definition of the interpretation requires working with the bound $Y$ rather than the exact value $y$, and for more complicated sentences, working with the bounded version is unavoidable.

\section{Hilbert's Basis Theorem, Noetherianity, and the Nullstellensatz}\label{sec:local noetherian}

For some results we will need an effective version of Hilbert's Basis Theorem---that is, of the Noetherianity of $K[X_{[n]}]$.  Such theorems are given without bounds in several places in the literature, such as Hertz \cite{Hertz} and Perdry and Schuster \cite{perdry2011noetherian}. Moreno Soc\'{i}as proved that bounds on the length of ascending chains of polynomial ideals are non-primitive recursive in the number of indeterminates (Cor. 7.5, \cite{socias1992length}). As a warm-up for the differential case, we use our methods to obtain an effective basis theorem and Nullstellensatz. 

To give bounds on Hilbert's Basis Theorem, we use a function given by Figueira et al \cite{MR2858898} to bound witnesses to Dickson's Lemma. (Le\'{o}n S\'{a}nchez and Ovchinnikov give related bounds in \cite{MR3448164}.)  For the remainder of the discussion we fix an arbitrary monotonically increasing function $\bf{D}:\mathbb{N}\rightarrow\mathbb{N}$.
\begin{notation}
Consider a nonempty finite set $X$ with elements from $\mathbb{N}^{n_1}, \dots,\mathbb{N}^{n_r}$. Let $\tau_{X}$ (or simply $\tau$ when $X$ is understood) be the multiset containing one copy of $n_i$ for every element of $X$ belonging to $\mathbb{N}^{n_i}$.

Given any multiset $\tau$ containing a natural number $k>0$, we denote by $\tau_{\langle k,i,{\bf{D} } \rangle}$ the multiset obtained by removing one copy of $k$ from $\tau$ and introducing $k\cdot({\bf{D}}(i)-1)$ new copies of $k-1$. This operation introduces 0 into the multiset if $k=1$. If $\tau$ contains 0, define $\tau_{\langle 0,i,{\bf{D} } \rangle}$ to be the result of removing one copy of $0$ from $\tau$.
\end{notation}

\begin{example}
Suppose $X= \{(1,2,3), (4,5,6), (1,2)\}$. Then $\tau$ is the multiset $\{3,3,2\}$ and $\tau_{\langle 3, i,{\bf{D}} \rangle}$ is the multiset $\{3, 2,\dots ,2\}$ containing $3\cdot({\bf{D}}(i) -1) +1$ copies of $2$. The multiset $\tau_{\langle 2, i,{\bf{D} }\rangle}$ is $\{3,3, 1,\dots, 1\}$ and contains $2\cdot({\bf{D}}(i)-1)$ copies of $1$.
\end{example}

We can compare multisets lexicographically:
\begin{prop}
The collection of finite multisets on $\mathbb{N}$ is well ordered by the relation $\leq_{multi}$ defined as follows:

$\sigma\leq_{multi} \tau$ if and only if $\sigma=\tau$ or $\tau$ contains strictly more copies of $k$ than does $\sigma$, where $k$ is the greatest value such that $\tau$ and $\sigma$ contain different numbers of copies of $k$.
\end{prop}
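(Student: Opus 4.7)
The plan is to verify the order axioms and well-foundedness directly from the definition, with a cleaner conceptual route via ordinals available as a sanity check. Reflexivity and antisymmetry are immediate: the definition grants $\sigma \leq_{multi} \sigma$ explicitly, and if $\sigma \leq_{multi} \tau$ and $\tau \leq_{multi} \sigma$ with $\sigma \neq \tau$, the witnessing $k$ in each direction must be the greatest value at which the two multisets differ, so one cannot simultaneously have strictly more copies of $k$ on both sides. Totality holds because any two distinct finite multisets agree on multiplicities except at finitely many values, hence the greatest $k$ at which they differ is well-defined, and one of the two must have strictly more copies there.

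For transitivity, I would take $\sigma <_{multi} \tau <_{multi} \rho$, write $k_1, k_2$ for the respective top-disagreement levels, and split into the three cases $k_1 > k_2$, $k_1 < k_2$, $k_1 = k_2$. In each case $\sigma$ and $\rho$ agree on all values strictly above $\max(k_1, k_2)$, and a direct count at that level shows $\rho$ has strictly more copies of it than $\sigma$. This is routine bookkeeping but is where most of the writing will go.

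For well-foundedness, suppose toward contradiction that $\tau_1 >_{multi} \tau_2 >_{multi} \cdots$ is an infinite strictly descending chain. A first useful observation is that no $\tau_{i+1}$ can contain a value strictly larger than every value appearing in $\tau_i$, since such a value would become the topmost difference and would place $\tau_{i+1}$ above $\tau_i$; hence every $\tau_i$ is supported on $\{0, 1, \ldots, M\}$ where $M = \max(\tau_1)$. The multiplicity of $M$ in $\tau_i$ is then non-increasing in $i$ (any step that changes level $M$ must decrease it), so it stabilizes from some index on. Past that index, all disagreements occur at levels strictly below $M$, so the multiplicity of $M-1$ is non-increasing and stabilizes too. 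Iterating down through $M, M-1, \ldots, 0$ shows every multiplicity coordinate eventually becomes constant, so the tail of the chain is constant, contradicting strict descent. The main obstacle is just organizing the nested-stabilization indices cleanly; equivalently, and perhaps more transparently, one can observe that $\sigma \mapsto \omega^{s_1} + \cdots + \omega^{s_m}$, with $s_1 \geq \cdots \geq s_m$ enumerating $\sigma$ with multiplicity, is an order-isomorphism from $(\text{finite multisets on }\mathbb{N}, \leq_{multi})$ onto the ordinals below $\omega^\omega$ in Cantor normal form, and inherit well-foundedness from the well-ordering of the ordinals.
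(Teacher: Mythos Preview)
Your proof is correct. The paper actually states this proposition without proof, treating it as a standard fact, so there is nothing to compare against. Your verification of the linear-order axioms is routine and accurate, and both arguments you give for well-foundedness are valid: the level-by-level stabilization works because the support bound $M=\max(\tau_1)$ is preserved down the chain, and the alternative map $\sigma\mapsto\sum_i\omega^{s_i}$ onto the ordinals below $\omega^\omega$ is indeed an order isomorphism (comparing Cantor normal forms is exactly the top-difference comparison you described). The ordinal route is particularly apt here, since the paper's Appendix~\ref{sec:ordinals} develops precisely this kind of ordinal machinery for its later bounds.
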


\begin{example}\ \\
\vspace{-.25cm}
\begin{itemize}

\item $\{1,1,1,1,1\} \leq_{multi} \{2\}$
\item $\{3, 1, 0\} \leq_{multi} \{3,2\}$
\end{itemize}

\end{example}

Note that $\tau_{\langle k,i, {\bf{D}} \rangle}<_{multi} \tau$, whence the following recursive definition makes sense:

\begin{notation}\label{not:local_noetherian}
We define $\mathfrak{m}_{\tau,{\bf{D}}}(i)$ by:

\begin{itemize}
\item $\mathfrak{m}_{\emptyset, {\bf{D}}}(i) =0$.
\item $\mathfrak{m}_{\tau,{\bf{D}}}(i) = 1+ \mathfrak{m}_{\tau_{\langle \text{min }\tau,i,{\bf{D}} \rangle},{\bf{D}}}(i+1)$, where $\tau\neq \emptyset$ and $\text{min }\tau$ is the least element of the multiset $\tau$.
\end{itemize}
\noindent For future convenience, denote the expression $\mathfrak{m}_{\{n\},{\bf{D}}+1}(0)+1$ by  $\mathfrak{m}^*({\bf{D}},n)$. 
\end{notation}

\begin{example}
Let ${\bf{D}}(i)=i+2$. 
\begin{align*}
\mathfrak{m}_{\{2\},{\bf{D}}}(0)&= 1+ \mathfrak{m}_{\{1,1\},{\bf{D}}}(1)\\
& =2 + \mathfrak{m}_{\{1,0,0\},{\bf{D}}}(2)\\
&= 3 +  \mathfrak{m}_{\{1,0\},{\bf{D}}}(3)\\
&= 4 +  \mathfrak{m}_{\{1\},{\bf{D}}}(4)\\
&= 5+  \mathfrak{m}_{\{0,0,0,0,0\},{\bf{D}}}(5)\\
&= 6+  \mathfrak{m}_{\{0,0,0,0\},{\bf{D}}}(6)\\
& \vdots\\
&= 10.
\end{align*}

\end{example}

When $\vec a=(a_1,\ldots,a_n)\in\mathbb{N}^n$, we write $|\vec a|$ to represent $\text{max}_{i\leq n} \{a_i\}$ (the infinity norm).  We write $(a_1,\ldots,a_n)\preceq(b_1,\ldots,b_n)$ if for each $i\leq n$, $a_i\leq b_i$. The bound we need concerns sequences $\vec{a_1},\vec{a_2},\dots$ such that for each $i$, $|\vec a_i|\leq{\bf{D}}(i)$.

\begin{theorem}[See \cite{MR2858898}, Lemma V.I] \label{Dickson bound}
Let $\vec{a_1},\vec{a_2},\dots,$ be a sequence in $\mathbb{N}^n$ such that for each $i$, $|\vec a_i|\leq{\bf{D}}(i)$. There exist $i<j\leq \mathfrak{m}^*({\bf D},n)$ such that $\vec a_i\preceq\vec a_j$. 
\end{theorem}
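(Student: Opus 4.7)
The plan is to prove a strengthened statement by well-founded induction on the multiset ordering $\leq_{multi}$, with the strengthening designed to mirror the recursive definition of $\mathfrak{m}_{\tau,\bf{D}}$. Specifically, given a multiset $\tau$, a starting index $i_0$, and a ``configuration'' consisting of $|\tau|$ sub-sequences whose initial dimensions (with multiplicity) form $\tau$ and whose elements live at global indices $\geq i_0$ with coordinate norms bounded by $\bf{D}$ at each index, the claim is that two comparable vectors from the same sub-sequence appear within $\mathfrak{m}_{\tau,\bf{D}}(i_0)$ global steps. The theorem itself is the case $\tau=\{n\}$ with a single sub-sequence $\vec a_1,\vec a_2,\ldots$ in $\mathbb{N}^n$.

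The base case $\tau=\emptyset$ is vacuous since $\mathfrak{m}_{\emptyset,\bf{D}}(i_0)=0$. For the inductive step, let $k=\min\tau$ and look at the first unprocessed element $\vec v\in\mathbb{N}^k$ (at some index $i_0$) of a dimension-$k$ sub-sequence. Either some later element of that sub-sequence dominates $\vec v$ (in which case we are done in one step), or every later element has a ``bad coordinate'' $\ell\in\{1,\dots,k\}$ with value strictly less than $v_\ell\leq\bf{D}(i_0)$, hence drawn from $\{0,\dots,\bf{D}(i_0)-1\}$. Partitioning by the bad-coordinate/value signature yields up to $k\cdot\bf{D}(i_0)$ sub-sub-sequences; projecting out the fixed coordinate turns each into a sequence in $\mathbb{N}^{k-1}$. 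The new configuration (now starting at $i_0+1$) has multiset $\tau'=\tau_{\langle k,i_0,\bf{D}+1\rangle}$, where the shift $\bf{D}\rightsquigarrow\bf{D}+1$ is used so that the recursion's factor $(\bf{D}+1)(i_0)-1$ equals $\bf{D}(i_0)$, matching the number of possible values. Since $\tau'<_{multi}\tau$, the inductive hypothesis applies and yields the total bound $1+\mathfrak{m}_{\tau',\bf{D}+1}(i_0+1)=\mathfrak{m}_{\tau,\bf{D}+1}(i_0)$. Specializing to $\tau=\{n\}$, $i_0=0$, and allowing for the initial element $\vec a_1$ itself, one obtains $j\leq 1+\mathfrak{m}_{\{n\},\bf{D}+1}(0)=\mathfrak{m}^*(\bf{D},n)$, exactly as required.

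The main obstacle is formulating the ``configuration of interleaved sub-sequences'' precisely enough to carry the induction: one must clarify how the partitioning step lines up the new sub-sub-sequences with the shared global index stream (and with the existing sub-sequences from $\tau\setminus\{k\}$), since the recursion's dependence on $i_0$ treats them as sharing a single clock. A secondary bookkeeping point is the shift to $\bf{D}+1$, which is easy to overlook but essential, because the range $\{0,\dots,\bf{D}(i_0)-1\}$ has cardinality $\bf{D}(i_0)$ rather than $\bf{D}(i_0)-1$. Once the generalized statement is set up, the induction itself is routine and the original theorem follows by a single application with $\tau=\{n\}$.
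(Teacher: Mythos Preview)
The paper does not prove this theorem; it is quoted from Figueira et al.\ as a ``shortcut'' (see the sentence introducing Theorem~\ref{Dickson bound}). Your outline is essentially the argument from that reference: decompose a controlled bad sequence via its first element using bad-coordinate/value partitioning to drop dimension, and carry the recursion on the multiset. You also correctly account for the shift $\mathbf{D}\rightsquigarrow\mathbf{D}+1$ and the trailing $+1$ in $\mathfrak{m}^*$.

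One step deserves more care than you give it. Your inductive step begins ``let $k=\min\tau$ and look at the first unprocessed element \ldots\ of a dimension-$k$ sub-sequence.'' But if the configuration really shares a single global clock, the element sitting at the current global index may lie in a component of dimension $k'\neq\min\tau$; processing it yields $\tau_{\langle k',i_0,\mathbf{D}+1\rangle}$, whereas the defining recursion for $\mathfrak{m}$ always reduces $\min\tau$. To close the induction you therefore need the auxiliary inequality
\[
\mathfrak{m}_{\tau_{\langle k',i,\mathbf{D}\rangle},\mathbf{D}}(i{+}1)\ \le\ \mathfrak{m}_{\tau_{\langle \min\tau,\,i,\mathbf{D}\rangle},\mathbf{D}}(i{+}1)\qquad\text{for every }k'\in\tau,
\]
i.e.\ reducing the smallest available dimension always leaves the largest residual bound. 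This follows from monotonicity of $\mathfrak{m}_{\sigma,\mathbf{D}}(i)$ in $\sigma$ with respect to $\leq_{multi}$ (since $\tau_{\langle k',i,\mathbf{D}\rangle}<_{multi}\tau_{\langle \min\tau,i,\mathbf{D}\rangle}$ when $k'>\min\tau$), which itself requires a short separate induction. This is related to, but not the same as, the interleaving obstacle you flag; make sure your write-up isolates and proves it.
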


\begin{remark} The existence of bounds follows from Dickson's Lemma, which implies that there are no infinite \emph{bad sequences} such that $\vec a_i \not\preceq \vec a_j$ for all $i<j$ (equivalently, $(\mathbb{N}^n,\preceq)$ is a well-quasiordering) \cite{MR2858898}. 
\end{remark}

We now give an effective version of Hilbert's Basis Theorem.
\begin{theorem}\label{local noetherianity}
Suppose $(\Lambda_1)\subseteq (\Lambda_2)\subseteq \cdots \subseteq K[X_{[n]}]$ with $\Lambda_i\subseteq K[X_{[n]}]_{\leq{\bf{D}}(i)}$. Then there is a $j\leq  \mathfrak{m}^*({\bf D},n)$ such that $(\Lambda_{j+1})\subseteq (\Lambda_j)$.
\end{theorem}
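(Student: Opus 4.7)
The plan is to prove the theorem by contradiction using the Dickson bound of Theorem \ref{Dickson bound}. Assume that for every $j \leq \mathfrak{m}^*(\mathbf{D}, n)$ we have $(\Lambda_{j+1}) \not\subseteq (\Lambda_j)$; equivalently, at each such $j$ some polynomial of $\Lambda_{j+1}$ lies outside $(\Lambda_j)$. The goal is to extract from this assumption a bad sequence $\vec{a}_1, \vec{a}_2, \ldots \in \mathbb{N}^n$ (no $\vec{a}_i \preceq \vec{a}_j$ for $i<j$) whose coordinates are bounded in terms of $\mathbf{D}$ in precisely the form required to contradict Theorem \ref{Dickson bound}.

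Fix a graded monomial order on $K[X_{[n]}]$, such as graded lex. At each step $j$, I produce a polynomial $f_j \in (\Lambda_{j+1})$ with $f_j \notin (\Lambda_j)$ and $\deg f_j \leq \mathbf{D}(j+1)$, then take $\vec{a}_j$ to be the exponent vector of $\operatorname{LM}(f_j)$. Construction: pick any starting $f \in \Lambda_{j+1} \setminus (\Lambda_j)$, which exists under the contradiction hypothesis; then, as long as some earlier $\operatorname{LM}(f_i)$ with $i<j$ divides $\operatorname{LM}(f)$, replace $f$ by $f - c\,X^{\alpha} f_i$ where the scalar $c$ and monomial $X^\alpha$ are chosen to cancel the leading terms. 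Each reduction strictly decreases $\operatorname{LM}(f)$ in the monomial order, so the well-ordering property forces termination at some $f_j$ whose leading monomial is divisible by none of the earlier $\operatorname{LM}(f_i)$, i.e. $\vec{a}_i \not\preceq \vec{a}_j$ for all $i<j$.

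Three properties need to be verified about $f_j$. First, each earlier $f_i$ lies in $(\Lambda_{i+1}) \subseteq (\Lambda_j)$ (using $i+1 \leq j$), so subtracting $K$-scalar multiples of monomials times $f_i$ keeps $f$ inside $(\Lambda_{j+1})$ and outside $(\Lambda_j)$; in particular $f$ can never reduce to $0$, so $f_j \neq 0$ and $\operatorname{LM}(f_j)$ is defined. Second, since the order is graded, total degree is non-increasing under reduction, giving $\deg f_j \leq \deg(\text{initial } f) \leq \mathbf{D}(j+1)$ and hence $|\vec{a}_j| \leq \mathbf{D}(j+1)$. Iterating produces the sequence, and applying Theorem \ref{Dickson bound} delivers $i<j$ with $j \leq \mathfrak{m}^*(\mathbf{D}, n)$ and $\vec{a}_i \preceq \vec{a}_j$, contradicting the construction. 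The two ``$+1$''s in the definition $\mathfrak{m}^*(\mathbf{D}, n) := \mathfrak{m}_{\{n\}, \mathbf{D}+1}(0) + 1$ are set up precisely to absorb the index offset between our sequence and the hypothesis of Theorem \ref{Dickson bound}.

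The main obstacle is engineering the inductive construction so that all three properties---membership in $(\Lambda_{j+1}) \setminus (\Lambda_j)$, the degree bound, and non-divisibility by the earlier $\operatorname{LM}(f_i)$---hold simultaneously for the output $f_j$. The graded monomial order is the key tool here: its well-ordering makes the reduction process halt at a leading monomial that resists further simplification, while its grading prevents the degree from blowing up, since each reduction strictly lowers the leading monomial (and hence the total degree). A secondary bookkeeping matter is aligning our indexing with the hypothesis of Theorem \ref{Dickson bound}, but this is exactly what the shifts built into Notation \ref{not:local_noetherian} handle automatically.
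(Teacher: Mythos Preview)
Your approach is essentially the paper's: build a sequence of polynomials by reduction so that their leading exponent vectors form a bad sequence for Theorem \ref{Dickson bound}. One indexing wrinkle: you draw $f_j$ from $\Lambda_{j+1}$, giving $|\vec a_j|\le\mathbf{D}(j+1)$, whereas the paper draws $f_j$ from $\Lambda_j$ to get $|\vec a_j|\le\mathbf{D}(j)$ and apply Theorem \ref{Dickson bound} as stated; your claim that the two ``$+1$''s in $\mathfrak{m}^*$ absorb this shift is not quite right, since those shifts are already consumed internally by Theorem \ref{Dickson bound}, but the fix (reindex, taking $f_j$ from $\Lambda_j$) is trivial. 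Your reduction loop---iterate until no earlier $\operatorname{LM}(f_i)$ divides the current leading monomial---is in fact stated more carefully than the paper's single-pass description, which as written does not obviously guarantee that the final leading monomial avoids divisibility by \emph{all} earlier $\operatorname{LM}(f_i)$.
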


\begin{proof}
We associate the monomial $X_1^{a_1}\cdots X_n^{a_n}$ with the tuple $\vec a$.  We place a linear ordering on $\mathbb{N}^n$, and so also on monomials, by saying $\vec a<\vec b$ if either $\sum_{1\leq k\leq n}a_k<\sum_{1\leq k\leq n}b_k$ or both $\sum_{1\leq k\leq n}a_k=\sum_{1\leq k\leq n}b_k$ and, taking $l$ least so $a_l\neq b_l$, $a_l<b_l$.  (The linear ordering $<$ should not be confused with the partial ordering $\preceq$.)
 
We define a sequence of elements of $K[X_{[n]}]$ as follows.  Suppose we have defined $f_i$ for $i<j$ so that $f_i\in K[X_{[n]}]_{\leq {\bf D}(i)}$. 
We \emph{reduce} each element of $\Lambda_j$ by $f_1,\ldots,f_{j-1}$. That is, if $f\in \Lambda_j$ and $f$ contains monomials divided by the leading monomial (i.e., greatest with respect to $<$) of $f_1$, divide $f$ by $f_1$; the remainder $r_1$ is a reduction of $f$ with respect to $f_1$. 

The total degree $\text{deg}(r_1)= \text{deg}(f-\alpha\cdot f_1)$ for some $\alpha$ such that $\text{deg}(\alpha\cdot f_1)\leq\text{deg}(f)$, so $r_1\in K[X_{[n]}]_{\leq\textbf{D}(j)}$.  Reduce $r_1$ with respect to $f_2$, the resulting remainder with respect to $f_3$, and so on. Since $\Lambda_j\subseteq K[X_{[n]}]_{\leq\textbf{D}(j)}$, it follows that the reductions are contained in $K[X_{[n]}]_{\leq {\bf D}(j)}$.  If all elements reduce to $0$, we are done.  Otherwise, we take $f_j$ to be the reduction with the greatest leading monomial.

Let $\vec a_1,\ldots,\vec a_j,\ldots$ be the leading monomials of $f_1,\ldots,f_j,\ldots$.  If $i<j$, then $\vec a_i\not\preceq\vec a_j$ because $f_j$ is reduced with respect to $f_i$ and hence $\vec a_i$ does not divide $\vec a_j$.  Note that if the sum of the entries of $\vec a_j$ is bounded by ${\bf{D}}(j)$, then $|\vec a|\leq {\bf{D}}(j)$ so by  \ref{Dickson bound} this process must terminate at some $j\leq  \mathfrak{m}^*({\bf D},n)$.
\end{proof}

The final result from \cite{vdDS} we need is Corollary 2.7(ii).  We include the following proof, which is the unwinding of the proof in \cite{vdDS}.
\begin{theorem}[Based on \cite{vdDS}, Cor 2.7(ii)]\label{thm:vdDS_2.7}
For any $n,d$ there is $m={\mathfrak{m}^*(i\mapsto \mathfrak{p}_n^i(d),n)}$ so that if $\Lambda\subseteq K[X_{[n]}]_{\leq d}$ and $f^k\in(\Lambda)$ (for any $k$) then $f=\sum_{i}c_ir_i$ where each $r_i^{2^m}\in(\Lambda)$.
\end{theorem}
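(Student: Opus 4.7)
The plan is to iterate the primality test of Theorem \ref{thm:vdds_2.5}: each time $(\Lambda_\nu)$ fails to be prime at the relevant degree bound, split it into two proper extensions $(\Lambda_\nu,g_\nu)$ and $(\Lambda_\nu,h_\nu)$ whose witnesses satisfy $g_\nu h_\nu\in(\Lambda_\nu)$, and use this to double the exponent of $f$ each time we move up one level. Concretely, build a rooted binary tree of ideals: the root is $(\Lambda)$, and at each node $\nu$ of depth $i$ whose generators lie in $K[X_{[n]}]_{\leq \mathfrak{p}_n^i(d)}$, apply Theorem \ref{thm:vdds_2.5} at the bound $\mathfrak{p}_n(\mathfrak{p}_n^i(d))=\mathfrak{p}_n^{i+1}(d)$. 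Either $(\Lambda_\nu)$ is prime (so $\nu$ is a leaf) or one obtains $g_\nu,h_\nu\in K[X_{[n]}]_{\leq \mathfrak{p}_n^{i+1}(d)}$ with $g_\nu h_\nu\in(\Lambda_\nu)$ but $g_\nu,h_\nu\notin(\Lambda_\nu)$; in the latter case branch into the two children $(\Lambda_\nu\cup\{g_\nu\})$ and $(\Lambda_\nu\cup\{h_\nu\})$, whose generators have degree at most $\mathfrak{p}_n^{i+1}(d)$, as required at depth $i+1$.

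Each root-to-leaf path is a strictly increasing chain of finitely generated ideals in $K[X_{[n]}]$ whose generators at step $i$ have degree at most $\fn{D}(i):=\mathfrak{p}_n^i(d)$. Theorem \ref{local noetherianity} bounds the length of such a chain by $\mathfrak{m}^*(\fn{D},n)=m$, so every branch terminates within $m$ steps and the tree has depth at most $m$, completing the finite construction.

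I then induct from the leaves on the height of the subtree below $\nu$ to establish $f^{2^{\operatorname{height}(\nu)}}\in(\Lambda_\nu)$. At a leaf $\sigma$, $(\Lambda_\sigma)$ is prime and contains $(\Lambda)$, so from $f^k\in(\Lambda)$ we obtain $f\in(\Lambda_\sigma)$, which is the base case. For an internal node $\nu$ with children of heights $a$ and $b$, the inductive hypothesis gives $f^{2^{\max(a,b)}}\in(\Lambda_\nu,g_\nu)$ and $f^{2^{\max(a,b)}}\in(\Lambda_\nu,h_\nu)$, and multiplying these memberships yields
\[f^{2^{\max(a,b)+1}}\in(\Lambda_\nu,g_\nu)(\Lambda_\nu,h_\nu)\subseteq(\Lambda_\nu,g_\nu h_\nu)=(\Lambda_\nu),\]
using $g_\nu h_\nu\in(\Lambda_\nu)$. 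Applied at the root, this gives $f^{2^m}\in(\Lambda)$, so taking $c_1=1$ and $r_1=f$ satisfies $f=c_1r_1$ with $r_1^{2^m}\in(\Lambda)$. The main obstacle is the bookkeeping: one must match the degree bound $\mathfrak{p}_n^{i+1}(d)$ produced by Theorem \ref{thm:vdds_2.5} at depth $i$ with the input to Theorem \ref{local noetherianity}, so that the chain-length bound really comes out to $\mathfrak{m}^*(i\mapsto\mathfrak{p}_n^i(d),n)$; after that the algebra reduces to the identity $(I,g)(I,h)\subseteq(I)$ when $gh\in I$, which is precisely the step that converts binary branching into the exponent doubling that produces the $2^m$ in the statement.
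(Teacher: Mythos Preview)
Your proof is correct and in fact establishes the stronger conclusion $f^{2^m}\in(\Lambda)$ (so one may take a single $r_1=f$). The tree construction and the exponent-doubling induction are essentially the same as the paper's, but the two proofs diverge after the tree is built. The paper's leaf criterion is $f\in(\Gamma_\sigma)$ rather than primality of $(\Gamma_\sigma)$; more importantly, instead of running the induction on $f$ itself, the paper first invokes internal flatness (Theorem~\ref{thm:flat}) to write $f=\sum_j d_j f_j$ where each $f_j$ has degree at most $\mathfrak{d}_n(\mathfrak{p}_n^m(d)2^m)$ and lies in every leaf ideal, and only then runs the doubling argument on the $f_j$. What this buys is that the $r_i$ produced have degree bounded uniformly in $n,d$ (independent of $\deg f$), a refinement that the stated theorem does not record but which is the kind of extra datum one expects from unwinding the ultraproduct proof. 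Your route is more elementary---it avoids flatness entirely---and suffices for the theorem as stated; the paper's route is the literal output of the functional-interpretation analysis and carries the additional degree information.
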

\begin{proof}
We will produce a tree of finitely generated ideals as follows.  When $\sigma$ is a node of this tree, we write $\Gamma_\sigma$ for the finite set of generators.  We will inductively maintain that:
\begin{itemize}
\item $\Gamma_\sigma\subseteq K[X_{[n]}]_{\leq \mathfrak{p}_n^{|\sigma|}(d)}$, and
\item if $\sigma\sqsubseteq\tau$ then $\Gamma_\sigma\subseteq\Gamma_\tau$.
\end{itemize}
We begin by setting $\Gamma_{\langle\rangle}=\Lambda$.  Given $\Gamma_\sigma$, we check whether $f\in(\Gamma_\sigma)$; if so, $\sigma$ is a leaf.  If not, since $f^k\in(\Gamma_\sigma)$, $(\Gamma_\sigma)$ is not prime, so we find $gh\in(\Gamma_\sigma)$ with $g,h\not\in(\Gamma_\sigma)$ and $\deg(g),\deg(h)\leq \mathfrak{p}_n(\mathfrak{p}_n^{|\sigma|}(d))$.  We define $\Gamma_{\sigma^\frown\langle 0\rangle}=\Gamma_\sigma\cup\{g\}$ and $\Gamma_{\sigma^\frown\langle 1\rangle}=\Gamma_\sigma\cup\{h\}$.  Inductively we see that $\Gamma_\sigma\subseteq K[X_{[n]}]_{\leq \mathfrak{p}_n^{|\sigma|}(d)}$.

The previous theorem ensures that each branch has length $\leq \mathfrak{m}^*(i\mapsto \mathfrak{p}_n^i(d),n)$, so the tree has at most $2^{\mathfrak{m}^*(i\mapsto \mathfrak{p}_n^i(d),n)}$ leaves.  Take $m={\mathfrak{m}^*(i\mapsto \mathfrak{p}_n^i(d),n)} $.  Note that the ideal corresponding to each leaf contains $f$, so for each $\sigma$ we have $f=\sum_i c_{i,\sigma}\gamma_{i,\sigma}$.  Fix some leaf $\sigma_0$, and consider the system of equations of the form $\sum_i \gamma_{i,\sigma_0}y_{i,\sigma_0}-\sum_j \gamma_{j,\sigma}y_{j,\sigma}=0$.  This is a system of at most $2^m$ equations 
whose coefficients have degree at most $\mathfrak{p}_n^m(d)$.  The $\{c_{i,\sigma}\}$ give a solution, and so by Lemma \ref{thm:flat}, there are solutions $c'_{i,j,\sigma}$ such that $c_{i,\sigma}=\sum_j d_jc'_{i,j,\sigma}$ and the $c'_{i,j,\sigma}$ have degree at most $\mathfrak{d}_n(\mathfrak{p}_n^m(d)2^m)$.

Let $f_{j}=\sum_i \gamma_{i,\sigma_0}c'_{i,j,\sigma_0}$.  Note that, since for each $j, j'\text{ and }\sigma$, $\sum_i c'_{i,j,\sigma_0}\gamma_{i,\sigma_0}-\sum_{i'}c'_{i',j',\sigma}\gamma_{i',\sigma}=0$, also $f_j=\sum_{i'} c'_{i',j',\sigma}\gamma_{i',\sigma}$, so $f_j\in(\Gamma_\sigma)$ for each leaf $\sigma$.  We now show inductively that if $|\sigma|=i$ then $f_j^{2^{m-i}}\in(\Gamma_\sigma)$.  For leaves this is immediate.  If $f_j^{2^{m-i}}\in(\Gamma_{\sigma^\frown\langle 0\rangle})\cap(\Gamma_{\sigma^\frown\langle 1\rangle})$, recall that there are $g,h$ so $\Gamma_{\sigma^\frown\langle 0\rangle}=\Gamma_\sigma\cup\{g\}$ and $\Gamma_{\sigma^\frown\langle 1\rangle}=\Gamma_\sigma\cup\{h\}$, so $f_j^{2^{m-i}}=\sum_i \gamma_{i,\sigma}u_i+gu=\sum_i \gamma_{i,\sigma}v_i+hv$, so
\[f_j^{2\cdot 2^{m-i}}=(\sum_i \gamma_{i,\sigma}u_i+gu)(\sum_i \gamma_{i,\sigma}v_i+hv)=\sum_i \gamma_{i,\sigma}u'_i+ghuv,\]
so $f_j^{2^{m-(i-1)}}\in(\Gamma_\sigma)$.

In particular, $f_j^{2^m}\in(\Gamma)$.  Since $f=\sum_i c_{i,\sigma_0}\gamma_{i,\sigma_0}=\sum_i \sum_j d_j c'_{i,j,\sigma_0}\gamma_{i,\sigma_0}=\sum_j d_j f_j$, we have shown the claim.
\end{proof}

In fact, these bounds are embarrassingly poor compared to those given by a different method.
\begin{theorem}\label{thm:rad_limit}
Suppose $\Lambda\subseteq K[X_{[n]}]_{\leq d}$ and $f^k\in(\Lambda)$ with $\deg(f)\leq d$.  Then $f^{\mathfrak{d}_{n+1}(d+1)}\in(\Lambda)$.
\end{theorem}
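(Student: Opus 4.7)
The plan is to use Rabinowitsch's trick in combination with the effective ideal membership bound supplied by Lemma \ref{thm:faithful}.

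First, I would pass to the polynomial ring $K[X_{[n+1]}] = K[X_{[n]}, Y]$ and consider the ideal generated by $\Lambda \cup \{1 - Yf\}$. The key observation is that $f^k \in (\Lambda)$ forces $1 \in (\Lambda, 1-Yf)$: indeed, from the identity
\[
1 \;=\; (1 - Yf)\bigl(1 + Yf + \cdots + (Yf)^{k-1}\bigr) + Y^k f^k,
\]
the first summand lies in $(1 - Yf)$ and the second in $(\Lambda)$ because $f^k \in (\Lambda)$. Note that every generator in $\Lambda \cup \{1 - Yf\}$ has total degree at most $d+1$, since $\Lambda \subseteq K[X_{[n]}]_{\leq d}$ and $1 - Yf$ has degree $\deg(f) + 1 \leq d+1$; the right-hand side $1$ trivially has degree $\leq d+1$.

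Next I would invoke Lemma \ref{thm:faithful} in $n+1$ variables with bound $b = d+1$. It yields polynomials $g_0, g_1, \ldots, g_r \in K[X_{[n]}, Y]$, each of total degree at most $\mathfrak{d}_{n+1}(d+1)$, such that
\[
1 \;=\; g_0(X,Y)\,(1 - Yf) + \sum_{i \geq 1} g_i(X,Y)\,\lambda_i,
\]
where $\lambda_i$ enumerate $\Lambda$. Since $\deg_Y(g_i) \leq \mathfrak{d}_{n+1}(d+1)$ as well, the standard Rabinowitsch substitution $Y \mapsto 1/f$ followed by multiplication by $f^{\mathfrak{d}_{n+1}(d+1)}$ clears denominators: the $(1-Yf)$ term vanishes, and each $f^{\mathfrak{d}_{n+1}(d+1)} g_i(X, 1/f)$ becomes an honest polynomial in $K[X_{[n]}]$. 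This gives $f^{\mathfrak{d}_{n+1}(d+1)} = \sum_i h_i(X)\lambda_i \in (\Lambda)$, which is the desired conclusion.

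The proof has no real obstacle: the only nontrivial input is Lemma \ref{thm:faithful}, which packages the effective flatness bound and gives degree-uniform ideal membership. The gain in efficiency over Theorem \ref{thm:vdDS_2.7} comes from avoiding Noetherianity entirely---one bypasses the tree-of-primes construction by inflating the dimension by one and applying the single-exponential flatness bound.
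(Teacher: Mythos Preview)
Your proof is correct and follows essentially the same route as the paper: Rabinowitsch's trick combined with Lemma~\ref{thm:faithful} in $n+1$ variables with bound $d+1$, then the substitution $Y\mapsto 1/f$. The only difference is cosmetic---you exhibit the explicit geometric-series identity for $1\in(\Lambda,1-Yf)$, whereas the paper simply cites ``the Nullstellensatz'' for the same fact; your version is in that respect slightly more self-contained.
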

\begin{proof}
  We use the Rabinowitsch trick: since $f^k\in(\Lambda)$, by the Nullstellensatz we have $1=\sum_ig_i\lambda_i+g(1-Yf)$ for some $\lambda_i\in \Lambda$ and $g_i,g\in K[X_{[n]},Y]$. By Lemma \ref{thm:faithful}, we may assume that the $g_i$ have degree $\leq \mathfrak{d}_{n+1}(d+1)$.  Therefore, substituting $1/f$ for $Y$ and multiplying both sides by $f^{\mathfrak{d}_{n+1}(d+1)}$ to clear denominators, we get $f^{\mathfrak{d}_{n+1}(d+1)}=\sum_i g'_i\lambda_i$.
\end{proof}

\noindent Note, however, that the proof-mined result is a little more uniform: unlike in \ref{thm:rad_limit}, there is no restriction on the degree of $f$ in \ref{thm:vdDS_2.7}. 

\section{Bounds in Differential Polynomial Rings}\label{sec:bounds differential}

\subsection{Rankings and Faithful Flatness}

We now turn to the differential case. Henceforth we fix a field $K$ of characteristic 0 equipped with a set $\Delta=\{\delta_1,\ldots,\delta_m\}$ of commuting partial derivations (i.e., additive homomorphisms satisfying the usual product rule).  We write $\Theta=\{\delta_1^{k_1}\cdots\delta^{k_m}_m\mid k_1,\ldots,k_m\geq 0\}$ for the set of $\Delta$-operators; for convenience we will also refer to these as derivations.  By the term ``derivative'' we mean an expression like $\theta X_i$; i.e., a differential indeterminate $X_i$ to which a derivation $\theta\in \Theta$ has been applied. The \emph{order} of $\theta X_i$ (or $\theta$, if we only care about the derivation) is $\sum_{j=1}^m k_j$.

\begin{definition}\label{not:ranking}
A  \emph{ranking} $<$ on a set of derivatives is a well-ordering such that for all derivatives $u,v$ and nontrivial $\theta\in \Theta$:
\begin{enumerate}
\item If $u< v$, then $\theta u < \theta v$ and
\item $u<\theta u$.
\end{enumerate}
\end{definition}

Throughout our discussion, we assume a fixed ranking of order-type $\omega$, so we can associate a derivative $\theta X_i$ to a natural number $o(\theta X_i)$. In order to give concrete bounds, we further assume an \emph{orderly ranking} on derivatives: $\delta_1^{k_1}\cdots\delta^{k_m}_mX_r <\delta_1^{l_1}\cdots\delta^{l_m}_mX_s $ if and only if $\sum_i k_i < \sum_i l_i$ or $\sum_i k_i = \sum_i l_i$ and $k_1=l_1, k_2=l_2,\dots, k_i<l_i$ for some $1\leq i\leq m$ or all these quantities are the same and $r<s$. This allows us to equate the differential polynomial ring $K\{X_{[n]}\}$ in $n$ differential indeterminates with the algebraic polynomial ring $K[Z_1,\ldots]$ in countably many indeterminates, where $Z_{o(\theta X_i)}$ is associated with $\theta X_i$. We write $o(f)$ for $o(\theta X_i)$, where $\theta X_i$ is the highest-ranking derivative in $f$.

There are two natural gradings we might consider: either treating the degree of polynomials and the number of algebraic indeterminates separately, or combining these into a single grading.

\begin{definition}\label{def:diff degree}
  We write $K[Z_{[b]}]=K[Z_1,\ldots,Z_b]$.

  $K\{X_{[n]}\}_{\leq b}$ is $K[Z_{[b]}]_{\leq b}$.

  $K\{X_{[n]}\}_{\leq b,d}$ is $K[Z_{[b]}]_{\leq d}$.
\end{definition}

\begin{remark}\label{rmk:order vs ranking}
Order and ranking are related but not equal. The ranking can be significantly greater because the number of derivatives of a given order grows with the order. For example, using our chosen orderly ranking, if $n=1,m=2$, then $\delta_2X_1$ is the second-least derivative in the ranking but $\delta_1\delta_2X_1$ is the fifth-least even though the order only increased by 1. There are ${N+ m-1 \choose m-1}\cdot n$ derivatives of order $N$ in $K\{X_{[n]}\}, \Delta=\{\delta_1,\dots,\delta_m\}$. It follows that the ranking of a derivative of order $N$ is at most ${N+ m-1 \choose m-1}\cdot n\cdot (N+1)$.
\end{remark}

\begin{definition}
  The \emph{leader} of a differential polynomial $f\in K\{X_{[n]}\}\setminus K$ is the greatest derivative (in the ranking) appearing in $f$.  The \emph{initial} $I_f$ of $f$ is the coefficient of the highest-degree term in the leader of $f$, considering $f$ as a univariate polynomial in the leader. The \emph{separant} $S_f$ of $f$ is the initial of any proper derivative $\theta f$ of $f$ (equivalently, the formal partial derivative in the usual calculus sense with respect to the leader.) The \emph{rank} of $f$ is the ordered pair $(\mu_f, deg(\mu_f))$ consisting of the leader $\mu_f$ of $f$ and the highest degree in which it appears in $f$; ranks are compared lexicographically.
\end{definition}

See \cite{MR1921694} for further discussion and examples of rankings, leaders, and related notions. 

We first show a version of internal flatness for $K\{X_{[n]}\}$.
\begin{lemma}[Based on \cite{HTKM}, 4.1, flatness]
Whenever $f_1,\ldots,f_k\in K\{X_{[n]}\}_{\leq b}$ and $\sum_i g_if_i=0$, there exist $h_{ij}\in K\{X_{[n]}\}_{\leq b,\mathfrak{d}_b(b)}$ and $c_j\in K\{X_{[n]}\}$ so that $\sum_i h_{ij}f_i=0$ for each $j$ and $\sum_j c_jh_{ij}=g_i$ for each $i$. 
\end{lemma}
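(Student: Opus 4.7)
The plan is to reduce this to an application of Theorem \ref{thm:flat} for ordinary polynomial rings, together with the observation that $K\{X_{[n]}\}$ is a free module over a polynomial subring in only the variables appearing in the $f_i$. Concretely, since each $f_i\in K\{X_{[n]}\}_{\leq b}=K[Z_{[b]}]_{\leq b}$ by Definition \ref{def:diff degree}, the $f_i$ only mention the first $b$ algebraic indeterminates $Z_1,\ldots,Z_b$ and each has total degree at most $b$. So the first step is to apply Theorem \ref{thm:flat} directly to the polynomial ring $K[Z_{[b]}]$ (which has exactly $b$ variables), with degree bound $b$. This produces $h_{ij}\in K[Z_{[b]}]_{\leq\mathfrak{d}_b(b)}$ satisfying $\sum_i h_{ij}f_i=0$ and generating the full syzygy module of $(f_1,\ldots,f_k)$ over $K[Z_{[b]}]$; by definition these $h_{ij}$ lie in $K\{X_{[n]}\}_{\leq b,\mathfrak{d}_b(b)}$, as required.

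The second step is to upgrade from generating syzygies over $K[Z_{[b]}]$ to generating syzygies over the full ring $K\{X_{[n]}\}=K[Z_1,Z_2,\ldots]$. For this, I would view $K\{X_{[n]}\}$ as the free $K[Z_{[b]}]$-module with basis $\mathcal{M}$ consisting of the monomials in the remaining variables $Z_{b+1},Z_{b+2},\ldots$. Given a solution $(g_1,\ldots,g_k)$ with $g_i\in K\{X_{[n]}\}$, expand each $g_i=\sum_{M\in\mathcal{M}}g_{i,M}M$ with $g_{i,M}\in K[Z_{[b]}]$. Plugging into $\sum_i g_i f_i=0$ and comparing coefficients of each monomial $M\in\mathcal{M}$ (which is possible because the $f_i$ do not involve any $Z_l$ with $l>b$), one obtains $\sum_i g_{i,M}f_i=0$ in $K[Z_{[b]}]$ for each $M$.

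Now applying the first step to each such $K[Z_{[b]}]$-syzygy produces $c_{j,M}\in K[Z_{[b]}]$ with $g_{i,M}=\sum_j c_{j,M}h_{ij}$. Setting $c_j:=\sum_{M\in\mathcal{M}}c_{j,M}M\in K\{X_{[n]}\}$ and reassembling yields
\[
g_i=\sum_M g_{i,M}M=\sum_M\sum_j c_{j,M}h_{ij}M=\sum_j\Bigl(\sum_M c_{j,M}M\Bigr)h_{ij}=\sum_j c_jh_{ij},
\]
which is the desired expression; only finitely many $c_{j,M}$ are nonzero since each $g_i$ involves only finitely many variables, so $c_j$ really is an element of $K\{X_{[n]}\}$.

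There is no real obstacle here: the only thing to check carefully is that the syzygy module of $(f_1,\ldots,f_k)$ over $K\{X_{[n]}\}$ is obtained by extending scalars from $K[Z_{[b]}]$, which is immediate from freeness of the polynomial extension $K[Z_{[b]}]\hookrightarrow K[Z_1,Z_2,\ldots]$ via the coefficient-matching step above. The bound $\mathfrak{d}_b(b)$ in the conclusion is inherited directly from Theorem \ref{thm:flat}, with the polynomial ring's variable count taken to be $b$ (the number of algebraic indeterminates the $f_i$ actually use) rather than $n$ or anything larger.
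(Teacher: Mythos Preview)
Your proposal is correct and follows essentially the same approach as the paper: expand the $g_i$ over a basis of monomials in the extra variables $Z_{b+1},Z_{b+2},\ldots$, match coefficients to reduce to syzygies in $K[Z_{[b]}]$, and then apply the algebraic internal flatness bound (Theorem~\ref{thm:flat}) there. The paper's proof is organized identically, with your $\mathcal{M}$, $g_{i,M}$, $h_{ij}$, and $c_j$ playing the roles of its $W_j$, $g_{ij}$, $u_{ki}$, and $c'_k$.
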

\begin{proof}
  Consider an equation $\sum_{i\leq l}f_iY_i=0$ with the $f_i$ in $K\{X_{[n]}\}_{\leq b}$ and suppose $\sum_i f_ig_i=0$.  We may write the solutions $g_i$ as polynomials in those variables $Z_k$ with $k>b$, with coefficients in $K[Z_{[b]}]$: $g_i=\sum_j g_{ij}W_j$.  Since $W_j$ is transcendental over $K[Z_{[b]}]$, $\sum_i f_ig_i=0$ implies that, for each $j$, $\sum_i f_ig_{ij}=0$.  Internal flatness of $K[Z_{[b]}]$ says that the $g_{ij}$ must be linear combinations of solutions in $K[Z_{[b]}]_{\leq \mathfrak{d}_b(b)}$.  The space of such solutions is finite dimensional, say $\vec u_1,\ldots,\vec u_r$ where each $\vec u_l=\langle u_{l0},\ldots,u_{lk}\rangle$ with $\sum_i f_iu_{li}=0$.  So for each $i$ and  $j$, the $g_{ij}$ must be a linear combination of such solutions, $g_{ij}=\sum_l c_{jl}u_{li}$.  Then $g_i=\sum_j \sum_l c_{jl}u_{li}W_j=\sum_l (\sum_j c_{jl}W_j)u_{li}$.  So, setting $c'_l=\sum_j c_{jl}W_j$, we have $g_i=\sum_l c'_lu_{li}$.
\end{proof}

\begin{lemma}[Based on \cite{HTKM}, 4.1, faithful flatness]
For any $n$ and any $f_i,h\in K\{X_{[n]}\}_{\leq b}$, if $\sum_{i\leq k}f_ig_i=h$ then there are $g'_i\in K\{X_{[n]}\}_{\leq b,\mathfrak{d}_b(b)}$ such that $\sum_{i\leq k}f_ig'_i=h$.
\end{lemma}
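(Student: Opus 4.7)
The plan is to mirror the argument for the preceding flatness lemma, reducing the differential statement to the algebraic faithful flatness of Lemma \ref{thm:faithful} by separating variables according to the ranking.

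First I would unpack the definitions: since we have an orderly ranking and the paper identifies $K\{X_{[n]}\}$ with $K[Z_1, Z_2, \ldots]$ via $Z_{o(\theta X_i)} \leftrightarrow \theta X_i$, Definition \ref{def:diff degree} gives $K\{X_{[n]}\}_{\leq b} = K[Z_{[b]}]_{\leq b}$ and $K\{X_{[n]}\}_{\leq b, \mathfrak{d}_b(b)} = K[Z_{[b]}]_{\leq \mathfrak{d}_b(b)}$. So $f_i, h \in K[Z_{[b]}]_{\leq b}$, while the given solutions $g_i$ a priori live in the full ring $K[Z_1, Z_2, \ldots]$ and may involve high-ranking variables.

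Next I would write each $g_i = \sum_j g_{ij} W_j$, where $\{W_j\}_{j \geq 0}$ enumerates the monomials in the variables $\{Z_k : k > b\}$ with $W_0 = 1$, and each coefficient $g_{ij} \in K[Z_{[b]}]$. Because $f_i$ and $h$ involve only the variables $Z_{[b]}$, and the $W_j$ are algebraically independent over $K[Z_{[b]}]$, the equation $\sum_i f_i g_i = h$ separates by monomial in $\{W_j\}$. Reading off the coefficient of $W_0 = 1$ gives
\[
\sum_{i \leq k} f_i\, g_{i,0} = h,
\]
so $(g_{i,0})_i$ is already a solution in the polynomial ring $K[Z_{[b]}]$ (the other coefficients $g_{ij}$, $j > 0$, satisfy the homogeneous equation and can be discarded for this lemma).

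Finally I would invoke Lemma \ref{thm:faithful} applied to the polynomial ring $K[Z_{[b]}]$ (i.e., with the role of $n$ played by $b$): since $f_i, h \in K[Z_{[b]}]_{\leq b}$ and a solution $(g_{i,0})$ exists, there exist $g'_i \in K[Z_{[b]}]_{\leq \mathfrak{d}_b(b)}$ with $\sum_i f_i g'_i = h$. Translating back, $g'_i \in K\{X_{[n]}\}_{\leq b, \mathfrak{d}_b(b)}$, as required. There is no substantial obstacle here — the only subtlety is the bookkeeping of the grading, specifically that the "extra" variables $Z_k$ with $k > b$ really are polynomially independent over $K[Z_{[b]}]$, so that the separation-by-monomial step is valid; this is immediate since we are working in the free polynomial ring, not in some quotient.
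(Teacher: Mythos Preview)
Your proof is correct and follows essentially the same route as the paper: split each $g_i$ into its $K[Z_{[b]}]$-part and the part involving higher variables, observe that the latter contributes nothing to the equation since $f_i,h\in K[Z_{[b]}]$, and then apply Lemma \ref{thm:faithful} in $K[Z_{[b]}]$. The paper writes this splitting as $g_i=g_i^++g_i^-$ rather than expanding over monomials $W_j$, but your $g_{i,0}$ is exactly their $g_i^+$.
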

\begin{proof}
  Suppose $\sum_i f_ig_i=h$ where $f_i,h\in K\{X_{[n]}\}_{\leq b}= K[Z_{[b]}]_{\leq b}$.  Then we may write each $g_i$ as a sum of monomials, and have $g_i=g^+_i+g^-_i$ where $g^+_i$ consists only of those monomials in $Z_{[b]}$ and $g^-_i$ contains all monomials with at least one term outside of $Z_{[b]}$.  Then $\sum_i f_ig^-_i=0$, so $\sum_i f_ig^+_i=h$.  By Lemma \ref{thm:faithful}, there are $g'_i\in K[Z_{[b]}]_{\leq \mathfrak{d}_b(b)}= K\{X_{[n]}\}_{\leq b,\mathfrak{d}_b(b)}$ so that $\sum_i f_ig'_i=h$.
\end{proof}

\subsection{Stratified Ideals and Autoreduced Sets}

\begin{notation}
  If $\Lambda\subseteq K\{X_{[n]}\}\setminus K$ is finite, we write $H_\Lambda$ for $\prod_{\lambda\in\Lambda}I_\lambda S_\lambda$.
\end{notation}

\begin{notation}
Given $\Lambda\subseteq K\{X_{[n]}\}$, we denote by $(\Lambda), [\Lambda]$, and $\{\Lambda\}$, respectively, the ideal, differential ideal (i.e., closed under derivation), and perfect ideal (radical ideal generated by $[\Lambda]$; it is automatically differential in our case) generated in $K\{X_{[n]}\}$ by $\Lambda$.  
\end{notation}

\begin{notation}
We frequently work with the following saturation ideals:
\[\sat{\Lambda}=\{g\mid \exists n\ H_\Lambda^ng\in(\Lambda)\}\]
and
\[\dsat{\Lambda}=\{g\mid \exists n\ H_\Lambda^ng\in[\Lambda]\}.\]
\end{notation}

Because the ideals $\sat{\Lambda}$ and $\dsat{\Lambda}$ need not be finitely generated, we need a more nuanced way to work with them if we want effective bounds.

\begin{definition}
  A \emph{stratified ideal} $\langle \Lambda_k\rangle_k$ in $K\{X_{[n]}\}$ is an increasing sequence $(\Lambda_1)\subseteq(\Lambda_2)\subseteq\cdots\subseteq K\{X_{[n]}\}$ so that, for each $k$, $\Lambda_k\subseteq K\{X_{[n]}\}_{\leq k}$.
\end{definition}

We identify a stratified ideal $\langle \Lambda_k\rangle_k$ with the ideal $(\bigcup_k\Lambda_k)$.  Note that there is no assumption that $K\{X_{[n]}\}_{\leq k}\cap(\bigcup_k\Lambda_k)=(\Lambda_k)$---new elements of $K\{X_{[n]}\}_{\leq k}$ might appear in $\Lambda_{k'}$ with $k'$ much larger than $k$.\footnote{Formally, we are representing membership in these ideals as an existential property.}

We pick canonical stratifications associated with the ideals $\sat{\Lambda}$ and $\dsat{\Lambda}$.

\begin{notation}\label{note:stratified}
  We write
\[\Lambda_{(k)}^H=\{g\in K\{X_{[n]}\}_{\leq k}\mid H_\Lambda^kg\in(\Lambda)\},\]
\[\Lambda_{[k]}=\{\theta \lambda\mid \lambda\in\Lambda \text{ and } o(\theta\lambda)\leq o(\lambda)+k\},\]
and
\[\Lambda_{[k]}^H=\{g\in K\{X_{[n]}\}_{\leq k}\mid H_\Lambda^kg\in(\Lambda_{[k]})\}.\]
\end{notation}

Before discussing primality of stratified ideals, we introduce the important topic of \emph{autoreduced sets}.  

\begin{definition}
A differential polynomial $f\in K\{X_{[n]}\}$  is \emph{partially reduced} with respect to $g\in K\{X_{[n]}\}\setminus K$ if $f$ has no proper derivative of the leader $\mu_g$ of $g$. $f$ is \emph{reduced} with respect to $g$ if $f$ is partially reduced with respect to $g$ and additionally the degree of $\mu_g$ in $f$ is strictly less than the degree of $\mu_g$ in $g$. $f$ is reduced with respect to a subset $S\subseteq K\{X_{[n]}\}\setminus K$ if $f$ is reduced with respect to every element of $S$, and $S$ is \emph{autoreduced} if every element of $S$ is reduced with respect to every other element of $S$. 
\end{definition} 
 
If a finite set $\Lambda\subseteq K\{X_{[n]}\}_{\leq b}=K[Z_{[b]}]_{\leq b}$ is autoreduced, one may check that $2b\choose b$ is an upper bound on the number of distinct monomials in $Z_{1},\dots, Z_b$ and hence on the cardinality of $\Lambda$.  In particular, this implies that $H_\Lambda\in K\{X_{[n]}\}_{\leq b,2b{2b\choose b}}$.

Given $f\in K\{X_{[x]}\}$, we can find a closely related remainder $\tilde{f}$ that is reduced with respect to $\Lambda$; the process of obtaining $\tilde{f}$ is called \emph{pseudodivision}. The exact remainder obtained from reduction depends on the sequence of elements from $\Lambda$, but we can still find effective bounds on the complexity regardless of the choices made during pseudodivision.

\begin{example}
Choose a ranking in which $z>\delta_1x> \delta_2y$. Let $g_1=\delta_2y(\delta_1x)^2+x\delta_1x$, $g_2=\delta_2y\delta_1^2x +x$, and $f=z+ x\delta_1^2x + T_f$ (where the trailing terms $T_f$ have lower rank). We illustrate a single step of pseudodivision of $f$ with respect to $g_1$ and $g_2$, respectively:

\begin{itemize}
\item $f$ contains a proper derivative of the leader of $g_1$, so we differentiate $g_1$ to obtain 
\[\delta_1g_1=(2\delta_2y\delta_1x +x)\delta_1^2x+ (\delta_1\delta_2y +1)(\delta_1x)^2. \]

\noindent Multiply $f$ by $S_{g_1}$ and subtract a suitable multiple of $\delta_1g_1$. The remainder $r_1$ after one step is 
\[S_{g_1}f-x\cdot\delta_1g_1= S_{g_1}(z+T_f) -x\cdot(\delta_1\delta_2y +1)(\delta_1x)^2.\]

\item $f$ contains the leader of $g_2$ but no proper derivatives thereof, so to pseudodivide $f$ by $g_2$ we simply multiply $f$ by $I_{g_2}$ and divide to obtain

\[I_{g_2}f-x\cdot g_2= I_{g_2}(z+T_f)-x^2.\]

\end{itemize}

\noindent Note that the actual leader $z$ of $f$ never came into play; we only eliminated terms that prevented $f$ from being reduced with respect to $g_1$ or $g_2$. Also, if we were reducing $f$ with respect to the set $\{g_1,g_2\}$, we would instead continue reducing $r_1$ with respect to $g_1$ until the remainder $r$ was reduced with respect to $g_1$. We would then reduce $r$ with respect to $g_2$. See \cite{MR1921694} for further explanation of reduction algorithms.
\end{example}

\begin{notation}\label{note:g}
$\mathfrak{g}(b,d)=d(1+b)^{d}$.
\end{notation}

\begin{lemma}[Based on \cite{MR0568864}, I(9), Proposition 1] \label{pseudodivision bounds}
  Let $\Lambda\subseteq K\{X_{[n]}\}_{\leq b}$ be autoreduced and let $f\in K\{X_{[n]}\}_{\leq d}$. Then there exist $\tilde{f}\in K\{X_{[n]}\}_{\leq d, \mathfrak{g}(b,d)}$ reduced with respect to $\Lambda$ and $k_\lambda,l_\lambda\leq \mathfrak{g}(b,d)$ for each $\lambda\in \Lambda$ such that 
\[\left(\prod_{\lambda\in\Lambda}I_\lambda^{k_\lambda}S_\lambda^{l_\lambda}\right) f-\tilde{f}\in (\Lambda_{[d]}).\]
\end{lemma}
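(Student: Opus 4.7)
The plan is to invoke the classical Ritt pseudodivision algorithm, processing variables (viewed as ranked derivatives) in decreasing order of rank and tracking both the growth of total degree and the number of reduction steps. Since $\Lambda$ is autoreduced, distinct elements have distinct leaders, so for each derivative $v$ of rank $\leq d$ there is at most one $\lambda\in\Lambda$ with $v=\theta\mu_\lambda$ for some $\theta\in\Theta$. For each such $v$ appearing in the current polynomial, I perform reduction steps as follows: writing the current $f$ as $a v^e+(\text{terms of lower }v\text{-degree})$, in the proper-derivative case $\theta\neq 1$ I replace $f$ by $S_\lambda f-a v^{e-1}\theta\lambda$ (valid because $\theta\lambda$ has $v$-degree $1$ with coefficient $S_\lambda$), and in the leader case $v=\mu_\lambda$ I replace $f$ by $I_\lambda f-a v^{e-r}\lambda$, where $r=\deg_{\mu_\lambda}(\lambda)$. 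Each step strictly drops the $v$-degree by at least $1$ while leaving the degrees of all higher-ranked variables fixed. Iterating at $v$ until the $v$-degree is $0$ (proper case) or strictly below $r$ (leader case), and processing the variables $v_d,v_{d-1},\ldots,v_1$ of rank $\leq d$ in turn, one obtains a polynomial $\tilde{f}$ that is reduced with respect to $\Lambda$.

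To bound $\tilde{f}$, let $T_i$ and $D_i$ denote, respectively, the total degree of the working polynomial and its $v_i$-degree just before processing $v_i$, so $T_d=d$ and $D_i\leq T_i$. A single step at $v_i$ multiplies by $S_\lambda$ or $I_\lambda$ (each of total degree $\leq b$) and subtracts a term of no larger total degree, so total degree grows by at most $b$ per step. Since at most $D_i$ steps are performed at $v_i$, we get $T_{i-1}\leq T_i+bD_i\leq(1+b)T_i$. Iterating across the $d$ variables yields $T_0\leq d(1+b)^d=\mathfrak{g}(b,d)$, placing $\tilde{f}$ in $K\{X_{[n]}\}_{\leq d,\mathfrak{g}(b,d)}$.

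For the exponents, each multiplication by $I_\lambda$ or $S_\lambda$ corresponds to a single reduction step, so $k_\lambda$ and $l_\lambda$ are each at most the total number of steps. Summing, $\sum_{i=1}^{d}D_i\leq\sum_{i=1}^{d}d(1+b)^{d-i}=d\,\frac{(1+b)^d-1}{b}\leq d(1+b)^d=\mathfrak{g}(b,d)$ (the case $b=0$ is trivial, as then $\Lambda\subseteq K$ and no reduction is needed). Every derivation $\theta$ employed satisfies $\theta\mu_\lambda=v_i$ for some $i\leq d$, hence $o(\theta\lambda)\leq d\leq o(\mu_\lambda)+d$ and $\theta\lambda\in\Lambda_{[d]}$. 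Telescoping the single-step identities $S_\lambda g-g'\in(\Lambda_{[d]})$ (resp.\ $I_\lambda g-g'\in(\Lambda_{[d]})$) down the whole sequence of reductions gives the congruence $\left(\prod_{\lambda\in\Lambda}I_\lambda^{k_\lambda}S_\lambda^{l_\lambda}\right)f-\tilde{f}\in(\Lambda_{[d]})$.

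The main delicacy is pinning the exponents at $\mathfrak{g}(b,d)$ rather than at the looser $dT_0=d\,\mathfrak{g}(b,d)$ one would get from bounding $\sum_i T_i$ naively; this forces the geometric-sum accounting above and the mild restriction $b\geq 1$ (with the $b=0$ case handled separately). A secondary point is verifying that the auxiliary derivatives $\theta\lambda$ remain in $\Lambda_{[d]}$, which follows automatically from our choice to touch only variables of rank $\leq d$.
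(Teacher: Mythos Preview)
Your proof is correct and follows essentially the same route as the paper: process the at most $d$ relevant derivatives in decreasing rank, observe that each single reduction step raises the total degree by at most $b$, derive the recursive bound $T_{i-1}\leq(1+b)T_i$ yielding $\mathfrak{g}(b,d)=d(1+b)^d$, and sum the geometric series $\sum_{i}D_i\leq d\frac{(1+b)^d-1}{b}\leq\mathfrak{g}(b,d)$ for the exponent count. Your explicit check that $\theta\lambda\in\Lambda_{[d]}$ and the separate handling of $b=0$ are small refinements, but the core argument is the same as the paper's.
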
 
\begin{proof}
Pseudodivide repeatedly to reduce the highest-ranking derivative in the current remainder that is not reduced with respect to $\Lambda$. This rank decreases at each step (either by reducing order or degree), so the process terminates with a remainder $\tilde{f}$ that is reduced with respect to $\Lambda$. Repeated multiplication by initials and separants throughout this process yields the form $\left(\prod_{\lambda\in\Lambda}I_\lambda^{k_\lambda}S_\lambda^{l_\lambda}\right) f-\tilde{f}\in (\Lambda_{[d]})$ for some $k_\lambda,l_\lambda$.

Recursively define ${\bf B}(i)$ as follows: 

\begin{itemize}
\item ${\bf B}(0)=d$,
\item ${\bf B}(i+1)={\bf B}(i)(1+b)$.
\end{itemize}

We claim that ${\bf B}(i)$ is an upper bound on the degree of the remainder after reducing $i$-many times the highest-ranking derivative not reduced with respect to~$\Lambda$.

By hypothesis, $d={\bf B}(0)$ bounds the degree of $f$, so we start out correctly. For the inductive step, suppose that the remainder after reducing $i$-many derivatives with respect to $\Lambda$ belongs to $K\{X\}_{\leq d,{\bf B}(i)}$. The number of division steps required for the next-highest-ranking derivative is at most the current degree in that derivative, hence it is bounded by ${\bf B}(i)$. Each multiplication by an initial or separant increases the degree by at most $b$. The degree after reducing the $i+1$-st derivative is consequently bounded by $\text{(current bound) } + b\cdot\!\text{(maximal number of divisions)}= {\bf B}(i)+b\cdot {\bf B}(i)={\bf B}(i)(1+b)={\bf B}(i+1).$ Pseudodivision cannot increase the order, so the new remainder belongs to $K\{X\}_{\leq d,{\bf B}(i+1)}$. Thus the final bound on the degree of $\tilde{f}$ is ${\bf B}(d)=d(1+b)^d=\mathfrak{g}(b,d)$. 

To find a bound on $k_\lambda,l_\lambda$ we add up the total number of division steps  required to reduce each derivative.  Again we see that $\mathfrak{g}(b,d)$ is an upper bound: $d+ d(1+b)+ \dots + d(1+b)^{d-1}= d\left(\frac{(1+b)^d-1}{b}\right)\leq\mathfrak{g}(b,d)$.
  
\end{proof}

\subsection{Local Primality}

The right notion of primality for stratified ideals is a ``local primality'' notion which tells us not only that when $fg\in(\bigcup_k\Lambda_k)$ that either $f\in(\bigcup_k\Lambda_k)$ or $g\in(\bigcup_k\Lambda_k)$, but incorporates a bound ${\bf F}$, so that $fg\in(\Lambda_k)$ implies that either $f\in (\Lambda_{{\bf F}(k)})$ or $g\in(\Lambda_{{\bf F}(k)})$.

\begin{definition}
  Let $\langle \Lambda_k\rangle_k$ be a stratified ideal.  We say $\langle\Lambda_k\rangle_k$ is \emph{${\bf F}$-prime up to $b$} if for each $k\leq b$, whenever $fg\in(\Lambda_k)$, either $f\in(\Lambda_{{\bf F}(k)})$ or $g\in(\Lambda_{{\bf F}(k)})$.
\end{definition}

We also need the bounded version of primality, analogous to the notion of ``prime up to $d$''.

\begin{definition}
  Let $\langle \Lambda_k\rangle_k$ be a stratified ideal.  We say $\langle\Lambda_k\rangle_k$ is \emph{boundedly ${\bf F}$-prime up to $b$} if for each $k\leq b$, whenever $fg\in(\Lambda_k)\cap K\{X_{[n]}\}_{\leq k}$, either $f\in(\Lambda_{{\bf F}(k)})$ or $g\in(\Lambda_{{\bf F}(k)})$.  
\end{definition}

We always assume that the function ${\bf F}$ is monotone---that is, $a\leq b$ implies ${\bf F}(a)\leq {\bf F}(b)$.

\begin{notation}\label{note:uf}
Given $\fn{F}$, let $\fn{F}_d(b)=\fn{F}(\mathfrak{p}_d(b))$.  Set $\mathfrak{u}_{{\bf F}}(x)=\fn{F}_x^{\mathfrak{m}^*(i\mapsto {\bf F}^{i}_x(x),x)}(x)$.
\end{notation}

\begin{lemma}[Based on \cite{HTKM}, 4.1a]\label{bounded prime implies prime}
If $\Lambda\subseteq K\{X_{[n]}\}_{\leq b}$ is such that $\langle\Lambda^H_{(k)}\rangle$ is boundedly ${\bf F}$-prime up to $\mathfrak{p}_d(\mathfrak{u}_{{\bf F}}(d))$, then $\langle\Lambda^H_{(k)}\rangle$ is $\mathfrak{u}_{{\bf F}}$-prime up to $d$.
\end{lemma}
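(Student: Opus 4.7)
The plan is to apply Theorem \ref{local noetherianity} to an ascending chain of stratified ideals built from iterating $\fn{F}_k$, and then use Theorem \ref{thm:vdds_2.5} at the stabilizing level to promote the hypothesized bounded $\fn{F}$-primality to the desired full $\mathfrak{u}_{\fn{F}}$-primality. This is the natural stratified analogue of how Theorem \ref{thm:vdds_2.5} was deployed in the purely algebraic setting of Section \ref{sec:explicit}.

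Concretely, I would fix $k \leq d$ and assume $fg \in (\Lambda^H_{(k)})$. Set $k_0 = k$ and $k_{i+1} = \fn{F}_k(k_i) = \fn{F}(\mathfrak{p}_k(k_i))$; these are exactly the iterates prescribed in Notation \ref{note:uf}, so that $k_{\mathfrak{m}^*(i\mapsto \fn{F}_k^i(k),\,k)} = \mathfrak{u}_{\fn{F}}(k)$. The ascending chain $(\Lambda^H_{(k_0)}) \subseteq (\Lambda^H_{(k_1)}) \subseteq \cdots$ has generators of degree at most $k_i$ at stage $i$. Applying Theorem \ref{local noetherianity} with degree bound $\fn{D}(i) = k_i$ and dimension $k$ would yield some $i_0 < \mathfrak{m}^*(\fn{D}, k)$ at which $(\Lambda^H_{(k_{i_0+1})}) \subseteq (\Lambda^H_{(k_{i_0})})$; in particular $k_{i_0+1} \leq \mathfrak{u}_{\fn{F}}(k)$.

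Next I would invoke Theorem \ref{thm:vdds_2.5} on $(\Lambda^H_{(k_{i_0})})$, whose generators have degree $\leq k_{i_0}$: the theorem asks only for primality up to $\mathfrak{p}_k(k_{i_0})$. For any pair $f', g'$ of degree $\leq \mathfrak{p}_k(k_{i_0})$ with $f'g' \in (\Lambda^H_{(k_{i_0})})$, the bounded $\fn{F}$-primality hypothesis is available (since $\mathfrak{p}_k(k_{i_0}) \leq \mathfrak{p}_d(\mathfrak{u}_{\fn{F}}(d))$ by monotonicity of both $\mathfrak{p}_\bullet$ and $\mathfrak{u}_{\fn{F}}$) and forces one of $f', g'$ into $(\Lambda^H_{(\fn{F}(\mathfrak{p}_k(k_{i_0})))}) = (\Lambda^H_{(k_{i_0+1})}) \subseteq (\Lambda^H_{(k_{i_0})})$ by the stabilization above. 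Hence $(\Lambda^H_{(k_{i_0})})$ is prime up to $\mathfrak{p}_k(k_{i_0})$, and Theorem \ref{thm:vdds_2.5} promotes this to full primality. Since $fg \in (\Lambda^H_{(k)}) \subseteq (\Lambda^H_{(k_{i_0})})$, one of $f, g$ must lie in $(\Lambda^H_{(k_{i_0})}) \subseteq (\Lambda^H_{(\mathfrak{u}_{\fn{F}}(k))})$, which is the desired conclusion.

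The principal obstacle will be the careful matching of dimensions and degrees. The natural ambient for the generators of $\Lambda^H_{(k_i)}$ is the polynomial ring $K[Z_{[k_i]}]$ of dimension $k_i$, but Notation \ref{note:uf} --- and therefore the argument --- must run both Noetherianity and Theorem \ref{thm:vdds_2.5} in dimension $k$. Reconciling this most likely requires contracting the chain into $K[Z_{[k]}]$, or reducing primality in $K\{X_{[n]}\}$ to primality in $K[Z_{[k]}]$ via the faithful flatness of the extension. A secondary nuisance is absorbing the factor of two arising when multiplying two degree-$\mathfrak{p}_k(k_{i_0})$ witnesses; this is routine but must be tracked to ensure every invocation of bounded primality remains strictly inside the hypothesis range $\mathfrak{p}_d(\mathfrak{u}_{\fn{F}}(d))$.
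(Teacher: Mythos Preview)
Your plan is the paper's plan: stabilize via Theorem \ref{local noetherianity}, upgrade to genuine primality via Theorem \ref{thm:vdds_2.5}, and read off the conclusion. You also correctly locate the real difficulty (the dimension mismatch between the ambient ring $K[Z_{[k_i]}]$ for the generators and the fixed dimension $k$ appearing in $\mathfrak{u}_{\fn{F}}$). What is missing is the actual resolution of that obstacle and the final lifting step.

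The paper's fix is precisely your first suggestion, carried out: after reducing to $k=d$ by monotonicity, one replaces $\Lambda^H_{(k)}$ by its contraction $\Lambda^*_{(k)}=\Lambda^H_{(k)}\cap K\{X_{[n]}\}_{\leq d,k}\subseteq K[Z_{[d]}]$, so that the entire Noetherian chain and the application of Theorem \ref{thm:vdds_2.5} live in the fixed $d$-variable polynomial ring. This yields a genuinely prime ideal $(\Lambda^*_{(k)})\subseteq K[Z_{[d]}]$ at the stabilization level. But there is still a second step you do not mention: the $f,g$ in the conclusion are arbitrary elements of $K\{X_{[n]}\}$, possibly involving indeterminates $Z_m$ with $m>d$, so primality of $(\Lambda^*_{(k)})$ in $K[Z_{[d]}]$ does not immediately give $f\in(\Lambda^*_{(k)})$ or $g\in(\Lambda^*_{(k)})$. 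The paper handles this by expanding $f=\sum_j b_jM_j$, $g=\sum_j c_jM_j$ in monomials $M_j$ over the extra variables $Z_{[d+1,m]}$, with coefficients $b_j,c_j\in K[Z_{[d]}]$, and then running the standard coefficient-by-coefficient primality argument (equivalently: the extra $Z_m$ are transcendental over $K[Z_{[d]}]$, so polynomial extension preserves primality). Faithful flatness alone is not the right tool here; what is used is transcendence of the higher indeterminates. Without this step your argument only establishes primality inside the truncated ring, not the $\mathfrak{u}_{\fn{F}}$-primality statement for unrestricted $f,g$.
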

\begin{proof}
To show $\mathfrak{u}_{{\bf F}}$-primality up to $d$, we must show the statement for each $d'\leq d$, but without loss of generality (because ${\bf F}$ is monotone) it suffices to show that whenever $fg\in(\Lambda^H_{(d)})$, either $f\in(\Lambda^H_{(\mathfrak{u}_{{\bf F}}(d))})$ or $g\in(\Lambda^H_{(\mathfrak{u}_{{\bf F}}(d))})$.  

Consider $\Lambda^*_{(k)}=\Lambda^H_{(k)}\cap K\{X_{[n]}\}_{\leq d,k}$.  Observe that $\langle\Lambda^*_{(k)}\rangle$ is also boundedly ${\bf F}_d$-prime up to $\mathfrak{p}_d(\mathfrak{u}_{{\bf F}}(d))$: if $fg\in(\Lambda^*_{(k)})\cap K\{X_{[n]}\}_{\leq d,k}\subseteq(\Lambda_{(k)}^H)$ for $k\leq \mathfrak{p}_d(\mathfrak{u}_{{\bf F}}(d))$ then, a fortiori, $f\in(\Lambda_{({\bf F}(k))}^H)\cap K\{X_{[n]}\}_{\leq d,k}$, so $f\in(\Lambda^*_{({\bf F}(k))})$.

Consider the sequence of ideals $(\Lambda^*_{(d)})\subseteq(\Lambda^*_{(\fn{F}_d(d))})\subseteq(\Lambda^*_{(\fn{F}^2_d(d))})\subseteq\cdots$. By Theorem \ref{local noetherianity}, there is some $i\leq\mathfrak{m}^*(i\mapsto\fn{F}^i_d(d),d)$ so that $(\Lambda^*_{(\fn{F}^{i+1}_d(d))})\subseteq (\Lambda^*_{(\fn{F}^{i}_d(d))})$.  Let $k=\fn{F}^i(d)\leq\mathfrak{u}_{\fn{F}}(d)$, so $d\leq k\leq \mathfrak{u}_{{\bf F}}(d)$ and $\Lambda^*_{({\bf F}_d(k))}\subseteq (\Lambda^*_{(k)})$.  In particular, since $\mathfrak{p}_d(k)\leq \mathfrak{p}_d(\mathfrak{u}_{{\bf F}}(d))$, if $fg\in(\Lambda^*_{(k)})\cap K\{X_{[n]}\}_{\leq d,\mathfrak{p}_d(k)}$ then either $f$ or $g$ belongs to $(\Lambda^*_{(\fn{F}(\mathfrak{p}_d(k)))})=(\Lambda^*_{(\fn{F}_d(k))})\subseteq(\Lambda^*_{(k)})$, so $(\Lambda^*_{(k)})$ is prime up to $\mathfrak{p}_d(k)$.  By Theorem \ref{thm:vdds_2.5}, $(\Lambda^*_{(k)})$ is prime.

Now suppose $fg\in(\Lambda^H_{(d)})\subseteq(\Lambda^H_{(k)})$.  Then $fg\in K[Z_{[m]}]$ for some $m$.  Let $M_0,\ldots,M_j,\ldots$ enumerate the monomials over the variables $Z_{[d+1,m]}$.  We may write $fg=\sum_{i,j}u_{i,j}\gamma_i M_j$ with $\gamma_i\in \Lambda^H_{(d)}\subseteq\Lambda^*_{(k)}$, $f=\sum_j b_jM_j$ and $g=\sum_j c_jM_j$.  Then for each $j$,
\[\sum_{j_0+j_1=j}b_{j_0}c_{j_1}=\sum_iu_{i,j}\gamma_i.\]

We resolve this monomial by monomial.  We show by induction on $J$ that there are $k_0,k_1$ with $k_0+k_1=J$ so that for each $j<k_0$, $b_j\in(\Lambda^*_{(k)})$ and for each $j<k_1$, $c_j\in(\Lambda^*_{(k)})$.  When $J=0$, this is immediate.  Suppose the claim holds for $J$; we have
\[b_{k_0}c_{k_1}=\sum_i u_{i,J}\gamma_i-\sum_{j_0<k_0}b_{j_0}c_{J-j_0}-\sum_{j_1<k_1}b_{J-j_1}c_{j_1},\]
and therefore $b_{k_0}c_{k_1}\in(\Lambda^*_{(k)})$.  Since this is a prime ideal, we have either $b_{k_0}\in(\Lambda^*_{(k)})$, in which case we increment $k_0$, or similarly with $c_{k_1}$.

When $J$ is large enough, we see that we must have either $b_j\in(\Lambda^*_{(k)})$ for all $j$ or $c_j\in(\Lambda^*_{(k)})$ for all $j$, so we have either $f\in(\Lambda^*_{(k)})\subseteq(\Lambda^H_{(k)})\subseteq(\Lambda^H_{(\mathfrak{u}_{\bf F}(d))})$  or $g\in(\Lambda^*_{(k)})\subseteq(\Lambda^H_{(\mathfrak{u}_{\bf F}(d))})$.
\end{proof}

\begin{notation}\label{note:uf_plus}
  $\mathfrak{u}^+_{\bf F}(b)=\max\{\mathfrak{u}_{\bf F}(b),\mathfrak{d}_{b+1}(2b{2b\choose b}+1)\}$.
\end{notation}

\begin{lemma}[Based on \cite{HTKM}, 4.2b]\label{thm:HTKM_4.2b}
If $\Lambda\subseteq K\{X_{[n]}\}_{\leq b}$, $|\Lambda|\leq {2b \choose b}$, and $\langle \Lambda_{(k)}^H\rangle$ is boundedly ${\bf F}$-prime up to $\mathfrak{p}_b(\mathfrak{u}_{\bf F}(b))$, then $\sat{\Lambda}\subseteq(\Lambda^H_{(\mathfrak{u}^+_{\bf F}(b))})$.
\end{lemma}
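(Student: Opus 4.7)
The plan is to apply Lemma \ref{bounded prime implies prime} first to upgrade the hypothesis---bounded $\fn{F}$-primality of $\langle\Lambda^H_{(k)}\rangle$ up to $\mathfrak{p}_b(\mathfrak{u}_{\fn{F}}(b))$---to genuine $\mathfrak{u}_{\fn{F}}$-primality of $\langle\Lambda^H_{(k)}\rangle$ up to $b$. I would then combine this primality with a Rabinowitsch-style elimination whose degree control is supplied by Lemma \ref{thm:faithful}. Using $|\Lambda|\leq\binom{2b}{b}$ together with the fact that each initial and separant of an element of $\Lambda$ has degree at most $b$, I would record at the outset that $\deg H_\Lambda\leq 2b\binom{2b}{b}$; this is the source of the second term in the definition of $\mathfrak{u}^+_{\fn{F}}(b)$. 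The argument then splits on whether $H_\Lambda\in(\Lambda^H_{(\mathfrak{u}_{\fn{F}}(b))})$.

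In the degenerate case $H_\Lambda\in(\Lambda^H_{(\mathfrak{u}_{\fn{F}}(b))})$, I would decompose $H_\Lambda=\sum c_i h_i$ with $h_i\in\Lambda^H_{(\mathfrak{u}_{\fn{F}}(b))}$ and multiply by $H_\Lambda^{\mathfrak{u}_{\fn{F}}(b)}$ to obtain $H_\Lambda^{\mathfrak{u}_{\fn{F}}(b)+1}\in(\Lambda)$; since $\mathfrak{u}^+_{\fn{F}}(b)$ is defined to dominate this witness, the ideal $(\Lambda^H_{(\mathfrak{u}^+_{\fn{F}}(b))})$ becomes the whole ring and trivially contains $\sat{\Lambda}$. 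In the non-degenerate case $H_\Lambda\notin(\Lambda^H_{(\mathfrak{u}_{\fn{F}}(b))})$, take $g\in\sat{\Lambda}$ with $H_\Lambda^N g\in(\Lambda)$. I would first reduce to $g\in K[Z_{[b]}]$ by writing $g=\sum_j g_j M_j$, where the $M_j$ are monomials in the higher-index variables $Z_{b+1},Z_{b+2},\ldots$ and $g_j\in K[Z_{[b]}]$; since $\Lambda$ and $H_\Lambda$ lie in $K[Z_{[b]}]$ and the $M_j$ are transcendental over that subring, each $g_j$ is itself in $\sat{\Lambda}$, and membership in $(\Lambda^H_{(\mathfrak{u}^+_{\fn{F}}(b))})$ passes from the pieces back to $g$. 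Next, apply the Rabinowitsch trick in $K[Z_{[b]},Y]$: each $g_j$ lies in the elimination ideal $(\Lambda,\,1-YH_\Lambda)\cap K[Z_{[b]}]$, whose generators have degree at most $2b\binom{2b}{b}+1$. Invoking Lemma \ref{thm:faithful} in the $(b+1)$-variable polynomial ring with this generator-degree bound produces a decomposition with coefficients of degree at most $\mathfrak{d}_{b+1}(2b\binom{2b}{b}+1)$. Specializing $Y\mapsto 1/H_\Lambda$ and clearing denominators by a sufficiently high power of $H_\Lambda$ exhibits $g_j$ as an element of $\Lambda^H_{(\mathfrak{d}_{b+1}(2b\binom{2b}{b}+1))}\subseteq\Lambda^H_{(\mathfrak{u}^+_{\fn{F}}(b))}$.

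The main obstacle will be the direct application of Lemma \ref{thm:faithful} to $g_j\in K[Z_{[b]}]$ when $g_j$ itself has arbitrary degree: the lemma as stated bounds the witnessing coefficients only relative to a right-hand side of controlled degree, so a naive Rabinowitsch/flatness argument does not produce a uniform saturation exponent. Bridging this gap is precisely where the primality hypothesis enters. Non-degeneracy means $H_\Lambda$ acts as a non-zerodivisor modulo $(\Lambda^H_{(\mathfrak{u}_{\fn{F}}(b))})$, which forces the ascending chain $(\Lambda):H_\Lambda\subseteq(\Lambda):H_\Lambda^2\subseteq\cdots$ (inside $K[Z_{[b]}]$) to stabilize at an index effectively bounded by $\mathfrak{u}^+_{\fn{F}}(b)$. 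Once this uniform saturation exponent is available, Lemma \ref{thm:faithful} need only be applied to the bounded-degree generators of $\sat{\Lambda}\cap K[Z_{[b]}]$, each of which then lies in $\Lambda^H_{(\mathfrak{u}^+_{\fn{F}}(b))}$; the high-degree $g_j$ is recovered as a linear combination of these generators with unrestricted coefficients, completing the argument.
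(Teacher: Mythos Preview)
Your opening moves are right: invoke Lemma~\ref{bounded prime implies prime} to get $\mathfrak{u}_{\fn{F}}$-primality up to $b$, and record $\deg H_\Lambda\le 2b\binom{2b}{b}$. But the case split on whether $H_\Lambda\in(\Lambda^H_{(\mathfrak{u}_{\fn{F}}(b))})$ sends you down the wrong path, and the non-degenerate case does not close.

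The gap is in your final paragraph. Non-degeneracy plus primality only tells you: if $H_\Lambda\cdot f\in(\Lambda^H_{(b)})$ then $f\in(\Lambda^H_{(\mathfrak{u}_{\fn{F}}(b))})$. This is \emph{not} the statement that $H_\Lambda$ is a non-zerodivisor modulo $(\Lambda^H_{(\mathfrak{u}_{\fn{F}}(b))})$, and it does not force the colon chain $(\Lambda):H_\Lambda\subseteq(\Lambda):H_\Lambda^2\subseteq\cdots$ to stabilize at a controlled index. If you try to peel off one factor of $H_\Lambda$ at a time, each application of primality moves you from level $b$ to level $\mathfrak{u}_{\fn{F}}(b)$, and primality is only available up to $b$, so you cannot iterate. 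The Rabinowitsch step you propose for $g_j$ also fails exactly where you note: Lemma~\ref{thm:faithful} needs the target to have bounded degree, and you never recover that control.

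The paper's fix is to apply primality \emph{once} to the full product $H_\Lambda^N g\in(\Lambda)\subseteq(\Lambda^H_{(b)})$, splitting on which factor lands in $(\Lambda^H_{(\mathfrak{u}_{\fn{F}}(b))})$. If $g$ does, you are done. If $H_\Lambda^N$ does, then $H_\Lambda^{N+\mathfrak{u}_{\fn{F}}(b)}\in(\Lambda)$, so $H_\Lambda$ is nilpotent modulo $(\Lambda)$; now Theorem~\ref{thm:rad_limit} (the Rabinowitsch bound) applies to $H_\Lambda$---which, unlike $g$, has bounded degree---yielding $H_\Lambda^{\mathfrak{d}_{b+1}(2b\binom{2b}{b}+1)}\in(\Lambda)$ and hence $1\in\Lambda^H_{(\mathfrak{u}^+_{\fn{F}}(b))}$. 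The point is that the Rabinowitsch trick belongs on the $H_\Lambda$ side of the product, where degrees are under control, not on the $g$ side. (Your degenerate case is morally this second branch, but your claim that $\mathfrak{u}^+_{\fn{F}}(b)$ dominates $\mathfrak{u}_{\fn{F}}(b)+1$ is not what the definition says; you still need Theorem~\ref{thm:rad_limit} there to get down to $\mathfrak{d}_{b+1}(2b\binom{2b}{b}+1)$.)
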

\begin{proof}
Suppose $H^N_\Lambda g\in(\Lambda)$ for some $N$. Then certainly $H^N_\Lambda g\in(\Lambda_{(b)}^H)$ since $\Lambda\subseteq\Lambda_{(b)}^H$.  Since $\langle\Lambda^H_{(k)}\rangle$ is boundedly ${\bf F}$-prime up to $\mathfrak{p}_b(\mathfrak{u}_{\bf F}(b))$, by Lemma \ref{bounded prime implies prime} it is $\mathfrak{u}_{\bf F}$-prime up to $b$ and either $H^N_\Lambda\in(\Lambda_{(\mathfrak{u}_{\bf F}(b))}^H)$ or $g\in(\Lambda_{(\mathfrak{u}_{\bf F}(b))}^H)$; in the latter case we are done.

Suppose $H^N_\Lambda\in(\Lambda_{(\mathfrak{u}_{\bf F}(b))}^H)$, so also $H^{N+\mathfrak{u}_{\bf F}(b)}_\Lambda\in(\Lambda)$.  Then by Theorem \ref{thm:rad_limit}, already $H^{\mathfrak{d}_{b+1}(2b{2b\choose b}+1)}_\Lambda\in(\Lambda)$.  But this implies that $1\in(\Lambda_{(\mathfrak{d}_{b+1}(2b{2b\choose b}+1))}^H)$, so also $g\in(\Lambda_{(\mathfrak{d}_{b+1}(2b{2b\choose b}+1))}^H)$.
\end{proof}

Later we will need a slight refinement of these notions, where we restrict to the subring of $K\{X_{[n]}\}$ containing only those indeterminates that are not proper derivatives of the leaders of $\Lambda$ (that is, those elements partially reduced with respect to $\Lambda$).

\begin{definition}
  $K\{X_{[n]}\upharpoonright\Lambda\}$ is the ring $K[Z_S]$ where $S$ is the (possibly infinite) set of indices of indeterminates which are not proper derivatives of any $\mu_\lambda$ with $\lambda\in\Lambda$.  We write $K\{X_{[n]}\upharpoonright\Lambda\}_{\leq b}$ for $K\{X_{[n]}\upharpoonright\Lambda\}\cap K\{X_{[n]}\}_{\leq b}$.

  We say $\langle\Lambda_k\rangle_k$ is \emph{pr$(\Lambda)$-${\bf F}$-prime up to $b$} if for each $k\leq b$, whenever $fg\in(\Lambda_k)\cap K\{X_{[n]}\upharpoonright\Lambda\}$, either $f\in(\Lambda_{{\bf F}(k)})$ or $g\in(\Lambda_{{\bf F}(k)})$.

We say $\langle\Lambda_k\rangle_k$ is \emph{boundedly pr$(\Lambda)$-${\bf F}$-prime up to $b$} if for each $k\leq b$, whenever $fg\in(\Lambda_k)\cap K\{X_{[n]}\upharpoonright\Lambda\}_{\leq k}$, either $f\in(\Lambda_{{\bf F}(k)})$ or $g\in(\Lambda_{{\bf F}(k)})$.  
\end{definition}

Inspection of the proofs of the previous two lemmas gives:
\begin{lemma}\label{thm:pr_bounded_implies_pr}
If $\Lambda\subseteq K\{X_{[n]}\}_{\leq b}$ is such that $\langle\Lambda^H_{(k)}\rangle$ is boundedly pr$(\Lambda)$-${\bf F}$-prime up to $\mathfrak{p}_d(\mathfrak{u}_{{\bf F}}(d))$, then $\langle\Lambda^H_{(k)}\rangle$ is pr$(\Lambda)$-$\mathfrak{u}_{{\bf F}}$-prime up to $d$.
\end{lemma}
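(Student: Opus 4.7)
The plan is to follow the proof of Lemma \ref{bounded prime implies prime} essentially verbatim, inserting a ``pr$(\Lambda)$-'' modifier at every step. The key enabling observation is that $K\{X_{[n]}\}$ is an integral domain (it is a polynomial ring over the field $K$), so for each non-pr variable $v$ the identity $\deg_v(fg) = \deg_v(f) + \deg_v(g)$ holds; hence $fg \in K\{X_{[n]}\upharpoonright\Lambda\}$ with $fg \neq 0$ forces both $f$ and $g$ into $K\{X_{[n]}\upharpoonright\Lambda\}$, while $fg = 0$ is trivial. By monotonicity of $\fn{F}$, this reduces the claim to showing: if $fg \in (\Lambda^H_{(d)})$ with $f, g \in K\{X_{[n]}\upharpoonright\Lambda\}$, then one of $f, g$ lies in $(\Lambda^H_{(\mathfrak{u}_\fn{F}(d))})$.

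To prove this, I would introduce the same bounded stratified ideal $\Lambda^*_{(k)} = \Lambda^H_{(k)} \cap K\{X_{[n]}\}_{\leq d, k}$ as in Lemma \ref{bounded prime implies prime}. The hypothesis routinely transfers to boundedly pr$(\Lambda)$-$\fn{F}_d$-primality of $\langle \Lambda^*_{(k)} \rangle$ up to $\mathfrak{p}_d(\mathfrak{u}_\fn{F}(d))$, since an element of $\Lambda^H_{(\fn{F}(k))}$ already in $K\{X_{[n]}\}_{\leq d, k}$ certainly lies in $\Lambda^*_{(\fn{F}(k))}$. Next, apply Theorem \ref{local noetherianity} to the chain $(\Lambda^*_{(d)}) \subseteq (\Lambda^*_{(\fn{F}_d(d))}) \subseteq \cdots$ to locate some $k \in [d, \mathfrak{u}_\fn{F}(d)]$ at which the chain stabilizes, then apply Theorem \ref{thm:vdds_2.5} inside the honest polynomial subring $K[Z_{S \cap [k]}]$ (where $S$ indexes the variables that are not proper derivatives of leaders of $\Lambda$) to upgrade bounded pr$(\Lambda)$-primality to genuine pr$(\Lambda)$-primality of $(\Lambda^*_{(k)})$.

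For the concluding monomial-by-monomial step, enumerate the pr monomials $M_0, M_1, \ldots$ in $Z_j$ with $j \in (d, m]$, write $f = \sum_j b_j M_j$ and $g = \sum_j c_j M_j$ with $b_j, c_j \in K[Z_{S \cap [d]}]$ (possible precisely because $f, g$ are themselves pr), and expand $fg = \sum_i u_i \gamma_i$ in all (pr or not) monomials in $Z_{(d, m']}$. Unique factorization of monomials guarantees that a pr monomial cannot be obtained as the product of factors one of which is non-pr, so equating pr-monomial coefficients gives $\sum_{j_0 + j_1 = J} b_{j_0} c_{j_1} \in (\Lambda^H_{(d)}) \subseteq (\Lambda^*_{(k)})$ for each $J$. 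The induction of the original proof then runs unchanged, alternately depositing $b_{k_0}$ or $c_{k_1}$ into $(\Lambda^*_{(k)})$ via pr$(\Lambda)$-primality, until $f$ or $g$ itself lies in $(\Lambda^*_{(k)}) \subseteq (\Lambda^H_{(\mathfrak{u}_\fn{F}(d))})$. The main subtlety, which is mechanical once the integral-domain observation is in hand, is checking that the pr restrictions are preserved at each stage of the argument and that Theorem \ref{thm:vdds_2.5} is legitimately applicable to the sub-polynomial ring indexed by $S \cap [k]$ rather than to $K[Z_{[k]}]$.
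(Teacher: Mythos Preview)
Your proposal is correct and follows exactly the approach the paper intends: the paper's own proof is simply ``Inspection of the proofs of the previous two lemmas gives'', and you have carried out precisely that inspection, inserting the pr$(\Lambda)$-modifier throughout and noting the integral-domain fact that a nonzero pr product forces both factors to be pr. Your added remarks (working in $K[Z_{S\cap[k]}]$ for the application of Theorem~\ref{thm:vdds_2.5}, and tracking that the monomial-by-monomial induction only ever produces pr elements) supply detail the paper leaves implicit, but do not deviate from its route.
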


\begin{lemma}\label{pr boundedly prime implies bound on saturation}
If $\Lambda\subseteq K\{X_{[n]}\}_{\leq b}$, $|\Lambda|\leq {2b \choose b}$, and $\langle \Lambda_{(k)}^H\rangle$ is boundedly pr$(\Lambda)$-${\bf F}$-prime up to $\mathfrak{p}_b(\mathfrak{u}_{\bf F}(b))$, then $\sat{\Lambda}\cap K\{X_{[n]}\upharpoonright\Lambda\}\subseteq(\Lambda^H_{(\mathfrak{u}^+_{\bf F}(b))})$.
\end{lemma}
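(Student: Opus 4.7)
The plan is to mirror the proof of Lemma~\ref{thm:HTKM_4.2b}, substituting Lemma~\ref{thm:pr_bounded_implies_pr} for Lemma~\ref{bounded prime implies prime}, and paying careful attention to membership in the partially reduced subring $K\{X_{[n]}\upharpoonright\Lambda\}$.

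First I would fix $g\in\sat{\Lambda}\cap K\{X_{[n]}\upharpoonright\Lambda\}$, so that $H_\Lambda^N g\in(\Lambda)\subseteq(\Lambda_{(b)}^H)$ for some $N$. Before invoking the primality hypothesis, the key bookkeeping point is that $H_\Lambda\in K\{X_{[n]}\upharpoonright\Lambda\}$: the cardinality bound $|\Lambda|\leq{2b\choose b}$ encodes that $\Lambda$ is autoreduced, and for such $\Lambda$ each initial $I_\lambda$ and separant $S_\lambda$ is reduced---in particular partially reduced---with respect to $\Lambda$ (no proper derivative of any leader $\mu_{\lambda'}$ appears in $\lambda$, hence in its initial or separant). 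Consequently the product $H_\Lambda^N g$ lies in $K\{X_{[n]}\upharpoonright\Lambda\}$, which is exactly what is required to apply pr$(\Lambda)$-primality to it.

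Next I would use Lemma~\ref{thm:pr_bounded_implies_pr} to upgrade the hypothesis to pr$(\Lambda)$-$\mathfrak{u}_{\bf F}$-primality of $\langle\Lambda_{(k)}^H\rangle$ up to $b$. Applied to $H_\Lambda^N\cdot g\in(\Lambda_{(b)}^H)\cap K\{X_{[n]}\upharpoonright\Lambda\}$, this yields the dichotomy: either $g\in(\Lambda_{(\mathfrak{u}_{\bf F}(b))}^H)\subseteq(\Lambda_{(\mathfrak{u}^+_{\bf F}(b))}^H)$---in which case we are done---or $H_\Lambda^N\in(\Lambda_{(\mathfrak{u}_{\bf F}(b))}^H)$. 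In the latter case, unwinding the definition of $\Lambda^H_{(k)}$ gives $H_\Lambda^{N+\mathfrak{u}_{\bf F}(b)}\in(\Lambda)$. Since $\deg(H_\Lambda)\leq 2b{2b\choose b}$ and $\Lambda\subseteq K[Z_{[b]}]$, Theorem~\ref{thm:rad_limit} then forces the shorter relation $H_\Lambda^{\mathfrak{d}_{b+1}(2b{2b\choose b}+1)}\in(\Lambda)$. Therefore $1\in(\Lambda_{(\mathfrak{d}_{b+1}(2b{2b\choose b}+1))}^H)$, so $g\in(\Lambda_{(\mathfrak{d}_{b+1}(2b{2b\choose b}+1))}^H)\subseteq(\Lambda_{(\mathfrak{u}^+_{\bf F}(b))}^H)$ by the very definition of $\mathfrak{u}^+_{\bf F}$.

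The only real obstacle is the initial bookkeeping verification that $H_\Lambda^N g$ actually lies in $K\{X_{[n]}\upharpoonright\Lambda\}$, so that pr$(\Lambda)$-primality may legitimately be invoked; this is what forces us to use autoreducedness (and hence the cardinality bound) of $\Lambda$. After that step, the argument is structurally identical to that of Lemma~\ref{thm:HTKM_4.2b}, with the conclusion landing in the same two candidate ideals and their union bounded by $\mathfrak{u}^+_{\bf F}(b)$.
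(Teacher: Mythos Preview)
Your approach is exactly the paper's: it just says ``inspection of the proofs of the previous two lemmas gives'' this result, and your write-up is precisely that inspection, swapping Lemma~\ref{bounded prime implies prime} for Lemma~\ref{thm:pr_bounded_implies_pr} and tracking membership in $K\{X_{[n]}\upharpoonright\Lambda\}$.

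One logical slip: the implication ``$|\Lambda|\leq{2b\choose b}$ encodes that $\Lambda$ is autoreduced'' is backwards. The cardinality bound is a \emph{consequence} of autoreducedness, not a characterization of it; a non-autoreduced $\Lambda$ can easily satisfy the bound. You genuinely need autoreducedness to guarantee $H_\Lambda\in K\{X_{[n]}\upharpoonright\Lambda\}$ (so that pr$(\Lambda)$-primality applies to $H_\Lambda^N g$), and the stated hypotheses do not supply it. This is arguably a gap in the paper's statement too: the lemma is only ever invoked in Lemma~\ref{thm:HTKM_4_2_c_real}, where $\Lambda$ is explicitly assumed autoreduced, so the missing hypothesis is harmless in context. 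Your identification of this as ``the only real obstacle'' is exactly right; just don't claim the cardinality bound delivers it.
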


Then we get:
\begin{lemma}\label{thm:HTKM_4_2_c_real}
If $\Lambda\subseteq K\{X_{[n]}\}_{\leq b}$ is autoreduced and $\langle \Lambda_{(k)}^H\rangle$ is boundedly pr$(\Lambda)$-${\bf F}$-prime up to $\mathfrak{p}_b(\mathfrak{u}_{\bf F}(b))$, then $\sat{\Lambda}\subseteq(\Lambda^H_{(\mathfrak{u}^+_{\bf F}(b))})$.
\end{lemma}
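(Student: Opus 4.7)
The plan is to reduce to Lemma~\ref{pr boundedly prime implies bound on saturation} by exploiting the polynomial-ring decomposition of $K\{X_{[n]}\}$ over the partially-reduced subring $R:=K\{X_{[n]}\upharpoonright\Lambda\}$.  Two consequences of autoreducedness drive this: first, each $\lambda\in\Lambda$, $I_\lambda$, and $S_\lambda$ lies in $R$ (no $\lambda$ involves a proper derivative of any $u_{\lambda'}$, and $I_\lambda,S_\lambda$ omit $u_\lambda$ itself), so $H_\Lambda\in R$; second, the proper derivatives $\{\theta u_\lambda:\lambda\in\Lambda,\,\theta\in\Theta\setminus\{1\}\}$ are algebraically independent over $R$, realizing $K\{X_{[n]}\}$ as the polynomial ring $R[\{\theta u_\lambda\}]$.

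Given $f\in\sat{\Lambda}$, fix $N$ with $H_\Lambda^N f\in(\Lambda)$ and write $f=\sum_\mu a_\mu\mu$ uniquely, with $\mu$ ranging over monomials in $\{\theta u_\lambda\}$ and $a_\mu\in R$.  Since $H_\Lambda^N\in R$, we have $H_\Lambda^N f=\sum_\mu (H_\Lambda^N a_\mu)\mu$.  The key observation is that any element of $(\Lambda)\subseteq K\{X_{[n]}\}$, expanded as a polynomial in the $\{\theta u_\lambda\}$ over $R$, has each $\mu$-coefficient lying in the $R$-ideal generated by $\Lambda$: given any representation $\sum_\lambda c_\lambda\lambda$, write each $c_\lambda=\sum_\mu c_{\lambda,\mu}\mu$ with $c_{\lambda,\mu}\in R$, and collect to read off the $\mu$-coefficient as $\sum_\lambda c_{\lambda,\mu}\lambda$, which lies in $(\Lambda)\cap R$ because $\lambda\in R$.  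Hence each $H_\Lambda^N a_\mu\in(\Lambda)$, so each $a_\mu\in\sat{\Lambda}\cap R$.

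Now Lemma~\ref{pr boundedly prime implies bound on saturation} applies to each $a_\mu$ — its cardinality hypothesis $|\Lambda|\leq\binom{2b}{b}$ is automatic from autoreducedness — producing $a_\mu\in(\Lambda^H_{(\mathfrak{u}^+_{\bf F}(b))})$ as an ideal in $K\{X_{[n]}\}$.  Summing, $f=\sum_\mu a_\mu\mu$ is then a $K\{X_{[n]}\}$-linear combination of elements of $(\Lambda^H_{(\mathfrak{u}^+_{\bf F}(b))})$, so $f\in(\Lambda^H_{(\mathfrak{u}^+_{\bf F}(b))})$.  The delicate step is the coefficient principle in the middle paragraph: it is the bridge that converts the \emph{partially-reduced} primality hypothesis into a statement about the full saturation, and it depends in an essential way on $\Lambda\subseteq R$, which is exactly what autoreducedness buys us. Everything else is bookkeeping around the polynomial expansion.
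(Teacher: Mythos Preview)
Your proof is correct and follows essentially the same route as the paper: both write $f$ as a polynomial over $R=K\{X_{[n]}\upharpoonright\Lambda\}$ in the extra indeterminates coming from proper derivatives of leaders, use autoreducedness to ensure $\Lambda\subseteq R$, extract that each $R$-coefficient lies in $\sat{\Lambda}\cap R$, and then invoke Lemma~\ref{pr boundedly prime implies bound on saturation}. One tiny inaccuracy in your parenthetical---$S_\lambda$ need not omit $u_\lambda$ itself---does not affect the argument, since what you actually need (and have) is that $S_\lambda$ involves no \emph{proper} derivative of any leader.
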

\begin{proof}
Given $g\in\sat{\Lambda}$, we have $g\in K\{X_{[n]}\upharpoonright\Lambda\}[Z_T]$ for some finite set of additional indeterminates $Z_T$, each a derivative of some leader in $\Lambda$.  Then we may write $g=\sum_i g_iM_i$ with the $g_i\in K\{X_{[n]}\upharpoonright\Lambda\}$. The $Z_T$ do not appear in $\Lambda$ (by autoreducedness of $\Lambda$), so by Lemma \ref{pr boundedly prime implies bound on saturation} and induction on the number of $Z_T$ appearing in $g$ we must have each $g_i\in\sat{\Lambda}\cap K\{X_{[n]}\upharpoonright\Lambda\}\subseteq(\Lambda^H_{(\mathfrak{u}^+_{\bf F}(b))})$. This proves $g\in(\Lambda^H_{(\mathfrak{u}^+_{\bf F}(b))})$.
\end{proof}

\begin{notation}\label{note:fF}\ \\
\vspace{-.25cm}
\begin{itemize}
\item $N = \mathfrak{d}_{\mathfrak{u}^+_{\bf F}(b)}(\mathfrak{u}^+_{\bf F}(b)) + \mathfrak{u}^+_{\bf F}(b)$.
\item $\mathfrak{f}({\bf F},b)=\mathfrak{d}_{\mathfrak{u}^+_{\bf F}(b)}({N+b\choose b}\cdot\mathfrak{u}^+_{\bf F}(b))+N$.
\end{itemize}
\end{notation}


\begin{lemma}[Based on \cite{HTKM}, 4.2c]\label{thm:HTKM_4.2c}
Suppose $\Lambda\subseteq K\{X_{[n]}\}_{\leq b}$ is autoreduced and $\langle\Lambda^H_{(k)}\rangle$ is boundedly pr$(\Lambda)$-${\bf F}$-prime up to $\mathfrak{p}_b(\mathfrak{u}_{\bf F}(b))$.  Suppose there is a $g\in\sat{\Lambda}$ which is non-zero and reduced with respect to $\Lambda$.  Then there is an $f\in(\Lambda^H_{(\mathfrak{u}^+_{\bf F}(b))})\cap K\{X_{[n]}\}_{\leq \mathfrak{u}^+_{\bf F}(b),\mathfrak{f}({\bf F},b)}$ which is non-zero and reduced with respect to $\Lambda$.

\end{lemma}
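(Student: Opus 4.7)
Let $b' = \mathfrak{u}^+_{\bf F}(b)$. By Lemma~\ref{thm:HTKM_4_2_c_real}, $g$ lies in $(\Lambda^H_{(b')})$, so there is an expression $g = \sum_i c_i \gamma_i$ with $\gamma_i \in \Lambda^H_{(b')} \subseteq K\{X_{[n]}\}_{\leq b'}$ and $c_i \in K\{X_{[n]}\}$. The plan is to use $g$ to extract a non-zero reduced element of the ideal $(\Lambda^H_{(b')})$ lying in the subring $K[Z_{[b']}]$, and then to bound the degree of such an element by $\mathfrak{f}({\bf F},b)$ via internal flatness.

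First I would reduce to the subring $K[Z_{[b']}]$. Expand each $c_i = \sum_\alpha c_{i,\alpha} M_\alpha$, where $M_\alpha$ ranges over monomials in the indeterminates of ranking strictly greater than $b'$ and $c_{i,\alpha} \in K[Z_{[b']}]$. Setting $h_\alpha := \sum_i c_{i,\alpha} \gamma_i \in (\Lambda^H_{(b')}) \cap K[Z_{[b']}]$, we obtain the decomposition $g = \sum_\alpha M_\alpha h_\alpha$. Since the leaders of $\Lambda$ have ranking $\leq b \leq b'$, they do not appear in any $M_\alpha$; thus the fact that $g$ is reduced with respect to $\Lambda$ propagates to each $h_\alpha$, because any forbidden proper derivative of a leader or excess power of a leader occurring in an $h_\alpha$ would appear in a monomial of $g$. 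As $g \neq 0$, at least one $h_{\alpha_0}$ is non-zero, producing a non-zero reduced element of $V := (\Lambda^H_{(b')}) \cap K[Z_{[b']}]$.

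The remaining and more delicate step is to produce such an element of degree at most $\mathfrak{f}({\bf F},b)$. Working inside the polynomial ring $K[Z_{[b']}]$, I would invoke Theorem~\ref{thm:flat} and Lemma~\ref{thm:faithful} in two stages. In the first stage, the relations among the $\gamma_i$ (of degree $\leq b'$) admit coefficients of degree $\leq \mathfrak{d}_{b'}(b')$, so $K[Z_{[b']}]$-linear combinations of the $\gamma_i$ hitting bounded targets live in degree $\leq N = \mathfrak{d}_{b'}(b') + b'$. In the second stage, requiring such a combination to be reduced amounts to imposing finitely many linear constraints on the coefficients, indexed by the at most $\binom{N+b}{b}$ monomials of degree $\leq N$ in the $b$ leader indeterminates; applying internal flatness again to the resulting constrained homogeneous system (with input degree $\binom{N+b}{b}\cdot b'$) bounds its coefficients by $\mathfrak{d}_{b'}(\binom{N+b}{b}\cdot b')$, yielding the target degree bound $\mathfrak{f}({\bf F},b) = \mathfrak{d}_{b'}(\binom{N+b}{b}\cdot b') + N$.

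The main obstacle is the rigorous execution of the second stage: one must set up a homogeneous linear system over $K[Z_{[b']}]$ whose non-zero solutions correspond precisely to reduced elements of $V$, use the existence of $h_{\alpha_0}$ to certify its solvability, and then extract a degree-bounded solution through Theorem~\ref{thm:flat}. Once this is in place, the $f \in V \cap K[Z_{[b']}]_{\leq \mathfrak{f}({\bf F},b)}$ so produced lies in $(\Lambda^H_{(b')}) \cap K\{X_{[n]}\}_{\leq b', \mathfrak{f}({\bf F},b)}$, is reduced with respect to $\Lambda$ by construction, and is non-zero, as required.
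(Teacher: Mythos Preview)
Your initial reduction to $K[Z_{[b']}]$ is clean and correct, and it differs from the paper's setup: the paper instead splits the variables into the leaders $T_{[t]}$ of $\Lambda$ and the remaining non-derivative-of-leader variables $Y_{[y]}$, and works over the field $L=K(Y_{[y]})$. Your projection onto monomials in the high-rank variables achieves a similar localization with less hand-waving about ``without loss of generality $\Lambda^H_{(b')}\subseteq K[T,Y]$''.

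However, your two-stage degree-bounding argument has a genuine gap, not merely an execution detail. In stage~1 you invoke faithful flatness to say that ``linear combinations of the $\gamma_i$ hitting bounded targets live in degree $\leq N$'', but your target $h_{\alpha_0}$ is \emph{not} bounded in total degree in $K[Z_{[b']}]$---the coefficients $c_{i,\alpha}$ are uncontrolled. So stage~1 does not fire. Stage~2 then presupposes the bound $N$ from stage~1 to make the constraint set finite, so the argument is circular as written. The syzygy bound $\mathfrak{d}_{b'}(b')$ on relations among the $\gamma_i$ does not by itself bound the degree of an arbitrary element of the ideal.

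The missing idea is precisely what the paper supplies: reducedness bounds only the \emph{leader} degree (indeed $\deg_T g\leq b$), not the total degree. Exploiting this requires passing to $L[T_{[t]}]$ with $L=K(Y)$, where faithful flatness (Lemma~\ref{thm:faithful}) now legitimately applies because the target has $T$-degree $\leq b\leq b'$. This gives coefficients $c'_i\in L[T]_{\leq \mathfrak{d}_{b'}(b')}$, hence (after clearing denominators) an $f=hg$ with $T$-degree $\leq N$. Only then does the system decouple into $\leq\binom{N+b}{b}$ equations over $K[Y]$, to which Theorem~\ref{thm:flat} applies to bound the $Y$-degree. Your stage~2 intuition about $\binom{N+b}{b}$ constraints in the leader variables is pointing at this, but without first isolating the $T$-grading via the base change to $K(Y)$, there is no way to get the input bound $N$.
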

\begin{proof}
Suppose $g\in\sat{\Lambda}$ is non-zero and reduced with respect to $\Lambda$.  Then by Lemma \ref{thm:HTKM_4_2_c_real}, $g\in(\Lambda^H_{(\mathfrak{u}^+_{\bf F}(b))})$.  Let $T_{[t]}$ list the variables appearing as leaders in $\Lambda$ (so $t\leq b$) and let $Y_{[y]}$ list all variables appearing in $g$ which are not derivatives of the variables $T_{[t]}$.  Since $g$ is reduced relative to $\Lambda$, $g\in K[T_{[t]},Y_{[y]}]$. Without loss of generality we may assume that  $t+y\leq \mathfrak{u}^+_{\bf F}(b)$ and that $\Lambda^H_{(\mathfrak{u}^+_{\bf F}(b))}\subseteq K[T_{[t]},Y_{[y]}]$. We then have $g=\sum_i c_i\gamma_i$ for some $\gamma_i\in \Lambda^H_{(\mathfrak{u}^+_{\bf F}(b))}$ and $c_i\in K[T_{[t]},Y_{[y]}]$.

Consider the field $L=K(Y_{[y]})$.  Then $g\in L[T_{[t]}]_{\leq b}$ (since its degree in the leading variables of $\Lambda$ is bounded by the degrees of $\Lambda$).  By Lemma \ref{thm:faithful}, we also have $g=\sum_i c'_i\gamma_i$ where $c'_i\in L[T_{[t]}]_{\leq \mathfrak{d}_{\mathfrak{u}^+_{\bf F}(b)}(\mathfrak{u}^+_{\bf F}(b))}$.  Clearing denominators, we have $f=hg=\sum_i h_i\gamma_{i}$ where the $h_i\in K[T_{[t]},Y_{[y]}]$ have the same $T_{[t]}$-degree as the $c'_i$. Note that $f$ is still non-zero and reduced with respect to $\Lambda$.

We expand this into a system of equations with one equation for each monomial from $T_{[t]}$. There are at most ${N+t\choose t}$-many equations because there are $t$-many variables and the $T_{[t]}$-degree of the system is bounded by $N$, the sum of the $T_{[t]}$-degrees of $h_i$ and $\gamma_i$.  By Theorem \ref{thm:flat}, the system has solutions $h'_i$ with $Y_{[y]}$-degree at most $\mathfrak{d}_{\mathfrak{u}^+_{\bf F}(b)}({N+t\choose t}\cdot\mathfrak{u}^+_{\bf F}(b))$ such that   $f'=\sum_i h'_i\gamma_i$ is non-zero in the same monomials $f$ is. Further, since $f'$ is zero in all monomials $f$ is, $f'$ is still reduced with respect to $\Lambda$. By using the fact that $t\leq b$ and adding the total degrees of $h'_i$ and $\gamma_i$, we obtain the final bound on $f'$.
\end{proof}


\subsection{Chains of Autoreduced Sets and Coherent Sets}
The derivatives in $K\{X_{[n]}\}$ are well-quasiordered: that is, given any infinite sequence $u_1,u_2,\ldots,u_n,\ldots$, there must be some $i<j$ and some derivation $\theta$ so that $\theta u_i=u_j$.  





\begin{definition}
  A \emph{bad leader sequence} is a sequence of derivatives $\langle u_1,\ldots,u_m\rangle$ so that $u_1<u_2<\cdots<u_m$ and if $i<j$ then there is no $\theta$ with $\theta u_i=u_j$.
\end{definition}
It is an easy consequence of Dickson's Lemma that there are no infinite bad leader sequences; in particular, we can speak of the ``tree of bad leader sequences'', and carry out proofs by induction on this tree.

There is a standard ordering associated with autoreduced sets:

\begin{definition}
Let $\Lambda\subseteq K\{X_{[n]}\}\setminus K$ be autoreduced.  List $\Lambda=\{f_1,\ldots,f_r\}$ in order of ascending rank.  We write $\Gamma(\Lambda)$ for the sequence $\langle(\mu_{f_1},b_1),\ldots,(\mu_{f_r},b_r)\rangle$ where $\mu_{f_i}$ is the leader of $f_i$ and $b_i$ is the degree of $\mu_{f_i}$ in $f_i$---that is, $(\mu_{f_i},b_i)$ is the rank of $f_i$.

Given such a sequence $\gamma=\langle(\mu_1,b_1),\ldots,(\mu_r,b_r)\rangle$, we write $\gamma_\mu=\langle \mu_1,\ldots,\mu_r\rangle$ and $\gamma_b=\langle b_1,\ldots,b_r\rangle$.

Given two sequences $\gamma_1=\langle(\mu_{f_1},b_1),\ldots,(\mu_{f_r},b_r)\rangle$ and $\gamma_2=\langle(\mu_{g_1},b'_1),\ldots,(\mu_{g_s},b'_s)\rangle$, we say $\gamma_1$ has \emph{lower rank} than $\gamma_2$ if either:
\begin{enumerate}
\item there is an $i\leq \min\{r,s\}$ so that for all $j<i$, $(\mu_{f_j},b_j)=(\mu_{g_j},b'_j)$, but $(\mu_{f_i},b_i)<(\mu_{g_i},b'_i)$, or
\item $s>r$ and for all $j\leq r$, $(\mu_{f_j},b_j)=(\mu_{g_j},b'_j)$.
\end{enumerate}

By abuse of notation, we often say $\Lambda_1$ has lower rank than $\Lambda_2$ when $\Gamma(\Lambda_1)$ has lower rank than $\Gamma(\Lambda_2)$.
\end{definition}



Rank forms a well-order on the sequences $\Gamma(\Lambda)$; we now obtain explicit bounds on the length of decreasing sequences in this ordering.

The idea is that we suppose we have a long sequence $\Gamma(\Lambda_0),\ldots,\Gamma(\Lambda_d)$ all beginning with the same initial sequence $\gamma$.  In the worst case, where this sequence is as long as possible, $\Gamma(\Lambda_0)=\gamma$ and for all $i>0$, $\Gamma(\Lambda_i)$ must properly extend $\gamma$---that is, each $\Gamma(\Lambda_i)$ must begin with some $\gamma^\frown\langle (u_i,k_i)\rangle$.  We break the interval $[1,d]$ into subsequences based on $u_i$, and then further subsequences based on $k_i$.

We first write down hypothetical worst case bounds: we inductively bound each subinterval, and then add these all up to bound the whole interval.
\begin{notation}\label{note:hgamma}
We define a bound $\mathfrak{h}_{n,m}(\fn{D},\gamma)$ by recursion on $\gamma_\mu$, taking $\fn{D}_{i_0}(i)=\fn{D}(i_0+i)$.  We usually drop $n,m$ when they are clear from context.  We define:
\begin{itemize}
\item when $\gamma_\mu$ is maximal, $\mathfrak{h}(\fn{D},\gamma)=1$,
\item when $\gamma_\mu$ is not maximal, we define two helper sequences, $w_u$ for $u\in[-1,\fn{D}(1)]$ and $v_{u,k}$ for $u\in[-1,\fn{D}(1)]$, $k\in[0,\fn{D}(w_u)]$:
  \begin{itemize}
  \item $w_{\fn{D}(1)}=1$,
  \item if $\gamma_\mu^\frown\langle u\rangle$ is not a bad leader sequence then $w_{u-1}=w_u$,
  \item if $\gamma_\mu^\frown\langle u\rangle$ is a bad leader sequence then
    \begin{itemize}
    \item $v_{u,\fn{D}(w_u)}=w_u$,
    \item $v_{u,k-1}=v_{u,k}+\mathfrak{h}(\fn{D}_{v_{u,k}},\gamma^\frown\langle (u,k)\rangle)$,
    \item $w_{u-1}=v_{u,0}$,
    \end{itemize}
  \end{itemize}
and set $\mathfrak{h}(\fn{D},\gamma)=w_{-1}$.
\end{itemize}
We set $\mathfrak{h}(\fn{D})=\mathfrak{h}(\fn{D},\langle\rangle)$.
\end{notation}
Roughly speaking, the gap $v_{u,k-1}-v_{u,k}$ is a bound on how long the subinterval of $i$ so that $\Gamma(\Lambda_i)$ begins with $\gamma^\frown\langle (u,k)\rangle$ could be.

\begin{lemma}\label{thm:dickson_iteration_bound}
  Let a monotonic function $\fn{D}:\mathbb{N}\rightarrow\mathbb{N}$ be given and let $\gamma=\langle(\mu_1,b_1),\ldots,(\mu_r,b_r)\rangle$.  Suppose that for each $i$, $\Lambda_i$ is an autoreduced set in $K\{X_{[n]}\}_{\leq \fn{D}(i)}$ so that $\Gamma(\Lambda_i)$ begins with $\gamma$.  Then there is an $i<\mathfrak{h}(\fn{D},\gamma)$ such that $\Gamma(\Lambda_{i+1})$ does \emph{not} have lower rank than $\Gamma(\Lambda_{i})$.
\end{lemma}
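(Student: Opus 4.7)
The plan is a transfinite induction on $\gamma_\mu$ along the well-founded tree of bad leader sequences (well-founded by Dickson's Lemma), mirroring the recursion defining $\mathfrak{h}(\fn{D},\gamma)$. Throughout, I argue by contradiction: assume strict rank descent $\Gamma(\Lambda_{i+1})<\Gamma(\Lambda_i)$ holds for every $i\leq\mathfrak{h}(\fn{D},\gamma)$. In the base case where $\gamma_\mu$ is maximal, each autoreduced $\Lambda_i$ has $\Gamma(\Lambda_i)_\mu$ a bad leader sequence extending $\gamma_\mu$, so maximality forces $\Gamma(\Lambda_i)_\mu=\gamma_\mu$ and hence $\Gamma(\Lambda_i)=\gamma$ for every $i$; the purported strict descent at $i=1$ already fails, matching the bound $1$.

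For the inductive step, first observe that if ever $\Gamma(\Lambda_i)=\gamma$ exactly, strict descent at the next step is immediately blocked (no sequence beginning with $\gamma$ is strictly less than $\gamma$), so we may assume each $\Gamma(\Lambda_i)$ strictly extends $\gamma$; denote its $(r+1)$-st entry by $(u_i,k_i)$. Autoreducedness ensures $\gamma_\mu^\frown\langle u_i\rangle$ is a bad leader sequence, and strict descent forces $(u_i,k_i)$ to be lexicographically non-increasing. The key technical claim is that $w_u$ is a valid upper bound on the first index at which $u_i\leq u$. This is proved by downward induction on $u\in\{-1,0,\ldots,\fn{D}(1)\}$: at $u=\fn{D}(1)$ it holds because $\Lambda_1\subseteq K\{X_{[n]}\}_{\leq\fn{D}(1)}$ forces $u_1\leq\fn{D}(1)$; the step for $\gamma_\mu^\frown\langle u\rangle$ not a bad leader sequence is immediate from autoreducedness; when $\gamma_\mu^\frown\langle u\rangle$ is a bad leader sequence, a nested downward induction on $k$ bounds how long each sub-interval with $(u_i,k_i)=(u,k)$ can persist. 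Specifically, the main inductive hypothesis applied to the shifted sequence $\Lambda_{v_{u,k}+j}$ with degree function $\fn{D}_{v_{u,k}}$ and prefix $\gamma^\frown\langle(u,k)\rangle$ forces this sub-interval to end within $\mathfrak{h}(\fn{D}_{v_{u,k}},\gamma^\frown\langle(u,k)\rangle)$ steps, justifying $v_{u,k-1}=v_{u,k}+\mathfrak{h}(\fn{D}_{v_{u,k}},\gamma^\frown\langle(u,k)\rangle)$ and hence $w_{u-1}=v_{u,0}$. At the worst-case initial degree $k=\fn{D}(w_u)$, the bound $\Lambda_{w_u}\subseteq K\{X_{[n]}\}_{\leq\fn{D}(w_u)}$ is what enforces $v_{u,\fn{D}(w_u)}=w_u$. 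Descending all the way to $u=-1$ yields the conclusion that $u_i\leq -1$ at some index $\leq w_{-1}=\mathfrak{h}(\fn{D},\gamma)$, contradicting $u_i\geq 0$.

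The main obstacle is justifying the recursive invocation of the inductive hypothesis on sub-intervals of the original sequence. This requires passing to the shifted sequence $\Lambda_{v_{u,k}+j}$ (with the correspondingly shifted degree bound $\fn{D}_{v_{u,k}}$) and verifying that each of its terms' $\Gamma$ begins with the longer prefix $\gamma^\frown\langle(u,k)\rangle$ for as long as needed. This relies on the non-increasing nature of $(u_i,k_i)$: the sub-interval ends precisely when the pair strictly decreases, and the hypothesis then guarantees this happens within the allotted length. Additional care is needed to manage off-by-one indexing in the shift and to handle the degenerate case $\Gamma(\Lambda_i)=\gamma$, which terminates the strict descent immediately and must be factored out before the non-increasing sequence $(u_i,k_i)$ is introduced.
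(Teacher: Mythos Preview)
Your proposal is correct and follows essentially the same approach as the paper: induction on the bad leader sequence $\gamma_\mu$, tracking the $(r{+}1)$-st entry $(u_i,k_i)$ of $\Gamma(\Lambda_i)$ and bounding how long the sequence can dwell at each value via the recursive definition of $\mathfrak{h}$. The only cosmetic difference is that you organize the inductive step as a direct downward induction establishing $\hat w_u\leq w_u$ and $\hat v_{u,k}\leq v_{u,k}$, whereas the paper phrases it as locating the first pair $(u,k)$ whose actual gap $\hat v_{u,k-1}-\hat v_{u,k}$ exceeds the theoretical gap $v_{u,k-1}-v_{u,k}$ and applying the inductive hypothesis there (shifted by the actual value $\hat v_{u,k}$ rather than the bound $v_{u,k}$); these are equivalent bookkeeping choices.
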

\begin{proof}
We proceed by induction on $\gamma_\mu$.  When $\gamma_\mu$ is maximal, $1$ suffices: if $\Gamma(\Lambda_0)$ and $\Gamma(\Lambda_1)$ both begin with $\gamma$ then both must be equal to $\gamma$ (because $\gamma_\mu$ is a maximal bad leader sequence), so they have the same rank.

Suppose $\gamma_\mu$ is not maximal, so that other than $\Lambda_0$, we may assume each $\Gamma(\Lambda_i)$ is a \emph{proper} extension of $\gamma$.  That is, each $\Gamma(\Lambda_i)$ begins $\gamma^\frown\langle (u,b)\rangle$ for some $u\leq \fn{D}(1)$.  For each $(u,b)$, take $\hat w_u$ to be least so that $\Gamma(\Lambda_{\hat w_u})$ begins with $\gamma^\frown\langle (u',b')\rangle$ for some $u'\leq u$ and take $\hat v_{u,b}$ to be least so that $\Gamma(\Lambda_{\hat v_{u,b}})$ begins with $\gamma^\frown\langle (u,b')\rangle$ for some $b'\leq b$.

To prove that our bounds work, we compare the actual gaps $\hat v_{u,k-1}-\hat v_{u,k}$ to our bounds $v_{u,k-1}-v_{u,k}$.  The idea is there must be some first interval in which the actual interval is at least as long as our bound, and the inductive hypothesis will guarantee that we find our witness in this interval.

Taking $\hat v_{u,-1}=\hat w_{u-1}$, we look for the smallest $(u,b)$ so that $\hat v_{u,b-1}-\hat v_{u,b}> v_{u,b-1}-v_{u,b}$.  Note that we must have $\hat v_{u,b}\leq v_{u,b}$ (otherwise this would have happened for a smaller $(u,b)$).  Then for every $i\in[\hat v_{u,b},\hat v_{u,b-1}-1]$, $\Gamma(\Lambda_i)$ begins with $\gamma^\frown\langle (u,b)\rangle$, so we may apply the inductive hypothesis with $\fn{D}_{\hat v_{u,b}}$ and obtain the desired witness.
\end{proof}

In particular, when $\gamma$ is the empty sequence,
\begin{cor}\label{autoreduced_chain_bound}
  Let a monotonic function $\fn{D}:\mathbb{N}\rightarrow\mathbb{N}$ be given.  Suppose that for each $i$, $\Lambda_i$ is an autoreduced set in $K\{X_{[n]}\}_{\leq \fn{D}(i)}$.  Then there is an $i< \mathfrak{h}(\fn{D})$ such that $\Gamma(\Lambda_{i+1})$ does \emph{not} have lower rank than $\Gamma(\Lambda_{i})$.
\end{cor}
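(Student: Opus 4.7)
The plan is to deduce this corollary as an immediate specialization of Lemma \ref{thm:dickson_iteration_bound}, which is precisely the same statement relativized to autoreduced sets whose rank sequence $\Gamma(\Lambda_i)$ begins with a prescribed initial segment $\gamma$. The corollary is the case $\gamma=\langle\rangle$, and the notation $\mathfrak{h}(\fn{D})=\mathfrak{h}(\fn{D},\langle\rangle)$ is introduced exactly so that this specialization reads cleanly.

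First, I would observe that the hypothesis of Lemma \ref{thm:dickson_iteration_bound} is met: each $\Gamma(\Lambda_i)$ trivially begins with the empty sequence $\gamma=\langle\rangle$, and the $\Lambda_i$ are autoreduced with $\Lambda_i\subseteq K\{X_{[n]}\}_{\leq\fn{D}(i)}$ by assumption. Next, I would unpack the recursive definition of $\mathfrak{h}$ at $\gamma=\langle\rangle$: since $\langle\rangle_\mu=\langle\rangle$ is not a maximal bad leader sequence (the empty sequence can always be extended by a minimal derivative), we fall into the second clause of Notation \ref{note:hgamma}, whose output is, by definition, $\mathfrak{h}(\fn{D})$. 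Applying the lemma then yields some $i\leq\mathfrak{h}(\fn{D},\langle\rangle)=\mathfrak{h}(\fn{D})$ at which $\Gamma(\Lambda_{i+1})$ does not have lower rank than $\Gamma(\Lambda_i)$.

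The only small wrinkle is the discrepancy between the non-strict inequality $i\leq\mathfrak{h}(\fn{D},\gamma)$ supplied by the lemma and the strict inequality $i<\mathfrak{h}(\fn{D})$ asserted in the corollary. I would handle this by a routine off-by-one observation: the bound in the proof of Lemma \ref{thm:dickson_iteration_bound} is obtained by summing gap-bounds $v_{u,k-1}-v_{u,k}$ that each strictly exceed the corresponding actual gap $\hat v_{u,k-1}-\hat v_{u,k}$ in the worst subinterval, so the witness $i$ actually falls strictly below the sum $\mathfrak{h}(\fn{D})$; alternatively, one can simply absorb the off-by-one into the recursive definition of $\mathfrak{h}$ by treating the base case as contributing $1$ to the count, which is exactly the convention used in the notation.

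I do not anticipate any substantive obstacle here: all the combinatorial content — the manipulation of bad leader sequences, the transfinite induction on ranks of autoreduced sets, and the bookkeeping that bounds the length of descending chains in terms of $\fn{D}$ — has already been absorbed into Lemma \ref{thm:dickson_iteration_bound}. The corollary is essentially a notational convenience, packaging the most frequently invoked case of that lemma into a statement that will be cited in the subsequent bounds on coherent sets and on Rosenfeld's lemma.
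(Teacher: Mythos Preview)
Your proposal is correct and matches the paper's approach exactly: the paper offers no separate proof but simply prefaces the corollary with ``In particular, when $\gamma$ is the empty sequence,'' treating it as an immediate specialization of Lemma \ref{thm:dickson_iteration_bound}. Your discussion of the $\leq$ versus $<$ discrepancy goes beyond what the paper says (the paper does not address it), and while your resolution is somewhat informal, the observation that the base case already yields $i=0<1=\mathfrak{h}(\fn{D},\gamma)$ and that the inductive argument inherits strictness is the right way to handle it.
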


This bound is relevant for several important operations. Our first application takes as input a finite set and outputs an autoreduced set that is closely related to the original.
\begin{notation}\label{note:isat}
$\fn{D}^{\mathrm{sat}}_{b,n,m}$ is the function defined inductively by:
\begin{itemize}
\item $\fn{D}^{\mathrm{sat}}_{b,n,m} (0)=b$,
\item $\fn{D}^{\mathrm{sat}}_{b,n,m} (i+1)=\mathfrak{g}(\fn{D}^{\mathrm{sat}}_{b,n,m}(i),b)$.
\end{itemize}
We set $\mathfrak{i}^{\mathrm{sat}}_{n,m}(b)=\fn{D}^{\mathrm{sat}}_{b,n,m}(\mathfrak{h}_{n,m}(\fn{D}^{\mathrm{sat}}_{b,n,m}))$.
\end{notation}
Here $n$ is the number of indeterminates and $m$ is the number of derivations; when $n,m$ are fixed, we simply write $\mathfrak{i}^{\mathrm{sat}}(b)$.

\begin{prop} \label{autoreduced set procedure}
Let  $\Lambda \subseteq K\{X_{[n]}\}_{\leq b}$ be finite.  Then there exists an autoreduced set $\Lambda'\subseteq [\Lambda]\cap K\{X_{[n]}\}_{\leq b,\mathfrak{i}^{\mathrm{sat}}(b)}$ such that $\Lambda\subseteq \dsat{\Lambda'}$.   Further, if $\Lambda^*$ is an autoreduced subset of $\Lambda$ then $\Lambda'$ has rank less than or equal to that of $\Lambda^*$.
\end{prop}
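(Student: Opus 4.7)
The plan is to carry out the standard iterative construction of a characteristic set for $[\Lambda]$, using Lemma \ref{pseudodivision bounds} to track the degree growth in a single iteration and Corollary \ref{autoreduced_chain_bound} to bound the number of iterations. Set $\Lambda_0 := \Lambda$, and at each stage $i$ let $\Lambda'_i$ be an autoreduced subset of $\Lambda_i \setminus K$ of minimal rank (such a subset exists because ranks are well-ordered). For each $f \in \Lambda$, compute a pseudo-remainder $\tilde f$ of $f$ with respect to $\Lambda'_i$ using Lemma \ref{pseudodivision bounds}; if every $\tilde f$ is zero, terminate and output $\Lambda' := \Lambda'_i$, otherwise put $\Lambda_{i+1} := \Lambda_i \cup \{\tilde f : f \in \Lambda,\; \tilde f \neq 0\}$ and repeat. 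Since each new $\tilde f$ is nonzero and reduced with respect to $\Lambda'_i$, a case analysis based on where the leader of $\tilde f$ falls relative to the leaders in $\Lambda'_i$ produces an autoreduced subset of $\Lambda_{i+1}$ of strictly smaller rank than $\Lambda'_i$, so the minimal-rank $\Lambda'_{i+1}$ satisfies $\operatorname{rank}(\Lambda'_{i+1}) < \operatorname{rank}(\Lambda'_i)$ at every non-terminal stage.

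An induction on $i$ shows $\Lambda_i \subseteq K\{X_{[n]}\}_{\leq b,\; \fn{D}^{\mathrm{sat}}_{b,n,m}(i)}$: only the original polynomials from $\Lambda$ (of order and degree $\leq b$) are ever pseudo-divided, and pseudo-dividing such an $f$ by $\Lambda'_i \subseteq K\{X_{[n]}\}_{\leq \fn{D}^{\mathrm{sat}}_{b,n,m}(i)}$ yields by Lemma \ref{pseudodivision bounds} a remainder of order at most $b$ and degree at most $\mathfrak{g}(\fn{D}^{\mathrm{sat}}_{b,n,m}(i), b) = \fn{D}^{\mathrm{sat}}_{b,n,m}(i+1)$. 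Consequently $\Lambda'_i \subseteq K\{X_{[n]}\}_{\leq \fn{D}^{\mathrm{sat}}_{b,n,m}(i)}$, and Corollary \ref{autoreduced_chain_bound} applied to the sequence $\Lambda'_0,\Lambda'_1,\ldots$ forbids a strict rank decrease at every stage below $\mathfrak{h}_{n,m}(\fn{D}^{\mathrm{sat}}_{b,n,m})$; by the previous paragraph this can only happen if the procedure has already terminated, so termination occurs at some $i^* \leq \mathfrak{h}_{n,m}(\fn{D}^{\mathrm{sat}}_{b,n,m})$, yielding the degree bound $\fn{D}^{\mathrm{sat}}_{b,n,m}(i^*) \leq \mathfrak{i}^{\mathrm{sat}}_{n,m}(b)$.

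Three verifications complete the argument. First, $\Lambda' \subseteq [\Lambda]$ by induction, because each pseudo-division identity writes the new remainder as $\bigl(\prod_\lambda I_\lambda^{k_\lambda}S_\lambda^{l_\lambda}\bigr)f$ minus an element of $[\Lambda'_i] \subseteq [\Lambda]$, with $f \in \Lambda \subseteq [\Lambda]$. Second, $\Lambda \subseteq \dsat{\Lambda'}$ is exactly the termination criterion combined with Lemma \ref{pseudodivision bounds}, which gives $H_{\Lambda'}^N f \in [\Lambda']$ for each $f \in \Lambda$ and some $N$. Third, if $\Lambda^* \subseteq \Lambda$ is autoreduced then $\Lambda^*$ is a candidate autoreduced subset at stage $0$, so $\operatorname{rank}(\Lambda'_0) \leq \operatorname{rank}(\Lambda^*)$, and since ranks only decrease thereafter $\operatorname{rank}(\Lambda') \leq \operatorname{rank}(\Lambda^*)$. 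The main obstacle is the strict rank decrease claim in the first paragraph: one must verify that inserting $\tilde f$ at the appropriate position in $\Lambda'_i$ produces a genuinely autoreduced set, which relies on the observation that a leader in $\Lambda'_i$ strictly below $\mu_{\tilde f}$ cannot contain any occurrence of $\mu_{\tilde f}$, and that reducedness of $\tilde f$ with respect to $\Lambda'_i$ prevents any conflict in the other direction.
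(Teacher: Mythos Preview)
Your proof is correct and follows essentially the same approach as the paper: iteratively select a minimal-rank autoreduced subset, pseudo-reduce elements of $\Lambda$ against it, adjoin nonzero remainders, and use the rank-decrease together with Corollary \ref{autoreduced_chain_bound} and Lemma \ref{pseudodivision bounds} to bound termination and degree growth. The only cosmetic difference is that you add all nonzero remainders at once while the paper adds one per step; the rank-decrease argument (via $\{g\in\Lambda'_i:\operatorname{rank}(g)<\operatorname{rank}(\tilde f)\}\cup\{\tilde f\}$), the degree recursion $\fn{D}^{\mathrm{sat}}_{b,n,m}(i+1)=\mathfrak{g}(\fn{D}^{\mathrm{sat}}_{b,n,m}(i),b)$, and the three closing verifications are identical in substance.
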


\begin{proof}
Consider the following procedure: Let $\Lambda_0$ be an autoreduced subset of $\Lambda$ having minimal rank. We are done if $\Lambda\subseteq \dsat{\Lambda_0}$. Otherwise, there exists $f\in \Lambda$ whose remainder $\tilde{f}$ with respect to $\Lambda_0$ is non-zero. Let $\Lambda_1$ be an autoreduced subset of $\Lambda_0 \cup \{\tilde{f}\}\subseteq \Lambda$ having minimal rank. Repeat the process to recursively define a sequence $\Lambda_0,\Lambda_1,\Lambda_2,\dots$ of autoreduced subsets of $\Lambda$. By Corollary \ref{autoreduced_chain_bound} it suffices to verify that $\Lambda_{i+1}< \Lambda_i$ as autoreduced sets and that $\Lambda_i\subseteq K\{X_{[n]}\}_{\leq b, \fn{D}^{\mathrm{sat}}_b(i)}$.

Let $\tilde{f}$ be the remainder used in defining $\Lambda_{i+1}$ from $\Lambda_i$. Note that elements of $\Lambda_i$ of lower rank than $\tilde{f}$ are reduced with respect to $\tilde{f}$. Since $\tilde{f}$ is reduced with respect to $\Lambda_i$, the set $\{\text{elements of }\Lambda_i\text{ having rank lower than that of } \tilde{f}\} \cup\{\tilde{f}\}$ is an autoreduced subset of $\Lambda_i\cup \{\tilde{f}\}$ having strictly lower rank than that of $\Lambda_i$.

Lastly, note that $\Lambda_0\subseteq K\{X_{[n]}\}_{\leq b, b}=K\{X_{[n]}\}_{\leq b, \fn{D}^{\mathrm{sat}}_b(0)}$. Assume $\Lambda_i\subseteq K\{X_{[n]}\}_{\leq b, \fn{D}^{\mathrm{sat}}_b(i)}$.  By Lemma \ref{pseudodivision bounds}, pseudodividing an element of $\Lambda\subseteq K\{X_{[n]}\}_{\leq b}$ with respect to $\Lambda_i$ gives a remainder belonging to  $\mathfrak{g}(\fn{D}^{\mathrm{sat}}_b(i),b)=\fn{D}^{\mathrm{sat}}_b(i+1)$.
\end{proof}

\begin{notation}
Let $v$ be a derivative and let $\Lambda\subseteq K\{X_{[n]}\}$ be a finite set. We write $\Lambda_{[<v]}$ for the set $\{\theta\lambda\mid \lambda\in\Lambda, \theta\in \Theta, \text{ and } \mu_{\theta\lambda}<v\}$.
\end{notation}

\begin{notation}
Consider $f,g\in K\{X_{[n]}\}\setminus K$ and $\theta_g, \theta_f$ derivations such that $\theta_gf$ and $\theta_fg$ have the same leader $v$. We define the \emph{$\Delta$-$S$-polynomial} of $f$ and $g$ with respect to $v$, denoted $\Delta(f,g,v)$, to be $S_g\theta_gf -S_f\theta_fg$. If $v$ is the least such derivative, we simply write $\Delta(f,g)$. 
\end{notation}
The following property is key:
\begin{definition}
An autoreduced set $\Lambda\subseteq K\{X_{[n]}\}\setminus K$ is \emph{coherent} if for all $f,g\in \Lambda$ with derivatives that share a common leader $v$ we have $\Delta(f,g,v)\in (\Lambda_{[<v]}):H_\Lambda^\infty$.
\end{definition}

To have coherence it suffices for the least such leader to satisfy this condition.  In practice, it is convenient to work with a slightly stronger notion.  This costs us nothing, since the standard construction of a coherent set actually gives the stronger property.
\begin{definition}
An autoreduced set $\Lambda\subseteq K\{X_{[n]}\}\setminus K$ is \emph{reduction-coherent} if for all $f,g\in \Lambda$ with derivatives that share a common leader $v$, the $\Delta$-$S$-polynomial $\Delta(f,g,v)$ reduces to $0$ with respect to $\Lambda$.
\end{definition}
It is easy to check that any reduction-coherent set is coherent.  See \cite{MR1921694,MR0568864} for further details and generalizations.

Using essentially the same strategy as in \ref{autoreduced set procedure}, we give effective bounds on coherent sets. See \cite{MR1676955}[5.5.12] for the algorithm (but without the bound). 

\begin{notation}\label{note:icohere}
$\fn{D}^{\mathrm{cohere}}_b$ is the function defined inductively by:
\begin{itemize}
\item $\fn{D}^{\mathrm{cohere}}_{b,n,m}(0)=b$,
\item $\fn{D}^{\mathrm{cohere}}_{b,n,m} (i+1)=\mathfrak{g}(\fn{D}^{\mathrm{cohere}}_{b,n,m} (i),{2\fn{D}^{\mathrm{cohere}}_{b,n,m} (i)+ m-1 \choose m-1}\cdot n\cdot(\fn{D}^{\mathrm{cohere}}_{b,n,m} (i) +1))$.
\end{itemize}
We set $\mathfrak{i}^{\mathrm{cohere}}_{n,m}(b)=\fn{D}^{\mathrm{cohere}}_{b,n,m} (\mathfrak{h}_{n,m}(\fn{D}^{\mathrm{cohere}}_{b,n,m}))$.
\end{notation}
Again we usually omit $n,m$.

\begin{prop}\label{coherent bound}
Let  $\Lambda_0\subseteq K\{X_{[n]}\}_{\leq b}$ be an autoreduced set.  Then there exists a reduction-coherent set $\Lambda\subseteq [\Lambda_0] 
\cap K\{X_{[n]}\}_{\leq \mathfrak{i}^{\mathrm{cohere}}(b)}$. 
\end{prop}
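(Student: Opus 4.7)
The plan is to mirror the iterative construction in Proposition~\ref{autoreduced set procedure}, this time driven by failure of reduction-coherence rather than by the presence of unreduced elements. I would build a sequence $\Lambda_0,\Lambda_1,\Lambda_2,\ldots$ of autoreduced subsets of $[\Lambda_0]$ of strictly decreasing rank. At stage $i$, test every pair $f,g\in\Lambda_i$ whose leaders share a common derivative: if every $\Delta$-$S$-polynomial $\Delta(f,g)$ pseudoreduces to $0$ with respect to $\Lambda_i$, then stop and output $\Lambda=\Lambda_i$, which is reduction-coherent by definition of the stopping condition. Otherwise, pick an offending pair, let $\tilde r$ be the nonzero remainder of $\Delta(f,g)$ after pseudodivision by $\Lambda_i$, and set $\Lambda_{i+1}$ to be an autoreduced subset of $\Lambda_i\cup\{\tilde r\}$ of minimal rank.

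Two invariants pass from stage $i$ to stage $i+1$. First, $\Lambda_{i+1}\subseteq[\Lambda_0]$: since $\Delta(f,g)\in[\Lambda_i]\subseteq[\Lambda_0]$ and reduction (multiplying by initials and separants and subtracting multiples of elements of $\Lambda_i$) preserves membership in $[\Lambda_0]$, we have $\tilde r\in[\Lambda_0]$. Second, $\Lambda_{i+1}$ has strictly lower rank than $\Lambda_i$: exactly as in Proposition~\ref{autoreduced set procedure}, the elements of $\Lambda_i$ of rank strictly below $\tilde r$, together with $\tilde r$ itself, form an autoreduced subset of $\Lambda_i\cup\{\tilde r\}$ of strictly lower rank than $\Lambda_i$, which bounds the rank of any minimal-rank autoreduced subset. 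Corollary~\ref{autoreduced_chain_bound} then caps the length of the sequence by $\mathfrak{h}(\fn{D}^{\mathrm{cohere}}_b)$.

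It remains to propagate the quantitative bound. Assume inductively $\Lambda_i\subseteq K\{X_{[n]}\}_{\leq \fn{D}^{\mathrm{cohere}}_b(i)}$. For any $f,g\in\Lambda_i$, the leaders $u_f,u_g$ have order at most $\fn{D}^{\mathrm{cohere}}_b(i)$, so their least common derivative $v$ has order at most $2\fn{D}^{\mathrm{cohere}}_b(i)$. Combining the count $\binom{2\fn{D}^{\mathrm{cohere}}_b(i)+m-1}{m-1}\cdot n$ of derivatives of order $2\fn{D}^{\mathrm{cohere}}_b(i)$ with the degree bound on $S_g\theta_gf - S_f\theta_fg$ coming from the degrees of the initials, separants, and of $f,g$ themselves, a crude joint ranking-degree bound places $\Delta(f,g)$ in $K\{X_{[n]}\}_{\leq d_i}$ for $d_i=\binom{2\fn{D}^{\mathrm{cohere}}_b(i)+m-1}{m-1}\cdot n\cdot(\fn{D}^{\mathrm{cohere}}_b(i)+1)$. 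Applying Lemma~\ref{pseudodivision bounds} to reduce $\Delta(f,g)\in K\{X_{[n]}\}_{\leq d_i}$ modulo $\Lambda_i\subseteq K\{X_{[n]}\}_{\leq \fn{D}^{\mathrm{cohere}}_b(i)}$ produces $\tilde r\in K\{X_{[n]}\}_{\leq\mathfrak{g}(\fn{D}^{\mathrm{cohere}}_b(i),d_i)}=K\{X_{[n]}\}_{\leq\fn{D}^{\mathrm{cohere}}_b(i+1)}$, matching the recursive definition exactly.

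The main obstacle is the bookkeeping in this last step: teasing out a single joint bound $d_i$ that absorbs both the ranking contribution (from the order of the least common derivative of two leaders) and the degree contribution (from separants and derivations), so that a single application of $\mathfrak{g}$ suffices to step from $\fn{D}^{\mathrm{cohere}}_b(i)$ to $\fn{D}^{\mathrm{cohere}}_b(i+1)$. A minor point worth verifying is that the halting criterion yields reduction-coherence rather than mere coherence at the terminal stage; this is immediate because the loop explicitly tests that every relevant $\Delta$-$S$-polynomial pseudoreduces to $0$ modulo $\Lambda_i$, which is exactly the defining condition.
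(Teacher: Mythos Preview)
Your proposal is correct and follows essentially the same approach as the paper: iteratively form $\Delta$-$S$-polynomials, pseudoreduce, and pass to a minimal-rank autoreduced subset, using Corollary~\ref{autoreduced_chain_bound} for termination and Lemma~\ref{pseudodivision bounds} for the quantitative step from $\fn{D}^{\mathrm{cohere}}_b(i)$ to $\fn{D}^{\mathrm{cohere}}_b(i+1)$. The only inessential variation is that you adjoin a single nonzero remainder $\tilde r$ at each stage, whereas the paper adjoins the full set $R_i$ of all remainders before extracting a minimal-rank autoreduced subset; both yield a strictly decreasing sequence with the same bounds.
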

\begin{proof}
Given an autoreduced set $\Lambda_0$, form the $\Delta$-$S$-polynomials corresponding to pairs of elements of $\Lambda_0$. Pseudodivide each $\Delta$-$S$-polynomial by $\Lambda_0$ and let $R_0$ be the set of remainders. If $R_0=\{0\}$, then $\Lambda_0$ is already reduction-coherent. If not, select an autoreduced subset $\Lambda_1$ of $\Lambda_0\cup R_0$ having minimal rank and repeat the process. 

The output is automatically reduction-coherent if the procedure terminates. Termination follows from Corollary \ref{autoreduced_chain_bound} because $\Lambda_{i+1}< \Lambda_i$ as autoreduced sets using the same argument as before: given $p\in R_i$, the set $\{\text{elements of }\Lambda_i\text{ having rank lower than that of } p\} \cup\{p\}$ is an autoreduced subset of $\Lambda_i\cup R_i$ having strictly lower rank than that of $\Lambda_i$.

We claim that the intermediate sets are bounded by $\fn{D}^{\mathrm{cohere}}_b$. Suppose $\Lambda_i\subseteq K\{X_{[n]}\}_{\leq \fn{D}^{\mathrm{cohere}}_b(i)}$. Then the degree and order of a $\Delta$-$S$-polynomial of two elements of $\Lambda_i$ are both at most $2\fn{D}^{\mathrm{cohere}}_b(i)$. The ranking grows more, though, because there are ${2\fn{D}^{\mathrm{cohere}}_b(i)+ m-1 \choose m-1}\cdot n$ derivatives of order $2\fn{D}^{\mathrm{cohere}}_b(i)$. Hence doubling the order of a derivative in $K\{X_{[n]}\}_{\leq \fn{D}^{\mathrm{cohere}}_b(i)}$ cannot place the result beyond  $K\{X_{[n]}\}_{\leq{2\fn{D}^{\mathrm{cohere}}_b(i)+ m-1 \choose m-1}\cdot n\cdot(\fn{D}^{\mathrm{cohere}}_b(i) +1)}$.

By Lemma \ref{pseudodivision bounds}, pseudodividing an element bounded by ${2\fn{D}^{\mathrm{cohere}}_b(i)+ m-1 \choose m-1}\cdot n\cdot(\fn{D}^{\mathrm{cohere}}_b(i) +1)$ with respect to $\Lambda_i$ gives a remainder bounded by $\mathfrak{g}(\fn{D}^{\mathrm{cohere}}_b(i),\allowbreak {2\fn{D}^{\mathrm{cohere}}_b(i)+ m-1 \choose m-1}\cdot n\cdot(\fn{D}^{\mathrm{cohere}}_b(i) +1))=\fn{D}^{\mathrm{cohere}}_b(i+1)$. Hence the algorithm terminates by step $\mathfrak{h}(\fn{D}^{\mathrm{cohere}}_b)$ and the output is bounded by $\fn{D}^{\mathrm{cohere}}_b(\mathfrak{h}(\fn{D}^{\mathrm{cohere}}_b))=\mathfrak{i}^{\mathrm{cohere}}(b)$.
\end{proof}

An easy extension of this argument gives:
\begin{prop}\label{ref:containment}
Let  $\Lambda_0\subseteq K\{X_{[n]}\}_{\leq b}$ be an autoreduced set.  Then there exists a reduction-coherent set $\Lambda\subseteq [\Lambda_0]
\cap K\{X_{[n]}\}_{\leq \mathfrak{i}^{\mathrm{cohere}}(b)}$ such that $\Lambda_0\subseteq\dsat{\Lambda}$.
\end{prop}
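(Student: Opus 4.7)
The plan is to interleave the algorithm of Proposition \ref{coherent bound} with the one of Proposition \ref{autoreduced set procedure}, running both checks simultaneously inside a single descending sequence of autoreduced sets. Starting from $\Lambda_0$, I will build autoreduced sets $\Lambda_0, \Lambda_1, \Lambda_2, \ldots \subseteq [\Lambda_0]$ of strictly decreasing rank, terminating when two conditions hold at once: (i) every $\Delta$-$S$-polynomial formed from pairs in $\Lambda_i$ reduces to $0$ with respect to $\Lambda_i$, and (ii) every element of the \emph{original} set $\Lambda_0$ reduces to $0$ with respect to $\Lambda_i$.

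At each stage $i$, I form the set $R_i$ consisting of all nonzero pseudoremainders obtained by reducing, relative to $\Lambda_i$, either a $\Delta$-$S$-polynomial of a pair in $\Lambda_i$ or an element of $\Lambda_0$. If $R_i = \emptyset$ the algorithm halts. Otherwise I choose $\Lambda_{i+1}$ to be an autoreduced subset of $\Lambda_i \cup R_i$ of minimal rank. Exactly as in the proofs of Propositions \ref{autoreduced set procedure} and \ref{coherent bound}, any $p \in R_i$ is reduced with respect to $\Lambda_i$ but nonzero, so the autoreduced set $\{\lambda \in \Lambda_i : \mathrm{rank}(\lambda) < \mathrm{rank}(p)\} \cup \{p\}$ witnesses that $\Lambda_{i+1}$ has strictly lower rank than $\Lambda_i$. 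Upon termination with $\Lambda = \Lambda_N$, condition (i) gives reduction-coherence, while condition (ii) combined with Lemma \ref{pseudodivision bounds} yields $H_\Lambda^{k_f} f \in (\Lambda_{[d]}) \subseteq [\Lambda]$ for every $f \in \Lambda_0$, hence $\Lambda_0 \subseteq \dsat{\Lambda}$.

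For the bound, I claim the sequence $\fn{D}^{\mathrm{cohere}}_b$ introduced in Notation \ref{note:icohere} already controls the degrees of all remainders generated here: the pseudodivision estimate for a $\Delta$-$S$-polynomial of a pair in $K\{X_{[n]}\}_{\leq \fn{D}^{\mathrm{cohere}}_b(i)}$ gives $\fn{D}^{\mathrm{cohere}}_b(i+1)$, and the pseudodivision of an element of $\Lambda_0 \subseteq K\{X_{[n]}\}_{\leq b}$ by $\Lambda_i$ produces a remainder in $K\{X_{[n]}\}_{\leq b,\mathfrak{g}(\fn{D}^{\mathrm{cohere}}_b(i),b)}$, which is dominated by $\fn{D}^{\mathrm{cohere}}_b(i+1)$ since $b \leq \fn{D}^{\mathrm{cohere}}_b(i)$ and $\mathfrak{g}$ is monotone in both arguments. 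Applying Corollary \ref{autoreduced_chain_bound} to $\fn{D}^{\mathrm{cohere}}_b$ then bounds the length of the sequence by $\mathfrak{h}(\fn{D}^{\mathrm{cohere}}_b)$, placing $\Lambda$ in $K\{X_{[n]}\}_{\leq \mathfrak{i}^{\mathrm{cohere}}(b)}$.

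The only real new ingredient compared to Proposition \ref{coherent bound} is the bookkeeping for condition (ii); the main thing to verify is that adding the $\Lambda_0$-reduction remainders into $R_i$ genuinely forces the rank to drop (it does, by the same autoreduced-replacement argument as before) and that these extra remainders do not violate the degree estimate (they do not, as just argued). I expect the mild obstacle to be stating cleanly that the ranking/degree bound on $\Delta$-$S$-polynomials absorbs the bound coming from pseudodividing degree-$b$ polynomials against the current $\Lambda_i$; once this comparison is fixed, the termination and degree conclusions follow verbatim from the proof of Proposition \ref{coherent bound}.
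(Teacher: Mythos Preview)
Your proposal is correct and follows essentially the same argument as the paper. The only cosmetic difference is that you merge the coherence check and the $\Lambda_0$-reduction check into a single set $R_i$ at every stage, whereas the paper alternates: it first runs the coherence step of Proposition~\ref{coherent bound}, and only when $\Lambda_i$ is already reduction-coherent does it pseudodivide an element of $\Lambda_0$ to produce the next $\Lambda_{i+1}$; in both versions the rank strictly decreases, the degree estimate is absorbed by $\fn{D}^{\mathrm{cohere}}_b$ exactly as you argue, and termination plus $\Lambda_0\subseteq\dsat{\Lambda}$ follow.
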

\begin{proof}
  As in the previous proof, we form a sequence of autoreduced sets $\Lambda_0,\Lambda_1,\ldots$ of decreasing rank.  If $\Lambda_i$ is not reduction-coherent then we choose $\Lambda_{i+1}$ as in the previous lemma.

If $\Lambda_i$ is reduction-coherent, we ask whether there is any $f\in\Lambda_0$ so that the remainder $\tilde f$ with respect to $\Lambda_i$ is non-zero.  If so, we take $\Lambda_{i+1}$ to be a minimal rank autoreduced subset of $\Lambda_{i}\cup\{\tilde f\}$.

This must still terminate within $\mathfrak{h}(\fn{D}^{\mathrm{cohere}}_b)$ steps with some $\Lambda$ which is reduction-coherent and whenever $f\in\Lambda_0$, the remainder $\tilde f$ with respect to $\Lambda$ is $0$.  Therefore by the definition of the remainder, $H^k_{\Lambda}f\in
[\Lambda]$ for some $k$ and $f\in\dsat{\Lambda}$.
\end{proof}


\begin{lemma}[Based on \cite{HTKM}, 4.4(1)]\label{thm:HTKM_4.4_1}
  Suppose $\Lambda\subseteq K\{X_{[n]}\}_{\leq b}$ is coherent and autoreduced and $\langle \Lambda_{(k)}^H\rangle$ is ${\bf F}$-prime up to $\mathfrak{p}_b(\mathfrak{u}_{\bf F}(b))$.  
  Suppose $g\in\dsat{\Lambda}\cap K\{X_{[n]}\}_{\leq d}$.  Then $g\in(\Lambda^H_{[\mathfrak{g}(b,d)+\mathfrak{u}^+_{\bf F}(b)]})$.
\end{lemma}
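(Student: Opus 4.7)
The plan is to reduce the differential problem to the already-developed algebraic machinery: pseudodivide $g$ against $\Lambda$ to obtain a reduced remainder, use Rosenfeld's lemma (the sole place coherence enters) to pass from $\dsat{\Lambda}$ to $\sat{\Lambda}$, and then invoke Lemma \ref{thm:HTKM_4_2_c_real} to get the required algebraic bound. A final bookkeeping step reassembles everything into membership in $(\Lambda^H_{[\mathfrak{g}(b,d)+\mathfrak{u}^+_{\bf F}(b)]})$.

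First I would apply pseudodivision (Lemma \ref{pseudodivision bounds}) to produce a remainder $\tilde g \in K\{X_{[n]}\}_{\leq d,\mathfrak{g}(b,d)}$ reduced with respect to $\Lambda$, together with exponents $k_\lambda, l_\lambda \leq \mathfrak{g}(b,d)$ for each $\lambda \in \Lambda$, satisfying
\[ H\,g - \tilde g \in (\Lambda_{[d]}), \qquad H := \prod_{\lambda\in\Lambda} I_\lambda^{k_\lambda} S_\lambda^{l_\lambda}. \]
Since $H$ divides $H_\Lambda^{\mathfrak{g}(b,d)}$, write $H_\Lambda^{\mathfrak{g}(b,d)} = H\cdot H'$ for some cofactor $H'$. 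The ideal $\dsat{\Lambda}$ contains $[\Lambda] \supseteq (\Lambda_{[d]})$ and is closed under $K\{X_{[n]}\}$-multiplication, so from $g\in\dsat{\Lambda}$ one gets $\tilde g = Hg - (Hg - \tilde g) \in \dsat{\Lambda}$.

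Second comes the one genuinely differential ingredient: since $\Lambda$ is coherent and $\tilde g$ is reduced (hence partially reduced) with respect to $\Lambda$, Rosenfeld's lemma (Lemma \ref{thm:Kolchin_3.8.5}) promotes $\tilde g \in \dsat{\Lambda}$ to $\tilde g \in \sat{\Lambda}$. The hypothesis of Lemma \ref{thm:HTKM_4_2_c_real} -- boundedly $\mathrm{pr}(\Lambda)$-${\bf F}$-primality up to $\mathfrak{p}_b(\mathfrak{u}_{\bf F}(b))$ -- is implied by the ${\bf F}$-primality we are given, so that lemma applies and yields $\tilde g \in (\Lambda^H_{(\mathfrak{u}^+_{\bf F}(b))})$. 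Unpacking the definition, this means $H_\Lambda^{\mathfrak{u}^+_{\bf F}(b)}\tilde g \in (\Lambda)$.

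Finally, set $k = \mathfrak{g}(b,d)+\mathfrak{u}^+_{\bf F}(b)$ and multiply $Hg = \tilde g + q$ (where $q \in (\Lambda_{[d]})$) by $H' H_\Lambda^{\mathfrak{u}^+_{\bf F}(b)}$ to obtain
\[ H_\Lambda^{k}\,g \;=\; H'\cdot H_\Lambda^{\mathfrak{u}^+_{\bf F}(b)}\tilde g \;+\; H'\cdot H_\Lambda^{\mathfrak{u}^+_{\bf F}(b)}\,q. \]
The first term lies in $(\Lambda) \subseteq (\Lambda_{[k]})$ by the previous paragraph, and the second in $(\Lambda_{[d]}) \subseteq (\Lambda_{[k]})$ since $d \leq k$. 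Combined with $g \in K\{X_{[n]}\}_{\leq d} \subseteq K\{X_{[n]}\}_{\leq k}$, this shows $g \in \Lambda^H_{[k]} \subseteq (\Lambda^H_{[k]})$, as required.

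The substantive obstacle is the middle step -- Rosenfeld's lemma -- which is the only place coherence is used and the sole bridge between differential and algebraic saturation; once that is available the remaining work is just tracking the pseudodivision exponents and the saturation bound from Lemma \ref{thm:HTKM_4_2_c_real} and checking that $\mathfrak{g}(b,d) + \mathfrak{u}^+_{\bf F}(b)$ is large enough to absorb both.
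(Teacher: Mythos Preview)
Your argument is correct and matches the paper's: pseudodivide, apply Rosenfeld's lemma to move the reduced remainder from $\dsat{\Lambda}$ into $\sat{\Lambda}$, bound the algebraic saturation, and reassemble. One small citation point: the paper invokes Kolchin's classical Rosenfeld lemma (\cite{MR0568864}, III.8, Lemma~5, valid for merely coherent sets) rather than its own Lemma~\ref{thm:Kolchin_3.8.5}, which assumes reduction-coherence and so would not literally apply under the stated hypothesis; and it uses Lemma~\ref{thm:HTKM_4.2b} in place of your Lemma~\ref{thm:HTKM_4_2_c_real}, though either suffices here.
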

\begin{proof}
Let $\tilde g$ be the remainder of $g$ with respect to $\Lambda$, so also $\tilde g\in\dsat{\Lambda}$.    By \cite{MR0568864} III(8), Lemma 5, also $\tilde g\in\sat{\Lambda}$, and then by Lemma \ref{thm:HTKM_4.2b}, $\tilde g\in(\Lambda^H_{\mathfrak{u}^+_{\bf F}(b)})$, so $H_\Lambda^{\mathfrak{u}^+_{\bf F}(b)}\tilde g\in(\Lambda)\subseteq(\Lambda_{[d]})$.  Since $H_\Lambda^{\mathfrak{g}(b,d)}g-\tilde g\in(\Lambda_{[d]})$ as well by Lemma \ref{pseudodivision bounds}, we have $H_\Lambda^{\mathfrak{g}(b,d)+\mathfrak{u}^+_{\bf F}(b)}g\in(\Lambda_{[d]})$, so $g\in(\Lambda^H_{[\mathfrak{g}(b,d)+\mathfrak{u}^+_{\bf F}(b)]})$.
\end{proof}

\begin{lemma}[Based on \cite{HTKM}, 4.4(2)]
Suppose $\Lambda\subseteq K\{X_{[n]}\}_{\leq b}\setminus K$ is autoreduced and $\dsat{\Lambda}$ contains no non-zero elements of degree $\leq b$ reduced with respect to $\Lambda$, that $P$ is any prime $\Delta$-ideal, and that there is some $g\in\dsat{\Lambda}\setminus P$.  Then there is an $h\in K\{X_{[n]}\}_{\leq b}$ so that $h\in\dsat{\Lambda}\bigtriangleup P$.


\end{lemma}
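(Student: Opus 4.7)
The plan is to split into two cases according to whether $\Lambda\subseteq P$. In the easy case $\Lambda\not\subseteq P$, pick any $\lambda_0\in\Lambda\setminus P$: since $\lambda_0\in[\Lambda]\subseteq\dsat{\Lambda}$ and $\lambda_0\in K\{X_{[n]}\}_{\leq b}$ by hypothesis, $h:=\lambda_0$ satisfies the conclusion. So the substantive case is $\Lambda\subseteq P$, which implies $[\Lambda]\subseteq P$ because $P$ is a $\Delta$-ideal.

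In this case, I would look for the witness inside $H_\Lambda$ rather than inside $\Lambda$. Since $g\in\dsat{\Lambda}\setminus P$, by definition $H_\Lambda^N g\in[\Lambda]\subseteq P$ for some $N$; primality of $P$ combined with $g\notin P$ then forces $H_\Lambda\in P$. Because $H_\Lambda=\prod_{\lambda\in\Lambda}I_\lambda S_\lambda$ is a product, a further application of primality yields a factor $h\in\{I_\lambda,S_\lambda:\lambda\in\Lambda\}$ with $h\in P$. By construction $h$ is nonzero (the initial and the separant of a non-constant polynomial are nonzero by definition) and lies in $K\{X_{[n]}\}_{\leq b}$, since $I_\lambda$ is a subpolynomial of $\lambda$ and $S_\lambda=\partial\lambda/\partial\mu_\lambda$ introduces no new indeterminates and does not increase the total degree.

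The remaining step, and the main technical point, is to verify that $h\notin\dsat{\Lambda}$. For this, the plan is to show that $h$ is reduced with respect to $\Lambda$ and then invoke the standing hypothesis directly. Both $I_\lambda$ and $S_\lambda$ have strictly lower rank than $\lambda$ itself: $I_\lambda$ has degree $0$ in $\mu_\lambda$, $S_\lambda$ has degree $\deg_{\mu_\lambda}(\lambda)-1$ in $\mu_\lambda$, and neither contains any proper derivative of $\mu_\lambda$. For any other $\lambda'\in\Lambda$, autoreducedness says that $\lambda$ is already reduced with respect to $\lambda'$, so $\lambda$ has no proper derivative of $\mu_{\lambda'}$ and has degree in $\mu_{\lambda'}$ strictly less than $\deg_{\mu_{\lambda'}}(\lambda')$; both properties are inherited by $h$, since taking a coefficient of a power of $\mu_\lambda$ or differentiating with respect to $\mu_\lambda$ can only shrink the monomial support and weakly decrease the degree in $\mu_{\lambda'}$. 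Hence $h$ is reduced with respect to $\Lambda$, so by hypothesis $h\notin\dsat{\Lambda}$, yielding $h\in P\setminus\dsat{\Lambda}\subseteq\dsat{\Lambda}\bigtriangleup P$ as required.
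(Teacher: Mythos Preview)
Your proof is correct and follows essentially the same route as the paper's: split on whether $\Lambda\subseteq P$, and in the main case use primality of $P$ to extract some $I_\lambda$ or $S_\lambda$ in $P$, then observe that this element is reduced with respect to $\Lambda$ and of bounded degree, so the hypothesis excludes it from $\dsat{\Lambda}$. You supply more detail than the paper does on why $I_\lambda$ and $S_\lambda$ are reduced with respect to all of $\Lambda$, but the argument is the same.
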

\begin{proof}
If $\Lambda\not\subseteq P$, this is immediate, so assume $\Lambda\subseteq P$.  Since $g\in\dsat{\Lambda}$, also $H^d_\Lambda g\in P$ for some $d$.  Since $P$ is prime, either $g\in P$ or $I_\lambda$ or $S_\lambda $ belongs to $P$ for some $\lambda\in \Lambda$.  Since we have ruled out $g\in P$, we have $I_\lambda\in P$ or $S_\lambda\in P$.  But both $I_\lambda$ and $S_\lambda$ have degree $\leq b$ and are reduced with respect to $\Lambda$, so they do not belong to $\dsat{\Lambda}$.

\end{proof}




\begin{notation}\label{note:zk}\ 
  \begin{itemize}
  \item $\mathfrak{z}^0(d,b)=d$,
  \item $\mathfrak{z}^{k+1}(d,b)=\mathfrak{z}^k(\mathfrak{d}_{b+d}((\mathfrak{g}(b+d-1, \text{max}\{b+d-1,2b\})+d+1){2b\choose b}2b+d) + \mathfrak{g}(b+d-1, \text{max}\{b+d-1,2b\})+d+1,b)$.
  \end{itemize}
\end{notation}

\begin{lemma}[Based on \cite{MR0568864}, III.8, Lemma 5]\label{thm:Kolchin_3.8.5}
  Suppose $\Lambda\subseteq K\{X_{[n]}\}_{\leq b}$ is autoreduced and reduction-coherent.  Then if $g\in(\Lambda^H_{[d]})\cap K\{X_{[n]}\}_{\leq d}$ is partially reduced with respect to $\Lambda$, $g\in(\Lambda^H_{(\mathfrak{z}^{b+d}(d,b)))})$.
\end{lemma}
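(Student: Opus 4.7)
The plan is to iteratively eliminate the highest-ranked derivative $\theta\lambda$ appearing in a representation of $g$ as a combination of elements of $\Lambda_{[d]}$, driving this highest derivative down to an element of $\Lambda$ itself. The structure mirrors the classical Rosenfeld-Kolchin argument: write $H_\Lambda^d g = \sum_i c_i (\theta_i\lambda_i)$ with $\theta_i\lambda_i \in \Lambda_{[d]}$, let $v$ be the greatest leader among the $\mu_{\theta_i\lambda_i}$, and observe that whenever $v$ is a \emph{proper} derivative of some $\mu_{\lambda_i}$, we can use coherence to rewrite the combination in terms of lower-ranked leaders.

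The elimination step at a fixed $v$ proceeds as follows. Since $g$ is partially reduced with respect to $\Lambda$, the derivative $v$ does not appear in $g$, so in the representation $H_\Lambda^d g = \sum c_i(\theta_i\lambda_i)$ the terms whose leader equals $v$ must collectively cancel up to lower-rank contributions. Grouping these terms and writing each $\theta_i\lambda_i = S_{\lambda_i} v^{\deg(\theta_i\lambda_i \text{ in } v)} + \text{(lower in rank)}$ produces a homogeneous linear equation in the separants $S_{\lambda_i}$ with polynomial coefficients. By reduction-coherence, each $\Delta$-polynomial $\Delta(\lambda_j,\lambda_{j'},v)$ pseudodivides to $0$ by $\Lambda$; therefore the generators of the solution module include the coherence relations, and any solution decomposes as a combination of these plus elements killed once we pass to $(\Lambda_{[d']})$ for $d'$ slightly larger than $d$. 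Applying Theorem \ref{thm:flat} (internal flatness bounded by $\mathfrak{d}_{b+d}$) to this linear system—which has at most ${2b\choose b}$ equations with coefficients of degree controlled by $2b$ and the current bound—lets us replace the $c_i$ by coefficients of bounded degree. A final pseudodivision by $\Lambda$ (Lemma \ref{pseudodivision bounds}, contributing the $\mathfrak{g}(b+d-1,\max\{b+d-1,2b\})$ factor) cleans the replacement up so that the new representation has strictly lower highest leader.

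The outer induction is on the rank of the highest leader appearing in the representation. The superscript $b+d$ in $\mathfrak{z}^{b+d}(d,b)$ bounds the number of such reduction steps: the derivatives involved lie at ranking $\leq d$, and each step strictly lowers the highest leader among those which are proper derivatives of leaders of $\Lambda$, so at most $b+d$ iterations suffice to reach a representation with no proper derivatives, i.e., one witnessing $H_\Lambda^{\mathfrak{z}^{b+d}(d,b)} g \in (\Lambda)$. The recursive definition $\mathfrak{z}^{k+1}(d,b) = \mathfrak{z}^k(\text{blowup}(d,b),b)$ exactly reflects that one iteration inflates the degree bound $d$ by internal-flatness and pseudodivision costs, and then $k$ more iterations are applied to the larger bound.

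The main obstacle is the careful tracking of the three coupled degree increases at each elimination step: (i) multiplication by a controlled product of separants and initials to absorb the coherence identity into $(\Lambda_{[d']})$, (ii) internal flatness in the polynomial ring in the $\leq b+d$ relevant indeterminates, which inflates degrees by $\mathfrak{d}_{b+d}$ applied to the size of the system (itself governed by ${2b\choose b}$ and the prior degree), and (iii) pseudodivision to bring the replaced terms back into a reduced form against $\Lambda$. Verifying that these three compose into precisely the quantity inside the outer $\mathfrak{z}^k(\cdot,b)$ of the recursive definition—rather than something larger—is where most of the work lies; modular decomposition into Lemmas \ref{pseudodivision bounds}, \ref{thm:flat}, and the coherence-driven elimination should make this tractable.
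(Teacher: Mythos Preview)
Your inductive scaffold matches the paper's: induct on the highest leader $Z_m$ among the proper derivatives $\theta_j\lambda_j$ appearing in a representation $H_\Lambda^d g=\sum c_j(\theta_j\lambda_j)+\sum d_i\lambda_i$, and at each stage use coherence, a flatness bound, and the pseudodivision estimate to pass to a representation with strictly lower top leader, accumulating one factor of the $\mathfrak{z}$ recursion per step. So the architecture is right.

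The elimination step itself, however, is not the paper's and as written has a gap. You propose to extract from the disappearance of $v$ on the left side a \emph{homogeneous} linear equation in the separants $S_{\lambda_i}$, then use internal flatness to write the $c_i$ as combinations of bounded generators which you identify with the coherence relations. Two problems: first, the $c_i$ themselves may involve $v$, so the ``coefficient of $v$'' in $\sum c_i(\theta_i\lambda_i)$ is not simply $\sum c_iS_{\lambda_i}$; unwinding this forces a $v$-degree expansion that your sketch does not handle. Second, even granting a clean relation, nothing guarantees that the $\Delta$-$S$-polynomial relations generate the syzygy module you need; coherence only says each $\Delta(\lambda_j,\lambda_{j'},v)$ pseudoreduces to zero, not that every relation among the $S_{\lambda_i}$ is a combination of these modulo lower-leader terms.

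The paper avoids both issues with a more concrete manoeuvre. Fix one index $q$ with $\mu_{\theta_q\lambda_q}=Z_m$; multiply the representation by $S_{\lambda_q}$ and rewrite every other top-leader term via $c_jS_{\lambda_q}\theta_j\lambda_j=c_j\,\Delta(\lambda_q,\lambda_j,Z_m)+c_jS_{\lambda_j}\theta_q\lambda_q$. Reduction-coherence (together with Lemma~\ref{pseudodivision bounds}, which is where the $\mathfrak{g}(b+d-1,\max\{b+d-1,2b\})$ exponent on $H_\Lambda$ actually enters) pushes the $\Delta$-terms into $(\Lambda_{[d-1]})$, leaving a single surviving top-leader summand $c^*(\theta_q\lambda_q)$. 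Now apply \emph{faithful} flatness (the ideal-membership degree bound, not Theorem~\ref{thm:flat}) to bound the coefficients, write $\theta_q\lambda_q=S_{\lambda_q}Z_m+h$ with $h$ of lower rank, and substitute $Z_m\mapsto -h/S_{\lambda_q}$: the left side is unchanged because $g$ is partially reduced, while $c^*(\theta_q\lambda_q)$ vanishes identically. Clearing the introduced denominators by $H_\Lambda^D$ yields the lower-leader representation. This substitution trick is the missing idea in your sketch; without it, your linear-algebra route does not close.
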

\begin{proof}
We may write
\[H^d_\Lambda g=\sum_{j\leq r}c_j(\theta_j\lambda_j)+\sum_i d_i\lambda_i\]
where $\theta_j\lambda_j\in K\{X_{[n]}\}_{\leq b+d}$.  Let $m$ be the least upper bound on indices $l$ such that $Z_l$ is the leader of some proper derivative $\theta_j\lambda_j$ appearing in the sum.  We will show by induction on $m$ that $g\in(\Lambda^H_{(\mathfrak{z}^{m}(d,b))})$.  When $m=0$ (that is, there are no $c_j\theta_j\lambda_j$ terms), this is immediate.

So suppose the claim holds for values less than $m$ and there is some $j$ with $
\mu_{\theta_j\lambda_j}=Z_m$ and $Z_m$ is the largest leader; we write
\[H^d_\Lambda g=\sum_{j< q}c_j(\theta_j\lambda_j)+\sum_{q\leq j< r}c_j(\theta_j\lambda_j)+\sum_i d_i\lambda_i\]
where $\mu_{\theta_j\lambda_j}=Z_{k_j}$ so $k_j<m$ if $j< q$ and $k_j=m$ if $q\leq j<r$.

We multiply both sides by $S_{\lambda_q}$, so 
\begin{align*}
S_{\lambda_q}H^d_\Lambda g&=\sum_{j< q}c'_j(\theta_j\lambda_j)+\sum_i d'_i\lambda_i
+\sum_{q\leq j<r}c_j(S_{\lambda_q}\theta_j\lambda_j-S_{\lambda_j}\theta_q\lambda_q)+\sum_{q\leq j<r}c_jS_{\lambda_j}\theta_q\lambda_q.
\end{align*}

By assumption, the remainder of $S$ with respect to $\Lambda_{[d-1]}$ is 0. Since $S\in K\{X_{[n]}\}_{\leq b+d-1, 2b}$ and $\Lambda_{[d-1]}\subseteq K\{X_{[n]}\}_{\leq b+d-1,b}$,  by \ref{pseudodivision bounds} we have $H_\Lambda^{\mathfrak{g}(b+d-1, \text{max}\{b+d-1,2b\})}S\in (\Lambda_{[d-1]})$. It follows that 
\[H^{\mathfrak{g}(b+d-1, \text{max}\{b+d-1,2b\})+d+1}_\Lambda g=\sum_{j<q'}c''_j(\theta_j\lambda_j)+\sum_i d''_i\lambda_i+c^*(\theta_q\lambda_q).\]
By internal faithful flatness, we may assume $c''_j,d''_i,c^*$ have degree at most $D=\mathfrak{d}_{b+d}((\mathfrak{g}(b+d-1, \text{max}\{b+d-1,2b\})+d+1){2b\choose b}2b+d)$.  Since $g$ is partially reduced with respect to $\Lambda$, in particular $Z_m=\mu_{\theta_q\lambda_q}$ does not appear on the left side of the equation.  We have $\theta_q\lambda_q=S_{\lambda_q}Z_m+h$ where $h$ has lower rank than $Z_m$, so we may replace $Z_m$ with $-\frac{h}{S_{\lambda_q}}$.  The degree of $Z_m$ is at most $D$, so we multiply by $H_\Lambda^{D}$ to clear denominators, giving

\[H^{D+\mathfrak{g}(b+d-1, \text{max}\{b+d-1,2b\})+d+1}_\Lambda g=\sum_{j<q'}c'''_j(\theta_j\lambda_j)+\sum_id'''_i\lambda_i.\]

The claim now follows by the inductive hypothesis since each $\mu_{\theta_j\lambda_j}=Z_l$ with $l<m$.
\end{proof}


\subsection{Characteristic Sets}

\begin{definition}
Let $I\subseteq K\{X_{[n]}\}$ be a $\Delta$-ideal. An autoreduced subset $\Sigma$ of $I$ having minimal rank is a \emph{characteristic set} of $I$.
\end{definition}

We use Rosenfeld's lemma and effective bounds on reduction-coherent sets to find effective bounds on characteristic sets.

\begin{notation}\label{note:ichar}
$\fn{D}^{\mathrm{char}}_b$ is the function defined inductively by:
\begin{itemize}
\item ${\bf F}_c^{\mathrm{char}}(k)=\mathfrak{z}^{c+\mathfrak{g}(c,k)}(\mathfrak{g}(c,k),c)$,
\item $\fn{D}^{\mathrm{char}}_{b,n,m}(0)=b$,
\item \begin{align*}\fn{D}^{\mathrm{char}}_{b,n,m}(i+1)=\max\{&\mathfrak{g}(\fn{D}^{\mathrm{char}}_{b,n,m}(i),{2\fn{D}^{\mathrm{char}}_{b,n,m} (i)+ m-1 \choose m-1}\cdot n\cdot(\fn{D}^{\mathrm{char}}_{b,n,m} (i) +1)),\\
&\mathfrak{p}_{\mathfrak{u}_{{\bf F}^{\mathrm{char}}_{\fn{D}^{\mathrm{char}}_{b,n,m}(i)}}^+(\fn{D}^{\mathrm{char}}_{b,n,m}(i))}
(\mathfrak{u}_{{\bf F}^{\mathrm{char}}_{\fn{D}^{\mathrm{char}}_{b,n,m}(i)}}(\mathfrak{u}_{{\bf F}^{\mathrm{char}}_{\fn{D}^{\mathrm{char}}_{b,n,m}(i)}}^+(\fn{D}^{\mathrm{char}}_{b,n,m}(i))
)),\\
&\mathfrak{f}({\bf F}^{\mathrm{char}}_{\fn{D}^{\mathrm{char}}_{b,n,m}(i)},\fn{D}^{\mathrm{char}}_{b,n,m}(i))\}.\end{align*}
\end{itemize}
We set $\mathfrak{i}^{\mathrm{char}}_{n,m}(b)=\fn{D}^{\mathrm{char}}_{b,n,m}(\mathfrak{h}_{n,m}(\fn{D}^{\mathrm{char}}_{b,n,m}))$.
\end{notation}
Once again, we usually omit $n,m$.

\begin{theorem}[Based on \cite{HTKM}, Lemma 5.6/Theorem 6.1]\label{thm:char set}
  Let $\Lambda\subseteq K\{X_{[n]}\}_{\leq b}$.  
Let $P$ be a proper $\Delta$-ideal containing $\Lambda$ such that whenever $fg\in P$ with $f\in K\{X_{[n]}\}_{\leq \mathfrak{i}^{\mathrm{char}}(b)}$, either $f\in P$ or $g\in P$.  Then $P$ contains a set $\Sigma\subseteq K\{X_{[n]}\}_{\leq \mathfrak{i}^{\mathrm{char}}(b)}$ which is the characteristic set of a prime ideal containing $\Lambda$ where $H_\Sigma\not\in P$.
\end{theorem}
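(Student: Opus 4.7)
The plan is to mimic the familiar construction of a characteristic set (iteratively replace by lower-rank autoreduced subsets until an appropriate stopping condition is met), but interleave three extra maintenance steps so that the bounds produced by the previous subsections can be invoked. Concretely, I would build a sequence of autoreduced sets $\Sigma_0,\Sigma_1,\ldots\subseteq P$ of strictly decreasing rank, maintaining as invariant that $\Sigma_i\subseteq K\{X_{[n]}\}_{\leq\fn{D}^{\mathrm{char}}_b(i)}$. Start with $\Sigma_0$ any autoreduced subset of $\Lambda$ of minimal rank, which lies in $K\{X_{[n]}\}_{\leq b}=K\{X_{[n]}\}_{\leq\fn{D}^{\mathrm{char}}_b(0)}$. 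The termination bound will come from Corollary \ref{autoreduced_chain_bound} applied to the function $\fn{D}^{\mathrm{char}}_b$: after at most $\mathfrak{h}(\fn{D}^{\mathrm{char}}_b)$ steps the chain must stabilize, and the final $\Sigma$ will lie in $K\{X_{[n]}\}_{\leq\mathfrak{i}^{\mathrm{char}}(b)}$.

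Given $\Sigma_i$, I would perform the following checks, each capable of producing a strictly-lower-rank replacement in $P$ within the bound $\fn{D}^{\mathrm{char}}_b(i+1)$. (a) \emph{Reduction-coherence.} Apply Proposition \ref{ref:containment} to replace $\Sigma_i$ by a reduction-coherent autoreduced set inside $\dsat{\Sigma_i}\subseteq P$ of the same or lower rank; the bound $\mathfrak{g}(\fn{D}^{\mathrm{char}}_b(i),\ldots)$ accounts for the first term in the definition of $\fn{D}^{\mathrm{char}}_b(i+1)$. (b) \emph{Initials and separants.} Check that $H_{\Sigma_i}\notin P$. If $H_{\Sigma_i}\in P$, the half-primality hypothesis on $P$ (applied iteratively to the factors $I_\lambda,S_\lambda$, each of which has rank strictly lower than $\lambda$) produces some $I_\lambda$ or $S_\lambda$ in $P$; using it to replace $\lambda$ yields a lower-rank autoreduced set in $P$. (c) \emph{Primality of the algebraic saturation.} Use the half-primality of $P$ up to $\mathfrak{i}^{\mathrm{char}}(b)$ to verify that $\langle(\Sigma_i)^H_{(k)}\rangle$ is boundedly $\mathrm{pr}(\Sigma_i)$-${\bf F}$-prime up to $\mathfrak{p}_{\fn{D}^{\mathrm{char}}_b(i)}(\mathfrak{u}_{{\bf F}}(\fn{D}^{\mathrm{char}}_b(i)))$ for $\fn{F}={\bf F}^{\mathrm{char}}_{\fn{D}^{\mathrm{char}}_b(i)}$; any failure produces an $fg\in(\Sigma_i)\subseteq P$ with $f,g$ of bounded degree and neither in the larger stratum, so by half-primality of $P$ one of them is in $P$, and reducing it modulo $\Sigma_i$ via Lemma \ref{pseudodivision bounds} yields a nonzero reduced element of $P$, giving a lower-rank replacement. (d) \emph{No reduced element in $P\cap\dsat{\Sigma_i}$.} If there is some nonzero $g\in P$ reduced with respect to $\Sigma_i$, Lemma \ref{thm:HTKM_4.2c} (applicable because of (a) and (c)) provides such a $g$ already in degree $\leq\mathfrak{f}({\bf F}^{\mathrm{char}}_{\fn{D}^{\mathrm{char}}_b(i)},\fn{D}^{\mathrm{char}}_b(i))$; add it to $\Sigma_i$ and extract an autoreduced subset of minimal rank, producing $\Sigma_{i+1}$ of strictly lower rank, still inside $P$, and within the bound $\fn{D}^{\mathrm{char}}_b(i+1)$.

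Once none of (a)--(d) applies, take $\Sigma:=\Sigma_i$ and set $Q:=\dsat{\Sigma}$. By (a), $\Sigma$ is coherent; by (c) together with Lemma \ref{thm:pr_bounded_implies_pr} and Lemma \ref{thm:HTKM_4_2_c_real}, the ideal $\sat{\Sigma}$ is prime (any would-be witness $fg$ to non-primality in $\sat{\Sigma}$ can be pseudo-divided and pushed into the stratified setting, where the primality is controlled); Rosenfeld's lemma (in the form used in the proof of Lemma \ref{thm:HTKM_4.4_1}) then promotes primality of $\sat{\Sigma}$ to primality of $\dsat{\Sigma}=Q$. Condition (b) gives $H_\Sigma\notin P$, whence $1\notin Q$, i.e.\ $Q$ is proper. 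Condition (d), combined with Lemma \ref{thm:HTKM_4.4_1}, shows that $P$ contains no element outside $Q$ reduced with respect to $\Sigma$; since every element of $\Lambda\subseteq P$ has a pseudoremainder modulo $\Sigma$ which lies in $P$ and is reduced, that remainder must be $0$, so $\Lambda\subseteq Q$. Finally, $\Sigma$ is an autoreduced subset of $Q$ of minimal rank among autoreduced subsets of $P\supseteq Q$ (by construction), hence of $Q$ itself, so $\Sigma$ is a characteristic set of $Q$.

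The main obstacle will be keeping all four maintenance operations simultaneously compatible with the rank decrease: each replacement must stay inside $P$, must produce a strictly lower-ranking autoreduced set, and must fit inside the next degree/order stratum $\fn{D}^{\mathrm{char}}_b(i+1)$. The delicate point is (c), where one must translate a failure of bounded primality for $\langle(\Sigma_i)^H_{(k)}\rangle$ into an actual element of $P$ that reduces the rank of $\Sigma_i$; this uses the half-primality of $P$ at exactly the bound $\mathfrak{i}^{\mathrm{char}}(b)$ and explains why the three maxima in the recursive definition of $\fn{D}^{\mathrm{char}}_b$ are needed. A secondary subtlety is the final verification that the stopping configuration forces $\Lambda\subseteq\dsat{\Sigma}$ and that $\dsat{\Sigma}$ is prime; both follow from Rosenfeld together with (d), but require careful bookkeeping of what ``reduced with respect to $\Sigma$'' means for a generator of $\Lambda$ already in $K\{X_{[n]}\}_{\leq b}$.
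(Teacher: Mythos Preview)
Your overall architecture matches the paper's proof: build a rank-decreasing chain of autoreduced subsets of $P$ bounded by $\fn{D}^{\mathrm{char}}_b$, with maintenance steps for coherence, $H_\Sigma\notin P$, bounded primality of the saturation, and absence of small reduced elements. But step (d) as you have written it does not work, and a separate maintenance step is missing. Lemma \ref{thm:HTKM_4.2c} only produces a bounded witness from a nonzero reduced $g\in\sat{\Sigma_i}$, not from an arbitrary reduced $g\in P$; so you cannot conclude from (d) alone that $P$ contains no nonzero element reduced with respect to $\Sigma$. Yet your final verification (``the pseudoremainder of each $\lambda\in\Lambda$ lies in $P$ and is reduced, hence is $0$'') needs exactly that. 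The paper closes this gap with an additional step you have omitted: explicitly test, for each $\lambda\in\Lambda$, whether its remainder $\tilde\lambda$ with respect to $\Sigma_i$ is zero; if not, $\tilde\lambda$ is already bounded by $\mathfrak{g}(\fn{D}^{\mathrm{char}}_b(i),b)$, lies in $P$, and is reduced, so adjoin it and drop rank. Step (d) is then only about $\sat{\Sigma_i}$, exactly where Lemma \ref{thm:HTKM_4.2c} applies.

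There is a second gap in (c). You assert that the pseudoremainder $\tilde f$ of the chosen $f\in P$ is nonzero, but this is precisely the point that requires Rosenfeld's lemma in the quantitative form of Lemma \ref{thm:Kolchin_3.8.5}: if $\tilde f=0$ then $f\in((\Sigma_i)^H_{[\mathfrak{g}(\cdot,\cdot)]})$, and since $f$ is partially reduced (it lies in $K\{X_{[n]}\upharpoonright\Sigma_i\}$), Lemma \ref{thm:Kolchin_3.8.5} forces $f\in((\Sigma_i)^H_{(\fn{F}^{\mathrm{char}}_c(k))})$, contradicting the choice of $f$ as a witness to the failure of bounded primality. This is where the definition $\fn{F}^{\mathrm{char}}_c(k)=\mathfrak{z}^{k+1}(k,\mathfrak{g}(c,k))$ is actually used, and without it the lower-rank replacement in (c) is not justified. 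Finally, your closing argument that $\Sigma$ is characteristic for $Q$ because it has ``minimal rank among autoreduced subsets of $P$'' is not what the construction gives; the correct reason is the standard one: once $\sat{\Sigma}$ contains no nonzero reduced element, $\Sigma$ is automatically a characteristic set of $\dsat{\Sigma}$.
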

\begin{proof}
We construct a series of autoreduced sets so that $\Lambda_{i+1}$ has lower rank than $\Lambda_i$.  Take $\Lambda_0\subseteq K\{X_{[n]}\}_{\leq b}$ to be some minimal rank autoreduced subset of $\Lambda$.  Given $\Lambda_i\subseteq K\{X_{[n]}\}_{\leq \fn{D}^{\mathrm{char}}_b(i)}\cap P$, we proceed as follows.

First, if $\Lambda_i$ is not reduction-coherent, we proceed as in Proposition \ref{coherent bound}: take $\Lambda_{i+1}$ to be a minimal rank autoreduced subset of $\Lambda_i\cup R$ where $R$ consists of the $\Delta$-$S$-polynomials of pairs from $\Lambda_i$.  As in Proposition \ref{coherent bound}, we have $\Lambda_{i+1}\subseteq K\{X_{[n]}\}_{\leq \mathfrak{g}(\fn{D}^{\mathrm{char}}_b(i),{2\fn{D}^{\mathrm{char}}_{b} (i)+ m-1 \choose m-1}\cdot n\cdot(\fn{D}^{\mathrm{char}}_{b} (i) +1))}$.

If $\Lambda_i$ is reduction-coherent but $\Lambda\not\subseteq\sat{\Lambda_i}$, pick $f\in\Lambda$ as in Proposition \ref{ref:containment} so that the remainder $\tilde f$ with respect to $\Lambda_i$ is non-zero and let $\Lambda_{i+1}$ be a minimal rank autoreduced subset of $\Lambda_i\cup\{\tilde f\}$.  By \ref{pseudodivision bounds} we have $\Lambda_{i+1}\subseteq K\{X_{[n]}\}_{\leq \fn{D}^{\mathrm{char}}_b(i),\mathfrak{g}(\fn{D}^{\mathrm{char}}_b(i), b)}\subseteq K\{X_{[n]}\}_{\leq \mathfrak{g}(\fn{D}^{\mathrm{char}}_b(i),{2\fn{D}^{\mathrm{char}}_{b} (i)+ m-1 \choose m-1}\cdot n\cdot(\fn{D}^{\mathrm{char}}_{b} (i) +1))}$.

If neither of the previous cases holds and $H_{\Lambda_i}\in P$, pick some $u\in \{I_f,S_f\mid f\in\Lambda_i\}\cap P$ and take a minimal rank autoreduced subset $\Lambda_{i+1}$ of $\Lambda_i\cup\{u\}$.  Then $\Lambda_{i+1}\subseteq K\{X_{[n]}\}_{\leq \fn{D}^{\mathrm{char}}_b(i)}$.

Let $C=\mathfrak{u}_{{\bf F}^{\mathrm{char}}_{\fn{D}^{\mathrm{char}}_{b,n,m}(i)}}^+(\fn{D}^{\mathrm{char}}_{b,n,m}(i))$.
If $\langle (\Lambda_i)^H_{(d)}\rangle_d$ is not boundedly pr$(\Lambda_i)$-${\bf F}^{\mathrm{char}}_{\fn{D}^{\mathrm{char}}_b(i)}$-prime up to $\mathfrak{p}_{C}
(\mathfrak{u}_{{\bf F}^{\mathrm{char}}_{\fn{D}^{\mathrm{char}}_{b,n,m}(i)}}(C))$ then there is some $k\leq \mathfrak{p}_{C} (\mathfrak{u}_{{\bf F}^{\mathrm{char}}_{\fn{D}^{\mathrm{char}}_{b,n,m}(i)}}(C))$ and some $fg\in ((\Lambda_i)^H_{(k)})\cap K\{X_{[n]}\upharpoonright \Lambda_i\}_{\leq k}$ so that $f,g\not\in((\Lambda_i)^H_{({\bf F}^{\mathrm{char}}_{\fn{D}^{\mathrm{char}}_b(i)}(k))})$.  Then $H^{k}_{\Lambda_i}fg\in P$.  Since $H_{\Lambda_i}\not\in P$, we may assume $f\in P$.  Let $\tilde f$ be the remainder of $f$ with respect to $\Lambda_i$, so there are $l_0,l_1\leq\mathfrak{g}(\fn{D}^{\mathrm{char}}_b(i),k)$ so that
\[I_{\Lambda_i}^{l_0}S_{\Lambda_i}^{l_1}f-\tilde f\in((\Lambda_i)_{[k]}).\]
Suppose $\tilde f=0$; then $H^{\mathfrak{g}(\fn{D}^{\mathrm{char}}_b(i),k)}_{\Lambda_i}f\in((\Lambda_i)_{[k]})$, so $f\in((\Lambda_i)^H_{[\mathfrak{g}(\fn{D}^{\mathrm{char}}_b(i),k)]})$.  But then by Lemma \ref{thm:Kolchin_3.8.5}, $f\in ((\Lambda_i)^H_{(\mathfrak{z}^{\fn{D}^{\mathrm{char}}_b(i)+\mathfrak{g}(\fn{D}^{\mathrm{char}}_b(i),k)}(\mathfrak{g}(\fn{D}^{\mathrm{char}}_b(i),k),\fn{D}^{\mathrm{char}}_b(i)))})=((\Lambda_i)^H_{({\bf F}^{\mathrm{char}}_{\fn{D}^{\mathrm{char}}_b(i)}(k))})$.  But this contradicts the assumption that $fg$ witnessed the failure of bounded $F^{\mathrm{char}}_{\fn{D}^{\mathrm{char}}_b(i)}$-primality.  So $\tilde f\neq 0$.  Since $\tilde f\in P$, let $\Lambda_{i+1}$ be a minimal rank autoreduced subset of $\Lambda_i\cup\{\tilde f\}$.

Suppose none of the cases above hold, and there is a $g\in\sat{\Lambda_i}$ which is non-zero and reduced with respect to $\Lambda_{i+1}$.  Then by Lemma \ref{thm:HTKM_4.2c} there is an $f\in(\Lambda^H_{(\mathfrak{u}^+_{{\bf F}^{\mathrm{char}}_{\fn{D}^{\mathrm{char}}_b(i)}}(\fn{D}^{\mathrm{char}}_b(i)))})\cap K\{X_{[n]}\}_{\leq \mathfrak{u}^+_{{\bf F}^{\mathrm{char}}_{\fn{D}^{\mathrm{char}}_b(i)}}(\fn{D}^{\mathrm{char}}_b(i)),\mathfrak{f}({\bf F}^{\mathrm{char}}_{\fn{D}^{\mathrm{char}}_b(i)},\fn{D}^{\mathrm{char}}_b(i))}$ which is non-zero and reduced with respect to $\Lambda_{i+1}$.  Then we may take $\Lambda_{i+1}$ to be a minimal rank autoreduced subset of $\Lambda_i\cup\{f\}$.

By Lemma \ref{autoreduced_chain_bound}, there is an $i<\mathfrak{h}(\fn{D}^{\mathrm{char}}_b)$ so that none of these cases occurs: $\Lambda_i$ is reduction-coherent, $H_{\Lambda_i}\not\in P$, $\langle (\Lambda_i)^H_{(d)}\rangle_d$ is boundedly pr$(\Lambda_i)$-${\bf F}^{\mathrm{char}}_{\fn{D}^{\mathrm{char}}_b(i)}$-prime up to $\mathfrak{p}_{C}
(\mathfrak{u}_{{\bf F}^{\mathrm{char}}_{\fn{D}^{\mathrm{char}}_{b,n,m}(i)}}(C))$, and there is no $g\in\sat{\Lambda_i}$ which is non-zero and reduced with respect to $\Lambda_i$.  It follows from Lemma \ref{thm:pr_bounded_implies_pr} that $\langle (\Lambda_i)^H_{(d)}\rangle_d$ is pr$(\Lambda_i)$-$\mathfrak{u}_{{\bf F}^{\mathrm{char}}_{\fn{D}^{\mathrm{char}}_b(i)}}$-prime up to $C$. Lemma \ref{thm:HTKM_4_2_c_real} implies $\sat{\Lambda_i}\subseteq ((\Lambda_i)^H_{(C)})$,  whence $\sat{\Lambda_i}$ is prime.  As a consequence of Rosenfeld's lemma (p. 399 \cite{rosenfeld1959specializations}), $\Lambda_i$ is the characteristic set of the prime $\Delta$-ideal $\dsat{\Lambda_i}$.  Since $\Lambda\subseteq\dsat{\Lambda_i}$, we are done. 
\end{proof}

\begin{cor}\label{thm:min prime ideal}
Suppose $\Lambda\subseteq K\{X_{[n]}\}_{\leq b}$ and let $P$ be a minimal prime $\Delta$-ideal containing $\Lambda$.  Then $P$ has a characteristic set $\Sigma\subseteq K\{X_{[n]}\}_{\leq \mathfrak{i}^{\mathrm{char}}(b)}$.
\end{cor}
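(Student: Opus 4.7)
The plan is to invoke Theorem \ref{thm:char set} directly with the minimal prime $\Delta$-ideal $P$ itself playing the role of the ``prime-like'' ideal in the hypothesis, and then use minimality of $P$ to identify the prime ideal produced by the theorem with $P$.

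First I would observe that since $P$ is prime, the hypothesis of Theorem \ref{thm:char set} is satisfied trivially: whenever $fg \in P$, already one of $f,g$ lies in $P$, with no degree restriction on $f$ needed at all. Applying Theorem \ref{thm:char set} then produces an autoreduced set $\Sigma \subseteq P \cap K\{X_{[n]}\}_{\leq \mathfrak{i}^{\mathrm{char}}(b)}$ that is the characteristic set of some prime $\Delta$-ideal $Q$ containing $\Lambda$, and moreover $H_\Sigma \notin P$. Inspection of the proof of Theorem \ref{thm:char set} shows that this $Q$ is $\dsat{\Sigma}$.

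Next I would show $\dsat{\Sigma} \subseteq P$. Since $\Sigma \subseteq P$ and $P$ is a $\Delta$-ideal, $[\Sigma] \subseteq P$. If $g \in \dsat{\Sigma}$, then $H_\Sigma^k g \in [\Sigma] \subseteq P$ for some $k$; primality of $P$ together with $H_\Sigma \notin P$ forces $g \in P$. So $\dsat{\Sigma}$ is a prime $\Delta$-ideal containing $\Lambda$ and contained in $P$. By the minimality of $P$ among prime $\Delta$-ideals containing $\Lambda$, we conclude $\dsat{\Sigma} = P$. Since $\Sigma$ is the characteristic set of $\dsat{\Sigma}$, it is the characteristic set of $P$, and the degree bound $\mathfrak{i}^{\mathrm{char}}(b)$ is exactly what Theorem \ref{thm:char set} delivers.

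There is no real obstacle here beyond keeping straight that ``minimal prime $\Delta$-ideal containing $\Lambda$'' lets us match the ideal constructed by the previous theorem with $P$ itself; the work has already been done in establishing Theorem \ref{thm:char set}. The only subtlety worth flagging is confirming that the prime ideal produced in the theorem is indeed $\dsat{\Sigma}$ (so that the chain $\Lambda \subseteq \dsat{\Sigma} \subseteq P$ makes sense and invokes minimality), which follows from the construction at the end of that proof where the terminal $\Lambda_i$ is both reduction-coherent and has no non-zero $\sat{\Lambda_i}$-element reduced with respect to itself, making $\dsat{\Lambda_i}$ prime with $\Lambda_i$ as its characteristic set.
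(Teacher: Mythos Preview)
Your proposal is correct and follows essentially the same route as the paper: apply Theorem \ref{thm:char set} to the prime $P$ (whose primality trivially satisfies the hypothesis), obtain $\Sigma$ with $H_\Sigma\notin P$ as the characteristic set of $\dsat{\Sigma}$, use $H_\Sigma\notin P$ and primality of $P$ to get $\dsat{\Sigma}\subseteq P$, and invoke minimality of $P$ to conclude $\dsat{\Sigma}=P$. The paper's proof is terser but identical in substance.
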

\begin{proof}
  $P$ contains some $\Sigma$ which is the characteristic set of a prime ideal $\dsat{\Sigma}$ containing $\Lambda$ with $H_\Sigma\not\in P$.  Suppose $f\in \dsat{\Sigma}$, so $H^k_\Sigma f\in [\Sigma]\subseteq P$ for some $k$.  Since $P$ is prime and $H_\Sigma\not\in P$, we have $f\in P$, so $\dsat{\Sigma}\subseteq P$.  By minimality of $P$, we must have $\dsat{\Sigma}=P$.
\end{proof}

\begin{cor}[Based on \cite{HTKM}, Proposition 5.3/Theorem 5.4]\label{thm:partial prime bound}
Let  $\Lambda\subseteq K\{X_{[n]}\}_{\leq b}$ be given with $1\not\in[\Lambda]$. If either $f\in [\Lambda]$ or $g\in[\Lambda]$ for all $f,g\in K\{X_{[n]}\}$ with $fg\in[\Lambda]$ and $f\in K\{X_{[n]}\}_{\leq \mathfrak{i}^{\mathrm{char}}(b)}$, then $[\Lambda]$ is prime.
\end{cor}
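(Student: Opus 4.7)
The plan is to apply Theorem \ref{thm:char set} with the ideal $P = [\Lambda]$ itself and then show the characteristic set it produces already generates $[\Lambda]$. Since $1 \not\in [\Lambda]$, the ideal $[\Lambda]$ is a proper $\Delta$-ideal containing $\Lambda$, and the given hypothesis about bounded primality is precisely the hypothesis of Theorem \ref{thm:char set}. Applying that theorem therefore produces a set $\Sigma \subseteq K\{X_{[n]}\}_{\leq \mathfrak{i}^{\mathrm{char}}(b)}$ which is the characteristic set of a prime $\Delta$-ideal $\dsat{\Sigma}$ containing $\Lambda$, with $\Sigma \subseteq [\Lambda]$ and $H_\Sigma \not\in [\Lambda]$.

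The one inclusion $[\Lambda] \subseteq \dsat{\Sigma}$ is immediate from $\Lambda \subseteq \dsat{\Sigma}$. The main work is the reverse: if $f \in \dsat{\Sigma}$, then $H_\Sigma^k f \in [\Sigma] \subseteq [\Lambda]$ for some $k$, and we want to conclude $f \in [\Lambda]$. The naive approach, applying the hypothesis to $H_\Sigma$ against $H_\Sigma^{k-1}f$, fails because the rank/degree of a power of $H_\Sigma$ can easily exceed $\mathfrak{i}^{\mathrm{char}}(b)$. Instead, we factor $H_\Sigma^k$ into a product of $I_\sigma$'s and $S_\sigma$'s for $\sigma \in \Sigma$ and peel them off one at a time. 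Each such factor $h$ is a polynomial in variables of ranking at most that of its parent $\sigma$ and of total degree at most $\deg(\sigma)$ (separants lower the degree in the leader by one, and initials live in lower-ranking variables), so $h \in K\{X_{[n]}\}_{\leq \mathfrak{i}^{\mathrm{char}}(b)}$. Moreover, no such $h$ can lie in $[\Lambda]$, for otherwise the ambient product $H_\Sigma$ would too, contradicting $H_\Sigma \not\in [\Lambda]$.

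With these observations in hand, one peels factors off inductively: write $H_\Sigma^k f = h \cdot g$, where $h$ is a single initial or separant, apply the hypothesis to see that $g \in [\Lambda]$ (since $h \not\in [\Lambda]$), and repeat. After finitely many steps all factors are removed and $f \in [\Lambda]$ remains, giving $\dsat{\Sigma} \subseteq [\Lambda]$. Hence $[\Lambda] = \dsat{\Sigma}$ is prime.

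The only delicate point is the bookkeeping in the previous paragraph: one must check that each iterated ``remainder'' still satisfies $fg \in [\Lambda]$ with $f$ inside the bounded region $K\{X_{[n]}\}_{\leq \mathfrak{i}^{\mathrm{char}}(b)}$, and that each peeled factor is genuinely outside $[\Lambda]$. Both reduce to the single observation that the initials and separants of members of $\Sigma$ inherit the bound $\mathfrak{i}^{\mathrm{char}}(b)$, and that membership of any of them in $[\Lambda]$ would force $H_\Sigma \in [\Lambda]$. Once that is clear, the induction is routine and the theorem follows.
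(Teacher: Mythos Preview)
Your proof is correct and follows essentially the same approach as the paper's: apply Theorem \ref{thm:char set} with $P=[\Lambda]$ to obtain $\Sigma$, note $[\Lambda]\subseteq\dsat{\Sigma}$, and for the reverse inclusion peel off the individual initials and separants of $\Sigma$ (each bounded by $\mathfrak{i}^{\mathrm{char}}(b)$ and not in $[\Lambda]$) from $H_\Sigma^k g$ one at a time using the hypothesis. The paper's version is terser---it compresses the peeling argument into the parenthetical ``because the factors of $H_\Sigma$ are bounded by $\mathfrak{i}^{\mathrm{char}}(b)$''---but your more explicit treatment of why the hypothesis applies at each step is entirely in line with it.
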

\begin{proof}
We apply the theorem to obtain a characteristic set $\Sigma\subseteq  [\Lambda]\cap K\{X_{[n]}\}_{\leq \mathfrak{i}^{\mathrm{char}}(b)}$ with $H_\Sigma\not\in[\Lambda]$.  Since $\Lambda\subseteq \dsat{\Sigma}$, we also have $[\Lambda]\subseteq\dsat{\Sigma}$.  It remains to show the reverse containment.  If $g\in\dsat{\Sigma}$ then for some $N\in\mathbb{N}$ we have $H^N_\Sigma g\in[\Sigma]\subseteq[\Lambda]$.  Let $N$ be least so that $H^N_\Sigma g\in[\Lambda]$.  If $N>0$ then either $H_\Sigma\in[\Lambda]$ or $g\in[\Lambda]$ (because the factors of $H_\Sigma$ are bounded by $\mathfrak{i}^{\mathrm{char}}(b)$).  But in the former case $H_\Sigma \in[\Lambda]\subseteq\dsat{\Sigma}$.  Therefore $g\in[\Lambda]$.
\end{proof}

 \begin{remark} Proposition 5.3 of \cite{HTKM} states this for a radical ideal, but their argument similarly applies to a differential ideal with minor modifications.
 \end{remark}

\section{Ritt-Noetherianity}\label{sec:ritt noetherian}

In this section we give an effective version of Ritt-Noetherianity.  There are two approaches we might take, depending on the question of whether we treat membership in a radical differential ideal as decidable---that is, given $h\in K\{X_{[n]}\}_{\leq d}$ and $\Lambda\subseteq K\{X_{[n]}\}_{\leq b}$, whether there is some bound $k$ depending on $n,b,d$ so that $h\in\{\Lambda\}$ iff $h^k\in(\Lambda_{[k]})$.

Of course, there is such a bound, given first in \cite{MR2556127} with refinements in \cite{freitag2016effective, gustavson2016new, MR3448164, gustavson2016effective}.  But \cite{HTKM} uses Ritt-Noetherianity to prove the existence of such a bound, so one might choose to avoid existing bounds and use the functional interpretation to obtain an explicit version of the bound from \cite{HTKM}.  The resulting version of Ritt-Noetherianity, however, is rather unwieldy (as the functional interpretation of a $\Pi_4$ statement, it requires the use of higher-order functions on functions), and the bounds one gets are much worse than those in the literature.

Therefore in the work below, we use the bounds from \cite{MR2556127} on testing membership in radical differential ideals.  This makes Ritt-Noetherianity a $\Pi_3$ statement, directly analogous to the effective version of Noetherianity discussed in Section \ref{sec:local noetherian}.  This will have roughly the form
\begin{quote}
For any functions $\fn{D}, \fn{F}$ there is a bound $M$ so that whenever $\Lambda_i\subseteq K\{X_{[n]}\}_{\leq \fn{D}(i)}$ for all $i$, there is an $m\leq M$ so that $\Lambda_{\fn{F}(m)}\subseteq\{\Lambda_m\}$.
\end{quote}
Along the way, we will need to inductively prove cases of similar but weaker statements (roughly speaking, we will replace the conclusion with $\Lambda_{\fn{F}(m)}\subseteq\{\Lambda_m\cup\{u\}\}$ for various choices of $u$).  Once we prove several of these, we will need to arrange for their conjunction to hold \emph{uniformly}---that is, once we can find $m_1$ so that $\Lambda_{\fn{F}(m_1)}\subseteq\{\Lambda_{m_1}\cup\{u_1\}\}$ and $m_2$ so that $\Lambda_{\fn{F}(m_2)}\subseteq\{\Lambda_{m_2}\cup\{u_2\}\}$, we will need to find a single $m$ so that $\Lambda_{\fn{F}(m)}\subseteq\{\Lambda_m\cup\{u_1\}\}\cap\{\Lambda_m\cup\{u_2\}\}$.

\begin{lemma}[Knitting Lemma]
Let $J$ be a finite set.  Suppose that for each $j\in J$, any function $\fn{F}$, and any $d$, there is a $k\in[d,\mathfrak{G}_j(\fn{F},d)]$ so that for all $i\in[k,\fn{F}(k)]$, the statement $\phi_j(i)$ holds.  Then there is a functional $\mathfrak{G}_J$ so that for any $d$ and any $\fn{F}$, there is a $k\in [d,\mathfrak{G}_J(\fn{F},d)]$ so that, for each $j\in J$ and each $i\in[k,\fn{F}(k)]$, $\phi_j(i)$ holds.
\end{lemma}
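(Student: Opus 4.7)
The plan is to prove this by induction on $|J|$. The base case $|J|=1$ is immediate, taking $\mathfrak{G}_J = \mathfrak{G}_j$. For the inductive step, pick some $j_0 \in J$ and set $J' = J \setminus \{j_0\}$; assume the functional $\mathfrak{G}_{J'}$ already exists. The idea is to first use the $\phi_{j_0}$-hypothesis to carve out a large interval on which $\phi_{j_0}$ is known to hold, and then apply the inductive hypothesis \emph{inside} that interval to find a sub-interval on which the remaining $\phi_j$ also hold. Since the outer interval already guarantees $\phi_{j_0}$, the sub-interval witnesses all of $J$.

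Concretely, given $d$ and any $\fn{F}$ (which we may assume monotone, replacing it by its monotone envelope if necessary), define the auxiliary functional
\[
\fn{F}'(x) = \fn{F}\bigl(\mathfrak{G}_{J'}(\fn{F}, x)\bigr).
\]
Apply the hypothesis for $j_0$ with $\fn{F}'$ and starting point $d$: there is a $k_0 \in [d, \mathfrak{G}_{j_0}(\fn{F}', d)]$ so that $\phi_{j_0}(i)$ holds for every $i \in [k_0, \fn{F}'(k_0)]$. Next, apply the inductive hypothesis $\mathfrak{G}_{J'}$ with $\fn{F}$ and starting point $k_0$: there is a $k \in [k_0, \mathfrak{G}_{J'}(\fn{F}, k_0)]$ such that for every $j \in J'$ and every $i \in [k, \fn{F}(k)]$, $\phi_j(i)$ holds. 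By monotonicity of $\fn{F}$,
\[
\fn{F}(k) \;\leq\; \fn{F}\bigl(\mathfrak{G}_{J'}(\fn{F}, k_0)\bigr) \;=\; \fn{F}'(k_0),
\]
so $[k, \fn{F}(k)] \subseteq [k_0, \fn{F}'(k_0)]$; hence $\phi_{j_0}(i)$ also holds on $[k, \fn{F}(k)]$. Taking
\[
\mathfrak{G}_J(\fn{F}, d) \;=\; \mathfrak{G}_{J'}\bigl(\fn{F},\, \mathfrak{G}_{j_0}(\fn{F}', d)\bigr)
\]
then bounds the witness $k$.

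The only subtle step is arranging the definition of $\fn{F}'$: the interval delivered by $\mathfrak{G}_{j_0}$ must reach all the way past the \emph{later} application of $\mathfrak{G}_{J'}$, so $\fn{F}'$ must ``pre-compose'' with the inductive functional. This is the kind of higher-type bookkeeping that is characteristic of the functional interpretation, and it is the single place where one could easily write down an incorrect bound; once the right $\fn{F}'$ is chosen, the verification is mechanical. The resulting $\mathfrak{G}_J$ iterates this construction $|J|$ times, so its growth rate is essentially that of $\mathfrak{G}_{j_0}$ composed with itself a fixed number of times with $\mathfrak{G}_{J'}$ inserted at each level — still a primitive recursive operation on the $\mathfrak{G}_j$'s, as expected for knitting together finitely many $\Pi_3$-type stabilization statements.
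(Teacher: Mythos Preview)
Your proof is correct and follows essentially the same strategy as the paper: induction on $|J|$, with the inductive step handled by nesting one hypothesis inside the other via a modified function that anticipates the inner application. The only difference is the nesting order---you apply the single $j_0$-hypothesis on the outside (with $\fn{F}'(x)=\fn{F}(\mathfrak{G}_{J'}(\fn{F},x))$) and the inductive $\mathfrak{G}_{J'}$ on the inside, whereas the paper does the reverse (applying $\mathfrak{G}_{J\setminus\{j_0\}}$ first with $\fn{G}(d')=\fn{F}(\mathfrak{G}_{j_0}(\fn{F}^{d'},d'))$, then the $j_0$-hypothesis inside)---but this is a symmetric variation, not a different idea.
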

The existence of such a lemma is guaranteed by the proof of correctness of the functional interpretation.  We began with statements $\forall x\exists y\forall z\phi_j(x,y,z)$, and it follows that $\forall x\exists Y\forall j\in J\exists y_j\leq Y\forall z\phi_j(x,y_j,z)$.  The functional interpretation promises that if the latter follows from the former then bounds on the functional interpretation of the latter must be derivable from bounds on the functional interpretation of the former.  The knitting lemma is simply the statement that the functional interpretation works in one specific case.\footnote{Specifically, the case in question is essentially $B\Sigma_2$---bounded collection for $\Sigma_2$ formula.  Since $B\Sigma_2$ is a consequence of $I\Sigma_2$, induction for $\Sigma_2$ formulas, we expect to obtain an interpretation using G\"odel's primitive recursive functional of type 1, which appears in the form of the map taking $\fn{D},d'$ to $\fn{F}^{d'}$ below.  A similar analysis of $B\Sigma_2$ due to Oliva \cite{10.1007/11780342_44}, also presented on page 213 of \cite{kohlenbach:MR2445721}, highlights the logical aspects more clearly, but is less useful for computing the bounds we ultimately want.  This lemma can also be compared to Lemma 6.2 of \cite{avigad09}, which deals with essentially the same issue.}

\begin{proof}
  By induction on $|J|$.  When $|J|=1$, this is trivial---$\mathfrak{G}_{\{j\}}$ is simply $\mathfrak{G}_j$.  So suppose $|J|>1$ and pick some $j_0\in J$.  The inductive hypothesis gives us a function $\mathfrak{G}_{J\setminus\{j_0\}}$.


  For any $d'$, define $\fn{F}^{d'}(d)=\fn{F}(\max\{d,d'\})$.  Let $\fn{G}(d')=\fn{F}(\mathfrak{G}_{j_0}(\fn{F}^{d'},d'))$.  Let $\mathfrak{G}_J(\fn{F},d)=\mathfrak{G}_{J\setminus\{j_0\}}(\fn{G},d)$.

For any $d$, there is a $d'\in[d,\mathfrak{G}_{J\setminus\{j_0\}}(\fn{G},d)]$ so that for all $i\in[d',\fn{G}(d')]$ and all $j\in J\setminus\{j_0\}$, $\phi_j(i)$ holds.  There is also a $k\in[d',\mathfrak{G}_{j_0}(\fn{F}^{d'},d')]$ so that, for all $i\in[k,\fn{F}^{d'}(k)]=[k,\fn{F}(k)]$, $\phi_{j_0}(i)$ holds.  Since $k\leq\mathfrak{G}_{j_0}(\fn{F}^{d'},d')$, also $\fn{F}(k)\leq\fn{F}(\mathfrak{G}_{j_0}(\fn{F}^{d'},d'))=\fn{G}(d')$, so $[k,\fn{F}(k)]\subseteq[d',\fn{G}(d')]$, so $k$ is the desired witness.
\end{proof}


\begin{theorem}[\!\!\cite{MR2556127}]\label{thm:eff diff null}
  There is a function $\mathfrak{k}$ so that whenever $\Lambda\subseteq K\{X_{[n]}\}_{\leq b}$ and $h\in K\{X_{[n]}\}_{\leq b}\cap\{\Lambda\}$, also $h\in\sqrt{\left(\Lambda_{[\mathfrak{k}(n,b)]}\right)}$.
\end{theorem}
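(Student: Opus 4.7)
The plan is to reduce to a consistency bound and then invoke the characteristic-set machinery of Section~\ref{sec:bounds differential}. First I would apply a differential Rabinowitsch trick: introducing a new differential indeterminate $Y$, observe that $h\in\{\Lambda\}$ if and only if $1\in\{\Lambda\cup\{1-Yh\}\}$ in $K\{X_{[n]},Y\}$. Since for the unit ideal the radical is trivial, it suffices to produce, for any $\Gamma\subseteq K\{X_{[n+1]}\}_{\leq b}$ with $1\in\{\Gamma\}$, an effective $k$ such that $1\in(\Gamma_{[k]})$; an application of Theorem~\ref{thm:rad_limit} to the prolonged system would then recover the radical membership statement for general $h$.

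For the consistency bound itself, I would argue contrapositively. Suppose $(\Gamma_{[k]})$ is a proper algebraic ideal for every $k$ up to a sufficiently large value. The aim is to extract from such a family a prime $\Delta$-ideal $P\supseteq\Gamma$, contradicting $1\in\{\Gamma\}$. The key input is Corollary~\ref{thm:min prime ideal}: any minimal prime $\Delta$-ideal over $\Gamma$ already has a characteristic set in $K\{X\}_{\leq\mathfrak{i}^{\mathrm{char}}(b)}$, so only finitely many candidate characteristic sets---each at explicitly bounded order and degree---need to be ruled out in order to force algebraic inconsistency.

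To make this quantitative, I would iterate the construction of autoreduced sets of decreasing rank used inside Theorem~\ref{thm:char set}: at each stage either the current candidate $\Lambda_i$ witnesses, via Lemma~\ref{thm:HTKM_4.4_1}, that $1\in(\Gamma^{H}_{[\,\mathfrak{g}(b,d)+\mathfrak{u}^{+}_{\mathbf{F}}(b)\,]})$ after a bounded prolongation (yielding the desired $k$), or one obtains a refinement $\Lambda_{i+1}$ of strictly lower rank. Corollary~\ref{autoreduced_chain_bound} terminates this process at an explicit height $\mathfrak{h}(\fn{D})$, and the final bound $\mathfrak{k}(n,b)$ arises by composing the degree-tracking functions $\fn{D}^{\mathrm{char}}_b$, $\mathfrak{i}^{\mathrm{char}}$, and $\mathfrak{h}$ from Section~\ref{sec:bounds differential}.

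The main obstacle will be the interaction between the algebraic and differential layers. At each iteration one must convert the hypothesis ``$1\notin(\Gamma_{[k]})$'' at the algebraic level into a candidate prime $\Delta$-ideal and back, tracking both degree bounds (via Lemma~\ref{thm:faithful} and the internal flatness results) and order bounds (via pseudodivision, Lemma~\ref{pseudodivision bounds}); each passage introduces a tower-of-exponentials jump. Uniformizing several such implications simultaneously may also require a knitting-lemma-style argument of the kind used in Section~\ref{sec:ritt noetherian}. Because the termination invokes the decreasing-chain bound for autoreduced sets, the resulting $\mathfrak{k}$ is necessarily non-primitive recursive, consistent with the behavior of $\mathfrak{i}^{\mathrm{char}}$ and with the bounds announced in \cite{MR2556127}.
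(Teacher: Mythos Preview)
The paper does not actually prove Theorem~\ref{thm:eff diff null}; it is quoted from \cite{MR2556127} and used as a black box. Indeed, the opening paragraphs of Section~\ref{sec:ritt noetherian} explicitly explain that the authors \emph{chose not} to unwind an HTKM-style argument for this bound because the functional interpretation of that route (a $\Pi_4$ statement) becomes unwieldy, and instead they simply import the bound from the literature. So there is no ``paper's own proof'' to compare against; you are attempting something the paper deliberately avoids.

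That said, your sketch has a genuine gap. The iteration inside Theorem~\ref{thm:char set} is \emph{guided by} a fixed $\Delta$-ideal $P$ with the half-primality property: at several branch points one must decide which factor of a product lies in $P$, or which initial/separant of $\Lambda_i$ lies in $P$, and that choice determines $\Lambda_{i+1}$. In your contrapositive setup you have no such $P$---you only know that $(\Gamma_{[k]})$ is a proper \emph{algebraic} ideal for $k$ up to some threshold, which does not supply a differential prime to make these selections. Without $P$ the construction does not produce a single descending chain of autoreduced sets but a branching tree of possibilities (much as in Theorem~\ref{thm:vdDS_2.7}), and you would need to show that \emph{every} branch either terminates in a characteristic set of a genuine prime $\Delta$-ideal containing $\Gamma$ (contradicting $1\in\{\Gamma\}$) or produces a witness $1\in(\Gamma_{[k]})$. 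You neither set up this tree nor bound its height and width. Relatedly, your appeal to Lemma~\ref{thm:HTKM_4.4_1} presupposes that $\langle(\Lambda_i)^H_{(k)}\rangle$ is $\fn{F}$-prime, which is again a hypothesis supplied by $P$ in the proof of Theorem~\ref{thm:char set} and is not available to you here.

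A secondary point: once you have $1\in(\Gamma_{[k]})$ with $\Gamma=\Lambda\cup\{1-Yh\}$, the substitution $Y\mapsto h^{-1}$ introduces derivatives of $h^{-1}$, so clearing denominators yields $h^N\in(\Lambda_{[k]})$ for some $N$ depending on $k$ and the degree bounds; this is routine but should be made explicit rather than delegated to Theorem~\ref{thm:rad_limit}, which is a purely algebraic statement.
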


\begin{notation}\label{not:j_function}
  We define $\mathfrak{j}_{n,m}(i_0,\fn{D},\fn{F},d,\Lambda)$ by recursion on the rank of $\Lambda$.

  Suppose $\mathfrak{j}_{n,m}(i_0,\fn{D},\fn{F},d,\Lambda')$ has been defined for all $\Lambda'$ with rank less than the rank of $\Lambda$.  Let $J$ consist of all finitely many (ranks of) autoreduced sets $\Lambda^*\subseteq K\{X_{[n]}\}_{\leq d}$ having rank less than $\Lambda$; for each $\Lambda^*\in J$ and each $\fn{D}$, we have functionals $\mathfrak{G}_{\Lambda^*,\fn{D}}(\fn{F},i)=\mathfrak{j}_{n,m}(i,\fn{D},\fn{F},d,\Lambda^*)$, so by the Knitting Lemma, we have a functional $\mathfrak{G}_{J,\fn{D}}$. 


Then we define $\mathfrak{j}_{n,m}(i_0,\fn{D},\fn{F},d,\Lambda)$ to be the maximum of:
\[\mathfrak{j}_{n,m}(\fn{F}(\mathfrak{G}_{J,\fn{D}}(\fn{F},i_0)),\fn{D},\fn{F},\mathfrak{g}(d,\beta),\Lambda_*)\]
where $\beta={\alpha+ m-1 \choose m-1}\cdot n\cdot (\alpha+1)$, $\alpha=\fn{D}(\fn{F}(\mathfrak{G}_{J,\fn{D}}(\fn{F},i_0)))+2 {2d \choose d}\mathfrak{k}(n,max\{d,\allowbreak \fn{D}(\fn{F}(\mathfrak{G}_{J,\fn{D}}(\fn{F},i_0)))\})$, and where $\Lambda_*$ ranges over autoreduced sets in $K\{X_{[n]}\}_{\leq \mathfrak{g}(d,\beta)}$ with lower rank than $\Lambda$.
\end{notation}
Again, we usually omit $n,m$ and we assume that functions $\fn{D},\fn{F}$ are monotonically increasing.

\begin{theorem}[Based on \cite{Marker:model_thy_of_fields}, Theorem 1.16, p. 47] \label{thm:local ritt}
  Let $i_0, \Lambda, \Lambda_0\subseteq\Lambda_1\subseteq\cdots, \fn{D}, \fn{F}, d$ be given such that:
  \begin{itemize}
  \item $\Lambda\subseteq K\{X_{[n]}\}_{\leq d}$ is autoreduced, and
  \item $\Lambda_i\subseteq K\{X_{[n]}\}_{\leq\fn{D}(i)}$ for all $i$.
  \end{itemize}

Then there is an $i\in[i_0, \mathfrak{j}(i_0,\fn{D},\fn{F},d,\Lambda)]$ so that $\Lambda_{\fn{F}(i)}\subseteq\{\Lambda\cup\Lambda_i\}$.
\end{theorem}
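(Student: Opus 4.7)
The plan is to proceed by transfinite induction on the rank of $\Lambda$, mirroring the classical Ritt--Raudenbush argument (as presented in Marker) while tracking the bound through the recursive definition of $\mathfrak{j}$. The base case handles $\Lambda$ of minimal rank, where no autoreduced $\Lambda^{*}$ of strictly smaller rank in the relevant degree bound exists, so only the Hilbert's-basis-type argument from Section \ref{sec:local noetherian} is needed.

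For the inductive step, I would first invoke the Knitting Lemma to stitch together all available inductive hypotheses. For each autoreduced $\Lambda^{*} \in J$ of strictly lower rank than $\Lambda$ in degree $\leq \mathfrak{i}^{\mathrm{sat}}(d)$, the inductive hypothesis supplies the bound $\mathfrak{G}_{\Lambda^{*},\fn{D}}(\fn{F},i) = \mathfrak{j}(i,\fn{D},\fn{F},\mathfrak{i}^{\mathrm{sat}}(d),\Lambda^{*})$ exactly as written in Notation \ref{not:j_function}. The Knitting Lemma collapses these finitely many bounds into a single functional $\mathfrak{G}_{J,\fn{D}}$, and we set $i_{1} := \mathfrak{G}_{J,\fn{D}}(\fn{F},i_{0})$. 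This yields a common, uniformly controlled index at which the chain has stabilized modulo every possible lower-rank autoreduced $\Lambda^{*}$ that we might later construct, regardless of which one actually arises.

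Next I would test whether $\Lambda_{\fn{F}(i_{1})} \subseteq \{\Lambda \cup \Lambda_{i_{1}}\}$: if so, $i = i_{1}$ is the required witness and we are done. Otherwise, pick $h \in \Lambda_{\fn{F}(i_{1})} \setminus \{\Lambda \cup \Lambda_{i_{1}}\}$ and pseudo-divide $h$ by $\Lambda$ (Lemma \ref{pseudodivision bounds}) to obtain a remainder $\tilde{h}$ reduced with respect to $\Lambda$, with $H_{\Lambda}^{k} h - \tilde{h} \in (\Lambda_{[\cdot]})$ for some bounded $k$. The remainder $\tilde{h}$ cannot vanish: if it did, then $H_{\Lambda}^{k} h \in (\Lambda_{[\cdot]}) \subseteq \{\Lambda\}$, so $h \in \{\Lambda\} \subseteq \{\Lambda \cup \Lambda_{i_{1}}\}$, contradicting the choice of $h$; the effective differential Nullstellensatz (Theorem \ref{thm:eff diff null}) appears here through the additive buffer $\mathfrak{k}(n,\fn{D}(\mathfrak{G}_{J,\fn{D}}(\fn{F},i_{0})))$ in the degree parameter of the recursive call, which absorbs the cost of turning the radical-ideal membership into a bounded algebraic one. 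Form $\Lambda^{**}$, a minimal-rank autoreduced subset of $\Lambda \cup \{\tilde{h}\}$: because $\tilde{h}$ is nonzero and reduced with respect to $\Lambda$, $\Lambda^{**}$ has strictly lower rank than $\Lambda$, and Proposition \ref{autoreduced set procedure} places $\Lambda^{**}$ in $J$.

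Finally, apply the inductive hypothesis for $\Lambda^{**}$ with initial index $i_{1}$: there exists $i$ within the recursively defined bound such that $\Lambda_{\fn{F}(i)} \subseteq \{\Lambda^{**} \cup \Lambda_{i}\}$. Since $\tilde{h} \in \{\Lambda \cup \Lambda_{\fn{F}(i_{1})}\} \subseteq \{\Lambda \cup \Lambda_{i}\}$ (using $i$ in the range where $\Lambda_{\fn{F}(i_{1})} \subseteq \Lambda_{i}$) and $\Lambda^{**} \subseteq \Lambda \cup \{\tilde{h}\}$, we obtain $\{\Lambda^{**} \cup \Lambda_{i}\} \subseteq \{\Lambda \cup \Lambda_{i}\}$, hence $\Lambda_{\fn{F}(i)} \subseteq \{\Lambda \cup \Lambda_{i}\}$ as required. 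The principal obstacle is the quantifier mismatch between the single-witness conclusion of the inductive hypothesis and the ``for all $i \in [k, \fn{F}(k)]$'' input demanded by the Knitting Lemma; this is bridged by a persistence observation for the non-decreasing chain $(\{\Lambda^{*} \cup \Lambda_{j}\})_{j}$, and by the careful layering of the recursive definition of $\mathfrak{j}$, which ensures that the worst case across all possible $\Lambda^{**} \in J$ fits within the final bound.
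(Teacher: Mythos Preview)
There is a genuine gap. Your key step ``if $\tilde h=0$ then $H_\Lambda^{k}h\in(\Lambda_{[\cdot]})\subseteq\{\Lambda\}$, so $h\in\{\Lambda\}$'' is false: radicality lets you take roots, not cancel arbitrary factors. Concretely, with two indeterminates $x,y$ and $\Lambda=\{xy\}$ (leader $y$, $I_\Lambda=S_\Lambda=x$), the element $h=y$ has pseudoremainder $0$ with respect to $\Lambda$, yet $y\notin\{xy\}$ (substitute $x\equiv 0$). So the case $\tilde h=0$ cannot be dismissed, and your dichotomy collapses.

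This is exactly the obstruction the classical Ritt--Raudenbush argument is built to overcome, and it is why the paper's proof looks different from yours. The paper applies the Knitting Lemma not as a generic safety net over all of $J$, but specifically to obtain a single $i$ at which $\Lambda_{\fn{F}(i)}\subseteq\{\Lambda\cup\{u\}\cup\Lambda_i\}$ holds \emph{simultaneously for every} $u\in\{I_\lambda,S_\lambda:\lambda\in\Lambda\}$ (each such $u$ is reduced with respect to $\Lambda$, so Proposition~\ref{autoreduced set procedure} yields a lower-rank $\Lambda_*\in J$). Only then does it split: if some remainder $\tilde h$ of a derivative $\theta h$ (with $\mathrm{ord}\,\theta\leq\mathfrak{k}(n,\fn{D}(i))$) is nonzero, it recurses once more, which is the outer call in Notation~\ref{not:j_function}. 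If \emph{all} such remainders vanish, it proves a separate combinatorial claim---by induction on $\sum_u\mathrm{ord}(\theta_u)$ using that $\{\Lambda\cup\Lambda_i\}$ is radical and differential---that $(\prod_u\theta_u u)(\theta' h)\in\{\Lambda\cup\Lambda_i\}$. Multiplying the expansions $h^{m_u}\in\{\Lambda\cup\{u\}\cup\Lambda_i\}$ over all $u$ and pairing each cross-term either with an element of $\Lambda\cup\Lambda_i$ or with one of these products then forces $h^{m'+1}\in\{\Lambda\cup\Lambda_i\}$, hence $h\in\{\Lambda\cup\Lambda_i\}$. The bound $\mathfrak{k}$ enters precisely here, to control the orders of the $\theta$'s appearing in those expansions via Theorem~\ref{thm:eff diff null}; in your sketch it is invoked but never actually used.
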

\begin{proof}
We proceed by induction on the rank of $\Lambda$.  We assume that whenever $\Lambda'$ is an autoreduced set so that $\Lambda'$ has lower rank than $\Lambda$, the claim holds of $\Lambda'$.

The functional $\mathfrak{G}_{J,\fn{D}}$ is as above.

\begin{claim}\label{claim: ritt noether 1}
There is an $i\in [i_0,\mathfrak{G}_{J,\fn{D}}(\fn{F},i_0)]$ so that for every $u\in\{I_\lambda,S_\lambda\mid \lambda\in\Lambda\}$, $\Lambda_{\fn{F}(i)}\subseteq\{\Lambda\cup\{u\}\cup\Lambda_i\}$.
\end{claim}
\begin{claimproof}
If $u\in K$, the claim is trivially true. Otherwise, there is an autoreduced set  $\Lambda_*\subseteq[\Lambda\cup\{u\}]\cap K\{X_{[n]}\}_{\leq d}$ so that the rank of $\Lambda_*$ is strictly less than the rank of $\Lambda$. (This is because $u$ is nonzero and reduced with respect to $\Lambda$; we may let $\Lambda_*$ be the set consisting of $u$ and all elements of $\Lambda$ having smaller rank than $u$. See the proof of Proposition \ref{autoreduced set procedure}.)  Necessarily $\Lambda_*\subseteq\{\Lambda\cup\{u\}\}_{\leq d}$.

  In particular, $\Lambda_*\in J$, so $\Lambda_{\fn{F}(i)}\subseteq\{\Lambda_*\cup\Lambda_i\}\subseteq\{\Lambda\cup\{u\}\cup\Lambda_i\}$.
\end{claimproof}

For each $h\in\Lambda_{\fn{F}(i)}$ and each derivation $\theta$ such that the order $ord(\theta) \leq 2 {2d \choose d}\mathfrak{k}(n,max\{d,\allowbreak \fn{D}(\fn{F}(i))\})$, consider the remainder $\tilde h$ of $\theta h$ with respect to $\Lambda$.  Suppose that for some $h,\theta$, the remainder $\tilde h\neq 0$. Note that $\tilde h$ is nonzero, belongs to $[\Lambda \cup\{h\}]$, and is reduced with respect to $\Lambda$. Let $\alpha_i$ be $\fn{D}(\fn{F}(i))+2 {2d \choose d}\mathfrak{k}(n,max\{d,\allowbreak \fn{D}(\fn{F}(i))\})$, which bounds the order of $\theta h$. By the conversion in Remark \ref{rmk:order vs ranking} (to convert bounds on order to bounds on ranking), a derivative of at most order $\alpha_i$ has ranking at most $\beta_i:={\alpha_i+ m-1 \choose m-1}\cdot n\cdot (\alpha_i+1)$.  By the bound on remainders from \ref{pseudodivision bounds} and the reasoning used to prove the claim above, there is an autoreduced set $\Lambda_*\subseteq [\Lambda\cup\{h\}]\cap K\{X_{[n]}\}_{\leq \mathfrak{g}(d,\beta_i)}$ of lower rank than $\Lambda$. 

Now the inductive hypothesis applies and we find an $i'\in[\fn{F}(i),\mathfrak{j}(\fn{F}(i),\fn{D},\fn{F},\allowbreak \mathfrak{g}(d,\beta),\Lambda_*)]$ so that $\Lambda_{\fn{F}(i')}\subseteq\{\Lambda_*\cup\Lambda_{i'}\}\subseteq\{\Lambda\cup\{h\}\cup\Lambda_{i'}\}$.  Since $\fn{F}(i)\leq i'$, we have $h\in \Lambda_{\fn{F}(i)}\subseteq \Lambda_{i'}$ and $i'$ is the desired witness.


In the remaining case, for each $h\in\Lambda_{\fn{F}(i)}$ and each derivation $\theta$ such that $ord(\theta) \leq 2 {2d \choose d}\mathfrak{k}(n,max\{d,\allowbreak \fn{D}(\fn{F}(i))\})$, the remainder with respect to $\Lambda$ is $0$. 

\begin{claim}
  For every $h\in\Lambda_{\fn{F}(i)}$ and all sets of derivations $\{\theta_u\}_{u\in\{I_\lambda,S_\lambda\mid\lambda\in\Lambda\}},\allowbreak \theta'$ with $\sum_u ord(\theta_u)+ord(\theta')\leq 2 {2d \choose d}\mathfrak{k}(n,max\{d,\allowbreak \fn{D}(\fn{F}(i))\})$, we have 
\[(\prod_u \theta_u u)(\theta' h)\in\{\Lambda\cup\Lambda_i\}.\]
\end{claim}
\begin{claimproof}
  By induction on $\sum_uord(\theta_u)$.  When $\sum_uord(\theta_u)=0$, we have $\prod_u\theta_u u=\prod_u u=H_{\Lambda}$.  Since the reduction of $\theta' h$ with respect to $\Lambda$ is $0$, we have $H^m_{\Lambda}(\theta' h)\in[\Lambda]\subseteq\{\Lambda\cup\Lambda_i\}$.  Since this is a radical ideal, also $H_{\Lambda}(\theta' h)\in\{\Lambda\cup\Lambda_i\}$.

Otherwise, take some $u_0$ with $ord(\theta_{u_0})>0$, so $\theta_{u_0}=\delta\theta'_{u_0}$.  For $u\neq u_0$, take $\theta'_u=\theta_u$.  Note that 
\[\delta((\prod_{u}\theta'_u u)(\theta' h))
=\sum_u ((\delta\theta'_u u)\prod_{u'\neq u}\theta'_{u'}u')(\theta' h)+(\prod_u\theta'_u u)(\delta\theta' h).\]
Since $(\prod_{u}\theta'_u u)(\theta' h)$ and $(\prod_u\theta'_u u)(\delta\theta' h)$ are both in $\{\Lambda\cup\Lambda_i\}$, also
\[\sum_u ((\delta\theta'_u u)\prod_{u'\neq u}\theta'_{u'}u')(\theta' h)\in\{\Lambda\cup\Lambda_i\}.\]
  Multiply this by $\prod_u\theta_u u$, so $\sum_u (((\delta\theta'_u u)\theta_u u)\prod_{u'\neq u}\theta'_{u'}u'\theta_{u'} u')(\theta' h) \in\{\Lambda\cup\Lambda_i\}$.  Any term in this sum where $u\neq u_0$ now has the form $\gamma (\prod_u\theta'_uu)(\theta' h)$ and therefore belongs to $\{\Lambda\cup\Lambda_i\}$.  Therefore the remaining term,
\[(((\delta\theta'_{u_0} u_0)\theta_{u_0} u_0)\prod_{u'\neq u_0}\theta'_{u'}u'\theta_{u'} u')(\theta' h)
=\prod_u (\theta_u u)^2(\theta' h)\in\{\Lambda\cup\Lambda_i\}.\]
Since the ideal is radical, also $(\prod_u\theta_u u)(\theta' h)\in\{\Lambda\cup\Lambda_i\}$ as desired.
\end{claimproof}

Consider some $h\in\Lambda_{\fn{F}(i)}$.  Then for each $u\in\{I_\lambda,S_\lambda\mid\lambda\in\Lambda\}$ we have $h\in\{\Lambda\cup\Lambda_i\cup\{u\}\}$, so for each $u$ there is some $m$ so
\[h^{m}=\sum_i \gamma_{i,u}\delta_{i,u}\]
where each $\delta_{i,u}$ has the form $\theta_{i,u}\mu_{i,u}$ where $ord(\theta_{i,u})\leq\mathfrak{k}(n,max\{d,\fn{D}(\fn{F}(i)))$ and $\mu_{i,u}\in\Lambda\cup\Lambda_i\cup\{u\}$.  There are at most $2|\Lambda|\leq 2{2d \choose d}$ elements $u$, whence the sum of the orders of the $\theta_{i,u}$ is at most  $2{2d \choose d}\mathfrak{k}(n,max\{d,\allowbreak \fn{D}(\fn{F}(i))\})$.

Suppose we multiply these all together, $h^{m'}=\prod_u\sum_i \gamma_{i,u}\delta_{i,u}$, and so also 
\[h^{m'+1}=h\prod_u \sum_i \gamma_{i,u}\delta_{i,u}.\]
  Expanding the product, each term has the form $h\prod_u \gamma_{i_u,u}\delta_{i_u,u}$.  Consider some such term.  If there is some $u$ with $\mu_{i_u,u}\in\Lambda\cup\Lambda_i$ then $h\prod_u \gamma_{i_u,u}\delta_{i_u,u}=\gamma\theta_{i_u,u}\mu_{i_u,u}\in \{\Lambda\cup\Lambda_i\}$.

For each term $h\prod_u \gamma_{i_u,u}\delta_{i_u,u}$ with every $\mu_{i_u,u}=u$, we have
\[h\prod_u \gamma_{i_u,u}\delta_{i_u,u}=\gamma(\prod_u\theta_{i_u,u}u)h.\]
But we have shown  in the second claim that $(\prod_u\theta_{i_u,u}u)h\in\{\Lambda\cup\Lambda_i\}$.

Therefore $h^{m'+1}$ is a sum of terms belonging to $\{\Lambda\cup\Lambda_i\}$, so we have $h\in\{\Lambda\cup\Lambda_i\}$.
\end{proof}

\section{Making Bounds Explicit}\label{sec:bounds explicit}


\subsection{Ordinal Length Iteration of Functions}



We need the concept of ordinal length iterations of a function.  We first recall some basic theory of ordinals below $\epsilon_0$ (which more than suffices for our purposes---the largest ordinals we will need are in the vicinity of $\omega^{\omega^{\omega^\omega}}$).

\begin{definition}
  Any ordinal $\alpha<\epsilon_0$ has a unique \emph{Cantor normal form} given by a finite set $I$ of ordinals below $\epsilon_0$ and, for each $\beta\in I$, a positive natural number $c_\beta$, so that $\alpha=\sum_{\beta\in I}\omega^{\beta}c_\beta$.
\end{definition}
When $\alpha=0$, we have $I=\emptyset$.  We sometimes think of this representation as being given recursively: $\alpha=\sum_{\beta\in I}\omega^\beta c_\beta$, and each $\beta$ can further be expressed in Cantor normal form.  Note that each $c_\beta$ must be strictly positive (when $c_\beta=0$, should omit $\beta$ from $I$).

\begin{definition}
  When $\alpha=\sum_{\beta\in I}\omega^{\beta}c_\beta>0$ (so $I$ is non-empty), we write $\max\alpha$ and $\min\alpha$ for $\max I$ and $\min I$, respectively.

  When $\min I>0$, we call $\alpha$ a \emph{limit ordinal}.  When $\min I=0$, we call $\alpha$ a \emph{successor ordinal}.
\end{definition}
When $\min I=0$, we have $\alpha=\alpha'+\omega^0 c_0=\alpha'+c_0$ where $\alpha'$ is a limit ordinal.

\begin{definition}
For any $\alpha=\sum_{\beta\in I}\omega^{\beta}c_\beta>0$ and any $x\in\mathbb{N}$, we define $\alpha[x]<\alpha$ recursively by $\alpha-1$ if $\min\alpha=0$ and
\[\alpha[x]=\sum_{\beta\in I\setminus\{\min\alpha\}}\omega^\beta c_\beta+\omega^{\min\alpha}(c_{\min\alpha}-1)+\omega^{(\min\alpha)[x]}x\]
otherwise.
\end{definition}
When $\alpha$ is a successor, $\alpha[x]$ is always $\alpha-1$.  When $\alpha=\gamma+\omega^n$, $\alpha[x]=\gamma+\omega^{n-1}x$.  When $\alpha=\gamma+\omega^\omega$, $\alpha[x]=\gamma+\omega^xx$, and so on.  $\alpha[x]$ is the canonical sequence of approximations to $\alpha$; in particular, when $\alpha$ is a limit, $\lim_x \alpha[x]=\alpha$.

\begin{definition}
  If $\alpha=\sum_\beta \omega^\beta c_\beta$, we define the \emph{coordinate bound} $|\alpha|\in\mathbb{N}$ recursively by $|\alpha|=\max\{c_\beta, |\beta|\}$.
\end{definition}
$|\alpha|$ is the upper bound on the coefficients that appear anywhere in the Cantor normal form of $\alpha$.

\begin{definition}
  Let $g$ be a function.  We define:
  \begin{itemize}
  \item $g^0(b)=b$,
  \item $g^\alpha(b)=g^{\alpha[b]}(g(b))$.
  \end{itemize}
\end{definition}

In particular, it is helpful to introduce a canonical list of functions to serve as benchmarks for our bounds.
\begin{definition}
  Let $\mathcal{G}(b)=b+1$.
\end{definition}
Then $\mathcal{G}^\omega(b)=2b$, $\mathcal{G}^{\omega^2}(b)\geq 2^bb$, $\mathcal{G}^{\omega^3}(b)$ is essentially a tower of exponents of height $b$, $\mathcal{G}^{\omega^\omega}$ is roughly the unary Ackermann function.

This is similar to the \emph{fast-growing functions} \cite{odifreddi1999classical}, sometimes called the Grzegorczyk hierarchy.  We have chosen a slower indexing because it matches our applications: roughly speaking, $\mathcal{G}^{\omega^\alpha}$ is the $\alpha$-th function in the fast-growing hierarchy.

We will need various properties about the behavior of these iterations which are included in Appendix \ref{sec:ordinals}.

\subsection{Some Bounds on Order of Magnitude}

\begin{lemma}\label{thm:pn_bounds}
For each $n$ and $d\geq n$, $\mathfrak{p}_n(d)\leq\mathcal{G}^{\omega^28n}(d)$.
\end{lemma}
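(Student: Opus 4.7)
The plan is to proceed by induction on $n$. For $n=1$, we have $\mathfrak{p}_1(d)=d\leq\mathcal{G}^{\omega^2\cdot 8}(d)$ by monotonicity of the iteration (since $\mathcal{G}^\alpha(x)\geq x$ for every ordinal $\alpha$). For the inductive step I set $L=\mathfrak{p}_{n-1}(d)$, so that by the inductive hypothesis $L\leq\mathcal{G}^{\omega^2\cdot 8(n-1)}(d)$. Writing the target as $\omega^2\cdot 8n=\omega^2\cdot 8+\omega^2\cdot 8(n-1)$, my goal reduces to establishing the single-level bound $\rho(n,d)\leq\mathcal{G}^{\omega^2\cdot 8}(L)$; once this is in hand, the transfinite composition property of $\mathcal{G}^\alpha$ developed in Appendix~\ref{sec:ordinals} immediately yields
\[\mathfrak{p}_n(d)=\rho(n,d)\leq\mathcal{G}^{\omega^2\cdot 8}\bigl(\mathcal{G}^{\omega^2\cdot 8(n-1)}(d)\bigr)\leq\mathcal{G}^{\omega^2\cdot 8n}(d).\]

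To establish the single-level bound, I would unfold Notation~\ref{note:pn} layer by layer and account for each layer's growth as a number of exponential jumps of size at most $\omega^2$ in the ordinal exponent. The term $\mathfrak{d}_{n-1}(L)=(2L)^{2^{n-1}}$ is polynomial in $L$ (with exponent depending only on $n$) and so is dominated by $\mathcal{G}^{\omega^2}(L)$; the binomials $\binom{n-1+\mathfrak{d}_{n-1}(L)}{n-1}$ in $\zeta_0$ and $\binom{d+n-1}{n-1}$ in $\zeta_1$ each contribute at most one further $\omega^2$-shift. The dominant growth comes from
\[\zeta_2(n-1,L,d)=(\zeta_1+1)^{2^{\zeta_1}-1},\]
a double exponential that absorbs two $\omega^2$-shifts. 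Forming $\upsilon=\zeta_1\zeta_2$ and then the polynomial $2\binom{\upsilon+n}{n}\upsilon$ adds at most one more, and the alternative $\mathfrak{e}(n-1,d)$ from Notation~\ref{note:enb} is an exponential in $d\leq L$ costing a single $\omega^2$-shift. Collecting these ($\mathfrak{d}$, $\zeta_0$, $\zeta_1$, the two from $\zeta_2$, one from $\rho$, one from $\mathfrak{e}$, and a cushion) brings the total within the eight $\omega^2$-shifts in the statement.

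The main obstacle will be precise bookkeeping rather than any deep new idea. I must verify that the polynomial factors depending on $n$ (the binomials $\binom{d+n-1}{n-1}$ and $\binom{\upsilon+n}{n}$, and the exponent $2^{n-1}$ inside $\mathfrak{d}_{n-1}$) are genuinely dominated by the $\omega^2$-shifts I am budgeting for them, so that the constant $8$ really suffices and is not secretly hiding a dependence on $n$. I also have to ensure that each intermediate quantity is large enough for the basic estimates $\mathcal{G}^{\omega^2}(x)\geq 2^x x$ and the compositional bound $\mathcal{G}^{\omega^2\cdot j}\circ\mathcal{G}^{\omega^2}\leq\mathcal{G}^{\omega^2\cdot(j+1)}$ to apply uniformly, which is where the ordinal-arithmetic lemmas of Appendix~\ref{sec:ordinals} do the heavy lifting.
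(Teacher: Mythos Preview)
Your approach is essentially the same as the paper's: induction on $n$, with the inductive step reducing to showing that passing from $\mathfrak{p}_{n-1}(d)$ to $\mathfrak{p}_n(d)$ costs at most $\mathcal{G}^{\omega^2\cdot 8}$, established by bookkeeping the contributions of $\mathfrak{d}_{n-1}$, $\zeta_0$, $\zeta_1$, $\zeta_2$, $\upsilon$, $\rho$, and $\mathfrak{e}$ as a small number of $\omega^2$-shifts. The paper carries out exactly this accounting with explicit intermediate bounds (e.g.\ $\zeta_1\leq\mathcal{G}^{\omega^2\cdot 2}$, $\zeta_2\leq\mathcal{G}^{\omega^2\cdot 5}$, $\upsilon\leq\mathcal{G}^{\omega^2\cdot 6}$, $\mathfrak{p}_{n+1}\leq\mathcal{G}^{\omega^2\cdot 8}$ over the inductive value), so your sketch is on target.
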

\begin{proof}
  By induction on $n$.  Clearly $\mathfrak{p}_1(d)=d=\mathcal{G}^0(d)$.

  Suppose the claim holds for $n$.  Note that for $d\geq n$,
\begin{align*}
  \mathfrak{e}(n,d)
&= 2^{(d+\mathfrak{d}_{n-1}(d))^{n-1}+1}d+d+\mathfrak{d}_{n-1}(d)\\
&= 2^{(d+(2d)^{2^{n-1}})^{n-1}+1}d+d+(2d)^{2^{n-1}}\\
&\leq\mathcal{G}^{\omega^22+\omega}(d).
\end{align*}

Then for $d\geq n$,
\begin{align*}
  \zeta_1(n,\mathfrak{p}_n(d),d)
&=({d+n\choose n}+2)\zeta_0(n,\mathfrak{p}_n(d))\\
&=({d+n\choose n}+2){n+\mathfrak{d}_n(\mathfrak{p}_n(d))\choose n}\\
&=({d+n\choose n}+2){n+(2\mathfrak{p}_n(d)^{2^n})\choose n}\\
&<(\frac{(d+n)(n+(2\mathfrak{p}_n(d)^{2^n}))e^2}{n})^n+2(\frac{(n+(2\mathfrak{p}_n(d)^{2^n}))e^2}{n})^n\\
&\leq((d+n+2)(n+(2\mathfrak{p}_n(d)^{2^n}))e^2)^n\\
&\leq((d+n+2)(n+(2\mathcal{G}^{\omega^28n}(d)^{2^n}))e^2)^n\\
&\leq((d+n+2)(n+(2\mathcal{G}^{\omega^2}(\mathcal{G}^{\omega^28n}(d)))e^2)^n\\
&\leq((d+n+2)\mathcal{G}^{\omega^2+\omega\cdot 4+n}(\mathcal{G}^{\omega^28n}(d)))^n\\
&\leq\mathcal{G}^{\omega^2 2}(\mathcal{G}^{\omega^28n}(d)).\\
\end{align*}
and
\begin{align*}
  \zeta_2(n,\mathfrak{p}_n(d),d)
&=(\zeta_1(n,\mathfrak{p}_n(d),d)+1)^{2^{\zeta_1(n,\mathfrak{p}_n(d),d)}-1}\\
&\leq\mathcal{G}^{\omega^2\cdot 3}(\mathcal{G}^{\omega^2\cdot 2}(\mathcal{G}^{\omega^28n}(d)))\\
&\leq\mathcal{G}^{\omega^2\cdot 5}(\mathcal{G}^{\omega^28n}(d)).\\
\end{align*}

We therefore have
\begin{align*}
  \nu(n+1,d)
&=\zeta_1(n,\mathfrak{p}_n(d),d)\zeta_2(n,\mathfrak{p}_n(d),d)\\
&\leq\mathcal{G}^{\omega^2\cdot 2}(\mathcal{G}^{\omega^28n}(d))\mathcal{G}^{\omega^2\cdot 5}(\mathcal{G}^{\omega^28n}(d))\\
&\leq(\mathcal{G}^{\omega^2\cdot 5}(\mathcal{G}^{\omega^28n}(d)))^2\\
&\leq\mathcal{G}^{\omega^2\cdot 6}(\mathcal{G}^{\omega^28n}(d))
\end{align*}
and
\begin{align*}
\mathfrak{p}_{n+1}(d)
&=\max\{2{\nu(n+1,d)+n+1\choose n+1}\nu(n+1,d),\mathfrak{e}(n,d)\}\\
&\leq\max\{2(\frac{(\nu(n+1,d)+n+1)e}{n+1})^{n+1}\nu(n+1,d),\mathcal{G}^{\omega^22+\omega}(d)\}\\
&\leq \mathcal{G}^{\omega^2\cdot 7+\omega 4+n+1}(\mathcal{G}^{\omega^28n}(d))\mathcal{G}^{\omega^2\cdot 6}(\mathcal{G}^{\omega^28n}(d))\\
&\leq \mathcal{G}^{\omega^2\cdot 8}(\mathcal{G}^{\omega^28n}(d))\\
&=\mathcal{G}^{\omega^28(n+1)}(d).
\end{align*}
\end{proof}

To get bounds on $\mathfrak{m}$, we first need the following observation:
\begin{lemma}
  Suppose $\tau=\tau^0\cup\tau^1$ and for every $a\in\tau^0$ and $b\in\tau^1$, $a\leq b$.  Then
\[\mathfrak{m}_{\tau,\fn{D}}(i)=\mathfrak{m}_{\tau^0,\fn{D}}(i)+\mathfrak{m}_{\tau^1,\fn{D}}(i+\mathfrak{m}_{\tau^0,\fn{D}}(i)).\]
\end{lemma}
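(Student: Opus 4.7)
The plan is to induct on $\tau^0$ in the multiset well-order $<_{\text{multi}}$. The intuition is that the recursion defining $\mathfrak{m}_{\tau,\fn{D}}$ processes elements from smallest to largest; if every element of $\tau^0$ is $\leq$ every element of $\tau^1$, the recursion first exhausts $\tau^0$ (through the ``$-1$'' expansions, which only add elements no larger than $\min\tau^0$ and thus still $\leq$ every element of $\tau^1$), and only afterwards touches $\tau^1$. Counting steps, the first phase contributes $\mathfrak{m}_{\tau^0,\fn{D}}(i)$ and leaves the argument shifted to $i+\mathfrak{m}_{\tau^0,\fn{D}}(i)$, after which the remaining cost is $\mathfrak{m}_{\tau^1,\fn{D}}(i+\mathfrak{m}_{\tau^0,\fn{D}}(i))$.

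The base case is $\tau^0=\emptyset$, in which case $\tau=\tau^1$, the summand $\mathfrak{m}_{\tau^0,\fn{D}}(i)=0$, and the identity is trivial. For the inductive step, assume $\tau^0\neq\emptyset$. Since every element of $\tau^0$ is at most every element of $\tau^1$, we have $\min\tau=\min\tau^0$. Apply the recursive clause of Notation \ref{not:local_noetherian}:
\[\mathfrak{m}_{\tau,\fn{D}}(i)=1+\mathfrak{m}_{\tau_{\langle\min\tau,i,\fn{D}\rangle},\fn{D}}(i+1).\]
Let $\sigma=\tau^0_{\langle\min\tau^0,i,\fn{D}\rangle}$. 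Then $\tau_{\langle\min\tau,i,\fn{D}\rangle}=\sigma\cup\tau^1$, and by the hypothesis on $\tau^0,\tau^1$ every element of $\sigma$ (which is either $\min\tau^0-1$ or comes from $\tau^0\setminus\{\min\tau^0\}$, or just $\sigma=\tau^0\setminus\{0\}$ in the boundary case $\min\tau^0=0$) is still $\leq$ every element of $\tau^1$. Moreover $\sigma<_{\text{multi}}\tau^0$, so the inductive hypothesis applies to the pair $(\sigma,\tau^1)$ at argument $i+1$, yielding
\[\mathfrak{m}_{\sigma\cup\tau^1,\fn{D}}(i+1)=\mathfrak{m}_{\sigma,\fn{D}}(i+1)+\mathfrak{m}_{\tau^1,\fn{D}}\bigl(i+1+\mathfrak{m}_{\sigma,\fn{D}}(i+1)\bigr).\]

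Combining these and using once more the recursive clause for $\tau^0$ itself, namely $\mathfrak{m}_{\tau^0,\fn{D}}(i)=1+\mathfrak{m}_{\sigma,\fn{D}}(i+1)$, we obtain
\[\mathfrak{m}_{\tau,\fn{D}}(i)=\bigl(1+\mathfrak{m}_{\sigma,\fn{D}}(i+1)\bigr)+\mathfrak{m}_{\tau^1,\fn{D}}\bigl(i+1+\mathfrak{m}_{\sigma,\fn{D}}(i+1)\bigr)=\mathfrak{m}_{\tau^0,\fn{D}}(i)+\mathfrak{m}_{\tau^1,\fn{D}}\bigl(i+\mathfrak{m}_{\tau^0,\fn{D}}(i)\bigr),\]
completing the induction.

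The only subtle point is checking that the hypothesis ``every element of $\tau^0$ is $\leq$ every element of $\tau^1$'' is preserved after the $\langle\min\tau^0,i,\fn{D}\rangle$ step, because this expansion is what makes it legitimate to claim that $\min(\sigma\cup\tau^1)$ still lies in $\sigma$; this is immediate from the definition, since the expansion introduces only copies of $\min\tau^0-1$ (or nothing, when $\min\tau^0=0$), all of which remain below the elements of $\tau^1$. Everything else is a bookkeeping rewrite of the one-step recursion.
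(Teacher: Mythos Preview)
Your proof is correct and follows essentially the same approach as the paper's: induction on $\tau^0$ in the multiset order, using $\min\tau=\min\tau^0$ to split off one recursion step and then apply the inductive hypothesis to $\sigma=\tau^0_{\langle\min\tau^0,i,\fn{D}\rangle}$. Your additional remark about why the ordering hypothesis is preserved under the expansion is a useful clarification that the paper leaves implicit.
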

\begin{proof}
  We proceed by induction on $\tau^0$.  When $\tau^0=\emptyset$, this is immediate from the definition.  Suppose the claim holds for all $\hat\tau^0<_{multi}\tau^0$.  Then
  \begin{align*}
    \mathfrak{m}_{\tau,\fn{D}}(i)
&=1+\mathfrak{m}_{\tau_{\langle \min\tau,i,\fn{D}\rangle},\fn{D}}(i+1)\\
&=1+\mathfrak{m}_{\tau^0_{\langle \min\tau,i,\fn{D}\rangle}\cup\tau^1,\fn{D}}(i+1)\\
&=1+\mathfrak{m}_{\tau^0_{\langle\min\tau,i,\fn{D}\rangle},\fn{D}}(i+1)+\mathfrak{m}_{\tau^1,\fn{D}}(i+1+\mathfrak{m}_{\tau^0_{\langle\min\tau,i,\fn{D}\rangle},\fn{D}}(i+1))\\
&=\mathfrak{m}_{\tau^0,\fn{D}}(i)+\mathfrak{m}_{\tau^1,\fn{D}}(i+\mathfrak{m}_{\tau^0,\fn{D}}(i))
  \end{align*}
as needed.
\end{proof}

\begin{definition}
  If $\tau$ is a multiset with $\max\tau=n$ and, for each $i\leq n$ with $0<i$, $c_i$ copies of $i$, then
\[o(\tau)=\sum_{0<i\leq n}\omega^{i-1} c_i+2|\tau|.\]
\end{definition}

\begin{lemma}\label{thm:m_bound}
For any $\tau$ and any monotone $\fn{D}$ so that $\fn{D}(b)\geq 2b$, whenever $b\geq|\tau|$ we have
\[\mathfrak{m}_{\tau,\fn{D}}(b)\leq \fn{D}^{o(\tau)}(b).\]
\end{lemma}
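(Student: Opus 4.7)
I would prove the bound by transfinite induction on $\tau$ in the multiset well-ordering $<_{multi}$, equivalently by ordinal induction on $o(\tau)$. The base case $\tau=\emptyset$ is immediate, since $\mathfrak{m}_{\emptyset,\fn{D}}(b)=0\leq b=\fn{D}^{0}(b)$.

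For the inductive step, set $k=\min\tau$ and $\tau'=\tau_{\langle k,b,\fn{D}\rangle}$, so that $\mathfrak{m}_{\tau,\fn{D}}(b)=1+\mathfrak{m}_{\tau',\fn{D}}(b+1)$. Applying the IH to $\tau'$ at position $b+1$ (which I would justify for $k\geq 1$ by first absorbing the growth in $|\tau'|$ into the exponent using monotonicity of $\fn{D}^\alpha$ in $b$ from the appendix, relying on the constant summand $2|\tau|$ of $o(\tau)$ as buffer) yields $\mathfrak{m}_{\tau',\fn{D}}(b+1)\leq\fn{D}^{o(\tau')}(b+1)$, reducing the goal to the numerical inequality $1+\fn{D}^{o(\tau')}(b+1)\leq\fn{D}^{o(\tau)}(b)$. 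I would then reduce this to a comparison of ordinals. When $k=0$, one has $o(\tau')=o(\tau)-2$, and since $o(\tau)$ is a successor (its $\omega^0$-coefficient is $\geq 2|\tau|\geq 2$), one unfolds $\fn{D}^{o(\tau)}(b)=\fn{D}^{o(\tau)-2}(\fn{D}^{2}(b))$; the conclusion follows from monotonicity of $\fn{D}^{o(\tau)-2}$ together with $\fn{D}^{2}(b)\geq 4b\geq b+2$. When $k\geq 1$, the $\omega^{k-1}c_k$ summand of $o(\tau)$ is replaced (modulo the constant part) by a coefficient $k(\fn{D}(b)-1)$ at level $\omega^{k-2}$ (or at $\omega^{0}$ when $k=1$); peeling off the $2|\tau|$ successor steps at the bottom of $o(\tau)$ advances $b$ to $\fn{D}^{2|\tau|}(b)$, and the next unfolding at the limit $\omega^{k-1}$ introduces a coefficient equal to $\fn{D}^{2|\tau|}(b)$ at level $\omega^{k-2}$. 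The hypothesis $b\geq|\tau|$ combined with $\fn{D}(b)\geq 2b$ then ensures $\fn{D}^{2|\tau|}(b)\geq k(\fn{D}(b)-1)$, which dominates the new coefficient in $o(\tau')$ and produces the required ordinal comparison.

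The main obstacle is this coefficient bookkeeping in the case $k\geq 2$, where the Cantor-normal-form rewriting in $o(\tau')$ must be matched against the fundamental-sequence approximants of $o(\tau)$; the hypotheses $b\geq|\tau|$ and $\fn{D}(b)\geq 2b$ are calibrated precisely to make this match close. The appendix supplies the supporting lemmas about $\fn{D}^\alpha$---monotonicity in $b$, weak monotonicity in $\alpha$ along fundamental sequences, the step-down identity $\fn{D}^\alpha(b)=\fn{D}^{\alpha[b]}(\fn{D}(b))$, and absorption of a single additive constant---that translate the ordinal bound into the numerical one.
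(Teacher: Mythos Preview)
Your approach has a genuine gap. The single-step unfold $\tau\mapsto\tau'=\tau_{\langle k,b,\fn{D}\rangle}$ with $k=\min\tau\geq 1$ replaces one copy of $k$ by $k(\fn{D}(b)-1)$ copies of $k-1$, so $|\tau'|=|\tau|-1+k(\fn{D}(b)-1)$. Two things fail simultaneously. First, the hypothesis $b+1\geq|\tau'|$ needed to invoke the induction hypothesis is typically violated, and your proposed ``absorption'' does not repair it: the induction hypothesis simply gives no information at arguments below $|\tau'|$. Second, and more seriously, your claim that induction on $<_{multi}$ is ``equivalently'' induction on $o(\tau)$ is false here, because $o(\tau')$ contains the summand $2|\tau'|$ and can therefore exceed $o(\tau)$. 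Concretely, for $\tau=\{1\}$ one has $o(\tau)=3$, while $o(\tau')=2(\fn{D}(b)-1)$, which is larger once $\fn{D}(b)\geq 3$. So even if the induction hypothesis applied, the target inequality $1+\fn{D}^{o(\tau')}(b+1)\leq\fn{D}^{o(\tau)}(b)$ is in the wrong direction.

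Your sketched coefficient comparison also breaks: you claim $\fn{D}^{2|\tau|}(b)\geq k(\fn{D}(b)-1)$ follows from $b\geq|\tau|$ and $\fn{D}(b)\geq 2b$, but take $\tau=\{k\}$ with $k$ large, $b=1$, and $\fn{D}(x)=2x$: then $\fn{D}^{2|\tau|}(b)=\fn{D}^2(1)=4$ while $k(\fn{D}(1)-1)=k$, so the inequality fails for $k>4$. The point is that $k$ can be arbitrarily large relative to $|\tau|$, and the ``buffer'' $2|\tau|$ sees only the cardinality of $\tau$, not the size of its entries.

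The paper avoids all of this by decomposing from the \emph{top} rather than the bottom. A preliminary splitting lemma shows that if $\tau=\tau^0\cup\tau^1$ with every element of $\tau^0$ bounded by every element of $\tau^1$, then $\mathfrak{m}_{\tau,\fn{D}}(i)=\mathfrak{m}_{\tau^0,\fn{D}}(i)+\mathfrak{m}_{\tau^1,\fn{D}}(i+\mathfrak{m}_{\tau^0,\fn{D}}(i))$. One then argues by main induction on $\max\tau$ with a side induction on the multiplicity of $\max\tau$: peel off one copy of $n=\max\tau$ as $\tau^1=\{n\}$, handle $\tau^0$ by the side induction hypothesis (its cardinality drops by one, so $b\geq|\tau^0|$ is preserved), and unfold $\{n\}$ once to a multiset with $\max=n-1$, to which the main induction hypothesis applies. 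The resulting ordinal bookkeeping produces the $\omega^{n-1}$ term in $o(\tau)$ directly. This top-down decomposition is the key idea your bottom-up argument is missing.
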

\begin{proof}
We proceed by induction on $\max\tau$.

When $\max\tau=0$, $\mathfrak{m}_{\tau,\fn{D}}(b)=r$, so $\mathfrak{m}_{\tau,\fn{D}}(b)\leq \fn{D}^0(b)$ once $b\geq r=|\max\tau|$.

Suppose the claim holds for values less than $n$ and proceed by side induction on the number of copies of $n$ in $\tau$.  Suppose we are given $\tau$ and $n\in\tau$ is maximal so $\tau=\tau_0\cup\{n\}$.  Then, letting $\sigma_b$ be the multiset with $\fn{D}(b)$ copies of $n-1$,
\begin{align*}
  \mathfrak{m}_{\tau,\fn{D}}(b)
&=\mathfrak{m}_{\tau^0,\fn{D}}(b)+\mathfrak{m}_{\{n\},\fn{D}}(b+\mathfrak{m}_{\tau^0,\fn{D}}(b))\\
&=\mathfrak{m}_{\tau^0,\fn{D}}(b)+\mathfrak{m}_{\sigma_{b+\mathfrak{m}_{\tau^0,\fn{D}}(b)},\fn{D}}(b+1+\mathfrak{m}_{\tau^0,\fn{D}}(b))\\
&\leq \fn{D}^{o(\tau^0)}(b)+\fn{D}^{\omega^{n-2}(b+\fn{D}^{o(\tau^0)}(b))}(b+1+\fn{D}^{o(\tau^0)}(b))\\
&\leq \fn{D}^{o(\tau^0)}(b)+\fn{D}^{\omega^{n-1}}(b+\fn{D}^{o(\tau^0)}(b))\\
&\leq \fn{D}^{o(\tau^0)}(b)+\fn{D}^{\omega^{n-1}+o(\tau^0)+1}(b)\\
&\leq \fn{D}^{\omega^{n-1}+o(\tau^0)+2}(b)\\
&=\fn{D}^{o(\tau)}(b).
\end{align*}
\end{proof}

\begin{cor}\label{thm:m_bound_star}
  If $\fn{D}(b)\geq\max\{2b,b+1\}$ then $\mathfrak{m}^*(\fn{D},n)\leq\fn{D}^{\omega^{n-1}+1}(0)$.
\end{cor}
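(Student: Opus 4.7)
The plan is to apply Lemma \ref{thm:m_bound} to $\tau=\{n\}$ with the shifted function $\fn{D}+1$, noting that $(\fn{D}+1)(b)\geq 2b+1\geq 2b$ verifies the lemma's hypothesis. For $\tau=\{n\}$ I compute $|\tau|=1$ and $o(\tau)=\omega^{n-1}+2$, so the lemma yields
\[
\mathfrak{m}_{\{n\},\fn{D}+1}(b)\;\leq\;(\fn{D}+1)^{\omega^{n-1}+2}(b)\quad\text{for all } b\geq 1.
\]

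Next I would establish that $i\mapsto \mathfrak{m}_{\tau,\fn{D}+1}(i)$ is nondecreasing in $i$, a direct induction on the rank of $\tau$ using that larger $i$ inserts strictly more copies of $\min(\tau)-1$ at every recursion step. This lets me replace the evaluation point $b=0$ appearing in the definition $\mathfrak{m}^*(\fn{D},n)=\mathfrak{m}_{\{n\},\fn{D}+1}(0)+1$ by $b=1$, obtaining
\[
\mathfrak{m}^*(\fn{D},n)\;\leq\;(\fn{D}+1)^{\omega^{n-1}+2}(1)+1.
\]

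The remaining step is to repackage this into the form $\fn{D}^{\omega^{n-1}+1}(0)$. Two slacks must be absorbed: passing from $\fn{D}+1$ back to $\fn{D}$, and trimming the exponent from $\omega^{n-1}+2$ to $\omega^{n-1}+1$ while simultaneously shifting the base point from $1$ down to $0$. Both are standard majorization arguments for ordinal-indexed iterations of monotone functions, and rely on the hypothesis $\fn{D}(b)\geq 2b$: one outer application of $\fn{D}$ dominates any pointwise $+1$ shift of the iterated function, and because $\fn{D}(0)\geq 1$ one unit of exponent surplus suffices to absorb the base-point shift plus the constant $+1$ outside. I expect these manipulations to follow routinely from the majorization lemmas collected in Appendix \ref{sec:ordinals}, so that the main remaining obstacle is pedantic bookkeeping of constants rather than any genuine conceptual difficulty; the one place to be careful is verifying the monotonicity-in-$i$ claim in the middle step, since the whole reduction to Lemma \ref{thm:m_bound} hinges on it.
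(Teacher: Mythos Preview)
The final repackaging step is where your argument breaks down, and not merely for bookkeeping reasons. All three discrepancies you identify---replacing $\fn{D}$ by $\fn{D}+1$, enlarging the exponent from $\omega^{n-1}+1$ to $\omega^{n-1}+2$, and shifting the base point from $0$ up to $1$---push your intermediate bound $(\fn{D}+1)^{\omega^{n-1}+2}(1)+1$ \emph{above} the target $\fn{D}^{\omega^{n-1}+1}(0)$, not below it. Since $\fn{D}$ is monotone with $\fn{D}(b)\geq b+1$, one has $(\fn{D}+1)^{\alpha}(b)\geq \fn{D}^{\alpha}(b)$ for every $\alpha$; and $\fn{D}^{\omega^{n-1}+2}(1)=\fn{D}^{\omega^{n-1}}(\fn{D}^2(1))$ whereas $\fn{D}^{\omega^{n-1}+1}(0)=\fn{D}^{\omega^{n-1}}(\fn{D}(0))$, with $\fn{D}^2(1)\geq\fn{D}(2)\geq 4$ but $\fn{D}(0)$ possibly as small as $1$. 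So no majorization lemma from Appendix~\ref{sec:ordinals} will close this gap: the slack runs the wrong way.

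In fact the bound as literally stated already fails at $n=1$. Take $\fn{D}(b)=\max\{2b,b+1\}$. Unwinding the definition gives $\mathfrak{m}_{\{1\},\fn{D}+1}(0)=2$, hence $\mathfrak{m}^*(\fn{D},1)=3$, while $\fn{D}^{\omega^0+1}(0)=\fn{D}^2(0)=\fn{D}(1)=2$. The corollary is thus off by a small additive constant; what Lemma~\ref{thm:m_bound} genuinely delivers is a bound of the shape $(\fn{D}+1)^{\omega^{n-1}+2}$ evaluated at some $b\geq 1$, and this is what the paper's downstream uses (e.g.\ Lemma~\ref{thm:uf_bounds}) actually need, since they immediately absorb such constants into much larger slack. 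Your first two steps correctly isolate the content, but you should not expect to recover the exact exponent $\omega^{n-1}+1$ at base $0$.
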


Recall that $\fn{F}_x(b)=\fn{F}(\mathfrak{p}_x(b))$.

\begin{lemma}\label{thm:uf_bounds}
  If $\fn{F}(b)\geq 2b$ for all $b$ then $\mathfrak{u}_{\fn{F}}(b)\leq \fn{F}_b^{\omega^b+\omega+1}(b)$.
\end{lemma}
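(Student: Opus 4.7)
The plan is to unwind $\mathfrak{u}_{\fn{F}}(b) = \fn{F}_b^K(b)$, where $K = \mathfrak{m}^*(i\mapsto\fn{F}^i_b(b),b)$, by first bounding $K$ by an ordinal-indexed iterate of $\fn{F}_b$ and then absorbing the outermost $\fn{F}_b^K(b)$ into one further ordinal step.

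First I will apply Corollary \ref{thm:m_bound_star}. Set $\fn{D}(i) = \fn{F}^i_b(b)$. The hypothesis $\fn{F}(x)\geq 2x$, combined with monotonicity of $\fn{F}$ and of $\mathfrak{p}_b$, forces $\fn{F}_b(x)\geq 2x$, so $\fn{D}(i)\geq 2^i b\geq \max\{2i,i+1\}$ for all $i$ (assuming $b\geq 1$, which we may). The corollary then gives $K \leq \fn{D}^{\omega^{b-1}+1}(0)$.

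Next I will re-express ordinal iteration of $\fn{D}$ as ordinal iteration of $\fn{F}_b$. Since $\fn{D}(n) = \fn{F}_b^n(b)$, each application of $\fn{D}$ is morally $\omega$ applications of $\fn{F}_b$, so by transfinite induction on $\alpha$ I would show
\[
\fn{D}^\alpha(0)\ \leq\ \fn{F}_b^{\omega\cdot\alpha}(b).
\]
At a successor this reduces to the ``$+\omega$ absorption'' inequality $\fn{F}_b^{\fn{F}_b^{\omega\cdot\alpha}(b)}(b) \leq \fn{F}_b^{\omega\cdot\alpha+\omega}(b)$, and the limit step is a continuity argument along the canonical fundamental sequence $\alpha[n]$; both ingredients are ordinal-arithmetic facts collected in Appendix \ref{sec:ordinals}. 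Evaluating at $\alpha = \omega^{b-1}+1$ yields $K\leq \fn{F}_b^{\omega^b+\omega}(b)$. A sharper ``$+1$ absorption'' applied once more to the outer $\fn{F}_b^K$ then finishes the argument:
\[
\mathfrak{u}_{\fn{F}}(b) \ =\ \fn{F}_b^K(b)\ \leq\ \fn{F}_b^{\fn{F}_b^{\omega^b+\omega}(b)}(b)\ \leq\ \fn{F}_b^{\omega^b+\omega+1}(b).
\]

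The main obstacle is the ordinal-arithmetic bookkeeping itself. The two absorption steps are of different strengths---gaining $+\omega$ inside the induction but only $+1$ at the very end, where the base point has already grown enough for the extra $K$-fold iteration to be absorbed---and both rely on monotonicity and rapid-growth properties of $\fn{F}_b$ together with careful manipulation of fundamental sequences. These are precisely the manipulations that Appendix \ref{sec:ordinals} is set up to handle; the real work lies in selecting the correct appendix lemma at each step and checking its hypotheses for $\fn{F}_b$.
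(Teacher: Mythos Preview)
Your approach is essentially the paper's: apply Corollary~\ref{thm:m_bound_star} to $\fn{D}(i)=\fn{F}_b^i(b)$, then convert $\fn{D}$-iteration to $\fn{F}_b$-iteration by an induction showing $\fn{D}^\alpha\leq \fn{F}_b^{\omega\cdot\alpha}$, and finally absorb the outer $\fn{F}_b^K$.

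One technical point is worth flagging. With the paper's convention $g^{\alpha}(b)=g^{\alpha[b]}(g(b))$, the successor identity is $g^{\alpha+1}(b)=g^{\alpha}(g(b))$, \emph{not} $g(g^\alpha(b))$. Your successor step implicitly uses the latter (you reduce to $\fn{F}_b^{\fn{D}^\alpha(0)}(b)=\fn{D}(\fn{D}^\alpha(0))$), so as written the induction hypothesis $\fn{D}^\alpha(0)\leq\fn{F}_b^{\omega\cdot\alpha}(b)$ is too weak: at the successor you actually need to bound $\fn{D}^\alpha(\fn{D}(0))=\fn{D}^\alpha(b)$, not $\fn{D}^\alpha(0)$. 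The paper handles this by carrying the argument in the inductive claim, proving $\fn{D}^\alpha(i)\leq\fn{F}_b^{\omega\cdot\alpha+i}(b)$ for all $i$; then the successor step unfolds as $\fn{D}^{\alpha+1}(i)=\fn{D}^{\alpha}(\fn{D}(i))=\fn{D}^{\alpha}(\fn{F}_b^i(b))$, and the strengthened hypothesis applies directly. Similarly, for the last step the paper routes through $\fn{F}_b^K(b)\leq\fn{F}_b^{\omega}(K)$ and then uses the natural-sum bound $g^\alpha(g^\beta(b))\leq g^{\alpha\#\beta}(b)$, rather than a bare ``$+1$ absorption'' (which is not one of the appendix lemmas in the form you need).
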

\begin{proof}
  Let $\fn{D}_c(i)=\fn{F}^i_c(c)$.  By induction on $\alpha$, we claim that $\fn{D}_c^\alpha(i)\leq\fn{F}^{\omega\times\alpha+i}_c(c)$.  When $\alpha=0$, $\fn{D}_c^0(0)=0\leq \fn{F}^0_c(0)$.  When $\alpha>0$,
\[\fn{D}_c^\alpha(i)
=\fn{D}_c^{\alpha[i]}(\fn{D}_c(i))
=\fn{D}_c^{\alpha[i]}(\fn{F}_c^i(c))
\leq \fn{F}_c^{\omega\times(\alpha[i])}(\fn{F}^i_c(c))
\leq \fn{F}_c^{\omega\times\alpha+i}(c).\]

  So we have $\mathfrak{m}^*(i\mapsto\fn{F}_x^i(x),x)=\mathfrak{m}^*(\fn{D}_x,x)\leq\fn{D}^{\omega^{x-1}+1}_x(0)\leq\fn{F}^{\omega^x+1}_x(x)$.  Therefore
\[\mathfrak{u}_{\fn{F}}(x)\leq \fn{F}_x^{\fn{F}_x^{\omega^x+1}(x)}(x)\leq \fn{F}_x^{\omega}(\fn{F}_x^{\omega^x+1}(x))\leq\fn{F}_x^{\omega^x+\omega+1}(x).\]
\end{proof}

Note that, under the same assumptions, $\fn{F}^{\omega^b+\omega+1}(b)$ is much larger than $\mathfrak{d}_{b+1}(2b{2b\choose b}+1)$, so the same bound holds for $\mathfrak{u}^+_{\fn{F}}$.

\begin{lemma}\label{thm:fF_bounds}
  If $\fn{F}(b)\geq 2b$ for all $b$ then $\mathfrak{f}(\fn{F},b)\leq \fn{F}_b^{\omega^b+\omega^26+\omega+8}(b)$.
\end{lemma}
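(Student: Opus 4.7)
The plan is to cascade the previous lemma's bound on $\mathfrak{u}^+_{\fn{F}}$ through the two nested applications of $\mathfrak{d}$ in the definition of $\mathfrak{f}$, using the appendix-level calibration of the hierarchy (cf.\ the benchmarks $\mathcal{G}^\omega(b)=2b$, $\mathcal{G}^{\omega^2}(b)\geq 2^b b$, $\mathcal{G}^{\omega^3}(b)\approx$ tower). Write $U=\mathfrak{u}^+_{\fn{F}}(b)$. By the remark following Lemma \ref{thm:uf_bounds}, $U\leq\fn{F}_b^{\omega^b+\omega+1}(b)$.

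The first step is to handle $N=\mathfrak{d}_U(U)+U=(2U)^{2^U}+U$. Since $(2U)^{2^U}\leq 2^{(2^U)(U+1)}\leq 2^{2^{U+\lceil\log_2(U+1)\rceil+1}}$, this is double-exponential in $U$. Because $\fn{F}_b(x)\geq 2x$ puts $\fn{F}_b^{\omega^2}$ at the single-exponential level, two iterations of $\omega^2$ (plus a small constant of slack) suffice: the appendix gives $\mathfrak{d}_U(U)\leq\fn{F}_b^{\omega^2\cdot 2+c_1}(U)$ for a small $c_1$, and hence $N\leq\fn{F}_b^{\omega^2\cdot 2+c_1}(U)$. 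The second step repeats this reasoning on the outer $\mathfrak{d}_U$. Using $b\leq U\leq N$, we bound ${N+b\choose b}\cdot U\leq (2N)^{b+1}$, so
\[
\mathfrak{d}_U\!\left({N+b\choose b}U\right)\leq (2(2N)^{b+1})^{2^U}=2^{(2^U)((b+1)\log_2(2N)+1)},
\]
again double-exponential, now in the larger of $U$ and $\log N$. Adding $N$ at the end is negligible, and we get $\mathfrak{f}(\fn{F},b)\leq\fn{F}_b^{\omega^2\cdot 2+c_2}(N)$.

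Composing the two estimates via the appendix's iteration identity $g^\alpha(g^\beta(x))\leq g^{\beta+\alpha}(x)$ yields
\[
\mathfrak{f}(\fn{F},b)\leq\fn{F}_b^{\omega^2\cdot 2+c_1+\omega^2\cdot 2+c_2}(U)\leq\fn{F}_b^{\omega^2\cdot 4+c_3}(U),
\]
and then substituting $U\leq\fn{F}_b^{\omega^b+\omega+1}(b)$ and applying the identity once more gives $\fn{F}_b^{\omega^b+\omega+1+\omega^2\cdot 4+c_3}(b)$. In Cantor normal form, $\omega+1$ is absorbed to the right of $\omega^2$, collapsing the exponent to $\omega^b+\omega^2\cdot 4+c_3$, which is comfortably dominated by the claimed $\omega^b+\omega^2\cdot 6+\omega+8$.

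The main obstacle is bookkeeping rather than content: one must keep track of the non-commutativity of ordinal addition (so that the final normal form puts $\omega^b$ on the left and no small terms get lost on the right), and one must invoke the appendix's calibration lemmas in the correct direction at each step. The generous slack $\omega^2\cdot 2+\omega+8$ beyond what the argument strictly produces is included precisely so that the small constants $c_1,c_2,c_3$ arising from logarithms and the stray $+N$ never need to be optimized.
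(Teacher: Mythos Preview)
Your approach is essentially the paper's: both bound $U=\mathfrak{u}^+_{\fn{F}}(b)$ by $\fn{F}_b^{\omega^b+\omega+1}(b)$, observe that each application of $\mathfrak{d}$ costs about $\omega^2\cdot 2$ levels of iteration, and compose. The paper substitutes the bound on $U$ \emph{before} applying each $\omega^2\cdot 2$ step, using the insertion lemma $g^\beta(g^{\alpha+\gamma}(b))\leq g^{\alpha+\beta+\gamma}(b)$ (valid when $\max\beta\leq\min\alpha$ and $\max\gamma\leq\min\beta$) to slot the $\omega^2$-block into the middle of the exponent; you compose the two $\omega^2$-blocks first and substitute $U$ at the end.

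There is, however, a technical slip. The identity you invoke, $g^\alpha(g^\beta(x))\leq g^{\beta+\alpha}(x)$, is false and is not in the appendix: with $\alpha=\omega$ and $\beta=1$ one has $g^\omega(g(b))>g^\omega(b)=g^{1+\omega}(b)$. What the appendix actually provides is the equality $g^{\alpha+\beta}(b)=g^\alpha(g^\beta(b))$ (outer exponent on the \emph{left}), the insertion lemma above, and the natural-sum bound $g^\alpha(g^\beta(b))\leq g^{\alpha\#\beta}(b)$. If you apply the correct equality in your final step, the exponent becomes $(\omega^2\cdot 4+c_3)+(\omega^b+\omega+1)=\omega^b+\omega+1$ (the left summand is absorbed by $\omega^b$), which is too small to be a valid bound. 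The fix is to use the natural sum instead, giving $(\omega^2\cdot 4+c_3)\#(\omega^b+\omega+1)=\omega^b+\omega^2\cdot 4+\omega+(c_3+1)$, still comfortably below the target; or, as the paper does, substitute the bound on $U$ first and then use the insertion lemma. Either repair works, but the composition step as you wrote it does not.
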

\begin{proof}
  We have 
  \begin{align*}
   N
&=\mathfrak{d}_{\mathfrak{u}^+_{\fn{F}}(b)}(\mathfrak{u}^+_{\fn{F}}(b))+\mathfrak{u}^+_{\fn{F}}(b)\\
&\leq (2\fn{F}_b^{\omega^b+\omega+1}(b))^{2^{\fn{F}_b^{\omega^b+\omega+1}(b)}}+\fn{F}_b^{\omega^b+\omega+1}(b)\\
&\leq \fn{F}_b^{\omega^22}(\fn{F}_b^{\omega^b+\omega+2}(b))+\fn{F}_b^{\omega^b+\omega+1}(b)\\
&\leq \fn{F}_b^{\omega^b+\omega^22+\omega+3}(b).
  \end{align*}

Then
\begin{align*}
  \mathfrak{f}(\fn{F},b)
&=\mathfrak{d}_{\mathfrak{u}^+_{\fn{F}}(b)}({N+b\choose b}\cdot\mathfrak{u}^+_{\fn{F}}(b))+N\\
&\leq \mathfrak{d}_{\mathfrak{u}^+_{\fn{F}}(b)}(\fn{F}_b^{\omega^b+\omega^24+\omega+6}(b))+\fn{F}_b^{\omega^b+\omega^22+\omega+3}(b)\\
&\leq \fn{F}_b^{\omega^b+\omega^26+\omega+7}(b)+\fn{F}_b^{\omega^b+\omega^22+\omega+3}(b)\\
&\leq \fn{F}_b^{\omega^b+\omega^26+\omega+8}(b).
\end{align*}
\end{proof}

\begin{lemma}\label{thm:zk_bounds}
  $\mathfrak{z}^k(d,b)\leq \mathcal{G}^{\omega^24k+\omega 6k+3k}(\max\{b,d\})$.
\end{lemma}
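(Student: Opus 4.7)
The plan is to proceed by induction on $k$, tracking how each layer of the recursion contributes to the ordinal exponent bounding $\mathfrak{z}^k(d,b)$.

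For the base case $k=0$, the bound $\mathfrak{z}^0(d,b) = d \le \max\{b,d\} = \mathcal{G}^0(\max\{b,d\})$ is immediate.

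For the inductive step, set $m = \max\{b,d\}$. Writing the recursion as $\mathfrak{z}^{k+1}(d,b) = \mathfrak{z}^k(N_1,b)$ where $N_1 = \mathfrak{d}_{b+d}(N_2) + \mathfrak{g}(b+d-1,\max\{b+d-1,2b\}) + d + 1$ and $N_2 = (\mathfrak{g}(b+d-1,\max\{b+d-1,2b\}) + d+1)\binom{2b}{b}\cdot 2b + d$, the goal is to show that $N_1 \le \mathcal{G}^{\omega^2 4 + \omega 6 + 3}(m)$, so that the inductive hypothesis gives
\[
\mathfrak{z}^{k+1}(d,b) = \mathfrak{z}^k(N_1,b) \le \mathcal{G}^{\omega^2 4k + \omega 6k + 3k}(\max\{b,N_1\}) \le \mathcal{G}^{\omega^2 4(k+1) + \omega 6(k+1) + 3(k+1)}(m),
\]
the last inequality relying on the additivity property of ordinal iteration (appendix).

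To estimate the pieces of $N_1$: the factor $\mathfrak{g}(b+d-1,\max\{b+d-1,2b\}) = \max\{b+d-1,2b\}\cdot(b+d)^{\max\{b+d-1,2b\}}$ is bounded by a function of the form $m^{O(m)}$, which lies below $\mathcal{G}^{\omega 3}(m)$ (since $b \mapsto b^b$ is absorbed by roughly three $\omega$-levels). Similarly $\binom{2b}{b}\le 4^b \le \mathcal{G}^{\omega 2}(m)$. The product $(\mathfrak{g}+d+1)\binom{2b}{b}2b + d$ therefore fits inside $\mathcal{G}^{\omega 6}(m)$ after accounting for the multiplications and additions. Finally $\mathfrak{d}_{b+d}(x) = (2x)^{2^{b+d}}$ with $b+d \le 2m$ and $x \le \mathcal{G}^{\omega 6}(m)$; using that $b \mapsto 2^b$ is a function of $\omega^2$-type growth (so iterating it $n$ times corresponds to $\omega^2 \cdot n$) and absorbing constants, one obtains $\mathfrak{d}_{b+d}(N_2) \le \mathcal{G}^{\omega^2 4 + \omega 6 + 2}(m)$. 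Adding the lower-order summand and a final $+1$ leaves us under $\mathcal{G}^{\omega^2 4 + \omega 6 + 3}(m)$.

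The main obstacle is the bookkeeping in that last paragraph: carefully translating each arithmetic operation (product, exponential, double exponential) into the appropriate ordinal increment and checking that the constants $4, 6, 3$ suffice. Once the ordinal-arithmetic lemmas from Appendix \ref{sec:ordinals} (majorization of $g^\alpha \circ g^\beta$ by $g^{\alpha+\beta}$, and the majorization of $b \mapsto b^b$ and $b \mapsto 2^b$ by the appropriate $\mathcal{G}^{\omega n}$ and $\mathcal{G}^{\omega^2}$) are invoked in the right order, the step falls out additively, giving exactly the increment $\omega^2 4 + \omega 6 + 3$ per recursion depth.
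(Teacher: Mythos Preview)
Your inductive skeleton is exactly the paper's: induct on $k$, show that the new first argument $N_1$ in the recursion is bounded by $\mathcal{G}^{\omega^2 4+\omega 6+3}(\max\{b,d\})$, and then add the increments. That part is fine and matches the paper.

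The gap is in your growth-rate bookkeeping. Recall that $\mathcal{G}(b)=b+1$, so $\mathcal{G}^{\omega}(b)=2b+1$ and more generally $\mathcal{G}^{\omega k}(b)\approx 2^k b$ is \emph{linear} in $b$; it is $\mathcal{G}^{\omega^2}(b)\approx 2^b b$ that first reaches exponential growth (the paper says this explicitly just after defining $\mathcal{G}$). Consequently your claims ``$b\mapsto b^b$ is absorbed by roughly three $\omega$-levels'' and ``$4^b\le\mathcal{G}^{\omega 2}(m)$'' are false: $\mathcal{G}^{\omega 3}(m)\approx 8m$ and $\mathcal{G}^{\omega 2}(m)\approx 4m$, neither of which dominates $m^m$ or $4^m$. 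The correct accounting, as in the paper, puts $\mathfrak{g}(b+d-1,\max\{b+d-1,2b\})$ at the $\omega^2$ level (the paper writes $\mathcal{G}^{\omega^2+\omega+1}$), and then $\mathfrak{d}_{b+d}(\cdot)=(2\cdot)^{2^{b+d}}$ contributes further $\omega^2$-level iterations, giving the $\omega^2 4$ in the final increment. Your $\omega 6$ budget for $N_2$ is therefore not a valid intermediate bound; you are effectively arriving at the right per-step ordinal $\omega^2 4+\omega 6+3$ by coincidence (or by working backward from the statement), not by a correct chain of estimates.

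Fixing this is just a matter of redoing the arithmetic with the right calibration: each exponential or self-power costs an $\omega^2$, each multiplication costs an $\omega$, and additive constants cost finite increments. Once you redo the estimates on $\mathfrak{g}$, $\binom{2b}{b}$, and $\mathfrak{d}_{b+d}$ with this in mind, the per-step bound $\mathcal{G}^{\omega^2 4+\omega 6+3}(m)$ goes through legitimately and the induction closes as you wrote it.
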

\begin{proof}
Observe that $\mathfrak{g}(b+d-1,\max\{b+d-1,2b\})\leq \mathfrak{g}(2\max\{b,d\})\leq \mathcal{G}^{\omega^2+\omega+1}(\max\{b,d\})$ and so
\[\mathfrak{d}_{b+d}(\mathfrak{g}(b+d-1,\max\{b+d-1,2b\})+d+1)\leq \mathcal{G}^{\omega^23+\omega 3+2}(\max\{b,d\}).\]

  We now proceed by induction on $k$.  When $k=0$, $\mathfrak{z}^0(d,b)=d\leq \fn{F}^0(d)$.

  Suppose the claim holds for $k$.  Then
  \begin{align*}
    \mathfrak{z}^{k+1}(d,b)
&\leq\mathfrak{z}^k(\mathcal{G}^{\omega^24+\omega 4+2}(\max\{b,d\})+\mathcal{G}^{\omega^2+\omega+1}(\max\{b,d\})+d+1,b)\\
&\leq\mathfrak{z}^k(\mathcal{G}^{\omega^24+\omega 6+3}(\max\{b,d\}),b)\\
&\leq\mathcal{G}^{\omega^2 4k+\omega 6k+3k}(\max\{b,d\},b).
  \end{align*}
\end{proof}

\subsection{Bounds on Lemma \ref{thm:dickson_iteration_bound} and its Consequences}
We first need an assignment of ordinals to bad leader sequences.
\begin{definition}
  A \emph{bad Dickson sequence} in $\mathbb{N}^n$ is a sequence $\langle\vec a_1,\ldots,\vec a_m\rangle$ of elements $\mathbb{N}^n$ so that when $i<j$, $\vec a_i\not\preceq\vec a_j$.
\end{definition}

\begin{lemma}[\cite{simpson:MR961012}]
  There is an assignment of ordinals $o(\langle\vec a_1,\ldots,\vec a_m\rangle)\leq \omega^n$ to bad Dickson sequences so that $o(\langle\rangle)=\omega^n$, $o(\langle\vec a_1,\ldots,\vec a_m,\vec a_{m+1}\rangle)<o(\langle\vec a_1,\ldots,\vec a_m\rangle)$, and $|o(\langle \vec a_1,\ldots,\vec a_m \rangle)|\leq mk^n$.
\end{lemma}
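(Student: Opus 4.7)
The plan is to define $o(s)$ as the rank of $s$ in the well-founded tree of bad Dickson extensions: setting
\[o(s)=\sup\{o(s^\frown\langle\vec a\rangle)+1:s^\frown\langle\vec a\rangle\text{ is bad}\},\]
Dickson's lemma guarantees that this tree has no infinite branches, so $o$ is well-defined, and the strict-decrease property is then automatic. What remains is to establish the equality $o(\langle\rangle)=\omega^n$, the upper bound $o(s)\leq\omega^n$ for all bad $s$, and the coefficient bound $|o(s)|\leq mk^n$, where $k$ is interpreted as an upper bound on all entries appearing in the tuples of $s$.

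I would prove the three estimates by simultaneous induction on $n$. For $n=0$ the space $\mathbb{N}^0$ has a unique element $\varepsilon$, so the only bad sequences are $\langle\rangle$ and $\langle\varepsilon\rangle$, giving $o(\langle\rangle)=1=\omega^0$ and trivially satisfying the coefficient bound. For $n=1$, bad sequences in $\mathbb{N}$ are strictly descending, so $o(\langle a_1,\dots,a_m\rangle)=a_m<\omega$, the equality $o(\langle\rangle)=\omega$ being witnessed in the limit by the one-element sequences $\langle N\rangle$ as $N\to\infty$; the coefficient bound $|o(s)|=a_m\leq k$ is immediate. For the inductive step from $n$ to $n+1$, I would classify extensions $s^\frown\langle(c,\vec b)\rangle$ by the value $c$ of the new first coordinate: for each fixed $c$, the prior tuples with first coordinate also equal to $c$ must have their remaining coordinates forming a bad sequence in $\mathbb{N}^n$ that $\vec b$ must extend badly, so the inductive hypothesis bounds the rank contribution from that slice by an ordinal below $\omega^n$. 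Aggregating across the $\leq k+1$ possible first-coordinate values gives $o(s)\leq\omega^n\cdot(k+1)<\omega^{n+1}$, while the equality $o(\langle\rangle)=\omega^{n+1}$ follows by taking suprema over sequences realizing arbitrarily large first-coordinate bounds and inductively large tail ranks.

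The main obstacle will be pinning down the coefficient bound $|o(s)|\leq mk^n$ rather than a worse polynomial. Each layer of the induction multiplies the leading Cantor coefficient by a factor of $k$ (coming from the range of first-coordinate choices), while extensions within a fixed first-coordinate slice accumulate up to $m$ separate contributions to lower-order coefficients. Making the accounting tight requires either an explicit recursive formula in which the Cantor coefficients of $o(s^\frown\langle\vec a\rangle)$ are read off from those of $o(s)$ and $\vec a$ and shown to grow additively in $m$ and multiplicatively in $k$, or a verification by inspection that Simpson's construction in \cite{simpson:MR961012} has this term-by-term behavior. The cleanest route is probably to adopt the explicit Simpson recursion directly, since re-deriving the maximal order type of $(\mathbb{N}^n,\preceq)$ together with a sharp coefficient bound from scratch would duplicate the substantive content of that paper.
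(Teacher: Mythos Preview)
The paper does not give its own proof of this lemma: it is stated with the citation \cite{simpson:MR961012} and no proof environment follows. So there is nothing in the paper to compare your argument against beyond the implicit ``see Simpson.''

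Your sketch via tree rank is a reasonable route to the first two properties (the decrease property is automatic, and $o(\langle\rangle)=\omega^n$ is the classical maximal-order-type computation for $(\mathbb{N}^n,\preceq)$). But as you yourself flag, the coefficient bound $|o(s)|\leq mk^n$ is the substantive part, and the tree-rank definition by supremum does not hand it to you directly: you would need an explicit recursion that tracks Cantor-normal-form coefficients, not just ordinal size. Your closing suggestion---to adopt Simpson's explicit construction rather than re-derive it---is exactly what the paper does, so in that sense your proposal ends up agreeing with the paper's treatment. If you want a self-contained argument, you will need to write down the explicit ordinal assignment (not the abstract tree rank) and verify the coefficient bound by induction on $n$; the tree-rank framing alone does not close that gap.
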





\begin{lemma}\label{thm:ord_assign_bad_leader}
With $m$ derivatives and $n$ differential indeterminates, there is an assignment of ordinals $o(\langle u_1,\ldots,u_k\rangle)\leq \omega^m\cdot n$ to bad leader sequences so that $o(\langle\rangle)=\omega^m\cdot n$, $o(\langle u_1,\ldots,u_k,u_{k+1}\rangle)<o(\langle u_1,\ldots,u_k\rangle)$, and if each $u_i\in K\{X_{[n]}\}_{\leq d}$ then $|o(\langle u_1,\ldots,u_k\rangle)|\leq nkd^m$.
\end{lemma}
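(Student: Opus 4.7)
The plan is to reduce to the Dickson case by partitioning the leader sequence according to which differential indeterminate is being derived. Encode each leader $u_i = \delta_1^{k_{i,1}}\cdots\delta_m^{k_{i,m}}X_{r_i}$ as a pair $(r_i,\vec a_i)\in[1,n]\times\mathbb{N}^m$, and for each $j\in[1,n]$ let $S_j$ be the subsequence of tuples $\vec a_i$ with $r_i = j$, kept in their original order. Since $u_{i'}$ is a derivative of $u_i$ precisely when $r_i = r_{i'}$ and $\vec a_i \preceq \vec a_{i'}$, the bad-leader condition on $\langle u_1,\ldots,u_k\rangle$ forces each $S_j$ to be a bad Dickson sequence in $\mathbb{N}^m$. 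Apply the previous lemma to each $S_j$ to obtain an ordinal $\sigma_j \leq \omega^m$.

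Next, define
\[o(\langle u_1,\ldots,u_k\rangle) \;=\; \sigma_1 \oplus \sigma_2 \oplus \cdots \oplus \sigma_n,\]
the Hessenberg (natural) sum. Using the natural sum rather than ordinary ordinal addition is the crucial design choice: ordinary $+$ can absorb decreases (for instance $\alpha + \omega^m = \omega^m$ whenever $\alpha < \omega^m$), which would break strict monotonicity as we extend the sequence. Hessenberg addition, by contrast, is strictly monotonic in each argument. With this definition, the two order-theoretic properties fall out cleanly: when $k=0$ every $S_j$ is empty so every $\sigma_j = \omega^m$, and natural-summing $n$ copies of $\omega^m$ yields $\omega^m\cdot n$; when we extend the sequence by $u_{k+1}$, exactly one $S_j$ gains an element and its $\sigma_j$ strictly decreases, and strict monotonicity of $\oplus$ propagates this to the total.

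For the coordinate bound, I would use the orderly ranking: $u_i\in K\{X_{[n]}\}_{\leq d}$ forces $o(u_i)\leq d$, hence $\sum_j k_{i,j}\leq d$ and in particular each entry of $\vec a_i$ is $\leq d$. The previous lemma then gives $|\sigma_j|\leq |S_j|\,d^m$ when $S_j$ is nonempty (and a trivial $O(m)$ bound when empty). Since the Hessenberg sum combines Cantor normal forms coefficient-wise, $|\alpha\oplus\beta|\leq|\alpha|+|\beta|$, so summing yields $|o(\langle u_1,\ldots,u_k\rangle)|\leq k\,d^m$ up to a small additive term from the empty subsequences, comfortably within the claimed $nkd^m$.

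The main obstacle is really just the bookkeeping around the natural sum: verifying both strict monotonicity and the subadditivity $|\alpha\oplus\beta|\leq|\alpha|+|\beta|$ requires a short induction on Cantor normal form, and one must be careful that the strict decrease is not a mere artifact of the chosen representation. These are standard facts, but worth stating explicitly here since they will be reused in the bounds analysis of $\mathfrak{h}_{n,m}$ immediately afterward.
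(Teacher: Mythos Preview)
Your approach is essentially identical to the paper's: partition the bad leader sequence by differential indeterminate into $n$ bad Dickson sequences in $\mathbb{N}^m$, apply the Simpson ordinal assignment to each, and combine the results via the natural (Hessenberg) sum. The paper carries this out by writing each $\alpha_i=\sum_{j\leq m}\omega^j c_{i,j}$ and setting $o(\langle u_1,\ldots,u_k\rangle)=\sum_{j\leq m}\omega^j(\sum_i c_{i,j})$, which is exactly $\sigma_1\oplus\cdots\oplus\sigma_n$; your discussion of why the natural sum (rather than ordinary ordinal addition) is needed for strict monotonicity is a welcome clarification that the paper leaves implicit.
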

\begin{proof}
  Given $u_1,\ldots,u_k$, each $u_j=\delta^{k_{1,j}}_1\cdots\delta^{k_{m,j}}_m X_{i_j}$.  For each $j$, let $\vec k_j=\langle k_{1,j},\ldots,k_{m,j}\rangle$.  For each $i$, consider the subsequence $j_1,\ldots,j_{d_i}$ with $i_j=i$; then $\langle \vec k_{j_1},\ldots,\vec k_{j_{d_i}}\rangle$ is a bad Dickson sequence with ordinal $\alpha_i=\sum_{j\leq m} \omega^j\cdot c_{i,j}$.  Taking $o(\langle u_1,\ldots,u_k\rangle)=\sum_{j\leq m}\omega^j\cdot(\sum_i c_{i,j})$ gives the desired bound.
\end{proof}
The quantity $\sum_{j\leq m}\omega^j\cdot(\sum_i c_{i,j})$ is an instance of the ``natural'' or ``commutative'' sum for ordinals.  


\begin{lemma}\label{thm:hgamma_bounds}
Let $g$ be a fixed monotonic function with $g(b)\geq 2b$ for all $b$.  For each $c$, let $\fn{D}_c$ be the function $\fn{D}_c(i) =g^{c+i}(b)$.

Then for any $\gamma$ with $o(\gamma_\mu)=\alpha$ and any $b\geq\max\{|\gamma|,n,m+2\}$, 
\[\mathfrak{h}_{n,m}(\fn{D}_c,\gamma)\leq g^{\omega^{\alpha 2}+\omega+c+2}(b).\]
In particular,
\[\mathfrak{h}_{n,m}(\fn{D})\leq g^{\omega^{\omega^m\cdot 2n}+\omega+2}(b).\]

\end{lemma}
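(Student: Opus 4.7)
My plan is to prove the displayed bound by transfinite induction on the ordinal $\alpha = o(\gamma_\mu)$ assigned to the bad leader sequence $\gamma_\mu$ by Lemma \ref{thm:ord_assign_bad_leader}. The extra hypothesis $b \geq \max\{|\gamma|, n, m+2\}$ is exactly what we need so that the coordinate bound $|o(\gamma_\mu^\frown\langle u\rangle)|$ stays controlled by $b$, which is essential when translating between ordinal iterations and concrete natural-number bounds. The second display (the ``in particular'' clause) is just the instance $\gamma = \langle\rangle$, $c = 0$, since $o(\langle\rangle) = \omega^m \cdot n$ by Lemma \ref{thm:ord_assign_bad_leader}, so I only need to address the main inequality.

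For the base case, when $\gamma_\mu$ is maximal, Definition~\ref{note:hgamma} gives $\mathfrak{h}(\fn{D}_c,\gamma) = 1$, which trivially satisfies the bound. For the inductive step, I unfold the definition: I first bound the inner sequence $v_{u,k}$ for those $u \leq \fn{D}_c(1) = g^{c+1}(b)$ such that $\gamma_\mu^\frown\langle u\rangle$ is a bad leader sequence. Running downward from $v_{u,\fn{D}_c(w_u)} = w_u$, each gap $v_{u,k-1} - v_{u,k}$ equals $\mathfrak{h}(\fn{D}_{c'},\gamma^\frown\langle(u,k)\rangle)$ for some shifted $c' = v_{u,k}$, and the inductive hypothesis applies because $o((\gamma^\frown\langle(u,k)\rangle)_\mu) < \alpha$. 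Summing the at most $\fn{D}_c(w_u)$-many gaps, and then summing over the at most $\fn{D}_c(1)$-many relevant $u$'s, yields a nested iteration of $g$ whose exponent lies strictly below $\omega^{\alpha\cdot 2}$.

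The main technical work will be translating these nested finite sums over shifted iterations of $g$ into an ordinal exponent. The key facts I need from Appendix~\ref{sec:ordinals} are: (i) $g^{\alpha[x]}(b) \leq g^\alpha(b)$ when $x \leq b$, (ii) $g^{\alpha+\beta}(b) \leq g^\alpha(g^\beta(b))$, and (iii) a finite sum of $g^{\beta_i}(b)$ with $\beta_i < \beta$ is dominated by $g^{\beta+1}(b)$ provided $b$ is large enough. The factor $\omega^{\alpha\cdot 2}$ (rather than $\omega^\alpha$) in the stated exponent absorbs the two nested sums (over $k$ and over $u$); the additive $\omega$ absorbs the need to iterate $g$ roughly $g^{c+1}(b)$ times, which is bounded by $g^{\omega+c+1}(b)$; and the constant $+2$ gives slack for the outer-most iteration and for maintaining the precondition $b \geq |\gamma^\frown\langle(u,k)\rangle|$ at each recursive call.

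The main obstacle I anticipate is bookkeeping the shifted parameter $c$: each recursive call replaces $c$ by some $v_{u,k}$, which itself is an intermediate value of $\mathfrak{h}$ at a previous stage and can be much larger than $c$. I will need to verify that the inductive hypothesis can absorb this growth by using monotonicity in $c$ of the bound $g^{\omega^{\beta\cdot 2}+\omega+c+2}(b)$, combined with (ii) above, so that repeatedly shifting $c$ by an earlier iterate of $g$ only contributes an additional $g^\omega$ to the overall exponent. Once this is in place, setting $\gamma = \langle\rangle$ and $c = 0$ produces the bound $g^{\omega^{\omega^m\cdot 2n} + \omega + 2}(b)$ claimed in the second display.
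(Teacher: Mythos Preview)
Your overall plan matches the paper's: transfinite induction on $\alpha = o(\gamma_\mu)$, unwinding the recursive definition of $\mathfrak{h}$ through two nested downward inductions (first on $k$, then on $u$), and invoking the inductive hypothesis at each $\gamma^\frown\langle(u,k)\rangle$. But one explicit claim in your outline is wrong and, left uncorrected, would block the argument. You write that the hypothesis $b \geq \max\{|\gamma|,n,m+2\}$ ensures ``the coordinate bound $|o(\gamma_\mu^\frown\langle u\rangle)|$ stays controlled by $b$''. It does not: by Lemma~\ref{thm:ord_assign_bad_leader} that coordinate bound is at most $n(|\gamma|+1)d^m$ where $d$ bounds the derivatives appearing in the extended sequence, and since $u$ ranges up to $\fn{D}_c(1) = g^{c+1}(b)$, this quantity is of order $g^{c+1}(b)^m$, far larger than $b$. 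Without a uniform bound on these coordinate bounds you cannot invoke the appendix monotonicity lemmas to compare the various $g^{\omega^{o(\gamma_\mu^\frown\langle u\rangle)\cdot 2}+\cdots}$ against a single target.

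The paper's remedy is to introduce an intermediate value $b' = g^\omega(g^{c+1}(b)) \geq n(|\gamma|+1)d^m$ and set $\beta = \alpha[b']$. Every ordinal $o(\gamma_\mu^\frown\langle u\rangle)$ is then both strictly below $\alpha$ and has coordinate bound below $b'$, so by the appendix lemma it is $\leq \beta$; the inductive hypothesis now yields uniform bounds of the shape $g^{\omega^{\beta\cdot 2}+\cdots}$. The two nested inductions push the exponent up through $\omega^{\beta\cdot 2+1}$ and then $\omega^{\beta\cdot 2+2}$, which is dominated by $\omega^{\alpha\cdot 2}$ once you evaluate at $b'$. This is also where the additive $\omega$ in the final exponent really enters: it pays for the passage from $b$ to $b' = g^{\omega+c+1}(b)$, not merely for an iteration count as you describe. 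Once you insert this $b'$-and-$\beta$ device, your outline becomes the paper's proof.
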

\begin{proof}
By induction on $o(\gamma_\mu)$.  When $o(\gamma_\mu)=0$, so $\gamma$ is maximal, $\mathfrak{h}_{n,m}(\fn{D}_c,\gamma)=1\leq b=g^0(b)$.

Suppose $o(\gamma_\mu)=\alpha$ and for all $\gamma'$ with $o(\gamma'_\mu)<\alpha$, the claim holds.  Let $d=\fn{D}_c(1)=g^{c+1}(b)$ and $b'=g^{\omega}(d)\geq n(|\gamma|+1)d^m$.  Let $\beta=\alpha[b']$.  For each $u\in[-1,d]$, we will show that 
\[w_u\leq g^{\omega^{\beta 2+1}(d-u)+\omega(d-u)+(d-u)}(b').\]
When $u=b'$, this is immediate.

Let $\delta_u=\omega^{\beta 2+1}(d-u)+\omega(d-u)+(d-u)$, and suppose we have shown that $w_u\leq g^{\delta_u}(b')$.  Then $\fn{D}_c(w_u)=g^{w_u+c}(b)\leq g^{w_u}(b')$.  Let $c_u=\fn{D}_c(w_u)$.  We will show that for each $k\in[0,c_u]$,
\[v_{u,k}\leq g^{\omega^{\beta 2}(c_u-k)+\omega 2(c_u-k)+2(c_u-k)}(g^{\delta_u}(b')).\]
When $k=c_u$, this is immediate.

Suppose the claim holds for $k$.  Then we have
\begin{align*}
  v_{u,k-1}
&=v_{u,k}+\mathfrak{h}_{n,m}(\fn{D}_{v_{u,k}},\gamma^\frown\langle (u,k)\rangle)\\
&\leq v_{u,k}+g^{\omega^{o(\gamma_\mu{}^\frown\langle u\rangle)2}+\omega+v_{u,k}+2}(b)\\
&\leq v_{u,k}
+g^{\omega^{\beta 2}+\omega+v_{u,k}+2}(b)\\
&= v_{u,k}
+g^{\omega^{\beta 2}+\omega+g^{\omega^{\beta 2}(c_u-k)+\omega 2(c_u-k)+2(c_u-k)}(g^{\delta_u}(b'))+2}(b)\\
&\leq v_{u,k}
+g^{\omega^{\beta 2}+\omega 2}(g^{\omega^{\beta 2}(c_u-k)+\omega 2(c_u-k)+2(c_u-k)+1}(g^{\delta_u}(b')))\\
&\leq g^{\omega^{\beta 2}(c_u-k)+\omega 2(c_u-k)+2(c_u-k)}(g^{\delta_u}(b'))
+g^{\omega^{\beta 2}(c_u-(k+1))+\omega 2 (c_u-(k+1))+2(c_u-k)+1}(g^{\delta_u}(b'))\\
&\leq 2g^{\omega^{\beta 2}(c_u-(k+1))+\omega 2(c_u-(k+1))+2(c_u-k)+1}(g^{\delta_u}(b')))\\
&\leq g^{\omega^{\beta 2}(c_u-(k+1))+\omega 2(c_u-(k+1))+2(c_u-(k+1))}(g^{\delta_u}(b'))).\\
\end{align*}

In particular, $v_{u,0}\leq g^{\omega^{\beta 2}c_u+\omega 2 c_u+2c_u}(g^{\delta_u}(b')))$.  Therefore
\begin{align*}
  w_{u-1}
&=v_{u,0}\\
&\leq g^{\omega^{\beta 2}c_u+\omega 2 c_u+2c_u}(g^{\delta_u}(b'))\\
&\leq g^{\omega^{\beta 2+1}+1}(g^{\omega 2}(g^{\delta_u}(b')))\\
&= g^{\omega^{\beta 2+1}+1}(g^\omega(g^{\omega^{\beta 2+1}(d-u)+\omega(d-u)+(d-u)}(b')))\\
&\leq g^{\omega^{\beta 2+1}(d-(u+1))+\omega(d-(u+1))+(d-(u+1))}(b').\\
\end{align*}

Therefore 
\begin{align*}
  \mathfrak{h}_{n,m}(\fn{D}_c,\gamma)
&=w_{-1}\\
&\leq g^{\omega^{\beta 2+1}(d+1)+\omega(d+1)+d+1}(b')\\
&\leq g^{\omega^{\beta 2+2}+1}(b')\\
&\leq g^{\omega^{\alpha 2}+1}(b')\\
&\leq g^{\omega^{\alpha 2}+\omega+c+2}(b).\\
\end{align*}
\end{proof}

\begin{lemma}\label{thm:isat_bounds}
\[\mathfrak{i}_{n,m}^{\mathrm{sat}}(b)\leq \mathcal{G}^{\omega^{\omega^m2n+2}2+\omega^{\omega^m2n}+\omega^26+3}(b).\]
\end{lemma}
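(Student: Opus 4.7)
The plan is to decompose $\mathfrak{i}^{\mathrm{sat}}_{n,m}(b)=\fn{D}^{\mathrm{sat}}_{b,n,m}(H)$ where $H:=\mathfrak{h}_{n,m}(\fn{D}^{\mathrm{sat}}_{b,n,m})$, and bound each piece in turn. For the $H$-bound, I first choose a monotone function $g$ with $g(x)\geq 2x$ such that $\fn{D}^{\mathrm{sat}}_{b,n,m}(i)\leq g^i(b)$ for every $i$; by the recurrence $\fn{D}^{\mathrm{sat}}_{b,n,m}(i+1)=b(1+\fn{D}^{\mathrm{sat}}_{b,n,m}(i))^b$, the choice $g(x)=\max\{b(1+x)^b,2x\}$ works, and a direct check shows $g(x)\leq\mathcal{G}^{\omega^2}(x)$ once $x\geq b$, since $b(1+x)^b\leq 2^x\cdot x$ in that range. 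Plugging this $g$ into Lemma~\ref{thm:hgamma_bounds} (applied with $c=0$ and $\gamma$ empty, using monotonicity of $\mathfrak{h}_{n,m}$ in its function argument) yields
\[
H\leq g^{\omega^{\omega^m 2n}+\omega+2}(b).
\]

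Next I apply $\fn{D}^{\mathrm{sat}}_{b,n,m}$ once more to get $\mathfrak{i}^{\mathrm{sat}}_{n,m}(b)\leq g^H(b)$. Since $H$ is a natural number bounded by $g^\alpha(b)$ for $\alpha:=\omega^{\omega^m 2n}+\omega+2$, a one-level iteration-collapse lemma from Appendix~\ref{sec:ordinals}---morally $g^{g^\alpha(b)}(b)\leq g^{\alpha\cdot 2}(b)$---gives $\mathfrak{i}^{\mathrm{sat}}_{n,m}(b)\leq g^{\alpha\cdot 2}(b)$. A second appendix identity, of the form $g^\gamma(b)\leq\mathcal{G}^{\omega^2\cdot\gamma}(b)$ whenever $g(x)\leq\mathcal{G}^{\omega^2}(x)$ pointwise, converts this to $\mathcal{G}^{\omega^2\cdot(\alpha\cdot 2)}(b)$. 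Using $\omega^2\cdot\omega^{\omega^m 2n}=\omega^{\omega^m 2n+2}$, $\omega^2\cdot\omega=\omega^3$, distributing over the outer factor of $2$, and absorbing the $\omega^3$ and lower-order pieces together with the slack from the pointwise comparison $g\leq\mathcal{G}^{\omega^2}$, one recovers the Cantor normal form
\[
\omega^{\omega^m 2n+2}\cdot 2+\omega^{\omega^m 2n}+\omega^2\cdot 6+3
\]
demanded by the statement.

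The main obstacle is the ordinal-arithmetic bookkeeping in the last step: the leading coefficient must come out to exactly $2$ rather than something like $\omega^3$ or a larger natural number, and the middle term $\omega^{\omega^m 2n}$ (which comes from the $+\omega$ summand of $\alpha$ after multiplication by $\omega^2$ and a round of absorption) must reappear on its own line in the Cantor normal form. These are purely calculational issues that hinge on the identities catalogued in Appendix~\ref{sec:ordinals}; the hard part is really invoking the correct collapse and distributivity lemmas rather than any new mathematical content. A secondary minor issue is verifying $b(1+x)^b\leq\mathcal{G}^{\omega^2}(x)$ uniformly in $b$ for $x$ large relative to $b$, which is an elementary estimate whose residual slack is absorbed into the $\omega^2\cdot 6+3$ tail.
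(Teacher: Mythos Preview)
Your overall strategy---bound $\fn{D}^{\mathrm{sat}}_{b,n,m}(i)$ by $g^i(b)$ for a simple $g$, invoke Lemma~\ref{thm:hgamma_bounds}, then pass to $\mathcal{G}$ via the appendix product lemma---is exactly the paper's approach. But two of your concrete steps do not go through.

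First, the estimate $b(1+x)^b\leq\mathcal{G}^{\omega^2}(x)\approx 2^xx$ for $x\geq b$ is false: already at $x=b$ one has $b(1+b)^b>b\cdot 2^b$ for $b\geq 2$. The paper instead majorizes by $\mathfrak{g}_*(d)=d(1+d)^d$ and uses $\mathfrak{g}_*(d)\leq\mathcal{G}^{\omega^22+1}(d)$. That trailing ``$+1$'' is not cosmetic: in the natural product with $\omega^{\omega^m2n}+\omega+2$ it is precisely what produces the isolated middle term $\omega^{\omega^m2n}$ that you yourself flag as needing to ``reappear.'' With only $\omega^2$ as the inner exponent, that term simply cannot arise, so the difficulty you anticipate is real and is not absorbable into the $\omega^26+3$ tail.

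Second, the relevant appendix lemma is $(g^\alpha)^\beta(b)\leq g^{\alpha\otimes\beta}(b)$ where $\otimes$ is the \emph{natural} (Hessenberg) product, not ordinary ordinal multiplication. Your identity ``$\omega^2\cdot\omega^{\omega^m2n}=\omega^{\omega^m2n+2}$'' is false for ordinary multiplication (the left side is $\omega^{\omega^m2n}$, since $2+\omega^m2n=\omega^m2n$) but correct for $\otimes$; your other uses of ``$\cdot$'' (the ``$\alpha\cdot 2$'' collapse and ``distributing over the outer factor of $2$'') are not internally consistent with either reading. The paper sidesteps the $g^{g^\alpha(b)}(b)\leq g^{\alpha\cdot 2}(b)$ detour altogether: it goes directly from $\mathfrak{g}_*^{\omega^{\omega^m2n}+\omega+2}(b)$ to $\mathcal{G}^{(\omega^22+1)\otimes(\omega^{\omega^m2n}+\omega+2)}(b)$, and then one further application of $\mathfrak{g}_*\leq\mathcal{G}^{\omega^22+1}$ contributes the last $\omega^22+1$ to give the stated tail $\omega^26+3$.
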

\begin{proof}
  Let $\mathfrak{g}_*(d)=\mathfrak{g}(d,d)=d(1+d)^d\leq\mathcal{G}^{\omega^22+1}(d)$.  Then the function $b\mapsto \fn{D}^{\mathrm{sat}}_b(i)$ is bounded by $\mathfrak{g}_*^i(b)$, and therefore
\begin{align*}
\mathfrak{h}_{n,m}(\fn{D}^{\mathrm{sat}}_b)
&\leq \mathfrak{g}_*^{\omega^{\omega^m\cdot 2n}+\omega+2}(b)\\
&\leq (\mathcal{G}^{\omega^22+1})^{\omega^{\omega^m\cdot 2n}+\omega+2}(b)\\
&\leq\mathcal{G}^{\omega^{\omega^m2n+2}2+\omega^{\omega^m2n}+\omega^24+2}(b).
\end{align*}

Then
\[\mathfrak{i}_{n,m}^{\mathrm{sat}}(b)\leq \mathfrak{g}_*(\mathcal{G}^{\omega^{\omega^m2n+2}2+\omega^{\omega^m2n}+\omega^24+2}(b))\leq\mathcal{G}^{\omega^{\omega^m2n+2}2+\omega^{\omega^m2n}+\omega^26+3}(b).\]
\end{proof}

\begin{lemma}\label{thm:icohere_bounds}
When $b\geq \max\{m,n\}$,
\[\mathfrak{i}_{n,m}^{\mathrm{cohere}}(b)\leq \mathcal{G}^{\omega^{\omega^m2n+2}2+\omega^{\omega^m2n+1}+\omega^{\omega^m2n}+\omega^26+\omega 2+3}(b).\]
\end{lemma}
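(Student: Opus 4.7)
The plan is to mirror the proof of Lemma \ref{thm:isat_bounds}, replacing the per-step bound $\mathfrak{g}_*(d)=\mathfrak{g}(d,d)\leq\mathcal{G}^{\omega^2\cdot 2+1}(d)$ used there with a slightly larger per-step bound that accommodates the extra binomial factor appearing in the definition of $\fn{D}^{\mathrm{cohere}}_b$. Set
\[g^{\mathrm{coh}}(d)=\mathfrak{g}\!\left(d,\,\binom{2d+m-1}{m-1}\cdot n\cdot(d+1)\right),\]
so that $\fn{D}^{\mathrm{cohere}}_b(i)\leq (g^{\mathrm{coh}})^i(b)$ by monotonicity of $\mathfrak{g}$ in both arguments.

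First I would bound $g^{\mathrm{coh}}$ itself. Under the hypothesis $d\geq b\geq\max\{m,n\}$, the estimate $\binom{2d+m-1}{m-1}\leq(3d)^{m-1}$ shows that the second argument of $\mathfrak{g}$ is bounded by a polynomial in $d$ of degree $m$, and hence by $\mathcal{G}^{\omega^2}(d)$. Combining this with $\mathfrak{g}(d,k)=k(1+d)^k$ and the same kind of elementary estimates on iterated exponentials used in Lemmas \ref{thm:pn_bounds} and \ref{thm:isat_bounds} yields $g^{\mathrm{coh}}(d)\leq\mathcal{G}^{\omega^2\cdot 2+\omega+1}(d)$; the extra $\omega$ over the $\mathfrak{g}_*$ case is the cost of the polynomial blow-up coming from the binomial.

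Next, Lemma \ref{thm:hgamma_bounds} gives $\mathfrak{h}_{n,m}(\fn{D}^{\mathrm{cohere}}_b)\leq (g^{\mathrm{coh}})^{\omega^{\omega^m\cdot 2n}+\omega+2}(b)$, and I would unfold the ordinal iteration of $\mathcal{G}^{\omega^2\cdot 2+\omega+1}$ using the identities of Appendix \ref{sec:ordinals}. Iterating along $\omega^{\omega^m 2n}$ produces an exponent with summands $\omega^{\omega^m 2n+2}\cdot 2$, $\omega^{\omega^m 2n+1}$, and $\omega^{\omega^m 2n}$ --- one for each nonzero Cantor coefficient of $\omega^2\cdot 2+\omega+1$ --- while the remaining $\omega+2$ iterations contribute only a small tail of size $\omega^2\cdot 4+\omega+2$. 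Finally, one more application of $g^{\mathrm{coh}}\leq\mathcal{G}^{\omega^2\cdot 2+\omega+1}$ is needed to pass from $\mathfrak{h}_{n,m}(\fn{D}^{\mathrm{cohere}}_b)$ to $\mathfrak{i}^{\mathrm{cohere}}_{n,m}(b)=\fn{D}^{\mathrm{cohere}}_b(\mathfrak{h}_{n,m}(\fn{D}^{\mathrm{cohere}}_b))$, which adds $\omega^2\cdot 2+\omega+1$ to the exponent and produces the target bound.

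The main obstacle is the ordinal bookkeeping in the second step. In the $\mathfrak{i}^{\mathrm{sat}}$ case only the coefficient $\omega^2\cdot 2$ mattered for the leading behavior, and the additive $+1$ contributed just the $\omega^{\omega^m 2n}$ term. Here the presence of the middle term $\omega$ in the per-step exponent generates an entirely new summand $\omega^{\omega^m 2n+1}$ under ordinal iteration, and I would need to apply the distributivity and natural-sum identities of the appendix carefully to confirm that this is exactly what appears --- with no contamination at the $\omega^{\omega^m 2n+2}$ level beyond the coefficient $2$.
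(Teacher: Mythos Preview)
Your approach is essentially identical to the paper's: you define the same per-step function (the paper calls it $\mathfrak{g}_*$, you call it $g^{\mathrm{coh}}$), bound it by the same $\mathcal{G}^{\omega^2\cdot 2+\omega+1}$, and then invoke Lemma~\ref{thm:hgamma_bounds} and the ordinal-iteration identities exactly as in the proof of Lemma~\ref{thm:isat_bounds}. Your discussion of how the extra $\omega$ in the per-step exponent produces the new $\omega^{\omega^m 2n+1}$ summand is more explicit than the paper, which simply asserts the final bound after stating $\mathfrak{g}_*(b)\leq\mathcal{G}^{\omega^2 2+\omega+1}(b)$.
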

\begin{proof}
  We use the same reasoning as in the previous lemma, but with the function
  \begin{align*}
    \mathfrak{g}_*(b)
&=\mathfrak{g}(b,{2b+m-1\choose m-1}n(b+1))\\
&\leq \mathcal{G}^{\omega^22+\omega+1}(b),
  \end{align*}
so
\[\mathfrak{i}^{\mathrm{cohere}}_{n,m}(b)\leq \mathcal{G}^{\omega^{\omega^m2n+2}2+\omega^{\omega^m2n+1}+\omega^{\omega^m2n}+\omega^26+\omega 2+3}(b).\]
\end{proof}

\begin{lemma}\label{thm:ichar_bounds}
When $b\geq \max\{3,m,n\}$,
\[\mathfrak{i}_{n,m}^{\mathrm{char}}(b)\leq \mathcal{G}^{\omega^{\omega^m 2n+\omega}+\omega^{\omega^m 2n+1}+\omega^{\omega+1}+\omega^\omega 2+\omega^2+\omega 2}(b).\]
\end{lemma}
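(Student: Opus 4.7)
The strategy parallels the proofs of Lemmas \ref{thm:isat_bounds} and \ref{thm:icohere_bounds}. I will identify a single ``step function'' $\mathfrak{g}_\ast = \mathcal{G}^{\gamma}$ dominating the per-step growth $c \mapsto \fn{D}^{\mathrm{char}}_{b,n,m}(i+1)$ when $c = \fn{D}^{\mathrm{char}}_{b,n,m}(i)$, then invoke Lemma \ref{thm:hgamma_bounds} with $\mathfrak{g}_\ast$ in place of $g$, and finally apply one more step of $\mathfrak{g}_\ast$ to bound $\mathfrak{i}^{\mathrm{char}}_{n,m}(b) = \fn{D}^{\mathrm{char}}_{b,n,m}(\mathfrak{h}_{n,m}(\fn{D}^{\mathrm{char}}_{b,n,m}))$.

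The bulk of the work is computing $\gamma$. The step is the max of three expressions, which I bound one at a time. The first term, $\mathfrak{g}(c,\binom{2c+m-1}{m-1}n(c+1))$, is of the same shape as in Lemma \ref{thm:icohere_bounds} and is crudely bounded by $\mathcal{G}^{\omega^2 \cdot 2 + \omega + 1}(c)$. The other two terms require first estimating the auxiliary function $\fn{F}^{\mathrm{char}}_c(k) = \mathfrak{z}^{k+1}(k,\mathfrak{g}(c,k))$; using $\mathfrak{g}(c,k) \leq \mathcal{G}^{\omega^2 \cdot 2 + 1}(\max\{c,k\})$ and Lemma \ref{thm:zk_bounds} gives $\fn{F}^{\mathrm{char}}_c(k) \leq \mathcal{G}^{\omega^2 \cdot 4(k+2) + \omega \cdot 6(k+2) + 3(k+2)}(\max\{c,k\})$. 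Composing with Lemma \ref{thm:pn_bounds} yields a bound on $(\fn{F}^{\mathrm{char}}_c)_c$, after which Lemma \ref{thm:uf_bounds} handles $\mathfrak{u}^+_{\fn{F}^{\mathrm{char}}_c}(c)$ and Lemma \ref{thm:fF_bounds} handles $\mathfrak{f}(\fn{F}^{\mathrm{char}}_c, c)$. Composing once more with $\mathfrak{p}_c$ produces the remaining two terms in the max. The decisive contribution comes from the $\omega^c$ iteration depth in $\mathfrak{u}^+_{\fn{F}^{\mathrm{char}}_c}(c)$: since the exponent of $\fn{F}^{\mathrm{char}}_c$ grows linearly in $k$, iterating $\omega^c$ many times at input $c$ pushes $\gamma$ into the $\omega^\omega$ range, leaving an ordinal comparable to $\omega^{\omega+1}\cdot 5 + \omega^\omega \cdot 2 + \omega^2 \cdot 5 + \omega \cdot 10$.

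With this $\gamma$ in hand, Lemma \ref{thm:hgamma_bounds} gives $\mathfrak{h}_{n,m}(\fn{D}^{\mathrm{char}}_{b,n,m}) \leq \mathfrak{g}_\ast^{\omega^{\omega^m \cdot 2n} + \omega + 2}(b)$, so
\[\mathfrak{i}^{\mathrm{char}}_{n,m}(b) \leq \mathfrak{g}_\ast^{\mathfrak{g}_\ast^{\omega^{\omega^m \cdot 2n} + \omega + 2}(b)}(b).\]
Collapsing the outer iteration using the ordinal-arithmetic absorption facts from Appendix \ref{sec:ordinals} (of the form $\mathfrak{g}_\ast^{\mathfrak{g}_\ast^{\alpha}(b)}(b) \leq \mathfrak{g}_\ast^{\alpha+1}(b)$ when $\alpha$ is a limit and $b$ is suitably large) yields $\mathfrak{g}_\ast^{\omega^{\omega^m \cdot 2n} + \omega + 3}(b)$. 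Unfolding $\mathfrak{g}_\ast = \mathcal{G}^{\gamma}$ and applying the standard rule for iterating $\mathcal{G}^\gamma$ along a limit ordinal produces the principal term $\omega^{\omega^m \cdot 2n + \omega}$ (with the $+\omega$ at the top arising from the $\omega^\omega$-level contribution to $\gamma$), together with the remaining tail $\omega^{\omega^m \cdot 2n + 5} + \omega^{\omega+1} \cdot 5 + \omega^\omega \cdot 2 + \omega^2 \cdot 5 + \omega \cdot 10$ coming from the subdominant summands of $\gamma$ and the lower-order terms in Lemma \ref{thm:hgamma_bounds}.

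The main obstacle is bookkeeping. The function $\fn{F}^{\mathrm{char}}_c$ depends on $c$ in two distinct places (through $\mathfrak{z}^{k+1}$ and through $\mathfrak{g}(c,k)$), and is then threaded through $\mathfrak{u}_{\fn{F}}$, whose bound already contains a transfinite $\omega^b$ iteration. Keeping the parameter $c$ cleanly separated from the iteration argument $k$ throughout the compositions, and correctly applying the ordinal absorption rules from the appendix so that the final Cantor normal form exhibits $\omega^{\omega^m \cdot 2n + \omega}$ rather than the naive $\omega^{\omega^m \cdot 2n}$, is the step that demands genuine care.
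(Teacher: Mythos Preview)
Your strategy is exactly the paper's: bound the single-step growth by some $\mathfrak{g}_\ast=\mathcal{G}^\gamma$ with $\gamma$ at the $\omega^\omega$ level, feed this into Lemma~\ref{thm:hgamma_bounds}, and then unwind $(\mathcal{G}^\gamma)^{\omega^{\omega^m\cdot 2n}+\omega+2}$ via the natural product $\otimes$. The paper in fact gets the cleaner $\gamma=\omega^\omega+\omega\cdot 5$; the tail $\omega^{\omega+1}5+\omega^\omega 2+\omega^25+\omega 10$ you quote is not $\gamma$ itself but the lower-order part of $\gamma\otimes(\omega^{\omega^m\cdot 2n}+\omega+2)$, so your reverse-engineering of $\gamma$ from the statement is slightly off, though harmless.

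One concrete slip: the absorption fact you invoke, $\mathfrak{g}_\ast^{\mathfrak{g}_\ast^{\alpha}(b)}(b)\leq\mathfrak{g}_\ast^{\alpha+1}(b)$, is false as stated (already for $\alpha=\omega$ the left side iterates roughly $\mathfrak{g}_\ast^{b+1}(b)$ times while the right side is $\mathfrak{g}_\ast^\omega(\mathfrak{g}_\ast(b))$). The inequality that actually holds is $\mathfrak{g}_\ast^{k}(b)\leq\mathfrak{g}_\ast^{\omega}(k)$ for $k\geq b$, giving $\mathfrak{g}_\ast^{\mathfrak{g}_\ast^{\alpha}(b)}(b)\leq\mathfrak{g}_\ast^{\alpha\#\omega}(b)$; this costs only an extra $\omega$ in the exponent and is absorbed into the stated bound, so the argument survives once you use the correct form.
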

\begin{proof}
Observe that
\begin{align*}
\fn{F}^{\mathrm{char}}_c(k)
&=\mathfrak{z}^{c+\mathfrak{g}(c,k)}(\mathfrak{g}(c,k),c)\\
&\leq\mathcal{G}^{\omega^2 4(c+\mathfrak{g}(c,k))+\omega 6(c+\mathfrak{g}(c,k))+3(c+\mathfrak{g}(c,k))}(\max\{c,\mathfrak{g}(c,k)\})\\
&\leq\mathcal{G}^{\omega^3}(6(c+\mathfrak{g}(c,k)))\\
&\leq\mathcal{G}^{\omega^3+\omega^22+\omega+1}(\max\{c,k\}).
\end{align*}
Also 
\begin{align*}
  \mathfrak{u}_{\fn{F}^{\mathrm{char}}_b}(b)
&\leq(\mathcal{G}^{\omega^3+\omega^22+\omega+1})^{\omega^b+\omega+1}(b)\\
&\leq\mathcal{G}^{\omega^{b+3}+\omega^{b+2}2+\omega^{b+1}+\omega^b+\omega^4+\omega^33+\omega^23+\omega 2+1}(b)\\
&\leq\mathcal{G}^{\omega^\omega+\omega}(b)
\end{align*}
and similarly $\mathfrak{u}^+_{\fn{F}^{\mathrm{char}}_b}(b)\leq\mathcal{G}^{\omega^\omega+\omega}(b)$.


  This time we use the function
\begin{align*}
\mathfrak{g}_*(b)
&=\max\{\mathfrak{g}(b,{2b + m-1\choose m-1} n(b+1)),
\mathfrak{p}_{\mathfrak{u}^+_{\fn{F}^{\mathrm{char}}_b}(b)}(\mathfrak{u}_{\fn{F}^{\mathrm{char}}_b}(b)),
\mathfrak{f}(\fn{F}^{\mathrm{char}}_b,b)\}\\
&\leq\mathcal{G}^{\omega^\omega+\omega}(b)
\end{align*}
so
\[\mathfrak{i}_{n,m}^{\mathrm{char}}(b)\leq \mathcal{G}^{\omega^{\omega^m 2n+\omega}+\omega^{\omega^m 2n+1}+\omega^{\omega+1}+\omega^\omega 2+\omega^2+\omega 2}(b).\]
\end{proof}

\subsection{Bounds on Ritt-Noetherianity}

\begin{lemma}[\!\!\cite{MR2556127}]\label{thm:diff null bounds}
When $d\geq n$, $\mathfrak{k}(n,d)\leq \mathcal{G}^{\omega^{n+8}+\omega^22}(d)$.
\end{lemma}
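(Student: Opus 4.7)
The plan is to calibrate the explicit bound on $\mathfrak{k}(n,b)$ established in \cite{MR2556127} against the ordinal iteration hierarchy $\mathcal{G}^\alpha$ developed in Appendix \ref{sec:ordinals}. No new content about differential ideals is involved — the work is entirely the translation between the form in which the effective differential Nullstellensatz is stated in \cite{MR2556127} and the uniform fast-growing hierarchy used throughout Section \ref{sec:bounds explicit}.

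First I would reproduce the shape of the bound from \cite{MR2556127}. That bound is defined by a recursion on $n$ in which each stage applies an exponential-type operation and passes to a modified problem in one fewer indeterminate, with a polynomial base case in $d$. Schematically, dropping $n$ by one costs roughly one layer of iteration of an exponential, so the bound's ``Ackermann-depth'' is linear in $n$.

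Second I would use the calibration of the hierarchy from the preceding subsection: polynomial growth in $d$ sits at a finite ordinal; exponentiation in $d$ sits at $\omega^2$; an iterated exponential tower of height $d$ sits at $\omega^3$; and, inductively, passing from the ``$k$-th fast growing function'' to the next one corresponds to bumping the exponent of $\omega$ by one. The base case of the recursion in \cite{MR2556127} is a polynomial in $d$ majorized by $\mathcal{G}^{\omega c}(d)$ for some small absolute constant $c$; each inductive step raises the exponent of $\omega$ by one plus a constant absorbed by Lemma-style facts such as $\mathcal{G}^{\omega^{k+1}}(b) \geq \mathcal{G}^{\omega^k \cdot r}(b)$ for all $r$ and $b \geq r$. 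Applying this $n$ times yields a bound at ordinal $\omega^{n+O(1)}$, and I would choose the constant generously so that all accumulated overhead from polynomial factors, binomial coefficients, and order-to-ranking conversions is absorbed by the exponent $n+8$.

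The main obstacle is purely bookkeeping: tracking how the additive ``constant'' overhead at each stage of the recursion in \cite{MR2556127} interacts with the multiplicative ordinal step, and verifying the accumulated slack does not exceed the headroom of $8$ in the exponent. Since the ordinal arithmetic required — distributing $\omega^\alpha$ through sums, multiplying constants into the next layer, and monotonicity of $\mathcal{G}^\alpha$ in $\alpha$ — is routine given the appendix, the only substantive step is verifying that the hypothesis $d \geq n$ is sufficient to keep the base case inside the bound, which follows because the recursion is entered at depth bounded by $n$ and each level of the $\mathcal{G}^{\omega^k}$ hierarchy only requires $b$ to exceed a bounded constant.
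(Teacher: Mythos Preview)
The paper does not actually prove this lemma: it is stated with a citation to \cite{MR2556127} and no proof follows. The function $\mathfrak{k}$ is introduced in Theorem \ref{thm:eff diff null} purely by reference to that paper, and Lemma \ref{thm:diff null bounds} simply records how the Ackermann-type bound established there sits inside the authors' $\mathcal{G}^\alpha$ hierarchy. So there is no ``paper's own proof'' to compare your proposal against.

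Your sketch is a plausible outline of the calibration one would carry out --- reading off the recursion depth from \cite{MR2556127} and matching each level to a bump in the $\omega$-exponent --- and is in the spirit of the other calculations in Section \ref{sec:bounds explicit}. That said, it remains only a sketch: you have not actually looked up the precise recursion in \cite{MR2556127}, so the claim that the accumulated overhead fits inside the headroom of $8$ is asserted rather than verified. To turn this into a proof you would need to write down the actual bound from that source (an Ackermann-type function in $n$ applied to a quantity polynomial in $d$ and the number $m$ of derivations) and check the base case and inductive step explicitly, as the paper does for $\mathfrak{p}_n$, $\mathfrak{m}$, and the other functions in this section.
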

(The extra factor of $\omega^22$ more than covers the roughly factorial differnce between the order of terms used in \cite{MR2556127} and the ranking we use here.)

\begin{lemma}
  Suppose that $\mathfrak{G}_j(\fn{F},d)\leq \fn{F}^{\alpha}(d)$.  Then $\mathfrak{G}_J(\fn{F},d)\leq \fn{F}^{\alpha^{|J|+1}}(d)$.
\end{lemma}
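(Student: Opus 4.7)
The proof is by induction on $|J|$, following the recursive definition of $\mathfrak{G}_J$ in the Knitting Lemma.

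For $|J|=1$, say $J=\{j\}$, we have $\mathfrak{G}_J = \mathfrak{G}_j$, so $\mathfrak{G}_J(\fn{F},d) \leq \fn{F}^\alpha(d) \leq \fn{F}^{\alpha^2}(d)$ trivially.

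For $|J|=k+1\geq 2$, pick $j_0\in J$ and set $J'=J\setminus\{j_0\}$. The construction from the Knitting Lemma gives $\mathfrak{G}_J(\fn{F},d) = \mathfrak{G}_{J'}(\fn{G},d)$ where $\fn{G}(d') = \fn{F}(\mathfrak{G}_{j_0}(\fn{F}^{d'},d'))$ and $\fn{F}^{d'}(x)=\fn{F}(\max\{x,d'\})$. First I would bound $\fn{G}$ pointwise in terms of iterates of $\fn{F}$. Since $\fn{F}^{d'}(x)=\fn{F}(x)$ whenever $x\geq d'$, the orbit of $d'$ under $\fn{F}^{d'}$ coincides with that under $\fn{F}$, so $(\fn{F}^{d'})^\alpha(d')=\fn{F}^\alpha(d')$. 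Applying the hypothesis to $\mathfrak{G}_{j_0}$ with the function $\fn{F}^{d'}$ and input $d'$ yields $\mathfrak{G}_{j_0}(\fn{F}^{d'},d')\leq (\fn{F}^{d'})^\alpha(d')=\fn{F}^\alpha(d')$. Hence $\fn{G}(d')\leq \fn{F}(\fn{F}^\alpha(d'))$, and appealing to the composition machinery for ordinal-indexed iteration from Appendix~\ref{sec:ordinals} bounds this by $\fn{F}^{\alpha+1}(d')$.

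Next, the inductive hypothesis applied to $J'$ and the function $\fn{G}$ gives $\mathfrak{G}_{J'}(\fn{G},d)\leq \fn{G}^{\alpha^{k+1}}(d)$. Substituting the pointwise bound $\fn{G}(d')\leq \fn{F}^{\alpha+1}(d')$ inside this iteration and invoking a second composition lemma from Appendix~\ref{sec:ordinals} converts it to a single iterate of $\fn{F}$: roughly $\fn{F}^{(\alpha+1)\cdot\alpha^{k+1}}(d)\leq \fn{F}^{\alpha^{k+2}}(d)$, using the ordinal identity $(\alpha+1)\cdot\alpha^{k+1}=\alpha^{k+2}$ valid when $\alpha$ is a limit ordinal (the finite overhead in the non-limit case is absorbed by slack in the exponent).

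The main obstacle is that the iteration $g^\alpha$ is defined via fundamental sequences threaded through the current input, so the natural composition identities behave as pointwise inequalities rather than abstract algebraic identities, and care is needed when substituting one iteration bound inside another. Both appeals above are exactly the sort of ``plug an iterated bound into another iteration'' manipulation that the paper defers to its ordinal appendix, so the plan is to quote those results rather than re-derive them; the bookkeeping of how $(\alpha+1)\cdot\alpha^{k+1}$ (and its fundamental sequence) compares to $\alpha^{k+2}$ is the only point where the exponent $|J|+1$ is actually used.
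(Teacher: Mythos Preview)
Your skeleton matches the paper's: induction on $|J|$, bound $\fn{G}(d')\leq\fn{F}^{\alpha+1}(d')$, then feed the inductive hypothesis through a composition lemma. The gap is in the ordinal bookkeeping at the inductive step. The composition lemma actually available in Appendix~\ref{sec:ordinals} is $(g^{\alpha})^{\beta}(b)\leq g^{\alpha\otimes\beta}(b)$ with the \emph{natural} (commutative) product $\otimes$, not ordinary ordinal multiplication. So from $\fn{G}\leq\fn{F}^{\alpha+1}$ and $\mathfrak{G}_{J'}(\fn{G},d)\leq\fn{G}^{\alpha^{k+1}}(d)$ you only obtain $\fn{F}^{(\alpha+1)\otimes\alpha^{k+1}}(d)$, and this exponent can strictly exceed $\alpha^{k+2}$ even for limit $\alpha$: already for $\alpha=\omega$, $k=0$ one has $(\omega+1)\otimes\omega=\omega^{2}+\omega>\omega^{2}$. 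Your identity $(\alpha+1)\cdot\alpha^{k+1}=\alpha^{k+2}$ is correct for ordinary multiplication, but that is not what the appendix delivers, so the induction as you state it does not close.

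The paper handles this by changing the inductive statement: it proves $\mathfrak{G}_J(\fn{F},d)\leq\fn{F}^{(\alpha+1)^{\otimes|J|}}(d)$ instead. The inductive step is then exact, since $(\alpha+1)\otimes(\alpha+1)^{\otimes(|J|-1)}=(\alpha+1)^{\otimes|J|}$, and only at the very end does one use $(\alpha+1)^{\otimes|J|}\leq\alpha^{|J|+1}$ to get the stated bound. In short, you need to load the natural product into the induction hypothesis rather than try to absorb it step by step into ordinary powers of $\alpha$.
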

\begin{proof}
  By induction on $|J|$, we show that $\mathfrak{G}_J(\fn{F},d)\leq \fn{F}^{(\alpha+1)^{\otimes n}}(d)$ where $(\alpha+1)^{\otimes n}$ is iterated commutative multiplication, as described in the appendix.  The conclusion follows since $(\alpha+1)^{\otimes n}\leq \alpha^{n+1}$.

When $|J|=1$, this is immediate.  Observe that $\fn{G}(d')\leq \fn{F}^{\alpha+1}(d')$, so $\mathfrak{G}_J(\fn{F},d)=\mathfrak{G}_{J\setminus\{j_0\}}(\fn{G},d)\leq \fn{G}^{(\alpha+1)^{\otimes(|J|-1)}}(d)\leq \fn{F}^{(\alpha+1)^{\otimes |J|}}(d)$.
\end{proof}

  We assign an explicit ordinal $o(\Lambda)$ to autoreduced sets so that when $\Lambda'$ has lower rank than $\Lambda$, $o(\Lambda')<o(\Lambda)$.

  \begin{lemma}
    With $m$ derivatives and $n$ differential indeterminates, there is an assignment of ordinals $o(\Lambda)\leq \omega^{\omega^m\cdot n}$ to autoreduced sets so that $o(\langle\rangle)=\omega^{\omega^m\cdot n}$ and if $\Lambda'$ has lower rank than $\Lambda$ then $o(\Lambda')<o(\Lambda)$.
  \end{lemma}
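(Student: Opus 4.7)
The plan is to combine the bad leader sequence ordinal from Lemma \ref{thm:ord_assign_bad_leader} with the degree data into a Cantor normal form. Writing $\Gamma(\Lambda)=\langle(\mu_1,b_1),\ldots,(\mu_r,b_r)\rangle$ and setting $\beta_i = o(\langle\mu_1,\ldots,\mu_i\rangle)$ (so that $\omega^m\cdot n=\beta_0>\beta_1>\cdots>\beta_r$), my candidate assignment is
\[
o(\Lambda)=\omega^{\beta_1}(b_1+1)+\omega^{\beta_2}(b_2+1)+\cdots+\omega^{\beta_{r-1}}(b_{r-1}+1)+\omega^{\beta_r}(b_r+2)
\]
for nonempty $\Lambda$, with $o(\emptyset)=\omega^{\omega^m\cdot n}$. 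Since the exponents are strictly decreasing and each is strictly less than $\omega^m\cdot n$, this is a valid Cantor normal form and is automatically bounded by $\omega^{\omega^m\cdot n}$, with equality exactly at the empty autoreduced set.

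Monotonicity would then be verified by a term-by-term Cantor normal form comparison, split according to the cases in the definition of ``lower rank.'' An extension $\Lambda\cup\{g\}$ replaces the final ``$+\omega^{\beta_r}$'' buffer of the last block by a new block $\omega^{\beta_{r+1}}(b_g+2)$ with smaller exponent $\beta_{r+1}<\beta_r$, and hence shrinks $o$. A decrease of $b_r$ in place shrinks the last coefficient directly. For a rank-lowering change at an earlier position $i<r$, the terms for $j<i$ are shared, any tail of $o(\Lambda')$ from positions beyond $i$ has all exponents strictly below $\beta_i$ and is therefore absorbed by the ``$+2$'' buffer of the new $i$-th block, and the $i$-th block itself shrinks: either $\mu_i'=\mu_i$ but $b_i'<b_i$ and the coefficient drops, or $\mu_i'<\mu_i$ and the exponent $\beta_i'$ drops.

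The main obstacle is arranging for the bad leader sequence ordinal to be strictly monotone in the derivative ranking, so that $\mu_i'<\mu_i$ with a shared prefix forces $\beta_i'<\beta_i$. Lemma \ref{thm:ord_assign_bad_leader} only guarantees decrease upon extension of a leader sequence, not upon sibling comparisons, so ``ties'' in the naive assignment can arise when two sibling leaders happen to have the same Simpson--Dickson complexity. I would address this either by refining that assignment---for instance, by taking a lexicographic combination of Simpson's Dickson ordinals with a bounded encoding of the derivative's ranking position, still living below $\omega^m\cdot n$---or by defining $\beta$ directly by well-founded recursion on the tree of bad leader sequences with siblings enumerated in increasing ranking order. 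Once strict monotonicity is in place, the Cantor normal form comparison above delivers $o(\Lambda')<o(\Lambda)$ in every descent case, and the bound $o(\Lambda)\leq\omega^{\omega^m\cdot n}$ together with $o(\emptyset)=\omega^{\omega^m\cdot n}$ holds by construction.
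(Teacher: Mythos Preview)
Your construction is essentially the paper's: the paper sets
\[
o(\Lambda)=\sum_{i\leq r}\omega^{\beta_i}b_i+\omega^{\beta_r},
\]
which differs from yours only in the harmless choice of coefficients (the paper puts the single extra ``buffer'' copy at the very end rather than one at each level), and the case analysis for the rank comparison is the same term-by-term Cantor normal form argument you outline.

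You are right to flag the sibling-monotonicity issue, and in fact you have been more careful here than the paper itself. The paper's proof simply asserts that when the prefixes agree and $(\mu'_i,b'_i)<(\mu_i,b_i)$ one has $\omega^{o(\langle\mu_1,\ldots,\mu_i\rangle)}b_i>\omega^{o(\langle\mu_1,\ldots,\mu'_i\rangle)}b'_i$, without addressing the sub-case $\mu'_i<\mu_i$; as you observe, Lemma~\ref{thm:ord_assign_bad_leader} as stated only guarantees a drop under \emph{extension} of a bad leader sequence, not under replacement of the last entry by a smaller derivative. Your proposed remedies---refining the bad-leader ordinal to break sibling ties using the ranking position, or defining $\beta$ directly by well-founded recursion on the tree of bad leader sequences with children enumerated by ranking---are exactly the right kind of fix, and either one stays below $\omega^m\cdot n$ so the overall bound $o(\Lambda)\leq\omega^{\omega^m\cdot n}$ is preserved.
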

  \begin{proof}
    Let $\Gamma(\Lambda)=\langle (\mu_1,b_1),\ldots,(\mu_r,b_r)\rangle$.  Set
\[o(\Lambda)=\sum_{i\leq r}\omega^{o(\langle \mu_1,\ldots,\mu_i\rangle)}b_i+\omega^{o(\langle \mu_1,\ldots,\mu_r\rangle)}.\]
If $\Lambda'$ has lower rank than $\Lambda$, so $\Gamma(\Lambda')=\langle (\mu'_1,b'_1),\ldots,(\mu'_{r'},b'_{r'})\rangle$ then either there is some $i\leq\min\{r,r'\}$ so that $\sum_{j<i}\omega^{o(\langle \mu_1,\ldots,\mu_j\rangle)}b_j=\sum_{j<i}\omega^{o(\langle \mu'_1,\ldots,\mu'_j\rangle)}b'_j$ but $\omega^{o(\langle \mu_1,\ldots,\mu_i\rangle)}b_i>\omega^{o(\langle \mu'_1,\ldots,\mu'_i\rangle)}b'_i$, and therefore $\omega^{o(\langle \mu_1,\ldots,\mu_i\rangle)}b_i>\sum_{j\geq i}\omega^{o(\langle \mu'_1,\ldots,\mu'_j\rangle)}b'_j+\omega^{o(\langle \mu'_1,\ldots,\mu'_r\rangle)}$, or $r'>r$ and we have $\omega^{o(\langle \mu_1,\ldots,\mu_r\rangle)}>\sum_{j>r}\omega^{o(\langle \mu'_1,\ldots,\mu'_j\rangle)}b'_j+\omega^{o(\langle \mu'_1,\ldots,\mu'_r\rangle)}$.
  \end{proof}

\begin{definition}
Let $\Lambda$ be autoreduced and let $\Gamma(\Lambda)=\langle (\mu_1,b_1),\ldots,(\mu_r,b_r)\rangle$.  Set
\end{definition}

\begin{lemma}\label{lemma:j_function}
Assume $\fn{F}(i)\geq i+1$ for all $i$ and $\fn{F},\fn{D}$ are monotonic.  Without loss of generality, let $\fn{F}(i)\geq\fn{D}(i)$ for all $i$.

Then $\mathfrak{j}_{n,m}(i_0,\fn{D},\fn{F},d,\Lambda)\leq \fn{F}^{\omega^{\omega^{o(\Lambda)}}2+\omega^{n+8}2}(\max\{d,n,i_0\})$.
\end{lemma}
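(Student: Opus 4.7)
The plan is to proceed by transfinite induction on $o(\Lambda)$, unpacking the defining recursion for $\mathfrak{j}$ into two pieces: the inner iteration encoded in $\mathfrak{G}_{J,\fn{D}}$ and the outer recursive call on $\Lambda_*$. Both pieces refer only to autoreduced sets of strictly lower rank than $\Lambda$, so each satisfies $o(\Lambda^*), o(\Lambda_*) < o(\Lambda)$ and the inductive hypothesis applies. The target ordinal $\omega^{\omega^{o(\Lambda)}}2 + \omega^{\omega^m 2n+2}3$ is deliberately budgeted so that the leading $\cdot 2$ accommodates the composition of the inner and outer applications of $\mathfrak{j}$, while the $\cdot 3$ on the secondary term absorbs the three auxiliary costs from the Knitting Lemma bound, from $\mathfrak{i}^{\mathrm{sat}}$ (Lemma \ref{thm:isat_bounds}), and from $\mathfrak{k}$ (Lemma \ref{thm:diff null bounds}).

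First I would bound $\mathfrak{G}_{J,\fn{D}}(\fn{F},i_0)$. For each $\Lambda^*\in J$, induction gives $\mathfrak{G}_{\Lambda^*,\fn{D}}(\fn{F},i)\leq \fn{F}^{\alpha^*}(\max\{\mathfrak{i}^{\mathrm{sat}}(d),n,i\})$ for some $\alpha^* = \omega^{\omega^{o(\Lambda^*)}}2 + \omega^{\omega^m2n+2}3$ that is strictly less than $\omega^{\omega^{o(\Lambda)}}$. The preceding lemma converts this into a Knitting Lemma bound of the form $\fn{F}^{(\alpha^*+1)^{|J|+1}}$, and since $|J|$ is bounded by a primitive recursive function of $\mathfrak{i}^{\mathrm{sat}}(d)$ and $o(\Lambda^*)<o(\Lambda)$, the resulting exponent still lies below $\omega^{\omega^{o(\Lambda)}}$. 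Composing with the bound $\mathfrak{i}^{\mathrm{sat}}(d)\leq\fn{F}^{\omega^{\omega^m 2n+2}2+\omega^{\omega^m 2n}}(d)$ from Lemma \ref{thm:isat_bounds} keeps the total exponent within $\omega^{\omega^{o(\Lambda)}} + \omega^{\omega^m2n+2}2$. Then I would bound the outer call: setting $M=\mathfrak{G}_{J,\fn{D}}(\fn{F},i_0)$ and using Lemma \ref{thm:diff null bounds} to bound $\mathfrak{k}(n,\fn{D}(\fn{F}(M)))$ by $\fn{F}^{\omega^{n+8}}(\fn{D}(\fn{F}(M)))$, the new degree input $d'$ is swallowed by $\fn{F}^{\omega^{\omega^{o(\Lambda)}} + \omega^{\omega^m2n+2}2 + \omega^{n+9}}(\max\{d,n,i_0\})$. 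Applying the inductive hypothesis to $\Lambda_*$, which has $o(\Lambda_*)<o(\Lambda)$, gives a further factor of $\fn{F}^{\omega^{\omega^{o(\Lambda_*)}}2+\omega^{\omega^m2n+2}3}$ applied on top.

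The hard part will be the careful ordinal arithmetic verifying that these compositions fit the allotted budget. Using the inequalities of Appendix \ref{sec:ordinals}, specifically $\fn{F}^{\alpha}(\fn{F}^{\beta}(x))\leq \fn{F}^{\alpha+\beta}(x)$ for monotone $\fn{F}$ with $\fn{F}(x)\geq x+1$, the total exponent becomes the commutative sum of at most two contributions bounded by $\omega^{\omega^{o(\Lambda)}}$ plus three contributions bounded by $\omega^{\omega^m 2n+2}$. Since $o(\Lambda_*)<o(\Lambda)$ implies $\omega^{\omega^{o(\Lambda_*)}}2 < \omega^{\omega^{o(\Lambda)}}$ (as $\omega^{\omega^{o(\Lambda)}}$ is a limit of such expressions when $o(\Lambda)$ is a successor, and otherwise limit arguments on Cantor normal forms apply), and similarly $\omega^{n+9}\ll \omega^{\omega^m 2n+2}$, the final exponent is dominated by $\omega^{\omega^{o(\Lambda)}}2 + \omega^{\omega^m2n+2}3$. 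The base case, where $\Lambda$ has minimal rank, is immediate: $J$ is empty (or trivial), the inner recursion vanishes, and the outer recursion terminates at once, giving a bound well within $\omega^{\omega^m2n+2}3$.
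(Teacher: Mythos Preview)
Your overall architecture matches the paper's: transfinite induction on $o(\Lambda)$, bound the inner Knitting-Lemma term $\mathfrak{G}_{J,\fn{D}}(\fn{F},i_0)$ using the inductive hypothesis on the $\Lambda^*\in J$, then bound the outer recursive call on $\Lambda_*$, and finally collect the pieces into the advertised ordinal exponent.

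However, there is a genuine gap in the way you pass from ``the exponent is below $\omega^{\omega^{o(\Lambda)}}$'' to a bound on the function value. Ordinal iteration $\fn{F}^\alpha(b)$ is \emph{not} monotone in $\alpha$: as the appendix emphasizes, one only has $\fn{F}^\beta(b)\leq \fn{F}^\alpha(b)$ when the coordinate bound $|\beta|$ is controlled by the argument $b$. Your sentence ``$o(\Lambda_*)<o(\Lambda)$ implies $\omega^{\omega^{o(\Lambda_*)}}2<\omega^{\omega^{o(\Lambda)}}$'' is true as an ordinal inequality but does not by itself justify replacing the smaller exponent by the larger inside an iteration. The paper handles this by explicitly computing a number $b$ (roughly $n(\mathfrak{i}^{\mathrm{sat}}(d))^{n+1}$) such that every $\Lambda^*\in J$ satisfies $|o(\Lambda^*)|\leq b$, and then working not with an unspecified ordinal below $o(\Lambda)$ but with the concrete $\gamma[b]$ where $\gamma=o(\Lambda)$. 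This is what licenses the replacement, via the lemma that $\beta<\alpha$ and $|\beta|<b$ give $\beta\leq\alpha[b]$. The same manoeuvre is repeated for the outer call with a second, larger parameter $b''$ computed from the inner bound. Without these coordinate-bound computations your ``the resulting exponent still lies below $\omega^{\omega^{o(\Lambda)}}$'' step is a non sequitur: you need to show that the argument at which you are evaluating $\fn{F}^{\alpha^*}$ already dominates $|\alpha^*|$, and this is exactly where the $\mathfrak{i}^{\mathrm{sat}}$ bound and the $\omega^{\omega^m2n+2}$ padding are actually spent in the paper's proof.

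A secondary imprecision: the composition inequality you cite as $\fn{F}^{\alpha}(\fn{F}^{\beta}(x))\leq \fn{F}^{\alpha+\beta}(x)$ is not what the appendix proves; the general bound uses the natural sum $\alpha\#\beta$, and the ordinary-sum version requires $\min\beta\leq\max\alpha$. This matters when you ``collect'' contributions of mixed sizes.
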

\begin{proof}
Write $o(\Lambda)$ for the ordinal rank of $\Lambda$.  We proceed by induction on $o(\Lambda)$.  When $o(\Lambda)=0$, $\mathfrak{j}_{n,m}(i_0,\fn{D},\fn{F},d,\Lambda)=i_0$.

Suppose $o(\Lambda)=\gamma$ and the claim holds for all $\Lambda_*$ with $o(\Lambda_*)<\gamma$.  Let $b=n(d+n)^n\leq n(d)^{n+1}$ when $d\geq n$, so when $\Lambda_*\subseteq K\{X_{[n]}\}_{\leq d}$, $|o(\Lambda_*)|\leq b$.

Then
\begin{align*}
  \mathfrak{G}_{J,\fn{D}}(\fn{F},i_0)
  &\leq \fn{F}^{(\omega^{\omega^{\gamma[b]}}2+\omega^{n+8}2)^{2^{{d\choose n}\choose n}}}(i_0)\\
  &\leq \fn{F}^{(\omega^{\omega^{\gamma[b]}})^\omega}(\max\{2^{(d+n)^n},i_0\})\\
  &\leq \fn{F}^{\omega^{\omega^{\gamma[b]+1}}+\omega^2}(\max\{d+n,i_0\})\\
  &\leq \fn{F}^{\omega^{\omega^{\gamma[b]+1}}+\omega^2+\omega}(\max\{d,n,i_0\}).
\end{align*}

Therefore
\[\fn{D}(\fn{F}(\mathfrak{G}_{J,\fn{D}}(\fn{F},i_0)))\leq\fn{F}^{\omega^{\omega^{\gamma[b]+1}}+\omega^22+\omega+2}(\max\{d,n,i_0\}),\]
so
\[\mathfrak{k}(n,\fn{D}(\mathbf{F}(\mathfrak{G}_{J,\fn{D}}(\fn{F},i_0))))\leq\fn{F}^{\omega^{\omega^{\gamma[b]+1}}+\omega^{n+8}+\omega^23+\omega+2}(\max\{d,n,i_0\}),\]
\[\fn{D}(\fn{F}(\mathfrak{G}_{J,\fn{D}}(\fn{F},i_0)))+2{2d\choose d}\mathfrak{k}(n,\fn{D}(\mathbf{F}(\mathfrak{G}_{J,\fn{D}}(\fn{F},i_0))))\leq\fn{F}^{\omega^{\omega^{\gamma[b]+1}}+\omega^{n+8}+\omega^24+\omega4+2}(\max\{d,n,i_0\}),\]
and, taking $\alpha$ to be the quantity in the previous line,
\[{\alpha+m-1\choose m-1}\cdot n\cdot(\alpha+1)\leq\fn{F}^{\omega^{\omega^{\gamma[b]+1}}+\omega^{n+8}+\omega^26+\omega5+2} (\max\{d,n,i_0\}).\]
Taking $\beta$ to be the quantity in the previous line, we have
\[\mathfrak{g}(d,\beta)\leq\fn{F}^{\omega^{\omega^{\gamma[b]+1}}+\omega^{n+8}+\omega^27+\omega5+2}(\max\{d,n,i_0\}).\]

We may bound this last quantity by $b=\fn{F}^{\omega^{\omega^{\gamma[b]+1}}+\omega^{n+8}2}(\max\{d,n,i_0\})$.  Now let $b''=n(\mathfrak{g}(d,\beta))^{n+1}$, so when $\Lambda_*\subseteq K\{X_{[n]}\}_{\leq\mathfrak{g}(d,\beta)}$, $|o(\Lambda_*)|\leq b''$.  Then
\begin{align*}
  \mathfrak{j}_{n,m}(i_0,\fn{D},\fn{F},d,\Lambda)
&\leq\fn{F}{\omega^{\omega^{\gamma[b'']}}2+\omega^{\omega^{\gamma[b]+1}}+\omega^{n+8}2}(\max\{d,n,i_0\})\\
&\leq\fn{F}^{\omega^{\omega^\gamma}2+\omega^{n+8}2}(\max\{d,n,i_0\}).
\end{align*}
\end{proof}

\begin{cor}\label{thm:ritt bounds}
  Assume $\fn{F}(i)\geq i+1$ for all $i$ and $\fn{F},\fn{D}$ are monotonic.  Without loss of generality, let $\fn{F}(i)\geq\fn{D}(i)$ for all $i$.  Then $\mathfrak{j}_{n,m}(i_0,\fn{D},\fn{F},d,\Lambda)\leq \fn{F}^{\omega^{\omega^{\omega^{\omega^m}n}}2+\omega^{n+8}2}(\max\{d,n,i_0\})$.
\end{cor}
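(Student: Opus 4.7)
The plan is to derive this corollary directly from Lemma~\ref{lemma:j_function} by uniformly bounding the dependence on $\Lambda$. The lemma gives
\[\mathfrak{j}_{n,m}(i_0,\fn{D},\fn{F},d,\Lambda)\leq \fn{F}^{\omega^{\omega^{o(\Lambda)}}2+\omega^{\omega^m2n+2}3}(\max\{d,n,i_0\}),\]
so the only work left is to replace $o(\Lambda)$ with a bound independent of the particular autoreduced set. The preceding lemma provides exactly this: every autoreduced set $\Lambda$ in $K\{X_{[n]}\}$ with $m$ derivations receives an ordinal rank $o(\Lambda)\leq \omega^{\omega^m\cdot n}$.

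First I would apply Lemma~\ref{lemma:j_function} to reduce to proving that the displayed ordinal exponent in the lemma is dominated by the one in the corollary. Since $o(\Lambda)\leq \omega^{\omega^m\cdot n}$ and the function $\alpha\mapsto\omega^{\omega^\alpha}$ is monotonic on ordinals, we obtain
\[\omega^{\omega^{o(\Lambda)}}\leq \omega^{\omega^{\omega^{\omega^m\cdot n}}}.\]
The bound stated in the corollary has $\omega^{\omega^{\omega^{\omega^m}\cdot n}}$ in place of $\omega^{\omega^{\omega^{\omega^m\cdot n}}}$; since $\omega^m\cdot n \leq \omega^{\omega^m}\cdot n$ (because $\omega^m\leq \omega^{\omega^m}$ for any $m$), this represents a further, harmless loosening of the bound.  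The $\omega^{\omega^m 2n+2}3$ summand is unchanged and carries over verbatim.

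Finally, invoking the monotonicity properties of the ordinal-length iteration $\fn{F}^\alpha$ (recorded in Appendix~\ref{sec:ordinals}) to combine the two estimates in the exponent of $\fn{F}$ yields
\[\mathfrak{j}_{n,m}(i_0,\fn{D},\fn{F},d,\Lambda)\leq \fn{F}^{\omega^{\omega^{\omega^{\omega^m}n}}2+\omega^{\omega^m2n+2}3}(\max\{d,n,i_0\}),\]
as required.  There is no real obstacle here: once the correct universal bound on $o(\Lambda)$ is in hand, the derivation is purely a matter of ordinal monotonicity.  The substantive content of the result, in particular the triple-exponential tower in the ordinal rank, is already encoded in the recursion for $\mathfrak{j}$ and extracted by the induction in Lemma~\ref{lemma:j_function}; the corollary simply makes explicit the worst-case growth rate over all admissible $\Lambda$.
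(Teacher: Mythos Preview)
Your overall strategy---invoke Lemma~\ref{lemma:j_function} and replace $o(\Lambda)$ by the uniform upper bound $\omega^{\omega^m\cdot n}$ coming from the ordinal assignment to autoreduced sets---is exactly the paper's intended (and unstated) argument; the substantive work is already done in the lemma.

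However, your reconciliation of the direct-substitution bound with the printed exponent is backwards. After substituting $o(\Lambda)\leq\omega^{\omega^m\cdot n}$ you correctly obtain the exponent $\omega^{\omega^{\omega^{\omega^m\cdot n}}}$; the corollary as printed has $\omega^{\omega^{\omega^{\omega^m}\cdot n}}$. The relevant comparison is between $\omega^{\omega^m\cdot n}$ and $\omega^{\omega^m}\cdot n$, and for $n\geq 2$ one has
\[
\omega^{\omega^m\cdot n}=(\omega^{\omega^m})^n\;>\;\omega^{\omega^m}\cdot n,
\]
so the printed exponent is \emph{smaller}, not larger, than what direct substitution yields. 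Your inequality $\omega^m\cdot n\leq\omega^{\omega^m}\cdot n$ is true but sits at the wrong level of the tower and does not establish the needed domination. What you have actually proved is the bound with exponent $\omega^{\omega^{\omega^{\omega^m\cdot n}}}\cdot 2+\omega^{\omega^m 2n+2}\cdot 3$; the discrepancy with the displayed statement is almost certainly a typographical slip in the corollary (the $n$ should live one brace deeper, inside the exponent $\omega^m\cdot n$, matching the bound on $o(\Lambda)$), not a gap in your reasoning.
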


\appendix

\section{Ordinal Iterations}\label{sec:ordinals}

The next several lemmas show identities relating ordinal arithmetic to function iteration.  Throughout this section we assume that $g$ is monotonic and $g(b)\geq b+1$ for all $b$ and we consider $b\geq 1$.

\begin{lemma}
Let $\alpha$ and $\beta$ be ordinals with $\min\beta\leq\max\alpha$.  Then
\[g^{\alpha+\beta}(b)=g^\alpha(g^\beta(b)).\]
\end{lemma}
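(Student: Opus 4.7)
The plan is to prove this by transfinite induction on $\beta$. The base case $\beta = 0$ is immediate since both sides collapse to $g^\alpha(b)$, and the hypothesis $\min\beta \leq \max\alpha$ is vacuous.

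For the inductive step with $\beta > 0$, I would unfold the recursive definition
\[g^{\alpha+\beta}(b) = g^{(\alpha+\beta)[b]}(g(b))\]
and reduce to $g^\alpha(g^{\beta[b]}(g(b))) = g^\alpha(g^\beta(b))$ via the inductive hypothesis applied to the pair $(\alpha, \beta[b])$. This reduction hinges on the combinatorial identity
\[(\alpha+\beta)[b] = \alpha + \beta[b],\]
which I would establish by case analysis on the shape of $\beta$. When $\beta$ is a successor $\gamma+1$, both $\alpha+\beta$ and $\beta$ are successors and both sides of the identity collapse to $\alpha+\gamma$. When $\beta$ is a limit (so $\min\beta > 0$), the hypothesis $\min\beta \leq \max\alpha$ guarantees that the $\alpha$-part survives in the Cantor normal form of $\alpha + \beta$---there is at least one term of $\alpha$ of exponent $\max\alpha \geq \min\beta$ that is not absorbed---so $\min(\alpha+\beta) = \min\beta$ and the fundamental-sequence recipe modifies only this lowest term, exactly as it does in $\beta$ alone, producing the identity.

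The hard part will be propagating the hypothesis to $\beta[b]$ so that the inductive hypothesis actually applies. In the limit case this is automatic: the new minimum exponent is $(\min\beta)[b]$, which is strictly less than $\min\beta$ and hence still at most $\max\alpha$, so the hypothesis is only strengthened. The successor case is more delicate, because $\min\beta[b]$ can exceed $\max\alpha$ (for instance $\beta = \omega + 1$ has $\min\beta = 0$ but $\beta[b] = \omega$ has $\min = 1$). I would handle this either by a secondary induction on the length of the natural-number tail of $\beta$---peeling off trailing successors one at a time, using the easy identity $g^{\alpha + (n+1)}(b) = g^{\alpha + n}(g(b))$ and the definition $g^{n+1}(b) = g^n(g(b))$ to relate both sides to the limit or zero ordinal they terminate at---or by setting up the induction on the pair $(\max\beta, \beta)$ ordered lexicographically, so that the successor reduction is subsumed by a cleaner base case.
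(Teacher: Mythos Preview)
Your approach matches the paper's exactly: induct on $\beta$, handle $\beta=0$ trivially, and for $\beta>0$ unfold $g^{\alpha+\beta}(b)=g^{(\alpha+\beta)[b]}(g(b))$, invoke $(\alpha+\beta)[b]=\alpha+\beta[b]$, and apply the inductive hypothesis. The paper's proof is precisely this four-step chain, with the identity and the propagation of the hypothesis left implicit.

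Your instinct that something is off is correct, but the problem is deeper than you diagnose. The stated hypothesis $\min\beta\leq\max\alpha$ is almost certainly a typo for $\max\beta\leq\min\alpha$ (which is the form actually used in the later lemma of the appendix). Under the hypothesis as written the lemma is simply false: with $g(b)=b+1$, $\alpha=3$, $\beta=\omega+1$ one has $\min\beta=0\leq 0=\max\alpha$, yet $\alpha+\beta=3+\omega+1=\omega+1$, so $g^{\alpha+\beta}(b)=2b+3$ while $g^\alpha(g^\beta(b))=g^3(2b+3)=2b+6$. No successor-peeling workaround can repair a false statement. Your limit-case justification also has a gap: the condition $\min\beta\leq\max\alpha$ does \emph{not} prevent low terms of $\alpha$ from being absorbed by higher terms of $\beta$. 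For instance with $\alpha=\omega^3+\omega$, $\beta=\omega^2$ (so $\min\beta=2\leq 3=\max\alpha$) one gets $\alpha+\beta=\omega^3+\omega^2$ and $(\alpha+\beta)[b]=\omega^3+\omega b$, whereas $\alpha+\beta[b]=\omega^3+\omega+\omega b=\omega^3+\omega(b+1)$, so the key identity fails.

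With the corrected hypothesis $\max\beta\leq\min\alpha$, the Cantor normal form of $\alpha+\beta$ is the concatenation of those of $\alpha$ and $\beta$ (possibly merging the boundary term), so $(\alpha+\beta)[b]=\alpha+\beta[b]$ is immediate, and propagation is automatic since $\max(\beta[b])\leq\max\beta\leq\min\alpha$. All of your extra machinery then becomes unnecessary.
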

\begin{proof}
  By induction on $\beta$.  When $\beta=0$, this is trivial.  
\[g^{\alpha+\beta}(b)=g^{(\alpha+\beta)[b]}(g(b))
=g^{\alpha+(\beta[b])}(g(b))
=g^\alpha(g^{\beta[b]}(g(b)))
=g^{\alpha}(g^\beta(b))\]
using the inductive hypothesis since $\beta[b]<\beta$.
\end{proof}

The main difficulty when dealing with ordinal iterations is that they are not strictly monotonic: we do not, in general, have $\alpha<\beta$ implies $g^\alpha(b)\leq g^\beta(b)$ (consider the case where $b$ is much smaller than $n$: we may have $g^n(b)>g^\omega(b)$).

When considering the effect of $\alpha$ on the size of $g^\alpha(b)$, both the size of $\alpha$ and the size of its coefficients matter.

We next establish some lemmas showing some cases when we can obtain monotonicity.  First, note that when $\max\beta\leq\min\alpha$ we have
\[g^\alpha(b)\leq g^\alpha(g^\beta(b))=g^{\alpha+\beta}(b).\]

\begin{lemma}
  $g^{\omega^\alpha}(b)> g^{\alpha}(b)$.
\end{lemma}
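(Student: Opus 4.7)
The plan is to use transfinite induction on $\alpha$, after first dispatching a trivial monotonicity sublemma: $g^\eta(b)\geq b$ for every ordinal $\eta$ and every $b\geq 1$. This sublemma is proved by induction on $\eta$; the base case is immediate, and for $\eta>0$ we have $g^\eta(b)=g^{\eta[b]}(g(b))\geq g(b)\geq b+1$, combining the inductive hypothesis applied to $\eta[b]<\eta$ with the standing assumption $g(b)\geq b+1$.

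For the main argument, the base case $\alpha=0$ is immediate, since $g^{\omega^0}(b)=g^{1}(b)=g(b)\geq b+1>b=g^0(b)$. For the inductive step, assume $\alpha>0$, and set $\gamma=\alpha[b]$ and $c=g(b)$, so that $\gamma<\alpha$ and $c\geq 2$. The Cantor normal form of $\omega^\alpha$ is $\omega^\alpha$ itself, so $\min(\omega^\alpha)=\alpha>0$ and the defining formula for the fundamental sequence yields $(\omega^\alpha)[b]=\omega^{\alpha[b]}\cdot b=\omega^\gamma\cdot b$. Consequently $g^{\omega^\alpha}(b)=g^{\omega^\gamma\cdot b}(c)$ while $g^\alpha(b)=g^\gamma(c)$, and the task reduces to showing $g^{\omega^\gamma\cdot b}(c)>g^\gamma(c)$.

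The key algebraic move is the ordinal decomposition $\omega^\gamma\cdot b=\omega^\gamma\cdot(b-1)+\omega^\gamma$, where the first summand is interpreted as $0$ when $b=1$. The addition identity recorded just before the present lemma applies (with the hypothesis $\min(\omega^\gamma)=\gamma=\max(\omega^\gamma\cdot(b-1))$ verified for $b\geq 2$, and trivially for $b=1$), giving
\[
g^{\omega^\gamma\cdot b}(c)=g^{\omega^\gamma\cdot(b-1)}\bigl(g^{\omega^\gamma}(c)\bigr).
\]
The inductive hypothesis at $\gamma<\alpha$ now furnishes $g^{\omega^\gamma}(c)>g^\gamma(c)$, and the sublemma yields $g^{\omega^\gamma\cdot(b-1)}(x)\geq x$ for every $x\geq 1$. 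Chaining the two estimates with $x=g^{\omega^\gamma}(c)$ produces
\[
g^{\omega^\alpha}(b)=g^{\omega^\gamma\cdot(b-1)}\bigl(g^{\omega^\gamma}(c)\bigr)\geq g^{\omega^\gamma}(c)>g^\gamma(c)=g^\alpha(b).
\]

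The subtle point to guard against is an unwarranted appeal to monotonicity of $g^{\eta}$ in the exponent $\eta$, which genuinely fails in general because large coefficients appearing in $\eta[b]$ can make $g^\beta(b)>g^\gamma(b)$ even when $\beta<\gamma$. The decomposition trick sidesteps this entirely: the inductive hypothesis is invoked only at the single pair $(\gamma,c)$, and the outer factor $g^{\omega^\gamma\cdot(b-1)}$ is absorbed by the unconditional sublemma. Once this is recognized, there are no remaining obstacles.
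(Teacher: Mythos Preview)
Your proof is correct and follows essentially the same approach as the paper's. The paper compresses your decomposition and sublemma into the single unexplained inequality $g^{\omega^{\alpha[b]}b}(g(b)) \geq g^{\omega^{\alpha[b]}}(g(b))$, but the underlying reasoning---split off one copy of $\omega^{\alpha[b]}$ via the addition lemma, apply the inductive hypothesis to that copy, and absorb the remaining $\omega^{\alpha[b]}(b-1)$ using $g^\eta(x)\geq x$---is identical to yours, just stated more tersely.
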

\begin{proof}
  By induction on $\alpha$.  When $\alpha=0$, 
\[g^{\omega^0}(b)=g(b)\geq b+1>b=g^0(b).\]

When $\alpha>0$,
\[g^{\omega^\alpha}(b)
=g^{\omega^{\alpha[b]}b}(g(b))
\geq g^{\omega^{\alpha[b]}}(g(b))
> g^{\alpha[b]}(g(b))
=g^\alpha(b).\]
\end{proof}

\begin{lemma}
  $g^{\omega^\alpha c}(b)\geq g^c(b)$.
\end{lemma}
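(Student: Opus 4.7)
The plan is to induct on the natural number $c$. The base case $c=0$ is immediate since both sides equal $b$.

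For the inductive step I first record an auxiliary claim: $g^{\omega^{\alpha}}(b) \geq g(b)$. If $\alpha=0$ this is equality. If $\alpha>0$, the definition of $\omega^\alpha[b]$ gives $\omega^\alpha[b]=\omega^{\alpha[b]}\cdot b$, so
\[
g^{\omega^\alpha}(b) \;=\; g^{\omega^{\alpha[b]}\cdot b}\bigl(g(b)\bigr) \;\geq\; g(b),
\]
where the last inequality uses the general fact $g^{\beta}(x)\geq x$ for every ordinal $\beta$ and every $x$, itself an easy transfinite induction on $\beta$ (since $g(x)\geq x+1$).

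Now assume $g^{\omega^{\alpha} c}(b)\geq g^{c}(b)$ for every $b$, and prove the same for $c+1$. Write $\omega^{\alpha}(c+1)=\omega^{\alpha}c+\omega^{\alpha}$ in Cantor normal form. When $c\geq 1$, both summands have Cantor exponent $\alpha$, so $\min(\omega^{\alpha})=\alpha=\max(\omega^{\alpha}c)$, and the previous additive lemma applies:
\[
g^{\omega^{\alpha}(c+1)}(b) \;=\; g^{\omega^{\alpha}c}\bigl(g^{\omega^{\alpha}}(b)\bigr).
\]
Apply the inductive hypothesis at the input $b':=g^{\omega^\alpha}(b)$ to get $g^{\omega^\alpha c}(b')\geq g^{c}(b')$. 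Monotonicity of $g^{c}$ (an obvious consequence of monotonicity of $g$) together with the auxiliary claim $g^{\omega^{\alpha}}(b)\geq g(b)$ then yields
\[
g^{c}\bigl(g^{\omega^{\alpha}}(b)\bigr) \;\geq\; g^{c}\bigl(g(b)\bigr) \;=\; g^{c+1}(b).
\]
The remaining case of the inductive step is $c=0\to c+1=1$, which is exactly the auxiliary claim.

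The only delicate point is verifying the side condition $\min\beta\leq\max\alpha$ for the additive lemma, which forces the separate treatment of $c=0$ in the inductive step; otherwise the argument is straightforward bookkeeping.
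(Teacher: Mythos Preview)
Your proof is correct, but you take a different route from the paper. The paper proceeds by transfinite induction on $\alpha$: when $\alpha>0$ it unfolds $g^{\omega^\alpha c}(b)$ as a $c$-fold composition of $g^{\omega^\alpha}$, invokes the preceding lemma $g^{\omega^\alpha}(b)>g^\alpha(b)$ at each layer, then drops to $g^{\omega^{\max\alpha}c}(b)$ and applies the inductive hypothesis for the smaller ordinal $\max\alpha$. You instead induct on the natural number $c$, using only the weaker auxiliary fact $g^{\omega^\alpha}(b)\geq g(b)$ together with the additive splitting lemma and monotonicity of the finite iterate $g^c$. Your argument is more elementary in that it avoids transfinite induction entirely and does not depend on the sharper bound $g^{\omega^\alpha}(b)>g^\alpha(b)$; the paper's version, on the other hand, fits its surrounding pattern of reducing everything to ordinal descent and reuses the immediately preceding lemma rather than proving a separate auxiliary claim.
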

\begin{proof}
  By induction on $\alpha$.  When $\alpha=0$ the two sides are identical.  When $\alpha>0$ we have
\begin{align*}
g^{\omega^\alpha c}(b)
&=g^{\omega^\alpha}(g^{\omega^\alpha}(\cdots (g^{\omega^\alpha}(b))\cdots))\\
&>g^{\alpha}(g^{\alpha}(\cdots (g^{\alpha}(b))\cdots))\\
&\geq g^{\omega^{\max\alpha}}(g^{\omega^{\max\alpha}}(\cdots (g^{\omega^{\max\alpha}}(b))\cdots))\\
&=g^{\omega^{\max\alpha}c}(b)\\
&\geq g^c(b).
\end{align*}
\end{proof}

\begin{lemma}
  For any $\epsilon>\delta$ and any $d\geq g^\delta(b)$, $\epsilon[d]\geq\delta$.
\end{lemma}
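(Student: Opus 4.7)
The plan is to induct on $\epsilon$. When $\epsilon$ is a successor, $\epsilon[d]=\epsilon-1\geq\delta$ follows immediately from $\delta<\epsilon$. When $\epsilon$ is a limit, I would write $\epsilon=\gamma+\omega^\mu$ where $\mu=\min\epsilon>0$ and either $\gamma=0$ or $\min\gamma>\mu$, so that by definition $\epsilon[d]=\gamma+\omega^{\mu[d]}\cdot d$. If $\delta\leq\gamma$ the conclusion is immediate, so the interesting case is $\delta=\gamma+\eta$ with $0<\eta<\omega^\mu$; it then suffices to show $\omega^{\mu[d]}\cdot d\geq\eta$.

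Write $\eta=\omega^{\beta_1}c_1+\eta''$ in Cantor normal form, with $\beta_1=\max\eta<\mu$ and $\max\eta''<\beta_1$. First I would use the composition identity $g^{\alpha+\beta}(b)=g^\alpha(g^\beta(b))$ (applicable because $\min\eta\leq\beta_1<\mu\leq\max\gamma$ when $\gamma\neq 0$, and trivially when $\gamma=0$) together with the earlier inequality $g^{\omega^\alpha}(b)>g^\alpha(b)$ to derive the chain $d\geq g^\delta(b)\geq g^\eta(b)\geq g^{\omega^{\beta_1}}(b)>g^{\beta_1}(b)$, using monotonicity of $g$ to propagate through iterates. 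Applying the inductive hypothesis to the pair $\beta_1<\mu$ (noting $\mu<\epsilon$) then yields $\mu[d]\geq\beta_1$, so $\omega^{\mu[d]}\cdot d\geq\omega^{\beta_1}\cdot d$.

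To finish I would split into two subcases. If $\mu[d]\geq\beta_1+1$, then $\omega^{\mu[d]}\cdot d\geq\omega^{\beta_1+1}>\omega^{\beta_1}(c_1+1)>\eta$, using that $c_1$ is a finite integer. In the tight subcase $\mu=\beta_1+1$, where necessarily $\mu[d]=\beta_1$, I would instead extract the arithmetic bound $d\geq g^\eta(b)\geq g^{\omega^{\beta_1}c_1}(b)\geq b+c_1\geq c_1+1$ by iterating the basic inequality $g(x)\geq x+1$ at least $c_1$ times inside $g^{\omega^{\beta_1}c_1}$, and then conclude $\omega^{\beta_1}\cdot d\geq\omega^{\beta_1}(c_1+1)=\omega^{\beta_1}c_1+\omega^{\beta_1}>\omega^{\beta_1}c_1+\eta''=\eta$ because $\eta''<\omega^{\beta_1}$.

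The main obstacle is exactly this last subcase: the purely ordinal-level estimate $\mu[d]\geq\beta_1$ is insufficient when $\mu$ is the successor $\beta_1+1$, and one must extract a numerical lower bound $d\geq c_1+1$ directly from the hypothesis $d\geq g^\delta(b)$. This is the step that justifies stating the hypothesis in terms of $g^\delta(b)$ rather than merely in terms of the coordinate bound $|\delta|$, and it is the place where the proof makes essential quantitative use of $g(x)\geq x+1$.
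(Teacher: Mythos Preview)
Your argument is essentially correct and parallels the paper's closely, but there is a small gap in your final case split and one notational slip.

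The slip: when you write $\epsilon=\gamma+\omega^\mu$ with $\mu=\min\epsilon$, you claim ``either $\gamma=0$ or $\min\gamma>\mu$''. In fact $\min\gamma\geq\mu$ is all you can say (if the coefficient of $\omega^\mu$ in $\epsilon$ exceeds $1$, then $\min\gamma=\mu$). This does not affect the rest of the argument, since you only use $\min\gamma\geq\mu>\max\eta$.

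The gap: your two subcases ``$\mu[d]\geq\beta_1+1$'' and ``$\mu=\beta_1+1$'' are not exhaustive. It can happen that $\mu$ is a limit ordinal with $\mu[d]=\beta_1$ exactly (e.g.\ $\mu=\omega$, $\beta_1=d$). Fortunately your argument for the second subcase uses only the conclusion $\mu[d]=\beta_1$, never the hypothesis $\mu=\beta_1+1$: from $d\geq g^{\omega^{\beta_1}c_1}(b)\geq g^{c_1}(b)\geq b+c_1\geq c_1+1$ you get $\omega^{\beta_1}\cdot d\geq\omega^{\beta_1}(c_1+1)>\eta$ regardless of the shape of $\mu$. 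So simply relabel the second subcase as ``$\mu[d]=\beta_1$'' and the proof goes through.

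For comparison, the paper inducts on $\delta$ rather than on $\epsilon$. After reducing to $\epsilon=\omega^\gamma$ with $\gamma>\max\delta$, it applies the inductive hypothesis to the pair $\gamma>\max\delta$ (legitimate since $\max\delta<\delta$) to get $\gamma[d]\geq\max\delta$, and then handles the tight case $\gamma[d]=\max\delta$ by the same numerical extraction $d>c_{\max\delta}$. The two proofs are thus structurally the same, differing only in which variable carries the induction; your choice of inducting on $\epsilon$ is equally natural.
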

\begin{proof}
We proceed by induction on $\delta$.  When $\delta=0$, this is trivial.

Write $\epsilon=\epsilon'+\omega^\gamma$ where $\gamma=\min\epsilon$.  If $\gamma\leq\max\delta$ then since $\epsilon>\delta$, we must have $\epsilon'\geq\delta$, so $\epsilon[d]=\epsilon'+\omega^\gamma[d]\geq\epsilon'\geq\delta$.

So suppose $\gamma>\max\delta$.  If $\epsilon'\neq 0$ then $\epsilon'\geq\omega^\gamma>\delta$ and we are done, so assume $\epsilon'=0$ and therefore $\epsilon=\omega^{\gamma}$.  We have $\epsilon[d]=\omega^{\gamma[d]}d$.

By the inductive hypothesis, since $\gamma>\max\delta$ and $d\geq g^\delta(b)\geq g^{\omega^{\max\delta}}(b)\geq g^{\max\delta}(b)$, we also have $\delta[d]\geq\max\delta$.  If $\delta[d]>\max\delta$ then $\epsilon[d]>\delta$.

So suppose $\delta[d]=\max\delta$.  Then
\[d\geq g^\delta(b)\geq g^{\omega^{\max\delta}c_{\max\delta}}(b)\geq g^{c_{\max\delta}}(b)>c_{\max\delta}.\]
So $\epsilon[d]=\omega^{\gamma[d]}d\geq \omega^{\max\delta}(c_{\max\delta}+1)>\delta$.
\end{proof}

\begin{lemma}\label{thm:ordinal_downgrade}
  Let $\alpha=\sum_{\gamma\in I}\omega^\gamma c_\gamma$.  Let $\beta,\delta\in I$ with $\delta<\beta$, and let $c'_\beta=c_\beta-1$, $c'_\delta=c_\delta+1$, and $c'_\gamma=c_\gamma$ if $\gamma\not\in\{\beta,\delta\}$.  Let $\alpha'=\sum_{\gamma\in I}\omega^\gamma c'_\gamma$.  Then $g^{\alpha'}(b)\leq g^\alpha(b)$.
\end{lemma}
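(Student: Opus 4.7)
The natural approach is a well-founded induction on $\alpha$, unraveling the recursive definition $g^\alpha(b) = g^{\alpha[b]}(g(b))$ and showing that the ``swap'' relationship between $\alpha'$ and $\alpha$ is preserved by the $[b]$ operation. The base case $\alpha = 0$ is vacuous because then $I = \emptyset$, so there is no $\beta$ to consider.

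For the inductive step, I would examine how $[b]$ acts on the Cantor normal forms of $\alpha$ and $\alpha'$. The central observation is that $[b]$ only modifies the $\omega^{\min I}$-summand, and since $\delta < \beta$ with both lying in $I$, we always have $\min I \leq \delta < \beta$. Hence the $\omega^\beta$-coefficient is untouched by $[b]$ in both $\alpha$ and $\alpha'$; it remains only to track what happens at $\omega^\delta$. When $\min I < \delta$, neither the $\omega^\beta$- nor the $\omega^\delta$-coefficient is touched by $[b]$, so $\alpha'[b]$ stands to $\alpha[b]$ in exactly the same swap relationship as $\alpha'$ to $\alpha$. When $\min I = \delta$, both $[b]$-operations identically decrement $c_\delta$ by one and append the same new summand $\omega^{\delta[b]}b$; the net result is that $\alpha'[b]$ still has one more copy of $\omega^\delta$ and one fewer copy of $\omega^\beta$ than $\alpha[b]$. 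In both sub-cases the inductive hypothesis applied at $\alpha[b] < \alpha$ with input $g(b)$ yields $g^{\alpha'[b]}(g(b)) \leq g^{\alpha[b]}(g(b))$, which by the definition of ordinal-indexed iteration is precisely $g^{\alpha'}(b) \leq g^\alpha(b)$.

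The main obstacle is the bookkeeping in boundary cases, specifically when $c_\delta = 1$ (so that $\omega^\delta$ formally disappears from the Cantor normal form of $\alpha[b]$) or when $c_\beta = 1$ (so that $\omega^\beta$ disappears from $\alpha'$). These are handled by allowing the tacit index set $I$ in the inductive statement to be enlarged with extra exponents carrying coefficient zero; viewed through such an enlarged $I$, the ``one fewer $\omega^\beta$, one more $\omega^\delta$'' swap structure is preserved verbatim, and the induction closes cleanly.
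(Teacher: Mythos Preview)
Your induction does not close. The difficulty is precisely the boundary case $c_\delta = 1$, and your proposed fix---enlarging $I$ to allow $c_\delta = 0$---silently introduces a new sub-case that your two-case analysis does not cover. Once $c_\delta = 0$ is permitted, $\delta$ need no longer lie in the true support of $\alpha$, so $\min\alpha > \delta$ becomes possible; in that situation $[b]$ acts on $\alpha$ at $\min\alpha$ but on $\alpha'$ at $\delta = \min\alpha'$, and the single-swap relation is destroyed. Concretely, take $\alpha = \omega^3 + \omega + 1$ with $\beta = 3$, $\delta = 1$ (so $\alpha' = \omega\cdot 2 + 1$). Unwinding your recursion, after $2 + g(b)$ applications of the fundamental-sequence map the $\alpha$-side has become $\omega^3$ and the $\alpha'$-side has become $\omega$; this pair is still a swap at $(3,1)$, but now with $c_\delta = 0$ and $\min = 3 > \delta$. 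One more step gives $\omega^2 b'$ versus $b'$, which is \emph{not} a single swap (you would need $b'$ swaps at $(2,0)$), so the inductive hypothesis in the form you stated does not apply. Strengthening to ``finitely many swaps'' does not obviously help either, since that relation forces equal coefficient-sums and the $[b]$ map changes the sums differently when $\min\alpha > 0$ but $\min\alpha' = 0$.

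The paper avoids this by a different decomposition: it uses the earlier identity $g^{\alpha+\beta}(b)=g^\alpha(g^\beta(b))$ to split off the common prefix $\alpha^+$ (terms at or above $\beta$, minus one $\omega^\beta$) and common suffix $\alpha^-$ (terms at or below $\delta$, minus one $\omega^\delta$), reducing everything to the single inequality $g^{\omega^\gamma + \omega^\delta}(b) \geq g^{\omega^\delta\cdot 2}(b)$ for $\gamma > \delta$. That core inequality is then proved by a short induction on $\gamma$ alone, using the lemma that $\gamma[c] \geq \delta$ whenever $c \geq g^{\omega^\delta}(b)$. The key advantage is that only a single $\omega^\gamma$ is unwound, so the problematic interaction between exhausting lower exponents and jumping past $\delta$ never arises.
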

\begin{proof}
It suffices to consider the case where $I\cap(\beta,\delta)=\emptyset$, since we get the general case by applying this case several times.  So we have
\[\alpha=\alpha^++\omega^{\gamma}+\omega^\delta +\alpha^-\]
where $\min(\alpha^+)\geq \gamma$ and $\delta\geq\max(\alpha^-)$.  Since
\[g^\alpha(b)=g^{\alpha^+}(g^{\omega^{\gamma}+\omega^\delta}(g^{\alpha^-}(b)))\]
and
\[g^{\alpha'}(b)=g^{\alpha^+}(g^{\omega^\delta2}(g^{\alpha^-}(b)))\]
it suffices to show that when $\gamma>\delta$,
\[g^{\omega^\gamma+\omega^\delta}(b)\geq g^{\omega^\delta 2}(b).\]
We show this by induction on $\gamma$: 
\begin{align*}
  g^{\omega^\gamma+\omega^\delta}(b)
&=g^{\omega^\gamma}(g^{\omega^\delta}(b))\\
&=g^{\omega^{\gamma[g^{\omega^\delta}(b)]}}g((g^{\omega^\delta}(b)))\\
&\geq g^{\omega^{\gamma[g^{\omega^\delta}(b)]}+\omega^\delta}(b)\\
&\geq g^{\omega^\delta 2}(b).
\end{align*}
\end{proof}

\begin{lemma}
  If $d\leq d'$ then $g^{\alpha[d]}(b)\leq g^{\alpha[d']}(b)$.
\end{lemma}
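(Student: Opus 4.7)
My plan is to prove the lemma by transfinite induction on $\alpha$, carried out jointly with the auxiliary monotonicity claim that $g^\beta$ is non-decreasing in its argument for every $\beta\leq\alpha$ (this is needed to propagate the inequality through the ``head'' $g^{\alpha^*}$ that appears in the limit case). I also use the purely combinatorial side fact that $\alpha[d]\leq \alpha[d']$ as ordinals whenever $d\leq d'$, which I would prove by a separate short induction on $\alpha$: in the limit case, writing $\alpha[x]=\alpha^*+\omega^{\gamma[x]}x$ with $\gamma=\min\alpha$, the inductive version for $\gamma$ gives $\gamma[d]\leq\gamma[d']$, and then either $\gamma[d]=\gamma[d']$ (so $\omega^{\gamma[d]}d\leq\omega^{\gamma[d]}d'$) or $\gamma[d]<\gamma[d']$ (so $\omega^{\gamma[d]}d<\omega^{\gamma[d]+1}\leq\omega^{\gamma[d']}$).

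For the main induction, the successor case ($\min\alpha=0$) is immediate, since $\alpha[d]=\alpha[d']=\alpha-1$. Suppose $\alpha$ is a limit with $\gamma:=\min\alpha>0$, and write $\alpha[x]=\alpha^*+\omega^{\gamma[x]}x$ where $\alpha^*$ is independent of $x$. Because $\min(\omega^{\gamma[x]}x)=\gamma[x]<\gamma\leq\min\alpha^*$, the addition identity from the start of the appendix gives $g^{\alpha[x]}(b)=g^{\alpha^*}\!\left(g^{\omega^{\gamma[x]}x}(b)\right)$. Since $\alpha^*<\alpha$, the joint IH supplies monotonicity of $g^{\alpha^*}$ in its argument, and the problem reduces to establishing
\[ g^{\omega^{\gamma[d]}d}(b)\ \leq\ g^{\omega^{\gamma[d']}d'}(b). \]

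I split this into two sub-steps. First, $g^{\omega^{\gamma[d]}d}(b)\leq g^{\omega^{\gamma[d]}d'}(b)$: write $\omega^{\gamma[d]}d'=\omega^{\gamma[d]}(d'-d)+\omega^{\gamma[d]}d$, apply the splitting identity, and use the elementary fact $g^\beta(c)\geq c$. Second, $g^{\omega^{\gamma[d]}d'}(b)\leq g^{\omega^{\gamma[d']}d'}(b)$: when $\gamma[d]=\gamma[d']$ this is an equality, and when $\gamma[d]<\gamma[d']$ I walk along the finite chain $\alpha_k:=\omega^{\gamma[d]}(d'-k)+\omega^{\gamma[d']}k$ for $k=0,1,\ldots,d'$, observing that each $\alpha_k$ is obtained from $\alpha_{k+1}$ by ``downgrading'' one copy of $\omega^{\gamma[d']}$ to $\omega^{\gamma[d]}$, so Lemma \ref{thm:ordinal_downgrade} yields $g^{\alpha_k}(b)\leq g^{\alpha_{k+1}}(b)$ and telescoping gives the inequality.

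The main obstacle is organizing the joint induction so that monotonicity of $g^{\alpha^*}$ in its argument is available exactly when needed (it must be proved for strictly smaller ordinals before invoking it at $\alpha$), and verifying in each invocation of the addition/splitting identity that the constraint $\min(\text{second summand})\leq\max(\text{first summand})$ is met; the latter is fortunately automatic thanks to the explicit form $\alpha[x]=\alpha^*+\omega^{\gamma[x]}x$ with $\gamma[x]<\gamma\leq\min\alpha^*$.
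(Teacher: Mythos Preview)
Your proof is correct and follows essentially the same route as the paper's: split $\alpha=\alpha^*+\omega^{\gamma}$ with $\gamma=\min\alpha$, reduce via the addition identity to the single-term case, and then establish the chain $g^{\omega^{\gamma[d]}d}(b)\leq g^{\omega^{\gamma[d]}d'}(b)\leq g^{\omega^{\gamma[d']}d'}(b)$. You are simply more explicit than the paper about two points it leaves implicit---the monotonicity of $g^{\alpha^*}$ in its argument (which you correctly handle by a joint induction) and the justification of the second inequality via Lemma~\ref{thm:ordinal_downgrade}.
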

\begin{proof}
  By induction on $\alpha$.  Write $\alpha=\alpha'+\omega^{\min\alpha}$.  We have $g^{\alpha[d]}(b)=g^{\alpha'}(g^{\omega^{\min\alpha}[d]}(b))$ and $g^{\alpha[d']}(b)=g^{\alpha'}(g^{\omega^{\min\alpha}[d]}(b))$, so it suffices to show that $g^{\omega^\beta[d]}(b)\leq g^{\omega^\beta[d']}(b)$.

If $\beta=0$ then $\omega^\beta[d]=0=\omega^\beta[d']$ and the claim is immediate.  Otherwise
\begin{align*}
  g^{\omega^\beta[d]}(b)
&=g^{\omega^{\beta[d]}d}(b)\\
&\leq g^{\omega^{\beta[d]}d'}(b)\\
&\leq g^{\omega^{\beta[d']}d'}(b)\\
&=g^{\omega^\beta[d']}(b).
\end{align*}
\end{proof}

\begin{lemma}
Suppose $\alpha,\beta,\gamma$ are ordinals with $\max\beta\leq\min\alpha$ and $\max\gamma\leq\min\beta$.   Suppose $g(b)$ is monotonic and $g(b)\geq b+1$ for all $b$.  Then
\[g^{\beta}(g^{\alpha+\gamma}(b))\leq g^{\alpha+\beta+\gamma}(b).\]
\end{lemma}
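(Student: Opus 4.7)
The plan is to prove the inequality by transfinite induction on $\beta$, retaining $\alpha$, $\gamma$, and $b$ as parameters together with both structural hypotheses throughout the induction. The composition identity of the first lemma of the appendix will serve as the main algebraic tool, and the monotonicity results for $g^{\alpha[d]}$ in $d$ together with the ``$\epsilon>\delta$ and $d\ge g^{\delta}(b)$ imply $\epsilon[d]\ge\delta$'' lemma will allow me to carry the hypotheses through the limit step.

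The base case $\beta=0$ is immediate, since both sides equal $g^{\alpha+\gamma}(b)$. For the successor case $\beta=\beta_0+1$, the hypothesis $\max\gamma\le\min\beta=0$ forces $\gamma$ to be a natural number, so I would unfold $g^{\beta}(x)=g^{\beta_0}(g(x))$, then chain together: (i) the auxiliary commutation $g(g^{\alpha+\gamma}(b))\le g^{\alpha+\gamma}(g(b))$, which is the $\beta=1$ instance of the target and follows from a separate transfinite induction on the exponent using the monotonicity-in-$d$ lemma just established; (ii) monotonicity of $g^{\beta_0}$; and (iii) the inductive hypothesis at $\beta_0$ with the same $\alpha,\gamma$ but starting point $g(b)$. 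The first lemma of the appendix then rewrites $g^{\alpha+\beta_0+\gamma}(g(b))$ as $g^{\alpha+\beta+\gamma}(b)$ (using $1+\gamma=\gamma+1$ for finite $\gamma$).

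The limit case of $\beta$ is where the real work lies. Setting $d=g^{\alpha+\gamma}(b)$, I would unfold $g^{\beta}(d)=g^{\beta[d]}(g(d))$, bound $g(d)$ by $g^{\alpha+\gamma}(g(b))$ via the auxiliary commutation, and then apply the inductive hypothesis at the strictly smaller ordinal $\beta[d]$. To invoke the inductive hypothesis I must check $\max\gamma\le\min\beta[d]$: this is where the hypothesis $\max\gamma\le\min\beta$ is essential, combined with the lemma ``$\epsilon>\delta$ and $d\ge g^{\delta}(b)$ imply $\epsilon[d]\ge\delta$'' (taking $\epsilon=\min\beta$, $\delta=\max\gamma$, with the premise $d\ge g^{\max\gamma}(b)$ coming from $d\ge g^{\gamma}(b)\ge g^{\omega^{\max\gamma}}(b)>g^{\max\gamma}(b)$). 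This yields the intermediate bound $g^{\alpha+\beta[d]+\gamma}(g(b))$.

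The main obstacle is then to conclude that this intermediate bound is at most $g^{\alpha+\beta+\gamma}(b)=g^{\alpha+\beta+\gamma[b]}(g(b))$. This is not an ordinal comparison of the exponents alone --- one must track the iteration values, since the coefficient of the freshly exposed bottom term of $\beta[d]$ depends on $d$. The resolution is to unpack $\beta[d]=\beta^{*}+\omega^{(\min\beta)[d]}d$ and apply Lemma \ref{thm:ordinal_downgrade}: because $(\min\beta)[d]<\min\beta$ and $\omega^{(\min\beta)[d]}d<\omega^{\min\beta}$, each unit of $\omega^{\min\beta}$ spent in going from $\beta$ to $\beta[d]$ can be traded against the gap $\gamma\mapsto\gamma[b]$ and the coefficient $d$ via repeated downgrades, showing that the iteration $g^{\alpha+\beta[d]+\gamma}(g(b))$ never exceeds $g^{\alpha+\beta+\gamma[b]}(g(b))$. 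This bookkeeping --- verifying that the coefficients introduced by $\beta[d]$ are majorised by the ones produced by the unfolding of $(\alpha+\beta+\gamma)[b]=\alpha+\beta+\gamma[b]$ --- is the technical heart of the proof and relies on the combined use of the downgrade lemma, the monotonicity-in-$d$ lemma, and first lemma of the appendix.
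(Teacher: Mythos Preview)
Your limit case has a genuine gap that the downgrade lemma cannot repair. After unfolding $g^{\beta}(d)$ at $d=g^{\alpha+\gamma}(b)$, applying the auxiliary commutation, and invoking the inductive hypothesis at $\beta[d]$, you arrive at the intermediate bound $g^{\alpha+\beta[d]+\gamma}(g(b))$. But this quantity is \emph{larger} than the target, not smaller. Take $\gamma=0$, $\alpha=\omega^{2}$, $\beta=\omega$: then $d=g^{\omega^{2}}(b)$ and your bound is $g^{\omega^{2}+d}(g(b))$, while the target is $g^{\omega^{2}+\omega}(b)=g^{\omega^{2}+b}(g(b))$. Since $d=g^{\omega^{2}}(b)\gg b$, your bound exceeds the target. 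The underlying problem is double-counting: the coefficient $d$ produced by unfolding $\beta$ has already been inflated by the full iteration $g^{\alpha}$, and then the inductive hypothesis applies $g^{\alpha}$ again on top of it. Lemma~\ref{thm:ordinal_downgrade} only shifts a unit of coefficient from a higher level to a lower one without decreasing the iteration value; it cannot absorb an excess factor of $g^{\alpha}$.

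The paper's proof is organised quite differently and avoids this trap. It first eliminates $\gamma$ entirely by substituting $g^{\gamma}(b)$ for $b$ (this is immediate from the composition identity and makes carrying $\gamma$ through the induction unnecessary). It then does an induction on $\beta$ that merely peels off the bottom term $\omega^{\delta}$ via composition, reducing to $\beta=\omega^{\delta}$; next an inner induction on $\alpha$ reduces to $\alpha=\omega^{\epsilon}$. In the base case $\alpha=\omega^{\epsilon}$, $\beta=\omega^{\delta}$ with $\epsilon>\delta$, the key move is to unfold $g^{\alpha}$ (not $g^{\beta}$) at the \emph{small} argument $b$, use the inner inductive hypothesis on $\alpha$ to commute $g^{\beta}$ past $g^{\omega^{\epsilon[g^{\beta}(b)]}(b+1)}$, and then refold into $g^{\omega^{\epsilon}}(g^{\beta}(b))$. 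The crucial point is that unfolding happens on the $\alpha$ side, where the fundamental-sequence index stays controlled, rather than on the $\beta$ side where it has already been blown up by $g^{\alpha}$.
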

\begin{proof}
Since $g^{\beta}(g^{\alpha+\gamma}(b))=g^{\beta}(g^{\alpha}(g^\gamma(b)))$ and $g^{\alpha+\beta+\gamma}(b)=g^{\alpha+\beta}(g^\gamma(b))$, by replacing $b$ with $g^\gamma(b)$, we may assume $\gamma=0$.

We first proceed by induction on $\beta$.  Write $\beta=\beta'+\omega^\delta$ where $\delta=\min\beta$.  If $\beta'>0$ then we have
\begin{align*}
  g^\beta(g^\alpha(b))
 &=g^{\beta'+\omega^\delta}(g^\alpha(b))\\
 &=g^{\beta'}(g^{\omega^\delta}(g^\alpha(b)))\\
 &\leq g^{\beta'}(g^{\alpha}(g^{\omega^\delta}(b)))\\
 &\leq g^{\alpha+\beta'+\omega^\delta}(b)\\
 &=g^{\alpha+\beta}(b)
\end{align*}
using the inductive hypothesis twice.

So we assume $\beta=\omega^\delta$ and proceed by induction on $\alpha$.  Writing $\alpha=\alpha'+\omega^{\epsilon}$ where $\epsilon=\min\alpha$; if $\alpha'\neq 0$, we have
\begin{align*}
  g^\beta(g^\alpha(b))
&=g^\beta(g^{\alpha'}(g^{\omega^\epsilon}(b)))\\
&\leq g^{\alpha'+\beta}(g^{\omega^\epsilon}(b))\\
&=g^{\alpha'}(g^\beta(g^{\omega^\epsilon}(b)))\\
&\leq g^{\alpha'}(g^{\omega^\epsilon+\beta}(b))\\
&=g^{\alpha+\beta}(b)
\end{align*}
using the inductive hypothesis twice.  So we may reduce to the case where $\alpha=\omega^\epsilon$.

If $\epsilon=\delta$ then this follows from work above, so we may assume $\epsilon>\delta$.  Therefore $\omega^\epsilon[g^\beta(b)]\geq\beta$, which must mean that $\epsilon[g^\beta(b)]\geq \delta$.  Then we have:
\begin{align*}
  g^\beta(g^{\omega^\epsilon}(b))
&=g^\beta(g^{\omega^\epsilon[b]}(g(b)))\\
&=g^\beta(g^{\omega^{\epsilon[b]}b}(g(b)))\\
&\leq g^\beta(g^{\omega^{\epsilon[g^\beta(b)]}b}(g(b)))\\
&\leq g^\beta(g^{\omega^{\epsilon[g^\beta(b)]}(b+1)}(b))\\
&\leq g^{\omega^{\epsilon[g^\beta(b)]}(b+1)}(g^\beta(b))\\
&\leq g^{\omega^{\epsilon[g^\beta(b)]}(g^\beta(b))}(g(g^\beta(b)))\\
&= g^{\omega^\epsilon}(g^\beta(b))\\
&=g^{\omega^\epsilon+\beta}(b).
\end{align*}
\end{proof}

\begin{definition}
Let $\alpha=\sum_{\gamma\in I}\omega^\gamma c_\gamma$ and $\beta=\sum_{\delta\in J}\omega^\delta d_\delta$.  Then $\alpha\#\beta=\sum_{\gamma\in I\cup J}\omega^\gamma(c_\gamma+d_\gamma)$ (where $c_\gamma=0$ for $\gamma\not\in I$ and $d_\gamma=0$ for $\gamma\not\in J$).

Let $\alpha=\sum_{\gamma\in I}\omega^\gamma c_\gamma$ and $\beta=\sum_{\delta\in J}\omega^\delta d_\delta$.  Then $\alpha\otimes\beta=\sum_{\gamma\in I,\delta\in J}\omega^{\gamma\#\delta}c_\gamma c_\delta$.
\end{definition}
These are the ``natural'' or ``commutative'' addition and multiplication on ordinals.  Our work above shows
\begin{lemma}
  $g^\alpha(g^\beta(b))\leq g^{\alpha\#\beta}(b)$.
\end{lemma}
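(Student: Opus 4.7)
The plan is to induct on $\alpha$, peeling off the least Cantor monomial of $\alpha$ at each step and merging it into the correct position inside $\beta$. The base case $\alpha = 0$ is immediate: $g^0(g^\beta(b)) = g^\beta(b) = g^{0\#\beta}(b)$.

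For the inductive step, I would write $\alpha = \alpha' + \omega^\gamma$ where $\gamma = \min\alpha$, so $\alpha' < \alpha$. Split $\beta = \beta^+ + \beta^-$, where $\beta^+$ collects the Cantor summands of $\beta$ with exponent strictly greater than $\gamma$ and $\beta^-$ collects those with exponent at most $\gamma$. By the first lemma of the appendix, $g^\beta(b) = g^{\beta^+}(g^{\beta^-}(b))$, so
$g^\alpha(g^\beta(b)) = g^{\alpha'}\bigl(g^{\omega^\gamma}\bigl(g^{\beta^+}(g^{\beta^-}(b))\bigr)\bigr).$
Since $\max(\omega^\gamma) = \gamma < \min\beta^+$, the previous lemma applies (with $\omega^\gamma$ as the outer and $\beta^+$ as the inner exponent) to give $g^{\omega^\gamma}(g^{\beta^+}(x)) \leq g^{\beta^+ + \omega^\gamma}(x)$. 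Reassembling and re-collapsing the composed powers using the first lemma yields
$g^\alpha(g^\beta(b)) \leq g^{\alpha'}\bigl(g^{\beta^+ + \omega^\gamma + \beta^-}(b)\bigr) = g^{\alpha'}\bigl(g^{\beta \# \omega^\gamma}(b)\bigr),$
where the rewrite on the right holds because $\beta^+ + \omega^\gamma + \beta^-$ is an ordinary sum whose exponents are arranged in non-increasing order, and in that arrangement ordinary sum and natural sum agree.

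Finally, I would invoke the inductive hypothesis on the pair $(\alpha',\, \beta \# \omega^\gamma)$ to conclude
$g^{\alpha'}\bigl(g^{\beta \# \omega^\gamma}(b)\bigr) \leq g^{\alpha' \#(\beta \# \omega^\gamma)}(b) = g^{\alpha \# \beta}(b),$
using associativity and commutativity of $\#$ together with the identity $\alpha' \# \omega^\gamma = \alpha' + \omega^\gamma = \alpha$.

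The real content is already packaged in the previous lemma; the only obstacle here is the bookkeeping — splitting $\beta$ at the threshold $\gamma$ so that the previous lemma's hypothesis $\max\beta \leq \min\alpha$ holds at the swap step, and verifying that the ordinary sum $\beta^+ + \omega^\gamma + \beta^-$ really does coincide with the natural sum $\beta \# \omega^\gamma$. Both points are routine once one observes that the constructed arrangement of summands respects decreasing exponents, so no genuine arithmetic subtlety arises.
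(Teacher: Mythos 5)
Your proof is correct and is essentially the derivation the paper leaves implicit (it only says ``our work above shows''): peel the least monomial $\omega^\gamma$ off $\alpha$, split $\beta$ at the threshold $\gamma$, insert $\omega^\gamma$ using the preceding lemma together with the composition identity $g^{\alpha+\beta}=g^\alpha\circ g^\beta$, observe that the resulting ordinary sum is $\beta\#\omega^\gamma$, and induct. The only tacit ingredient is that applying $g^{\alpha'}$ to both sides of the inner inequality uses monotonicity of $g^{\alpha'}$ in its argument, an unstated fact the paper itself relies on in the same way in its proof of the insertion lemma, so your write-up matches the paper's intended route and level of rigor.
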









\begin{lemma}
  $(g^\alpha)^\beta(b)\leq g^{\alpha\otimes\beta}(b)$.
\end{lemma}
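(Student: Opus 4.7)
The plan is transfinite induction on $\beta$. The base case $\beta=0$ is immediate: since $\alpha\otimes 0=0$, both sides collapse to $b$.

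For the inductive step I would unfold one layer of the outer iteration,
\[(g^\alpha)^\beta(b)\;=\;(g^\alpha)^{\beta[b]}(g^\alpha(b)).\]
Since $\beta[b]<\beta$, the inductive hypothesis applied with initial argument $g^\alpha(b)$ gives $(g^\alpha)^{\beta[b]}(g^\alpha(b))\leq g^{\alpha\otimes\beta[b]}(g^\alpha(b))$. The immediately preceding lemma $g^\eta(g^\mu(b))\leq g^{\eta\#\mu}(b)$, applied with $\eta=\alpha\otimes\beta[b]$ and $\mu=\alpha$, then bounds this by $g^{(\alpha\otimes\beta[b])\#\alpha}(b)$. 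Hence it suffices to prove $g^{(\alpha\otimes\beta[b])\#\alpha}(b)\leq g^{\alpha\otimes\beta}(b)$.

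The successor case is clean: when $\beta=\beta'+1$ we have $\beta[b]=\beta'$, and distributivity of $\otimes$ over $\#$ gives $(\alpha\otimes\beta')\#\alpha=(\alpha\otimes\beta')\#(\alpha\otimes 1)=\alpha\otimes(\beta'\#1)=\alpha\otimes\beta$, making the two exponents equal. The nontrivial case, and the main obstacle, is when $\beta$ is a limit. Writing $\beta=\beta^{\sim}+\omega^{\delta}d$ with $\delta=\min\beta>0$, so that $\beta[b]=\beta^{\sim}+\omega^{\delta}(d-1)+\omega^{\delta[b]}b$, one expands both $(\alpha\otimes\beta[b])\#\alpha$ and $(\alpha\otimes\beta)[b]$ in Cantor normal form using the definition of the natural product. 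The two ordinals agree on the ``high'' terms (exponents strictly above $\min\alpha\#\delta$, namely those coming from $\alpha\otimes\beta^{\sim}$ together with the $(d-1)$-fold copy of $\alpha\otimes\omega^\delta$), and the comparison reduces to the ``low'' part. The hard part is verifying that the extra $\#\,\alpha$ summand in $(\alpha\otimes\beta[b])\#\alpha$ places weight only at exponents $\gamma\leq\max\alpha$, which lie strictly below the fresh ``pull-down'' exponent $(\min\alpha\#\delta)[b]=\max\alpha\#\delta[b]$ appearing in $(\alpha\otimes\beta)[b]$ once $b$ is sufficiently large relative to the coefficients of $\alpha$ and $\beta$. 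Granted this, one rewrites $(\alpha\otimes\beta[b])\#\alpha$ as a sequence of applications of the downgrade operation of Lemma \ref{thm:ordinal_downgrade} starting from $(\alpha\otimes\beta)[b]$, pushing each unit of coefficient mass at $\omega^{(\min\alpha\#\delta)[b]}$ down into the $\omega^\gamma$-positions demanded by the $\#\,\alpha$, which by that lemma can only decrease the iterate $g^{\cdot}(b)$. Feeding this back through the unfolded definition $g^{\alpha\otimes\beta}(b)=g^{(\alpha\otimes\beta)[b]}(g(b))$ closes the induction.
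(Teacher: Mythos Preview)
Your unfolding $(g^\alpha)^\beta(b)=(g^\alpha)^{\beta[b]}(g^\alpha(b))$ is correct---and more careful than the paper's, which writes $g(b)$ in place of $g^\alpha(b)$ at that step. Your successor case is also fine. But the limit case cannot be completed, and the obstruction is essential. Take $\alpha=\beta=\omega$, $g(x)=x+1$, $b=1$. Then $(\alpha\otimes\beta[b])\#\alpha=(\omega\otimes 1)\#\omega=\omega\cdot 2$ while $\alpha\otimes\beta=\omega^2$, and one computes $g^{\omega\cdot 2}(1)=g^\omega(3)=7$ versus $g^{\omega^2}(1)=g^{\omega}(2)=5$. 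So the very inequality $g^{(\alpha\otimes\beta[b])\#\alpha}(b)\leq g^{\alpha\otimes\beta}(b)$ you reduce to is false. In your sketch the ``strictly below'' claim fails here (the pull-down exponent is $(\min\alpha\#\delta)[b]=2[1]=1$, equal to $\max\alpha$), and the downgrade lemma cannot help because it conserves total coefficient mass, whereas you would need to pass from coefficient sum $1$ in $(\alpha\otimes\beta)[b]=\omega$ to coefficient sum $2$ in $\omega\cdot 2$. (The side identity $(\min\alpha\#\delta)[b]=\max\alpha\#\delta[b]$ is also wrong once $\alpha$ has more than one term.)

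The same computation gives $(g^\omega)^\omega(1)=7>5=g^{\omega^2}(1)$, so the \emph{statement} itself is false as written; the paper's four-line argument has the matching gaps (the mis-unfolded first line and the unjustified step $g^{\alpha\otimes(\beta[b])}(g(b))\leq g^{(\alpha\otimes\beta)[b]}(g(b))$). Your instinct to restrict to ``$b$ sufficiently large relative to the coefficients of $\alpha$ and $\beta$'' points toward a genuine repair---the uses in the paper only ever invoke the bound with $b$ already large compared to $|\alpha|,|\beta|$---but that is a change to the lemma, not just to the proof.
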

\begin{proof}
  By induction on $\beta$.  When $\beta=0$ this is immediate.  Otherwise,
  \begin{align*}
    (g^\alpha)^{\beta}(b)
&=(g^\alpha)^{\beta[b]}(g(b))\\
&\leq (g^{\alpha\otimes(\beta[b])})(g(b))\\
&\leq (g^{(\alpha\otimes\beta)[b]})(g(b))\\
&=g^{\alpha\otimes\beta}(b).
  \end{align*}
\end{proof}

\begin{lemma}
  If $|\beta|< b$ and $\beta<\alpha$ then $\beta\leq \alpha[b]$.
\end{lemma}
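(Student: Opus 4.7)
The plan is to proceed by induction on $\alpha$. The case $\alpha = 0$ is vacuous, and when $\min\alpha = 0$ (i.e., $\alpha$ is a successor $\alpha = \alpha' + 1$), the definition gives $\alpha[b] = \alpha'$ and the conclusion $\beta \leq \alpha'$ is immediate from $\beta < \alpha$. So the real content is the limit case, where $\delta := \min\alpha > 0$.

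Write $\alpha = \hat\alpha + \omega^\delta$, where $\hat\alpha$ collects every term of $\alpha$ except for one copy of $\omega^\delta$; then by definition $\alpha[b] = \hat\alpha + \omega^{\delta[b]} b$. Split on whether $\beta < \hat\alpha$ or $\beta \geq \hat\alpha$. In the first case $\beta < \hat\alpha \leq \alpha[b]$ and we are done. In the second case, $\beta < \alpha = \hat\alpha + \omega^\delta$ lets us write $\beta = \hat\alpha + \beta'$ with $\beta' < \omega^\delta$; if $\beta' = 0$ we are done, so assume $\beta' > 0$ and let $\epsilon = \max\beta' < \delta$. It now suffices to prove $\beta' \leq \omega^{\delta[b]} b$.

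For this I apply the inductive hypothesis to the pair $(\delta, \epsilon)$: since $\delta > 0$ and we are below $\epsilon_0$, we have $\delta < \omega^\delta \leq \alpha$, so the IH is available for $\delta$. The coordinate-bound inequality $|\epsilon| \leq |\beta'| \leq |\beta| < b$ follows from the recursive definition of $|\cdot|$ (the exponents of $\beta'$ contribute to $|\beta'|$). Hence $\epsilon \leq \delta[b]$. If $\epsilon < \delta[b]$ then $\beta' < \omega^{\epsilon+1} \leq \omega^{\delta[b]} \leq \omega^{\delta[b]} b$. If instead $\epsilon = \delta[b]$, write $\beta' = \omega^\epsilon e_\epsilon + \beta''$ with $\max\beta'' < \epsilon$ and $e_\epsilon \leq |\beta'| \leq |\beta| < b$; then $\beta'' < \omega^\epsilon$ gives
\[
\beta' < \omega^\epsilon e_\epsilon + \omega^\epsilon = \omega^\epsilon(e_\epsilon + 1) \leq \omega^{\delta[b]} b,
\]
completing the proof.

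The only ``obstacle'' is bookkeeping: one must check that the inductive hypothesis is legitimately being applied at a strictly smaller ordinal ($\delta < \alpha$) and that the coordinate-bound propagates from $\beta$ to its subterms, both of which are trivial consequences of the Cantor-normal-form definitions. There is no substantive arithmetic beyond the case split above.
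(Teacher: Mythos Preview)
Your proof is correct and follows essentially the same recursive idea as the paper's: pass from the pair $(\alpha,\beta)$ to the pair $(\min\alpha,\ \text{an exponent extracted from }\beta)$ and invoke the inductive hypothesis there. The organization differs slightly: the paper sets this up as a main induction on $\beta$ (with a side induction on $\alpha$) and splits on whether $\min\alpha\leq\max\beta$, whereas you run a single transfinite induction on $\alpha$ and split on whether $\beta<\hat\alpha$. Your decomposition $\beta=\hat\alpha+\beta'$ is arguably cleaner, since it makes the needed bookkeeping ($|\beta'|\leq|\beta|$ because the Cantor normal form of $\beta$ is literally the concatenation of those of $\hat\alpha$ and $\beta'$) transparent, and it avoids a small slip in the paper's case $\min\alpha\leq\max\beta$ (for instance $\alpha=\omega^{2}\cdot 2$, $\beta=\omega^{2}+\omega$ shows that $\alpha'\geq\beta$ need not hold there). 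Both arguments reach the same key inequality $\epsilon\leq\delta[b]$ and finish the same way.
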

\begin{proof}
  By main induction on $\beta$ and side induction $\alpha$.  When $\alpha=\alpha'+1$, this is immediate from the inductive hypothesis.

  Suppose $\alpha=\alpha'+\omega^\gamma$, so $\alpha[b]=\alpha'+\omega^{\gamma[b]}b$.  If $\gamma\leq \max\beta$ then $\alpha'\geq\beta$ so $\alpha[b]\geq\beta$ as well.  So suppose $\gamma>\max\beta$.  Then by the main inductive hypothesis, $\gamma[b]\geq\max\beta$, and since $|\beta|<b$, $\beta<\omega^{\max\beta}b\leq\omega^{\gamma[b]}b\leq\alpha[b]$.
\end{proof}

\begin{lemma}
  If $\alpha>\beta$ and $|\beta|< b$ then $g^\beta(b)\leq g^\alpha(b)$.
\end{lemma}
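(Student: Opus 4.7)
The plan is to induct on $\alpha$, combining the preceding lemma (which gives $\beta \leq \alpha[b]$ whenever $|\beta|<b$ and $\beta<\alpha$) with an auxiliary monotonicity fact for $g^\gamma$ in its base argument. Unfolding the definition, $g^\alpha(b) = g^{\alpha[b]}(g(b))$, so it suffices to show
\[g^\beta(b) \leq g^{\alpha[b]}(g(b)).\]
The preceding lemma tells us $\beta \leq \alpha[b]$. If $\beta < \alpha[b]$, then since $\alpha[b]<\alpha$ and $|\beta|<b<g(b)$, the inductive hypothesis (applied to the pair $\beta<\alpha[b]$ at base $g(b)$) gives $g^\beta(g(b))\leq g^{\alpha[b]}(g(b))$, and combining with $g^\beta(b)\leq g^\beta(g(b))$ finishes the case. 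If $\beta=\alpha[b]$, then we only need $g^\beta(b)\leq g^\beta(g(b))$.

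So the real work is establishing the sub-claim that $g^\gamma$ is monotone in its base argument: if $b\leq b'$ then $g^\gamma(b)\leq g^\gamma(b')$. I would prove this by a separate induction on $\gamma$. The case $\gamma=0$ is trivial. For $\gamma>0$, write $g^\gamma(b)=g^{\gamma[b]}(g(b))$. The earlier lemma ``if $d\leq d'$ then $g^{\gamma[d]}(c)\leq g^{\gamma[d']}(c)$'' gives $g^{\gamma[b]}(g(b))\leq g^{\gamma[b']}(g(b))$, and the inductive hypothesis (applied with the smaller ordinal $\gamma[b']<\gamma$ and using monotonicity of $g$ itself to get $g(b)\leq g(b')$) yields $g^{\gamma[b']}(g(b))\leq g^{\gamma[b']}(g(b'))=g^\gamma(b')$.

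With the sub-claim in hand, the case analysis above closes the main induction. The main obstacle is really just organizing the two nested inductions and verifying the side condition $|\beta|<g(b)$ when invoking the inductive hypothesis at the new base $g(b)$ — but this is immediate from $g(b)>b>|\beta|$. I do not expect any computational difficulty; the proof is purely a combinatorial manipulation of Cantor normal forms via $\alpha[\cdot]$ together with the two prior lemmas.
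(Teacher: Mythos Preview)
Your argument is correct and is essentially the paper's proof: induction on $\alpha$ using the preceding lemma that $\beta\le\alpha[b]$ whenever $|\beta|<b$. You treat the successor and limit cases uniformly via $g^\alpha(b)=g^{\alpha[b]}(g(b))$ and make explicit the base-monotonicity sub-claim $b\le b'\Rightarrow g^\gamma(b)\le g^\gamma(b')$ that the paper invokes tacitly (under the phrase ``monotonicity of $g$''), but the underlying idea is the same.
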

\begin{proof}
  Let $\beta, g, b$ be fixed and proceed by induction on $\alpha$.  If $\alpha=\beta$ this is trivial, and if $\alpha=\alpha'+1$ this follows immediately from the inductive hypothesis and the monotonicity of $g$.

  If $\alpha$ is a limit ordinal then $g^\alpha(b)=g^{\alpha[b]}(g(b))\geq g^\beta(b)$ by the inductive hypothesis and the fact that $\alpha[b]\geq\beta$.
\end{proof}

\bibliographystyle{plain}
\bibliography{DiffBounds}
\end{document}